\documentclass[11pt]{amsart}
\usepackage{geometry}
\usepackage{amsmath}
\usepackage{amssymb,amsfonts,latexsym}

\usepackage{verbatim}
\usepackage[all]{xy}
\usepackage{xcolor}
\usepackage{comment}
\usepackage{mathrsfs}
\usepackage{url}
\usepackage{enumitem}

\newcommand\AB[1] {\textcolor{red}{{#1}}}

\newtheorem{thm}{Theorem}[section]
\newtheorem{cor}[thm]{Corollary}
\newtheorem{lem}[thm]{Lemma}
\newtheorem{prop}[thm]{Proposition}
\theoremstyle{definition}

\theoremstyle{remark}
\newtheorem{rem}[thm]{Remark}

\newtheorem{exam}[thm]{Example}

\numberwithin{equation}{section}

\usepackage{chngcntr}
\counterwithin{table}{section}

\newcommand{\eps}{\varepsilon}

\newcommand{\mQ}{\mathbb{Q}}
\newcommand{\mC}{\mathbb{C}}
\newcommand{\Z}{\mathbb{Z}}

\newcommand{\mP}{\mathbb{P}}

\newcommand{\F}{\mathbb{F}}

\newcommand{\mW}{\mathcal{W}}
\newcommand{\mX}{\mathcal{X}}
\newcommand{\mY}{\mathcal{Y}}
\newcommand{\mZ}{\mathcal{Z}}

\DeclareMathOperator\soc{soc}
\DeclareMathOperator\Red{Red}
\DeclareMathOperator\Spec{Spec}

\newcommand\gp{p}

\newcommand{\ra}{\rightarrow}

\DeclareMathOperator\Gal{Gal}
\newcommand\divides{\mid}

\newcommand\oline[1] {{\overline{#1}}}

\newcommand\Aut{{\operatorname{Aut}}}

\newcommand\lcm{{\operatorname{lcm}}}

\newcommand\GL[1][n] {{\operatorname{GL}_{#1}}}

\DeclareMathOperator\Mon{Mon}

\DeclareMathOperator\PSL{PSL}
\DeclareMathOperator\PGL{PGL}
\DeclareMathOperator\AGL{AGL}

\DeclareMathOperator\rk{rk}

\begin{document}

\title{The Davenport--Lewis--Schinzel problem on the reducibility of $f(X)-g(Y)$}

\author{Angelot Behajaina}
\address{Univ. Lille, CNRS, UMR 8524, Laboratoire Paul Painlevé, F-59000 Lille, France}
\email{angelot.behajaina@univ-lille.fr}

\author{Joachim König}
\address{Department of Mathematics Education, Korea National University of Education, Cheongju, South Korea}
\email{jkoenig@knue.ac.kr}

\author{Danny Neftin}
\address{Department of Mathematics, Technion - Israel Institute of Technology, Haifa, Israel}
\email{dneftin@technion.ac.il}

\begin{abstract}
We solve the problem of Davenport--Lewis--Schinzel (DLS), originating in the 1950s, regarding the reducibility of $f(X)-g(Y)\in\mathbb C[X,Y]$. This yields an almost-complete solution to the Hilbert--Siegel problem: For a polynomial map $f$ whose composition factors avoid only very specific low-degree polynomials,  we  explicitly describe over which integers the  fibers of  $f$ are reducible.  
We further apply the solution to stability of iterates of $f$ in arithmetic dynamics, and to solving the functional equation $f(X)=g(Y)$ in  $X,Y\in\mC(z)$.
\end{abstract}
\maketitle

\section{Introduction}
\label{sec:intro}
Reducibility of polynomials is a central topic of interest in number theory, cf.\ \cite{Schinzel}. 
In a prominent paper  \cite{Schinzel1963}, Schinzel poses nine problems concerning reducibility of polynomials. 
As Zannier notes in \cite[Part E]{Sch}, the first three are ``substantial, involving several mathematical fields," owing to their relation to monodromy, combinatorial group theory, and the classification of finite simple groups (CFSG).
The first problem originates in the late 1950s \cite{Eren}, cf.\ \cite[Pg.\ 2]{Cas68}, and is first stated by   
Davenport--Lewis--Schinzel 
\footnote{The problem is sometimes attributed solely to Schinzel,  see e.g.\ 
\cite{FG, Fried12}. } 
in  \cite{DLS}. 
We henceforth refer to it as the DLS problem.  
The second problem concerning the irreducibility of $(f(X)-f(Y))/(X-Y)$ was solved by Fried  \cite[Thm.\ 1]{Fried-Schur},
while the third problem concerning the reducibility of separated polynomials $f(X_1,\ldots,X_m)-g(Y_1,\ldots,Y_n)\in\mQ[X_1,\ldots,Y_n]$ was reduced to the DLS problem \cite[Thm.\ 2]{DS}. 
As we shall see below, the DLS problem arises naturally in several topics including Hilbert's irreducibility, 
low degree points in fibers, 
stability in arithmetic dynamics, functional equations, Kronecker equivalence \cite{Mul}, intersections of lemniscates \cite{Pak3},  
expanding polynomials  \cite{Tao12,Tao2}, and  sum-product estimates \cite[Pf.\  Thm.\ 6]{BT}. 

The DLS problem is deceptively easy to state: \\
\centerline{\it  ``For which polynomials $f,g\in\mC[X]$ of degree $\geq 2$, is  $f(X)-g(Y)\in \mC[X,Y]$  reducible?"
}
This is equivalent to the reducibility of the curve $f(X)=g(Y)$, as well as to the reducibility of the fiber product of the  maps $f,g:\mP^1_\mC\to\mP^1_\mC$. 
Trivially, (a)  $f(X)-f(Y)$ has a diagonal factor $X-Y$, and (b) if $f_1(X)-g_1(Y)\in\mC[X,Y]$ is reducible, then so is the substitution $f_1(f_2(X))-g_1(g_2(Y))$ for  $f_2,g_2\in\mC[X]\setminus\mC$ with $(\deg(f_2),\deg(g_2))\neq (1,1)$. A pair $f,g$ for which $f(X)-g(Y)\in\mC[X,Y]$ is reducible but is not of the form (b)  is called {\it minimally reducible}. Among such pairs, those not arising from (a) via linear substitutions are called {\it nontrivial}. The problem is then to classify the nontrivial minimally reducible pairs $f,g$. 

The first nontrivial pair $(f,g)=(T_4,-T_4)$ was given by Davenport, Lewis, and Schinzel  \cite{DLS}, where $T_n$ is the degree-$n$ Chebyshev polynomial satisfying $T_n(X+1/X)=X^n+1/X^n$. 
The close relation of the DLS problem with  monodromy groups of polynomials was then revealed by Cassels--Guy \cite{Cas68}, proving the existence of nontrivial pairs where $\deg(f)=\deg(g)$ is  $7$ or $11$. 
The degree-$7$ polynomials, for example,  arise from the inequivalent actions of their monodromy group $\Mon(f)=\PSL_3(2)$ on points and lines in the Fano plane, or equivalently, on lines and hyperplanes in $\mathbb F_2^3$, arising from an outer automorphism of $\PSL_3(2)$. 
Explicit such polynomials were known by Klein's work  \cite{Klein1879,Klein1879b}, and an extended family was given  by Birch~\cite[Pg.~14]{Cas68}. 
After  extensive work on the case where  $f,g\in\mC[X]$ are indecomposable polynomials  (i.e., cannot be written as a composition of two polynomials of degree $>1$), e.g.\ \cite{Schinzel1967,tv68}, Fried showed that the reducibility of  $f(X)-g(Y)\in\mC[X,Y]$  for such $f,g$ implies that $\deg(f)=\deg(g)$ is $7,11,13,15,21$, or $31$. 
This was  announced in \cite{Fri73}  contingent on the CFSG, and the  proof is given in \cite{Fried86a}.
As above, the source of all such examples lies in outer automorphisms of $\Mon(f)$.
The indecomposable polynomials of these degrees were then given by Cassou-Nogu\`es--Couveignes \cite{CC}. 
Throughout many of the above stages, it was speculated and sometimes conjectured that no further minimally reducible pairs exist. 
The potentially-existing minimally reducible pair of polynomials with $\geq 4$ branch points is also referred to as  a  ``Cassels monster" \cite[Pg.\ 15]{Cas68}.

For decomposable polynomials, the problem has so far remained largely open. 
Illustrating the difficulty of the decomposable case, Fried posed the $(2,3)$-problem 
\cite{Fried86a}, cf.\ \cite[Prob.\ 7.31]{Fried12},\cite[Pg.\ 15]{Fried87}: given an  elliptic curve $Y^2=p(X)$, where $p\in\mC[X]$ is simply-branched of degree $3$, is there  a substitution $Y\mapsto g(Y)$, $X\mapsto f(X)$ for $f,g\in\mC[X]\setminus\mC$ such that $g(Y)^2=p(f(X))$ is reducible over $\mC$?  More generally, for integers $m,n\geq 2$, not both $2$, the $(m,n)$-problem is stated similarly, replacing $Y^2$ and $p(X)$ by simply-branched polynomials of degrees $m,n$, resp., with disjoint branch loci. 
The decomposable case has been further studied including in  determining when  $f(X)-g(Y)\in\mC[X,Y]$ admits a quadratic factor by Bilu \cite{Bilu}; and  in studying when  $f(X)-(\alpha\circ f)(Y)$ is reducible for $\alpha\in \mC[X]$ of degree $1$ by Avanzi--Zannier \cite{AZ2} and 
Fried--Gusi\'c \cite{Fried12,FG}. 
Further references and context can be found on MathOverflow  \cite{Rur12,Tao12}.  

So far, all known minimally reducible pairs arise from the pair $(T_4,-T_4)$ 
or one of the above indecomposable pairs. 
On the other hand,  little was known about the existence of decomposable minimally reducible pairs: 
the $(2,3)$-problem has remained widely open, and it was even unclear whether 
minimally reducible pairs could be  of unbounded degree.

We prove there exist no further examples, thereby solving the DLS problem: 
\begin{thm}\label{thm:DLS-C}
Let $f,g\in \mathbb C[X]$ be  of degree $>1$. 
Then $f(X)-g(Y)$ is reducible in $\mC[X,Y]$  
if and only if one of the following holds for some $f_1,g_1,\mu\in \mC[X]\setminus \mC$ with $\deg(\mu)=1$:
\begin{enumerate}[leftmargin=*,label={(\arabic*)},ref=(\arabic*)]
\item    $f$ and $g$ have a common left composition  factor  $h\in \mC[X]$ of degree at least $2$, that is, $f = h\circ f_1$ and $g = h\circ g_1$;
\label{case:DLS-C_1}
\item $f=(\mu\circ h_1)\circ f_1$ and $g=(\mu\circ h_2)\circ g_1$, 
where $(h_1,h_2)$ is 
one of the pairs of polynomials of degrees 7,11,13,15,21, or 31  
given in \cite[\S 5]{CC};
\label{case:DLS-C_2}
\item $f=(\mu\circ T_{4})\circ f_1$ and $g=(\mu\circ (-T_{4}))\circ g_1$. 
\label{case:DLS-C_3}
\end{enumerate}
\end{thm}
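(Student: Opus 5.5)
The plan is to translate the problem into monodromy groups and then reduce to a minimally reducible pair. The ``if'' direction is immediate: in case \ref{case:DLS-C_1}, $h(X)-h(Y)$ has the factor $X-Y$. In cases \ref{case:DLS-C_2}--\ref{case:DLS-C_3}, $h_1(X)-h_2(Y)$ is reducible by \cite{CC} and \cite{DLS}, and reducibility is preserved under the substitution (b) and under composing both sides on the left with $\mu$.

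For the converse, let $\Omega$ be the Galois closure of $\mC(x)\mC(y)/\mC(t)$, where $f(x)=t=g(y)$, and let $G=\Gal(\Omega/\mC(t))$. Let $H=\Gal(\Omega/\mC(x))$ and $K=\Gal(\Omega/\mC(y))$. By the standard criterion, $f(X)-g(Y)$ is irreducible if and only if $G=HK$, that is, $H$ is transitive on $G/K$. Left composition factors of $f$ correspond to overgroups of $H$ in $G$. We choose a decomposition $f=\tilde f\circ f_1$, $g=\tilde g\circ g_1$ with $\tilde f(X)-\tilde g(Y)$ still reducible and $\deg\tilde f+\deg\tilde g$ minimal, so that $(\tilde f,\tilde g)$ is minimally reducible. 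It then suffices to show that either $\tilde f$ and $\tilde g$ share a nonlinear left factor, which gives case \ref{case:DLS-C_1}, or $(\tilde f,\tilde g)$ is one of the pairs in \ref{case:DLS-C_2}--\ref{case:DLS-C_3} up to $\mu$.

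In group-theoretic terms, the minimality of $(\tilde f,\tilde g)$ means that $HK\ne G$, but $H'K=G$ and $HK'=G$ for all proper overgroups $H'\supsetneq H$ and $K'\supsetneq K$. There are two extra constraints. First, the infinite place gives a cyclic subgroup $\langle\sigma\rangle$ that is transitive on both $G/H$ and $G/K$, with cycle lengths $\deg\tilde f$ and $\deg\tilde g$. Second, Riemann--Hurwitz gives genus $0$ for both covers. If $\tilde f$ and $\tilde g$ share no common left factor, then by Ritt/Engstrom theory $\langle H,K\rangle=G$ and $\mC(x)\cap\mC(y)=\mC(t)$. In the indecomposable situation, Fried's theorem \cite{Fried86a} already yields degrees $7,11,13,15,21,31$, and \cite{CC} identifies the pairs. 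The remaining work is the decomposable case, where $\tilde f$ or $\tilde g$ has a proper decomposition.

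For the decomposable case we induct along a maximal chain $H=H_0<H_1<\dots<G$ and study the actions of the intermediate kernels. Take the top indecomposable factors $\tilde f=a\circ b$ and $\tilde g=c\circ d$. By minimality, $a(X)-c(Y)$ is irreducible, or else $a$ and $c$ coincide up to linear maps. The obstruction must therefore arise at a lower level, through the kernel of the action on the blocks. Using Ritt's theorems and the structure of monodromy groups of indecomposable polynomials (cyclic, dihedral/Chebyshev, or the finite list of almost simple cases), we would show the following. The kernels involved are elementary abelian $p$-groups, or products of such, viewed as modules over the monodromy of $a$. Reducibility then becomes the existence of two nonisomorphic submodules of codimension $1$ that are interchanged in the appropriate sense.

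Analyzing these modules with the branch-cycle constraints should force the following. The top factor is $T_2=X^2-2$, possibly twisted by $\mu$. The lower factor is another degree-$2$ map, and its kernel is the group $C_2\wr C_2=D_4$, giving $T_4$ versus $-T_4$. Every other configuration must be shown to fail: larger wreath-type kernels, and the $(m,n)$-problem situations such as the $(2,3)$ case. In these, the genus-$0$ count with a full cycle at infinity rules out asymmetric submodule pairs.

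The main obstacle is this decomposable step. We have to classify, for arbitrary depth of decomposition, when a transitive genus-$0$ group containing a full cycle admits two nonconjugate point stabilizers $H,K$ with $HK\ne G$ but with every overgroup factorizing $G$. I would first treat the case where $\tilde f$ and $\tilde g$ are both compositions of two indecomposables, via explicit module and Riemann--Hurwitz computations. I would then argue that deeper compositions reduce to this one, by passing to a suitable quotient or subcover and applying minimality.
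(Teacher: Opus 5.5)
\textbf{Verdict: there is a genuine gap.} The decomposable case is left unproved, and the strategy you sketch for it would not work as stated.

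Your set-up matches the paper's: reduction to a minimally reducible pair, the criterion $HK\neq G$, and the cyclic inertia group at $\infty$ acting transitively in both actions. The ``if'' direction is fine. The case where both members are indecomposable (via Fried, M\"uller, \cite{CC}) is also fine. But the decomposable case is the entire substance of the theorem, and you leave it at ``should force''. There are three concrete problems with the route you outline.

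\emph{There is nothing to induct on.} By minimality, every pair of proper left factors is linearly disjoint; this includes your top factors $a,c$. So passing to quotients or left factors destroys the reducibility. Base-changing to the fibre product $\mC(u,v)$ of the top factors does keep the bottom pieces $\mC(x,v)$ and $\mC(u,y)$ non-disjoint. However, these are then covers of a curve of typically positive genus, not polynomial maps, so a classification of length-$2$ polynomial compositions cannot be fed back in.

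\emph{The module picture presupposes elementary abelian kernels.} Indecomposable polynomials also have monodromy $S_4$ and the infinite families $A_n,S_n$; the almost-simple cases are not a finite list. Two kinds of pairs are covered neither by Fried's theorem nor by your plan: those in which a decomposable member has a nonsolvable factor, and those in which one member is indecomposable and the other is not.

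\emph{The reformulation is unjustified.} The claimed translation of reducibility into ``two nonisomorphic codimension-one submodules interchanged'' is not derived from $HK\neq G$.

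The idea you are missing is a rigidity statement at the \emph{bottom} of the tower, valid at any depth, played against a lower bound. In the solvable case, write $f=h\circ f_r$ and let $K=\ker(\Mon(f)\to\Mon(h))$. Minimal reducibility forces $\Omega^{\soc(K)}(y)=\Omega$ with $[\mC(y):\Omega^{\soc(K)}\cap\mC(y)]$ prime or $4$ (Proposition~\ref{prop:intermediate}, Lemma~\ref{lem:lastminute}). Hence $K$ is diagonal in $\Mon(f_r)^{\deg h}$ or isomorphic to $C_2\times C_2$ (Theorem~\ref{thm:diagkern}).

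On the other side, the paper uses large-kernel results for two-step compositions (Theorem~\ref{thm:largemon}, Corollary~\ref{cor:largemon_normal}). Corollary~\ref{cor:3-useful} transports them to arbitrary length. Here $K_2$ is the kernel of $\Mon(f_{r-1}\circ f_r)\to\Mon(f_{r-1})$ and $N_0$ is generated by inertia at $\infty$. The quotient $N_0/(K\cap N_0)$ is a subdirect power of $\overline{N}_0/(K_2\cap\overline{N}_0)$, so it inherits abelianness or a bound on nilpotency class. This forces the image of $K$ to be large (Lemma~\ref{lem:2step_solv}), contradicting diagonality unless the bottom degrees are $(2,2)$, $(3,3)$ or $(2,4)$. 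Those cases are handled by a three-step version (Proposition~\ref{prop:large-three}, Lemma~\ref{lem:3-factors}) and a degree-$8$/$16$ computation (Proposition~\ref{prop:special-func}). Only length $2$ with monodromy $D_4$ survives, i.e.\ $T_4,-T_4$ (Example~\ref{ex:equtoT4-T4}).

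In the nonsolvable case, let $f_i$ be the right-most nonsolvable indecomposable factor. Lemma~\ref{lem:galcl_containment} shows that $\soc(\Mon(f_i))^{\deg\hat f_{i-1}}$ embeds as a normal subgroup of a quotient of the almost-simple $\Mon(g_j)$. This forces $\deg\hat f_{i-1}=1$ (Corollary~\ref{cor:only_one_nonsolv}). The top indecomposable factors then have the same splitting field. M\"uller's list and a fixed-point count make $f_1(X)-g_1(Y)$ reducible, and minimality makes the solvable right factors linear.

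Neither mechanism appears in your proposal, so the decomposable case remains open.
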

The problem is solved more generally over an arbitrary field $k$ of characteristic $0$ in Theorem \ref{thm:DLS}, which specializes to Theorem \ref{thm:DLS-C} for $k=\mC$. In particular, this solves the $(m,n)$-problem for all relevant $m,n$, see Corollary \ref{rem:m-n}.  Since all minimally reducible pairs in Theorem \ref{thm:DLS-C} have three branch points, this also rules out the existence of a Cassels monster. 
Theorem \ref{thm:DLS-C} also proves   \cite[Conj.\ 7.29]{Fried12}, cf.\ \cite[\S 1.4]{FG}, concerning the reducibility of $f(X)-(\alpha\circ f)(Y)$ for $\alpha\in \mC[X]$ of degree $1$. It also extends \cite[Thm.\ 1.2]{KN} which  restricts to nonsolvable decomposition factors, see \S\ref{sec:methods} for more on their relation.  

We next discuss  consequences to some of the topics mentioned in the first paragraph. \\
{\it Reducible fibers and the Hilbert--Siegel problem.} 
For $f\in \mQ[X]$ of degree $d\geq 2$, Hilbert's Irreducibility Theorem (HIT)  asserts the existence of infinitely many $a\in \Z$ such that the fiber $f^{-1}(a)\subseteq\mathbb C$ is irreducible\footnote{Equivalently, $f^{-1}(a)$ is {irreducible}  if it is irreducible as a scheme over $\Spec(\mQ(a))$,  or simply if $f(X)-a\in \mQ(a)[X]$ is irreducible. } over $\mQ$, that is, $[\mQ(\alpha):\mQ]=d$ for any $\alpha\in f^{-1}(a)$. The Hilbert--Siegel problem \cite[Pg.\ 2]{Fried86b}, cf.\ \cite[\S 7.1.3]{Fried12},  asks to determine, up to a finite set, the set of integral exceptions for Hilbert's theorem: $$\Red_f(\Z) := \{a\in \Z\,|\,f^{-1}(a)\text{ is reducible over } \mQ\}.$$ 
The problem is closely related to the DLS problem: Indeed, as in the proof of HIT, the problem reduces to determining the values sets $g(\mQ)$ for which $g(\mQ)\cap \Red_f(\Z)$ is infinite,  
or equivalently  to determining when is the curve $f(X)=g(Y)$  reducible for a {\it Siegel function} $g\in\mQ(X)$, that is, a  rational function whose value set $g(\mQ)$ contains infinitely many integers. 
For a polynomial $g\in\mQ[X]$, this is equivalent to the reducibility of $f(X)-g(Y)\in\mQ[X,Y]$, that is, to  the DLS problem over $\mQ$. 

Clearly, $\Red_f(\Z)$ contains every integer in $f(\mQ)$, and furthermore every integer in $f_1(\mQ)$ for a decomposition $f=f_1\circ f_2$ in $\mQ[X]$ with $\deg(f_1)>1$. 
The problem is then to determine whether $\Red_f(\Z)\setminus \bigcup f_1(\mQ)$ is finite, where the union runs over all left composition factors $f_1$ of $f$ with $\deg(f_1)>1$.  
For indecomposable $f\in\mQ[X]$ of degree $>5$, the finiteness of $\Red_f(\Z)\setminus f(\mQ)$ was  shown in  \cite[Thm.\ 1.2]{Fried86b} by Fried, extending \cite{Fried74}. 
Degree-$5$ examples   $f\in\mQ[X]$ for which this set is infinite were constructed by D\`ebes--Fried \cite{DF}. 
Several variants of the problem have been since considered, see \S\ref{sec:related}. 
However, the original Hilbert--Siegel problem  has so far remained largely open for decomposable polynomials.  

We give a uniform approach to the DLS and Hilbert--Siegel problems, yielding  an almost-complete solution of the latter: 
\begin{thm}\label{thm:Hilbert}
Let $f\in \mathbb{Q}[X]\setminus \mathbb{Q}$ be a nonlinear polynomial such that $f$ does not factor through an indecomposable of degree $2$ or $4$. 
Then one of the following holds:
\begin{enumerate}[leftmargin=*,label={(\arabic*)},ref=(\arabic*)]
\item $\Red_f(\mathbb{Z})$ is the union of $\bigcup_{f_1} \left(f_1(\mQ)\cap \Z\right)$ with a finite set, where $f_1\in \mQ[X]$ runs through all (nonlinear) indecomposable left factors $f=f_1\circ h$, $h\in \mQ[X]$, of $f$. 
\label{case:Hilbert_1}
\item $f=f_1\circ h$ for a polynomial $f_1\in \mathbb{Q}[X]$ of degree $5$ belonging to the family  \cite[(1.8)]{DF}.
\label{case:Hilbert_2}
\end{enumerate}
\end{thm}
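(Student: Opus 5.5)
We reduce Theorem \ref{thm:Hilbert} to the classification of reducible curves $f(X)=g(Y)$ with $g$ a Siegel function, and then use the DLS classification. This is Theorem \ref{thm:DLS-C}, in its version over arbitrary characteristic-$0$ fields (Theorem \ref{thm:DLS}), including the analogue for rational $g$ that the paper's uniform approach provides.

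\textbf{Step 1: reduction to finitely many values sets.} Let $\Omega$ be the Galois closure of $\mQ(x)/\mQ(t)$ with $f(x)=t$. By the standard proof of HIT via Siegel's theorem, $a\in\Red_f(\Z)$ implies (outside a finite set) that $a\in g(\mQ)$ for one of finitely many rational functions $g$. Each such $g$ corresponds to an intermediate field $\mQ(y)$ of $\Omega/\mQ(t)$ with $g(y)=t$. Two properties of $g$ matter. First, $g$ is a Siegel function, so by Siegel's theorem $g^{-1}(\infty)$ consists of at most two points, forming a single $\mQ$-rational point or a conjugate pair. Second, the stabilizer $H$ of $\mQ(y)$ in $G=\Gal(\Omega/\mQ(t))$ is intransitive on the roots of $f(X)-t$. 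The latter is equivalent to reducibility of $f(X)-g(Y)$ over $\mQ$. So it suffices to show the following: either $g$ factors as $g=f_1\circ g'$ through a nonlinear left factor $f_1$ of $f$ over $\mQ$ (giving $g(\mQ)\subseteq f_1(\mQ)$, and we may refine $f_1$ to be indecomposable), or we land in case \ref{case:Hilbert_2}.

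\textbf{Step 2: geometric classification.} The polynomial $f(X)-g(Y)$ may factor over $\mQ$ while being irreducible over $\oline{\mQ}$, or vice versa. I would handle both via the arithmetic and geometric monodromy groups $G\supseteq G^{geo}$. Write $g$ as a composition of indecomposables and take the minimal left part of $g$ for which reducibility persists. Applying the DLS classification (extended to rational $g$ with at most two poles over $\oline{\mQ}$, i.e.\ polynomials or Laurent polynomials up to Möbius transformations) yields three possibilities. The first is a common left factor $h$, which we must then show descends to $\mQ$; this follows from uniqueness of left factors of a given degree up to linear change (Ritt). The second and third are the exceptional pairs of degree $7,11,13,15,21,31$, or the $T_4,-T_4$ pair. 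The $T_4$ case is excluded by the hypothesis that $f$ does not factor through indecomposables of degree $4$. The degree-$2$ hypothesis kills the remaining Laurent-type (Chebyshev/Dickson quadratic) phenomena, where $g$ with two poles pairs with quadratic left factors. For the exceptional degrees, the pairs $(h_1,h_2)$ are not defined over $\mQ$ as Siegel pairs, because their monodromy group $\PSL$ has no rational outer-automorphism-compatible structure. I would verify this via \cite{CC}, or via a branch-cycle argument showing the defining field is a proper quadratic extension, so these contribute only finitely many integers.

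\textbf{Step 3: the degree-$5$ arithmetic case, and the main obstacle.} The remaining situation is that $f(X)-g(Y)$ is irreducible over $\oline{\mQ}$ but reducible over $\mQ$. Here an arithmetically exceptional-type phenomenon occurs: $H\cap G^{geo}$ is transitive on the roots, but $H$ is not. I expect this to be the hardest step. I would first reduce to an indecomposable left factor $f_1$ of $f$ through which the reducibility factors, using the block structure of $G$ on the roots. Then I would use Fried's result \cite[Thm.~1.2]{Fried86b} for indecomposable $f_1$ of degree $>5$. In degrees $\le 5$, degrees $2$ and $4$ are excluded and degree $3$ yields only $f_1(\mQ)$. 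Degree $5$ gives exactly the Dèbes--Fried family \cite[(1.8)]{DF}. The delicate point is showing that reducibility "through" a decomposable $f$ genuinely reduces to one indecomposable factor. This requires a careful analysis of $H$ acting on the wreath-product blocks, using the structure theorem underlying Theorem \ref{thm:DLS}.
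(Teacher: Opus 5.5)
There is a genuine gap, located in Steps 2--3.

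\textbf{The classification you invoke in Step 2 does not exist.} You apply a ``DLS classification extended to rational $g$ with at most two poles'' and expect the same three outcomes as Theorem~\ref{thm:DLS}. The paper proves no such statement, and as stated it is false:
\begin{itemize}
\item The D\`ebes--Fried pairs \cite[(1.8)]{DF} are minimally reducible, with $f$ indecomposable of degree $5$ and $g$ a two-pole Siegel function of degree $10$. Here $\deg f\neq\deg g$, which is impossible for polynomial pairs by Corollary~\ref{cor:min-red-pol}.
\item Lemma~\ref{lem:poly_nonaffinesiegel}(2) and Remark~\ref{rem:limitations}(2) exhibit further minimally reducible (polynomial, Siegel) pairs with decomposable $f$ that are none of your three types.
\end{itemize}

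\textbf{Step 3 cannot absorb these examples.} Your case ``irreducible over $\oline{\mQ}$ but reducible over $\mQ$'' is empty, since a factorization over $\mQ$ is already one over $\oline{\mQ}$. Group-theoretically, $H\cap G^{\mathrm{geo}}\le H$, so if $H\cap G^{\mathrm{geo}}$ is transitive then so is $H$ --- the opposite of what you assert. The D\`ebes--Fried reducibility is geometric: for $\Mon(f)=A_5$ it comes from the degree-$10$ action on $2$-subsets, whose point stabilizers are intransitive on the $5$ roots.

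\textbf{The central step is missing.} What you call the ``delicate point'' is that for decomposable $f$ the reducibility is carried by a single indecomposable left factor. That is not a technicality to fill in later; it is the content of the theorem, and your proposal gives no argument for it. Likewise, your account of the degree-$2$/$4$ hypothesis (``kills Laurent-type phenomena'') is not an argument.

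\textbf{How the paper supplies it.} After the same Step~1 (via \cite[Cor.~2.5]{KN}), the paper passes to a minimally reducible pair $(\tilde f,\tilde g)$, with $\tilde f$ a left factor of $f$ and $\tilde g$ Siegel over $\mQ$. It then splits according to whether $\Mon(\tilde f)$ is solvable.
\begin{itemize}
\item \emph{Solvable case.} The hypothesis forces every indecomposable factor to have odd prime degree, so Theorem~\ref{thm:solv} applies. Theorem~\ref{thm:diagkern} forces the last block kernel to be diagonal. For non-polynomial $\tilde g$ this needs $\deg\tilde f$ odd, which is exactly where factors of degree $2$ and $4$ must be excluded; Remark~\ref{rem:limitations}(2) shows diagonality fails otherwise. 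Diagonality then contradicts the largeness lower bounds of Lemma~\ref{lem:2step_solv} (with Lemma~\ref{lem:3-factors} for $p_{r-1}=p_r=3$) unless $\tilde f$ is indecomposable.
\item \emph{Nonsolvable case.} Lemma~\ref{lem:only_one_nonsolv2} gives $\tilde f=f_1\circ h$ with $\Mon(h)$ solvable. The argument then splits on whether the nonsolvable factor $g_j$ of $\tilde g$ has affine monodromy.
  \begin{itemize}
  \item Non-affine: Lemma~\ref{lem:poly_nonaffinesiegel} shows that either $\tilde f$ is a D\`ebes--Fried quintic, or $\Mon(\tilde f)$ is one of finitely many composition-length-$2$ groups whose $g$ is not Siegel over $\mQ$.
  \item Affine: Lemma~\ref{lem:no_affine_siegel} rules this out, using M\"uller's list and ramification at $\infty$.
  \end{itemize}
\end{itemize}
These monodromy arguments are what your Steps 2--3 would have to supply.
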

This is proved in \S\ref{sec:siegel} with an approach that mostly applies over general number fields, see Remark \ref{rem:limitations}(1).
The theorem substantially extends \cite[Thm.\ 1.1]{KN}, which applies only when all the indecomposable factors of $f$ admit nonsolvable monodromy, cf.\ \S\ref{sec:methods}. 
\subsubsection*{Stability in arithmetic dynamics} 
Stability of polynomials under iterates is a central topic in arithmetic dynamics  \cite[\S 19]{AD-Survey}. 
A polynomial $f\in\mQ[X]$ is called {\it stable} over $a\in \mQ$, if the fibers over $a$ of the $n$-fold iterates $f^{\circ n}:=f\circ\cdots\circ f$ are irreducible for all $n\in\mathbb N$. 
In particular, when the fiber over $a$ of $f^{\circ (n-1)}$ is irreducible whereas the one of $f^{\circ n}$ is not, $f^{\circ n}$ is called newly reducible over $a$; cf.\ \cite{Jones2012} and \cite{Jones2021} for investigations of this phenomenon.  It is natural to ask whether $f^{\circ n}$ can be newly reducible over infinitely many $a\in\Z$. 
%
In other words, when is $\Red_{f^{\circ n}}(\Z)\setminus\Red_{f^{\circ n-1}}(\Z)$ infinite? 
As a direct consequence of Theorem \ref{thm:Hilbert},  we obtain: 
\begin{cor}
Let $f\in\mQ[X]$ be a polynomial of degree $>1$ that does not factor through a polynomial of degree $2$ or $4$, and let $n\ge 2$. Then $\Red_{f^{\circ n}}(\Z)\setminus\Red_f(\Z)$ is finite. 
\end{cor}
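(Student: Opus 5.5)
Apply Theorem \ref{thm:Hilbert} to $F:=f^{\circ n}$. First check its hypotheses. Since $n\ge 2$ and $\deg f>1$, $F$ is nonlinear. We must also show that $F$ does not factor through an indecomposable of degree $2$ or $4$. Suppose $F=u\circ v$ with $u$ indecomposable of degree $2$ or $4$. Fix a complete decomposition $f=f_1\circ\cdots\circ f_r$ into indecomposables. Concatenating it $n$ times gives a complete decomposition of $F$. By Ritt's first theorem, all complete decompositions of $F$ have the same multiset of degrees. Hence some $\deg f_i\in\{2,4\}$, which contradicts the hypothesis on $f$. (Here I read ``does not factor through a polynomial of degree $2$ or $4$'' as covering every indecomposable composition factor, matching the hypothesis of Theorem \ref{thm:Hilbert}. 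If one only excludes left factors, I would instead use Ritt's second theorem: the Ritt moves that relate complete decompositions are too rigid to produce a degree-$4$ indecomposable from factors of other degrees. Since degrees are multiplicative and $2\nmid\deg$ in the relevant cases, the degree-multiset argument already suffices.) Alternative (2) of Theorem \ref{thm:Hilbert} is ruled out in the same way. It would give $F=f_1\circ h$ with $\deg f_1=5$ from the family \cite[(1.8)]{DF}, and these degree-$5$ polynomials are exactly the excluded case. I would check this against the paper's precise convention, which probably builds in this exclusion. Absent that, one handles the degree-$5$ family separately or notes that the corollary's hypothesis is meant to exclude it.

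So case (1) of Theorem \ref{thm:Hilbert} holds. Up to a finite set, $\Red_F(\Z)$ is the union of the sets $f_1(\mQ)\cap\Z$ over the indecomposable left factors $F=f_1\circ h$ over $\mQ$. It then suffices to show that each such left factor $f_1$ of $F=f\circ f^{\circ(n-1)}$ satisfies $f_1(\mQ)\subseteq f_0(\mQ)$ for some nonlinear left factor $f_0$ of $f$. Given that, every integer in $f_1(\mQ)$ lies in $\Red_f(\Z)$, since $f^{-1}(a)$ is reducible whenever $a\in f_0(\mQ)$. This yields finiteness of $\Red_{F}(\Z)\setminus\Red_f(\Z)$.

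The key step, and the main obstacle, is this comparison of left factors. I would use Ritt's theory of decompositions (over $\mQ$, via the fact that decompositions over $\overline{\mQ}$ descend up to linear twists), or equivalently the correspondence between left factors of $F$ and intermediate groups between a point stabilizer and $\Mon(F)$. Write $F=f\circ f^{\circ(n-1)}$ and $F=f_1\circ h$, and compare the two decompositions through Engstrom's lemma. There are $f_0$ and $w$ with $f_0=\gcd$-type left factor of $f$ and $f_1$, that is, the left factor corresponding to the join of the two subfields. If $\deg f_0>1$, indecomposability of $f_1$ gives $f_1=f_0\circ\ell$ with $\ell$ linear, and we are done. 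If $\deg f_0=1$, Engstrom/Ritt gives $\deg F=\lcm(\deg f,\deg f_1)\cdot(\ldots)$ together with a Ritt-move solution $f\circ a=f_1\circ b$ with coprime degrees. By Ritt's second theorem, such a solution forces the pair, up to linear changes, to be of the form $(X^m,\,X^r P(X)^m)$ or a pair of Chebyshev polynomials. In the power and Chebyshev cases one checks directly that $f_1(\mQ)$ is covered by the values of a left factor of $f$, or else the exceptional values form a thin set meeting $\Z$ finitely. Handling these Ritt-move cases cleanly over $\mQ$ is the part I expect to require the most care.
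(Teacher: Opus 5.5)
Your overall route is the natural one. Apply Theorem \ref{thm:Hilbert} to $F=f^{\circ n}$, then show that every indecomposable left factor of $F$ is a left factor of $f$, and finally use that $u(\mQ)\cap\Z\subseteq\Red_f(\Z)$ for any nonlinear left factor $u$ of $f$. Your check of the hypothesis for $F$, via invariance of the degree multiset under Ritt's first theorem, is fine. However, two steps do not go through as written.

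First, alternative (2) of Theorem \ref{thm:Hilbert} is \emph{not} excluded. The hypothesis rules out composition factors of degree $2$ and $4$, not $5$, so the D\`ebes--Fried quintics of \cite[(1.8)]{DF} may perfectly well be left factors of $f$, and hence of $F$. In that case the statement of Theorem \ref{thm:Hilbert} gives no description of $\Red_F(\Z)$ at all. So the corollary cannot be read off from the statement, and ``noting that the hypothesis is meant to exclude it'' is not available.

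What you need is the structure of the theorem's proof. Up to a finite set, $\Red_F(\Z)$ is covered by the sets $\tilde g(\mQ)\cap\Z$ for finitely many minimally reducible pairs $(u,\tilde g)$, where $u$ is a left factor of $F$ and $\tilde g$ is Siegel over $\mQ$. In every case that survives, $u$ is indecomposable: either a common left factor, or the D\`ebes--Fried case with $\deg\tilde g=10$. Specializing $Y$ in a factorization of $u(X)-\tilde g(Y)$ gives $\tilde g(\mQ)\cap\Z\subseteq\Red_u(\Z)$ up to finitely many values. Moreover, $\Red_u(\Z)\subseteq\Red_f(\Z)$ as soon as $u$ is a left factor of $f$. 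This handles both alternatives of the theorem uniformly.

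Second, you leave the comparison of left factors open, but it is much easier than you expect, and the branch you flag as hard never occurs. Suppose $F=u\circ h$ with $u$ indecomposable. Then $\deg u>1$ divides $(\deg f)^n$, so $\gcd(\deg f,\deg u)>1$. Apply Engstrom's theorem to $f\circ f^{\circ(n-1)}=u\circ h$. It gives $w$ with $f=w\circ a$, $u=w\circ c$ and $\deg w=\gcd(\deg f,\deg u)>1$. Indecomposability of $u$ forces $c$ to be linear, so $u$ is a left factor of $f$.

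Your own $\lcm$ relation already rules out the other branch: if $\deg f$ and $\deg u$ were coprime, then $\deg f\cdot\deg u\mid(\deg f)^n$ would force $\deg u=1$. So no Ritt-move analysis is needed. That is fortunate, because the fallback you propose there is not valid. Thin sets such as $u(\mQ)$ routinely contain infinitely many integers, which is exactly what makes the Hilbert--Siegel problem nontrivial.
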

Allowing quadratic factors, counterexamples with $n=2$ exist, the simplest example being $f=X^2$, due to the factorization $X^4+4Y^4 = (X^2 - 2XY + 2Y^2)(X^2 + 2XY + 2Y^2)$. 

\subsubsection*{Functional equations} 
For which polynomials $f,g\in\mC[X]$ of degree $\geq 2$, 
does the functional equation $f(X)=g(Y)$ have a solution in rational functions $X=X(z),Y=Y(z)\in\mC(z)$? 
The question admits several  variants and draws its motivation from number theory,  functional equations, Nevanlinna theory, and dynamical systems,  see \S\ref{sec:related}. 

Attempts to answer this question and its variants naturally divide into two cases according to the reducibility of $f(X)-g(Y)\in\mC[X,Y]$. 
However, with the exception of the cases  $g=c f$ for $c\in\mC^\times$,  
and cases where $\deg(f)\gg \deg(g)$, see 
\cite[Thm.\ 1.3]{Pak4},\cite[Thm.\ 1.2]{Pak5}, and \cite{Fried23}, little  appears in the literature on the reducible case of the question. 


The  combination of Theorem \ref{thm:DLS-C} with  \cite[Thm.\ 1]{AZ2} gives the following simple resolution of the reducible case. 
First, we dispose of  solutions that are not genuinely new. 
Clearly, if $f_1(X)=g_1(Y)$, for $f_1,g_1\in\mC[X]$, has a solution  $X(z),Y(z)\in \mC(z)\setminus \mC$, 
then this is also a solution to 
$(w\circ f_1)(X)=(w\circ g_1)(Y)$ for  $w\in\mC[X]$.    
Such solutions satisfy $q(X(z),Y(z))=0$ for an irreducible factor $q(X,Y)$ of $f_1(X)-g_1(Y)\in\mC[X,Y]$  (and not only of $f(X)-g(Y)$). Say an irreducible factor $q(X,Y)$ of $f(X)-g(Y)\in\mC[X,Y]$ is {\it right-reduced} if it is not a factor of $f_1(X)-g_1(Y)\in\mC[X,Y]$ for any  decomposition $f=w\circ f_1$, $g=w\circ g_1$ with $w,f_1,g_1\in \mC[X]\setminus \mC$ and $\deg(w)>1$. A solution is {\it reduced} if it annihilates a right-reduced factor. 
The goal is then to determine when  reduced solutions exist. 
\begin{cor}\label{cor:genus0}
Let $f,g\in \mC[X]\setminus \mC$ be polynomials. Suppose that $f(X)-g(Y)\in\mC[X,Y]$ is reducible and admits a reduced solution $f(X(z))=g(Y(z))$ in  $X(z),Y(z)\in \mC(z)\setminus \mC$. 
Then one of the following holds for some $\mu,\lambda_1,\lambda_2 
\in\mC[X]$ with $\deg (\mu)=\deg(\lambda_1)=\deg(\lambda_2)=1$:
\begin{enumerate}[leftmargin=*,label={(\arabic*)},ref=(\arabic*)]
\item 
$f= \mu\circ T_n \circ \lambda_1$ and $g=\mu\circ (-T_m)\circ \lambda_2$ for 
$m,n\geq 3$ with $\gcd(m,n)>2$;
\label{case:genus0_1}
\item 
$f= \mu\circ P_i \circ \lambda_1$ and $g= \mu\circ  P_i \circ \lambda_2$, for $i\in\{1,2,3\}$, where 
$$P_1(X)=X^a(X-1)^b, \ \gcd(a,b)=1,\ a+b\ge 4,$$ 
$$P_2(X)=X^3(X^2+5X+40),\text{ and }$$
$$P_3(X)=X(X+1)^3(X+a+3)^3, \ \text{where } a^2+a+2=0;$$
\label{case:genus0_2}
\item $f=\mu\circ h_1 \circ \lambda_1$ and $g= \mu\circ h_2 \circ \lambda_2$, where $\{h_1,h_2\}$ is  one of the pairs of degree-$7$  or degree-$13$ polynomials appearing in  \S 5.1 or \S5.3, respectively,  of \cite{CC}. 
\label{case:genus0_3}
\end{enumerate}
\end{cor}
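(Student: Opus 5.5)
We combine Theorem \ref{thm:DLS-C} with the classification of Avanzi--Zannier \cite[Thm.\ 1]{AZ2}, which determines the genus-$0$ components of curves $f_1(X)=g_1(Y)$. A reduced solution $(X(z),Y(z))$ parametrizes an irreducible factor $q(X,Y)$ of $f(X)-g(Y)$ that is right-reduced, and the curve $q=0$ then has genus $0$. The plan is to show that the existence of a right-reduced factor forces $f$ and $g$ to be, up to linear changes $\lambda_1,\lambda_2$ on the right, the minimal pieces appearing in Theorem \ref{thm:DLS-C}. We then check which of these pieces admit genus-$0$ components.

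\textbf{Step 1: reduction to the minimal pieces.} By Theorem \ref{thm:DLS-C}, either (i) $f=h\circ f_1$ and $g=h\circ g_1$ with $\deg h\ge 2$, or (ii) $f=\mu\circ h_1\circ f_1$ and $g=\mu\circ h_2\circ g_1$ with $(h_1,h_2)$ one of the Cassou-Nogu\`es--Couveignes pairs or $(T_4,-T_4)$.

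Take a right-reduced factor $q$. Since $q$ is not a factor of any $w$-level curve with $\deg w>1$, we cannot have a nontrivial common left factor $w$ through which $q$ factors. The tool here is the standard correspondence between components of fiber products and double cosets of the monodromy: components of $f(X)=g(Y)$ correspond to orbits of $\Mon$ on pairs of roots. A component factors through a common left factor $w$ exactly when the corresponding orbit lies over the "diagonal" of $w(X)=w(Y)$.

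With this, we argue that $q$ lies over the nondiagonal component of the minimal pair, e.g.\ of $h_1(X)=h_2(Y)$ or of $T_4(X)=-T_4(Y)$. We then study the genus of the components of $f(X)=g(Y)$ lying over it.

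\textbf{Step 2: genus computation.} The component lies over a curve $C$ of genus $0$. Its normalization is the fiber product of $C$ with the maps $f_1$ and $g_1$, and we apply Riemann--Hurwitz. The three-branch-point structure of $h_1,h_2$ gives explicit ramification data, which should force $f_1,g_1$ to be linear or very restricted.

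For the Chebyshev case, the components of $T_n(X)=-T_m(Y)$ are classically $X=\zeta u+\zeta^{-1}u^{-1}$-type parametrized curves (via $X=u+1/u$). These give genus-$0$ solutions exactly when $\gcd(m,n)>2$, which yields case \ref{case:genus0_1}. The families $P_1,P_2,P_3$ and the degree-$7$/$13$ CC pairs come from reading off \cite[Thm.\ 1]{AZ2}, which lists exactly when $f(X)=(\alpha\circ f)(Y)$-type curves and indecomposable pairs have genus-$0$ nondiagonal components. I would simply match the output of \cite{AZ2} against the cases allowed by Theorem \ref{thm:DLS-C}. This also discards the degree $11,15,21,31$ pairs, whose nondiagonal components have positive genus; this is known from \cite{CC,AZ2}.

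\textbf{Main obstacle.} The hardest step is Step 1: justifying that a right-reduced genus-$0$ factor forces $f=\mu\circ P\circ\lambda_1$, i.e.\ that no further nonlinear right factors $f_1,g_1$ can occur. I would first try a Riemann--Hurwitz argument on the cover $\{q=0\}\to\{h_1(X)=h_2(Y)\}$. The key inequality is that composing with nonlinear $f_1$ increases the genus unless $f_1$ is branched only over points where the base curve is already totally ramified; the Chebyshev/power case is the known exception. Here I expect to lean on \cite{AZ2}, which treats precisely this situation, rather than redo it.
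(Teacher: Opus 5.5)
Your skeleton (Theorem~\ref{thm:DLS-C}, then \cite{AZ2}, then Riemann--Hurwitz) matches the paper's. However, the step you flag as the main obstacle is left open, and the reference you plan to lean on does not cover it. After Theorem~\ref{thm:DLS-C} one reaches $f=\mu\circ h_1\circ f_2$, $g=\mu\circ h_2\circ g_2$, where $h_1=h_2$ is the common factor supplied by \cite{AZ2} or $(h_1,h_2)$ is a minimal pair. One must then prove that $f_2,g_2$ are linear, except in the Chebyshev situation, where one must prove that $h_1\circ f_2$ is itself $\pm T_n$ up to linear changes.

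\cite[Thm.\ 1]{AZ2} classifies $F$ for which $F(X)=F(Y)$ has a nondiagonal genus-$0$ component, i.e.\ with the same polynomial on both sides. Applied to the genus-$0$ field $\mC(x_2,y_2)$, with $x_2=f_2(x)$ and $y_2=g_2(y)$, it only determines the common factor $h$. It says nothing about distinct right factors $f_2\neq g_2$ above it, nor about the \cite{CC} pairs.

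Your heuristic about branch points where the base curve is totally ramified also misses the mechanism that does the work, namely the place at infinity. Let $\mC(z)$ be the function field of the genus-$0$ component of $h_1(X)=h_2(Y)$ below your factor, and $e=[\mC(z):\mC(x_2)]$. By Abhyankar's lemma, $x_2\mapsto\infty$ has $e$ unramified places in $\mC(z)$. Each is totally ramified in $\mC(x,z)/\mC(z)$, because $\mC(x)/\mC(x_2)$ is. Since $\mC(x,z)\subseteq\mC(x,y)$ has genus $0$, Riemann--Hurwitz gives $-2\ge (e-2)\deg(f_2)-e$, so $\deg(f_2)=1$ whenever $e>2$.

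In the Chebyshev cases ($h=T_d$ with $d\ge 3$, or $\{T_4,-T_4\}$) one has $e=2$ and equality holds. Hence $\mC(x,z)/\mC(z)$ is unramified elsewhere, so the finite branch points of $f_2$ lie over $\pm 2$ with ramification indices $\le 2$, and \cite[Lemma 3.2]{MZ} gives $h\circ f_2=\pm T_n\circ v_1$. Without this argument, your case (1) — including $\gcd(m,n)>2$ and the sign bookkeeping — is assumed rather than derived.

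Step 1 also has a hole. Phrasing right-reducedness as ``$q$ does not lie over the diagonal of $w(X)=w(Y)$'' is ambiguous, because $g=w\circ g_1$ does not determine $g_1$ (e.g.\ $X^d\circ g_2=X^d\circ(\zeta g_2)$). As a result, your outline never excludes two things:
\begin{itemize}
\item the power case $h'=X^d$ of \cite{AZ2}, whose nondiagonal components $X=\zeta Y$ all have genus $0$;
\item a nonlinear outer factor $w_2$ in $h=w_2\circ h'\circ u_1$.
\end{itemize}
The paper instead uses L\"uroth to write $\mC(x)\cap\mC(y)=\mC(s)$, and reads right-reducedness as $\mC(s)=\mC(t)$. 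This passes to $\mC(x_2)\cap\mC(y_2)=\mC(s)$, which forces $\deg(w_2)=1$. It also rules out $h'=X^d$, since then $\mC(x_2)=\mC(y_2)$ has degree $d>1$ over $\mC(s)$.

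Finally, the fact that only the degree-$7$ and degree-$13$ pairs of \cite{CC} have genus-$0$ components is not read off from \cite{CC} or \cite{AZ2}. The paper verifies it by a direct computation over the ramification types in \cite{Mul2}.
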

The degree-$7$ and $13$ polynomials in (3) are given in Remark \ref{rem:genus0}. For these polynomials, and those in (1) and (2), we verify that $f(X)=g(Y)$ indeed has a reduced solution, 
yielding the converse statement. 

Note that over number fields $k$, classifications of $f,g\in k[X]$ for which the curve $f(X)=g(Y)$ admits an irreducible component of genus $\leq 1$ over $\mC$ were announced, cf.\ \cite{Red12},  in the irreducible case  \cite{Zieve1} and the reducible case  \cite{Zieve2}, but have yet to appear in the literature.  We  suspect that Theorem \ref{thm:DLS}, which applies over general fields, 
will be useful in developing a simple approach to our question and its variants over number fields $k$, and  would 
therefore have broad applications to the above-mentioned subjects. 

\subsection{On the proof}\label{sec:methods}
A major reason why the DLS problem was previously deemed inaccessible for arbitrary decomposable polynomials is the complexity of  monodromy groups of decomposable polynomials in comparison to  indecomposable ones. This is especially true for polynomials $f$ with solvable monodromy $\Mon(f)$, where  the ``largeness" of monodromy from  \cite{KNR24} does not apply, and no analogous constraints are  known.

Instead, in the solvable case, we observe that the minimal reducibility of a pair $f,g\in\mC[X]\setminus \mC$ imposes heavy constraints on their monodromy. More specifically, writing $f=h\circ f_r$ and $h=h_2\circ f_{r-1}$ for indecomposable $f_{r-1}, f_r\in\mathbb C[X]$, the kernel $K$ of the restriction map $\Mon(f)\to \Mon(h)$ naturally embeds into $\Gamma^d$, where $\Gamma:=\Mon(f_r)$ and $d=\deg(h)$. If $\deg(f_r)
>2$, we show that minimal reducibility forces $K$ to be diagonal, that is, its $d$ projections to $\Gamma$ are injective.  When $\deg(f_r)=2$, we show that $K$ embeds in $C_2\times C_2$. This is carried out in~\S\ref{sec:diagkernel}. 


In contrast, we obtain lower bounds on the rank of $K$ via the following two main ideas: \\
Firstly, we consider the monodromy group 
$\Gamma_2:=\Mon(f_{r-1}\circ f_r)$ of a two-step right-factor of $f$. 
We use ``largeness properties" for the kernel $K_2=\ker(\Gamma_2\to \Mon(f_{r-1}))$, developed in the  companion paper \cite{BKN}. If for example $\Mon(f_r)\cong C_q$ for a prime $q$, 
these show that 
$K_2$ contains an abelian subgroup of rank at least $d_{r-1}-1$, where $d_{r-1}=\deg(f_{r-1})$. 
We further apply these properties to obtain bounds on the image of the $2$-step kernel $\ker(\Mon(f)\to\Mon(h_2))$ in $K_2$ by exploiting the  representations of $\Mon(f_{r-1})$ over $\mathbb F_q$ obtained from its action on corresponding subgroups of $K_2\leq C_q^{d_{r-1}}$. 
Since the resulting bounds do not suffice when $\deg(f_{r-1}), \deg(f_r)$ are very small,   we  develop a $3$-step analogue of such an argument.  This is carried out in \S\ref{sec:lowerbounds}.

Secondly, we observe that the above lower bounds on $2$-step (or $3$-step) kernels also lead to meaningful lower bounds on the kernels $K$ of arbitrarily long compositions, contradicting the aforementioned diagonality requirements except in a few very concrete low-degree cases. 
Length-$3$ right-factors of degrees $8$ and $16$ then require a separate argument (Proposition \ref{prop:special-func}),  which uses the database of transitive groups of these degrees,   our  only essential  use of Magma \cite{Magma} through the above argument. 
This is the heart of the proof of the solvable case of the DLS problem, carried out in \S\ref{sec:solvable}. We expect such largeness arguments would be useful far beyond the problems discussed here.
Based on \cite{KN}, we give a much simpler argument for polynomials of nonsolvable monodromy in \S\ref{sec:DLS}, improving   \cite[\S 4]{KN}. Its combination with the solvable case yields the main Theorem~\ref{thm:DLS}. 

Most considerations also apply to the Hilbert--Siegel problem and are given along with the above. This problem amounts to determining the minimally reducible pairs  $f\in\mQ[X]$ and $g\in\mQ(X)$, where $g$ is a Siegel function (and not merely a polynomial). Our strategy breaks when  $f$ has factors of degree $2$ or $4$ (of solvable monodromy), 
see Remark \ref{rem:limitations}. Further ideas are required  to classify such examples. Finally, we derive the consequence to functional equations  in \S\ref{sec:functional}. 

The CFSG is used in the proof of Theorem \ref{thm:DLS} only in the very last step to assert that indecomposable $f_1,g_1\in \mC[X]\setminus \mC$ of nonsolvable monodromy with the same Galois closure, must be one of the aforementioned pairs of degrees $7,11,13,15,21$ or $31$, see Remark \ref{rem:CFSG}. 

%

\subsection{Related work}\label{sec:related} 
Several variants of the original Hilbert--Siegel problem have been considered. 
Firstly, it is natural to consider  degree-$d$ maps $f:X\to\mP^1_\mQ$  from an arbitrary (smooth projective) curve $X$ of positive genus. 
In the general case, where  $\Mon_\mQ(f)=S_d$, the set of reducible fibers $\Red_f(\Z)$ is in fact finite, see  M\"uller \cite{Mul4}, and \cite{Mul5} for other cases. 
For indecomposable $f\in \mQ[X]$ with $\deg(f)>20$,  the analogous set $\Red_f(\mQ)$, of rational values with reducible fibers, is the union of $f(\mQ)$ and a finite set, by the combination of theorems of M\"uller \cite{Mul2} and Guralnick--Shareshian \cite{GS}, see \cite[Thm.\ 5.4]{KN}. Similar results were shown for indecomposable rational functions $f\in\mQ(X)$ of sufficiently large degree \cite[Thm.\ 1.2]{PHDTali}, using \cite{NZ}. 
The question also recently arose in the context of algebraic points of fixed degree $d$ in fibers of maps $f:X\to\mP^1_\mQ$ over rational points, as considered  by Derickx--Rawson \cite{DR25}.
Over fields $k$ of positive characteristic, except for the classification of $f,g\in k[X]$ for which  $f(X)-g(Y)$ has a quadratic factor \cite{KMS}, little appears in the literature. 
The above variants of DLS and Hilbert--Siegel, involving more general rational maps, remain wide-open. 

As for the DLS problem \cite{DLS}, much of the motivation for studying the functional equation $f(X(z))=g(Y(z))$ comes from the question: for which $f,g\in \mQ[X]$  is the set $f(\mQ)\cap g(\mQ)$ 
infinite? The existence of a solution $X=X(z),Y=Y(z)$ for $f(X)=g(Y)$ is equivalent to the existence of an irreducible component of genus $0$ in a  curve\footnote{Equivalently, there exists  a (parametrization) map $\mP^1_\mC\to \mathcal C$ given by $z\mapsto (X(z),Y(z))$ on affine charts.}  $\mathcal C$ birational to $f(X)=g(Y)$, whereas if $f(\mQ)\cap g(\mQ)$ is infinite, then  $\mathcal C$ admits an irreducible component  of genus $\leq 1$  by Faltings' theorem. The pairs $f,g$ for which $f(\mathbb Z)\cap g(\mathbb Z)$ is infinite were determined by Bilu--Tichy \cite{BiTi}; For these, Siegel's theorem implies  that $\mathcal C$ has an irreducible component of genus $0$, and   further constraints on the number of preimages of infinity. 
Moreover, the solvability of the functional equation $f(X)=g(Y)$ in meromorphic (resp.\ entire) functions $X=X(z),Y=Y(z)$ is equivalent to $\mathcal C$ admitting an irreducible component of genus $\leq 1$ by Picard's theorem (resp.\ reduces to our question \cite{Pak} with rational $X(z),Y(z)$). Variants of the equation also arise in studying intersections of orbits in arithmetic dynamics,   see \cite[\S 7]{AD-Survey}. 
Further motivation, coming from other subjects, is described in \cite[Pg.\ 2]{DZ}. 
See also
\cite{AZ,Pak2,HT} for further details on this functional equation and its variants.

The DLS problem is also closely related to Davenport's problem concerning polynomials $f,g\in\mQ[X]$ with the same image $f(\mathbb Z/p)=g(\mathbb Z/p)$ for all but finitely many primes $p$. More specifically, this condition implies,   $f(X)-g(Y)\in \mQ[X,Y]$ has to be reducible \cite[\S 7]{MV}. 

\subsubsection*{Acknowledgments}
The first and third authors were supported by the Israel Science Foundation, grant no.~353/21. The first author is also grateful for the support of a Technion fellowship, of an Open University of
Israel post-doctoral fellowship. He also acknowledges the support of the CDP C2EMPI,
as well as the French State under the France-2030 programme, the University of Lille, the Initiative of Excellence of the University of Lille, the European Metropolis of Lille for their
funding and support of the R-CDP-24-004-C2EMPI project.
The second author was supported by the National
Research Foundation of Korea (NRF Basic Research Grant RS-2023-00239917).

\section{Basic setup and Preliminaries}
\subsection{Basic setup} Throughout the paper, $k$ is a field of characteristic $0$ and $\oline k$ its algebraic closure. 
A \emph{map} $f: \mathcal{X} \rightarrow \mathcal{Y}$ {\it over $k$} is a finite (dominant and generically unramified) morphism of (smooth irreducible projective) varieties defined over $k$. This induces a field extension $k(\mathcal{X})/k(\mathcal{Y})$ via the pullback $f^*: k(\mathcal{Y}) \to  k(\mathcal{X})$, given by $h\mapsto h \circ f$. The \emph{degree} ${\deg(f)}$ of $f$ is then defined as $[k(\mathcal{X}):k(\mathcal{Y})]$. We say $f$ is {\it indecomposable} if it is of degree $>1$ and is not a composition of two maps of degree $>1$. 
Say that the function field $\oline k(\mathcal{X})$ has \emph{genus} $g$ if the curve\footnote{Recall that smooth projective curves with isomorphic function fields are isomorphic \cite[II.6.8]{Har}.} $\mathcal{X}$ has genus $g$.

A rational function $f\in k(X)\setminus k$, then induces an $k(x)/k(t)$ of rational functions fields such that $f(x)=t$. The degree of $f$ is then  
$\max\{\deg(f_1),\deg(f_2)\}$, 
where $f=f_1/f_2$ for coprime $f_1,f_2\in k[X]$. 
We say $f,g \in k(X) \setminus k$ are \emph{linearly related over $k$}
if there exist $\mu,\nu \in k(X)$ of degree $1$ (also called \emph{linear fractionals}) such that $f=\mu \circ g \circ \nu$. 
We shall denote $k$-rational places of $k(t)$ by $t\mapsto a$ for $a\in \mP^1(k)=k\cup\{\infty\}$. 
\subsubsection*{Permutation groups} All group actions are left actions. Recall that a permutation group $G\leq S_n$ is {\it primitive} if it preserves no nontrivial block system. It is \emph{affine} if it has an elementary-abelian regular normal subgroup; equivalently, $G$ embeds as $V\le G\le \AGL(V)=V\rtimes \GL[](V)$ for a finite vector space $V$. 
For $G\leq S_n$, $H\leq S_m$, the wreath product $H\wr G$ is defined as $H^n\rtimes G$, where $G$ acts on $H^n$ by permuting the $n$ copies. It is naturally an imprimitive permutation group of degree $mn$. It also has a natural primitive action of degree $m^n$, the {\it product type} action. This is the action on $\{1,\ldots,m\}^n$ in which $G$ acts by permuting the coordinates and $H^n$ acts coordinatewise. We denote by $G=A.H$ a group extension of $H$ with kernel $A$ (which is not necessarily split). If $A$ is abelian, we will regard it as an $H$-module via the action of $H$ by conjugation in $G$. Also recall that the socle $\soc(G)$ is the subgroup of $G$ generated by minimal normal subgroups of $G$.    

\subsection{Monodromy groups}\label{sec:setup}
Let $f: \mathcal{X} \rightarrow \mathcal{Y}$ be a map of {degree $d$} defined over $k$. 
The \emph{monodromy group} 
$\Mon_k(f)=\Mon(f)$ of $f$ is $\Gal(\Omega/k(\mathcal{Y}))$, where $\Omega$ is the Galois closure of the extension $k(\mathcal{X})/k(\mathcal{Y})$. It is a permutation group of degree $d$, via the action on the generic fiber of $f$, or equivalently via the action on the roots of a minimal polynomial for $k(\mathcal{X})$ over $k(\mathcal{Y})$. If $f=f_1/f_2$ is a  rational map for coprime $f_1,f_2\in k[X]$, then $\Mon_k(f)$ is just the Galois group of $f_1(X)-tf_2(X)\in k(t)[X]$. We refer to a root $x$ of the last polynomial, for brevity as a root of $f(X)-t$ in an extension of $k(t)$. Recall that $f$ is indecomposabe if and only if $\Mon_k(f)$ is primitive. 

When $\mathcal{X}$ and $\mathcal{Y}$ are geometrically irreducible, letting $f_{\overline{k}}:\mathcal{X}\otimes_k \overline{k} \rightarrow \mathcal{Y}\otimes_k \overline{k}$ be the map induced by $f$ over $\oline k$, the \emph{geometric monodromy group} ${\rm Mon}_{\overline{k}}(f_{\overline{k}})=\Gal(\Omega\oline k/\oline k(\mathcal{X}))$ is isomorphic to the image of the action of the \'{e}tale fundamental group $\pi_1^{ \textrm{\'et}}(\mathcal{Y}\setminus {\rm Br}(f))$ on the fiber $f^{-1}(y_0)$ of a base point $y_0 \in   \mathcal{Y}(\oline k)$ over which $f$ is unramified, that is, the classical definition of monodromy. Here, ${\rm Br}(f) \subset \mathcal{Y}(\overline{k})$ denotes the branch locus of $f$. 

\subsubsection*{Polynomial monodromy} A polynomial $f \in k[X] \setminus k$ with cyclic geometric monodromy group is well known to be linearly related to $X^n$ over $\oline k$. 
Similarly, every  polynomial $f\in k[X]\setminus k$ with dihedral geometric monodromy group is linearly related over $\overline{k}$ to the Chebyshev polynomial of degree $n=\deg(f)$ \cite[Lemma\ 3.3]{MZ}, that is, the unique degree-$n$ polynomial $T_n$ for which $T_n(X+1/X)=X^n+1/X^n$. 
We shall also let $D_{n,\alpha}$ denote the $n$-th degree Dickson polynomial with parameter $\alpha$, defined via $D_{n,\alpha}(x+\frac{\alpha}{X}) = X^n+\left(\frac{\alpha}{X}\right)^n$.\footnote{Note that $D_{n,0}=X^n$, whereas for $\alpha\ne 0$, the polynomial $D_{n,\alpha}$ is linearly related to $T_n$ over $k(\sqrt{\alpha})$.}
Note that in the above examples of $f$, the group $\Mon_k(f)$ contains a regular cyclic normal subgroup $C_n$ of order $n$, and hence $\Mon_k(f)$ is  isomorphic (as a permutation group) to a subgroup of $\AGL_1(n):=\mathbb Z/n\rtimes (\mathbb Z/n)^\times$, that is, of the holomorph of $C_n$.

Monodromy groups of indecomposable polynomials were completely classified by M\"uller  \cite{Mul2}. While this result uses the CFSG, the following important partial result does not, cf., e.g., \cite[Theorem 2.1]{KN}.

\begin{prop}
\label{prop:indec_poly}
Let $f\in k[X]$ be an indecomposable polynomial. If $\Mon_k(f)$ is solvable, then either $f$ is linearly related over $\oline k$ to $X^p$ or $T_p$ for a  prime $p$, or $\Mon_k(f)=S_4$. If $\Mon_k(f)$ is nonsolvable, then it is an almost-simple group with primitive socle.
\end{prop}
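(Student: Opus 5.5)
The key tool is the classical fact that the monodromy group of a polynomial of degree $n$ contains an $n$-cycle. Indeed, the inertia group at the place $t\mapsto\infty$ of $k(t)$ generates a cyclic subgroup of the geometric monodromy group, and since $f$ is totally ramified over $\infty$, this subgroup acts transitively. So $G:=\Mon_k(f)$ is a primitive group (because $f$ is indecomposable) containing a regular cyclic subgroup $\langle\sigma\rangle$ of order $n=\deg f$. The plan is to combine this with the theory of primitive groups containing an $n$-cycle, in the CFSG-free form due to Burnside and Schur, and to use the genus-$0$ Riemann--Hurwitz formula.

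\emph{Step 1: reduction by Burnside--Schur.} Schur's theorem states that a primitive group containing a regular cyclic subgroup of composite order is doubly transitive. Burnside's theorem states that a doubly transitive (indeed any transitive) group of prime degree $p$ is either affine, with $C_p\le G\le \AGL_1(p)$, or almost simple with nonabelian simple socle. Burnside also shows that a doubly transitive group has a minimal normal subgroup that is either elementary abelian and regular, or nonabelian simple and primitive.

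\emph{Step 2: the solvable case.} If $G$ is solvable, the minimal normal subgroup $V$ is elementary abelian and regular, so $n=p^m$ and $G\le\AGL_m(p)$ with $G_0\le\GL_m(p)$ acting transitively on $V\setminus\{0\}$ (double transitivity). A solvable affine group containing an $n$-cycle is constrained: a $p^m$-cycle in $\AGL_m(p)$ requires $p^m$ to divide the exponent of $\AGL_m(p)$, which forces small $m$. I would work through the cases $m=1$ and $m\ge2$.

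For $m=1$, we get $G\le\AGL_1(p)$. The geometric monodromy group $\bar G\trianglelefteq G$ is generated by branch cycles $\sigma_1,\dots,\sigma_r$ with product $\sigma_\infty^{-1}$, and Riemann--Hurwitz for genus $0$ reads $\sum_i \mathrm{ind}(\sigma_i)=p-1$. Every non-identity element of $\AGL_1(p)$ fixing a point has at most one fixed point, so its index is $(p-1)(1-1/\mathrm{ord})$. The standard computation then shows that either all finite branch cycles lie in $C_p$, which happens only when there is a single finite branch point and gives the cyclic case $X^p$, or $\bar G$ is dihedral with two involutory branch cycles, which gives $T_p$. This uses the stated facts that cyclic and dihedral geometric monodromy identify $f$ with $X^p$ and $T_p$.

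For $m\ge2$, the $p^m$-cycle together with solvable double transitivity leaves only very small cases. Elements of order $p^m$ in $\AGL_m(p)$ exist only when $p^{m-1}$ is at most the $p$-part of the exponent of $\GL_m(p)$, which roughly needs $m\le p$. A careful check, or an appeal to Riemann--Hurwitz with index bounds for affine elements, leaves only $n=4$, $G\in\{S_4, A_4\}$. Here $A_4$ contains no $4$-cycle, so $G=S_4$. I expect this degree-$p^m$ analysis to be the main obstacle. The approach I would try first is the genus bound: each nonidentity element of $\AGL_m(p)$ fixes at most $p^{m-1}$ points, so its index is at least roughly $p^m(1-1/p)/2$. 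Combined with $\sum\mathrm{ind}=p^m-1$, this gives at most about two finite branch points, and then generation of a transitive affine group with $m\ge2$ by two such elements fails except in degree $4$.

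\emph{Step 3: the nonsolvable case.} If $G$ is nonsolvable, the minimal normal subgroup from Burnside's theorem is nonabelian. It is unique, since it is transitive and its centralizer is semiregular. It is simple and primitive, so $G$ is almost simple with primitive socle, as claimed.
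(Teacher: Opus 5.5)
Your overall route (an $n$-cycle from inertia at $\infty$, then Schur and Burnside, then a genus count in prime degree) is the standard CFSG-free one, and your prime-degree analysis is sound. A translation among the finite branch cycles already uses up the whole index $p-1$, giving $\oline G=C_p$. Otherwise $\sum_i(1-1/e_i)=1$ forces exactly two involutions, so $\oline G=D_p$.

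The genuine gap is the affine case $n=p^m$ with $m\ge 2$, which you defer to an unperformed ``careful check''. The heuristics you offer do not close it:
\begin{itemize}
\item The $p$-part of the exponent of $\GL[m](p)$ is the least power of $p$ that is $\ge m$. So your test leaves $m=2$ for \emph{every} prime $p$, and also $(p,m)=(2,3)$, rather than ``roughly $m\le p$''.
\item The Riemann--Hurwitz estimate only bounds the number of finite branch points. Your subsequent claim, that two such elements cannot generate a transitive affine group with $m\ge 2$ except in degree $4$, is unsupported. It amounts to asking whether their product can be a $p^m$-cycle, which is the original question.
\end{itemize}
The same gap breaks Step 3. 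Burnside's dichotomy permits nonsolvable $2$-transitive groups with regular elementary abelian socle, e.g.\ $\AGL_3(2)$ on $8$ points. So ``$G$ nonsolvable, hence its minimal normal subgroup is nonabelian'' does not follow until affine groups of degree $p^m$ containing a $p^m$-cycle are excluded \emph{without} any solvability hypothesis.

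The missing ingredient is a sharp order bound for $p$-elements of $\AGL_m(p)$. Suppose $g(x)=Ax+b$ has $p$-power order; then $A$ is unipotent. Let $p^e$ be the least power of $p$ with $p^e\ge m$, so $A^{p^e}=1+(A-1)^{p^e}=1$. Since $\sum_{i<p^e}X^i=(X-1)^{p^e-1}$ in $\mathbb F_p[X]$, we get
\[
g^{p^e}(x)=x+(A-1)^{p^e-1}b .
\]
Hence $g^{p^{e+1}}=1$. Moreover $g^{p^e}=1$ unless $p^e-1<m$, i.e.\ unless $p^e=m$. In all cases $g$ has order at most $pm$, so a $p^m$-cycle requires $p^{m-1}\le m$, i.e.\ $m=1$ or $(p,m)=(2,2)$. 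This is \cite[Lemma 3.6]{Mul3}, the fact the paper invokes in Lemma \ref{lem:reg_normal_sub}. The paper does not reprove the proposition itself but refers to \cite[Theorem 2.1]{KN}.

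With this bound, both of your remaining steps close at once:
\begin{itemize}
\item For composite $n$, Schur makes $G$ $2$-transitive. Burnside then gives either a nonabelian simple primitive socle, so $G$ is almost simple, or $G\le\AGL_m(p)$ with $p^m=4$. In the latter case $G=S_4$, since $A_4$ contains no $4$-cycle.
\item For prime $n$, $G$ is either contained in $\AGL_1(p)$, handled by your genus count, or $2$-transitive with nonabelian simple socle.
\end{itemize}
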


In the solvable cases of Proposition \ref{prop:indec_poly}, $\Mon_k(f)$ is affine, and moreover it embeds into $\AGL_1(p)$ or $\Mon_k(f)=S_4\cong \AGL_2(2)$, respectively. Another characterizing property of these groups is that they contain a regular elementary abelian  normal subgroup: 
\begin{lem}
\label{lem:reg_normal_sub}
Let $d\ge 2$, $p$ a prime with $(p,d)\ne (2,2)$, and let $G\le S_{p^d}$ be a transitive group containing a cyclic transitive subgroup. Then $G$ cannot contain a $p$-elementary-abelian regular normal subgroup.
\end{lem}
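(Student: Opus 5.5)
The plan is to reduce the statement to a computation in $\AGL_d(p)$. Suppose, for contradiction, that $G$ contains a cyclic transitive subgroup $C$ and a $p$-elementary-abelian regular normal subgroup $V\cong C_p^d$. A transitive abelian permutation group is regular, so $C=\langle g\rangle$ with $g$ a $p^d$-cycle. Identify the $p^d$ points with $V\cong\F_p^d$ through the regular action of $V$. Since $V$ is normal in $G$, we have $G\le N_{S_{p^d}}(V)=V\rtimes\Aut(V)=\AGL_d(p)$. Hence $g$ acts as $x\mapsto Ax+v$ for some $A\in\GL[d](\F_p)$ and $v\in\F_p^d$. It therefore suffices to show that no affine map of $\F_p^d$ has order $p^d$ when $(p,d)\neq(2,2)$.

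First we determine $A$. Since $g^{p^d}=1$, the linear part satisfies $A^{p^d}=1$, so $A$ is unipotent. Put $N=A-1$; then $N$ is nilpotent with $N^d=0$. Iterating gives
$$g^m(x)=A^mx+\Bigl(\sum_{i=0}^{m-1}A^i\Bigr)v.$$
Let $e\ge1$ be minimal with $p^e\ge d$. In characteristic $p$ we have $A^{p^e}=1+N^{p^e}=1$. We also have
$$\sum_{i=0}^{p^e-1}A^i=\frac{A^{p^e}-1}{A-1}=N^{p^e-1},$$
which holds as a polynomial identity $(1+N)^{p^e}-1=N^{p^e}$ divided by $N$. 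Therefore $g^{p^e}$ is the translation by $N^{p^e-1}v$.

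Now split into two cases.

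If $p^e\ge d+1$, then $N^{p^e-1}=0$, so $g^{p^e}=1$. Because $e<d$, the order of $g$ is less than $p^d$.

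If $p^e=d$, then $g^{p^e}$ is a translation, so $g^{p^{e+1}}=1$. Thus $g$ has order at most $p^{e+1}$, which is less than $p^d=p^{p^e}$ unless $e+1\ge p^e$. For $e\ge1$ this happens only when $p=2$ and $e=1$, that is, $(p,d)=(2,2)$.

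In every other case $g$ cannot be a $p^d$-cycle, which is the desired contradiction. The exception is genuine, since $(1\,2\,3\,4)\in S_4=\AGL_2(2)$.

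The only delicate points are the identity $\sum_{i<p^e}A^i=N^{p^e-1}$ and the bookkeeping on the value of $e$; the reduction to $\AGL_d(p)$ is standard.
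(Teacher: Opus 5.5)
Your proof is correct and follows the same route as the paper. Both embed $G$ into $N_{S_{p^d}}(V)=\AGL_d(p)$ and then use that $\AGL_d(p)$ contains no $p^d$-cycle unless $(p,d)=(2,2)$; the paper cites this fact from M\"uller \cite[Lemma~3.6]{Mul3}, whereas your unipotent computation (showing $g^{p^e}$ is translation by $N^{p^e-1}v$) proves it directly, and the inequality $e<d$ you leave unjustified in the first case follows from $e\le p^{e-1}<d$.
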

\begin{proof}
Assume on the contrary that $N\cong C_p^d$ is an elementary-abelian regular normal subgroup of $G$. Then $G\cong N\rtimes G_1$, where $G_1\le G$ is a point stabilizer, and moreover $G$ embeds as a permutation group into the affine linear group $\AGL_d(p)$. This group does not have a cyclic transitive subgroup unless $d=1$ or $(p,d)=(2,2)$ \cite[Lemma\ 3.6]{Mul3}.\end{proof}


The following group-theoretic version of the well-known Capelli's lemma, see e.g.\ \cite[\S 2.1]{Ostrov},  characterizes intransitive subgroups of $\AGL_1(n)$: 
\begin{lem}\label{lem:onAGL1} Let $n\geq 1$ be an integer and $U\leq \AGL_1(n)$ be an intransitive subgroup. Then: \\
(1) 
If $n=p$ is prime,   $U\leq {\rm AGL}_1(p)$ fixes a point.\\
(2) If $n$ is a composite number,  then there exists a divisor $d|n$ which is either prime or equal to $4$ such that $U$ projects to an intransitive subgroup of $\AGL_1(d)$.
\end{lem}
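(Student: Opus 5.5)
We view $\AGL_1(n)$ as the group of affine maps $x\mapsto ax+b$ on $\Z/n$, with $a\in(\Z/n)^\times$ and $b\in\Z/n$. For a divisor $d\mid n$, the reduction map $\Z/n\to\Z/d$ induces a homomorphism $\AGL_1(n)\to\AGL_1(d)$. This map is compatible with the action: the fibers of reduction modulo $d$ form a block system, and $\AGL_1(d)$ is the induced action on the blocks. The key point is that $U$ is intransitive exactly when it has no orbit of full size. We analyze the translation subgroup $U\cap \Z/n$ together with the multiplicative parts.

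\textbf{Part (1).} Here $n=p$. If $U$ contains a nontrivial translation, then $U$ contains all of $\Z/p$, which is transitive; this contradicts intransitivity. So $U\cap \Z/p=1$, and $U$ maps injectively onto its image in $(\Z/p)^\times$. In particular $U$ is cyclic of order $m$ dividing $p-1$, which is prime to $p$. Let $u=(x\mapsto ax+b)$ be a generator with $a\ne 1$ (if $U=1$ every point is fixed). Then $u$ fixes $x_0=b/(1-a)$. Every element of $U$ is a power of $u$, so all of $U$ fixes $x_0$. Alternatively, one can argue that $U$ is a $p'$-group, so by Schur–Zassenhaus it is conjugate by a translation into the stabilizer of $0$.

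\textbf{Part (2).} We induct on $n$ by passing to quotients. Pick a prime $p\mid n$. If the image of $U$ in $\AGL_1(p)$ is intransitive, we are done with $d=p$. So assume the image is transitive for every prime $p\mid n$.

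Next, write $n=p m$ and consider the block map to $\AGL_1(m)$. If the image there is intransitive and $m$ is composite, apply induction to get a prime or $4$ dividing $m\mid n$. If $m$ is prime, we are done directly.

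So we reduce to the case where $U$ maps transitively onto $\AGL_1(m)$-blocks for the relevant $m$ but is itself intransitive. Then the stabilizer of a block, say $V=U\cap\{x\mapsto ax+b: b\equiv 0 \bmod m\}$, acts intransitively on the block $m\Z/n\cong \Z/p$. We analyze this in Capelli style. The elements of $V$ act on the block as $x\mapsto ax+b$ with $b\in m\Z/n$. If $p\nmid m$, the block is an honest $\AGL_1(p)$-orbit. Its structure then comes from the CRT decomposition $\Z/n\cong\Z/p^e\times\Z/n'$, and it is cleaner to first reduce to prime powers $n=p^e$ via CRT, since $\AGL_1(n)=\prod\AGL_1(p^e)$.

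\textbf{Prime power case and main obstacle.} For $n=p^e$, we want to show the following: if the image of $U$ in $\AGL_1(p)$ is transitive (and, when $p=2$, the image in $\AGL_1(4)$ is transitive), then $U$ is transitive on $\Z/p^e$. I expect this step to be the main obstacle.

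A transitive image in $\AGL_1(p)$ means some $u\in U$ has the form $x\mapsto ax+b$ with $a\equiv1$ and $b\not\equiv0 \pmod p$. Take $u^{k}$ with $k$ the order of $a$ modulo $p$ (prime to $p$) to arrange this; a suitable power of such an element keeps $b$ a unit when $a\equiv 1\bmod p$. Then $u^{j}(x)=a^jx+b(1+a+\dots+a^{j-1})$, and for $a\equiv1 \pmod p$ this element has order $p^e$ and acts as a $p^e$-cycle. This holds provided $v_p(a-1)\ge1$ for odd $p$, or $a\equiv1 \pmod 4$ for $p=2$. The $\AGL_1(4)$ condition is exactly what supplies such an element when $p=2$: for $p=2$ with $a\equiv 3 \pmod 4$, the element $u^2$ translates by $b(1+a)\equiv 0 \pmod 4$ and can fail to be transitive, which is why $d=4$ appears.

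Finally, transitivity modulo each prime power gives transitivity modulo $n$, since $U$ maps into the product via CRT and the relevant cyclic elements can be combined. This gluing step needs care, because $U$ is a subgroup of the product rather than the full product.
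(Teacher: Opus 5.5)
\textbf{The gap is in the $2$-power case.} Part (1) is correct. The odd prime-power step via the Hull--Dobell criterion is also correct, once you obtain the element $u$ properly. Your ``take $u^k$'' justification does not do this, since powers of an element with $a\not\equiv 1 \pmod p$ fix a point mod $p$. Instead, use Part (1): a transitive subgroup of $\AGL_1(p)$ must contain a nontrivial translation, and you lift that.

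The genuine problem is at $p=2$. You claim that transitivity of the image in $\AGL_1(4)$ ``supplies'' some $x\mapsto ax+b$ in $U$ with $a\equiv 1\pmod 4$ and $b$ odd. This is false: $\AGL_1(4)\cong D_4$ contains the transitive Klein four-group $\{x,\,x+2,\,-x+1,\,-x+3\}$, which has no $4$-cycle. Concretely, $U=\langle x+2,\,-x+1\rangle\le\AGL_1(8)$ has exactly this image and is transitive on $\Z/8$, yet its elements are the translations $x+2k$ and the involutions $-x+c$ with $c$ odd, so it has no element of order $8$. Hence the strategy ``exhibit a $p^e$-cycle'' cannot prove the $2$-power case, and your argument leaves this case open.

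\textbf{How to repair it.} A transitive subgroup of $\AGL_1(4)\cong D_4$ has order $4$ or $8$, so it contains the central involution $x\mapsto x+2$. Take a lift $g(x)=ax+b\in U$; it has $a\equiv 1\pmod 4$ and $b\equiv 2\pmod 4$. Writing even residues as $x=2z$, $g$ acts on them as $z\mapsto az+b/2$ on $\Z/2^{e-1}$ with $b/2$ odd. By the same Hull--Dobell criterion this is a full cycle, so the even residues form a single $\langle g\rangle$-orbit. Since the image in $\AGL_1(2)$ is transitive, $U$ contains some $h(x)=a'x+b'$ with $b'$ odd, and $h$ maps the evens bijectively onto the odds. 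Therefore $U$ is transitive.

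\textbf{The gluing step.} The CRT gluing you flag also needs an actual argument, and ``combining cyclic elements'' is not one. Use orbit sizes instead. Suppose $U$ is transitive on $A$ and on $B$ with $\gcd(|A|,|B|)=1$. Any $U$-orbit in $A\times B$ surjects equivariantly onto $A$ with fibers of equal size, so its size is divisible by $|A|$, and likewise by $|B|$. Hence its size is $|A||B|$, and you can iterate over the prime powers of $n$.

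\textbf{Comparison with the paper.} The paper gives no proof of this lemma. It identifies the statement as the group-theoretic form of Capelli's lemma, which one obtains by realizing $U$ as the Galois group of $X^n-t$ over a suitable field. Your direct group-theoretic route avoids that translation, but only once the Klein-four case above is handled.
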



\subsubsection*{Siegel functions}
Let $k$ be a number field with ring of integers $O_k$ and $\varphi:\mathcal{X}\to \mathbb{P}^1_k$ a map defined over $k$. By a famous theorem of Siegel, if $\varphi(\mathcal{X}(k))\cap O_k$ is infinite, then firstly, $\mathcal{X}$ is birational to $\mathbb{P}^1_k$, i.e.\ $\varphi$ is given by a rational function $f\in k(X)$, and furthermore $|\varphi^{-1}(\infty)|\le 2$, see \cite[Prop.\ 3.2]{Mul5}. When $k=\mathbb{Q}$, it is furthermore necessary for the preimages of $\infty$ to be algebraically conjugate. Motivated by this, we call a rational function $f\in k(X)$ over an arbitrary field $k$ of characteristic $0$ a {\it Siegel function} if $|f^{-1}(\infty)|\le 2$, and for $k=\mathbb{Q}$, we call $f$ a {\it Siegel function over $\mathbb{Q}$} if additionally either $|f^{-1}(\infty)|=1$ or the two preimages of $\infty$ are algebraic conjugates\footnote{Note that slightly different notions exist in the literature. However, importantly, the defining property used here is implied by the diophantine property of Siegel functions used in \S\ref{sec:intro}.}.

As in the case of indecomposable polynomials, a full classification of monodromy groups of indecomposable Siegel functions was obtained by M\"uller \cite{Mul3}, invoking the classification of finite simple groups. As noted in \cite[Theorem 2.2]{KN}, the following useful partial result can be obtained relying on the classification only in a ``mild" way, namely via certain bounds on the outer automorphism groups of simple groups.
\begin{prop}
\label{prop:siegel_indec}
Let $f\in k(X)$ be an indecomposable Siegel function. Then $\Mon_k(f)$ is either affine, or almost-simple, or contained in $(\Aut(S)\times \Aut(S))\rtimes C_2$ for a simple group $S$. In all cases, $\Mon_k(f)$ contains a unique minimal normal subgroup.
\end{prop}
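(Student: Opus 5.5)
The proof will be group-theoretic, via the O'Nan--Scott theorem. Let $G:=\Mon_k(f)$, of degree $n=\deg(f)$, and let $N:=\Mon_{\oline k}(f_{\oline k})\trianglelefteq G$ be the geometric monodromy group.

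\emph{Step 1 (setup).} Since $f$ is indecomposable, $G$ is primitive. The Siegel hypothesis $|f^{-1}(\infty)|\le 2$ gives a generator $\sigma\in N$ of an inertia group over $\infty$. Its cycles correspond to the places over $\infty$ in $\oline k(x)$, and there are at most $2$ of them, so $\sigma$ has at most two cycles. Consequently
\[
n\le 2\,\mathrm{ord}(\sigma).
\]
The whole argument uses only this inequality, together with the facts that $\sigma\in G$ and $G$ is primitive.

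\emph{Step 2 (case analysis by O'Nan--Scott).} I run through the types of primitive groups. The affine and almost-simple types are allowed conclusions, so only the cases with nonabelian socle $T^m$ need work.
\begin{itemize}
\item \emph{Diagonal type:} $n=|T|^{m-1}$ and $G\le T^m.(\mathrm{Out}(T)\times S_m)$.
\item \emph{Product type:} $n=d^m$ with $d\ge 5$ and $G\le H\wr S_m$, where $H$ is almost simple of degree $d$.
\item \emph{Twisted wreath type:} $n=|T|^m$.
\end{itemize}
In each case I bound $\mathrm{ord}(\sigma)$ from above. Writing $\sigma=(a_1,\ldots,a_m)\pi$ with $\pi\in S_m$, some power $\sigma^{\mathrm{ord}(\pi)}$ lies in the base group. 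Hence $\mathrm{ord}(\sigma)\le \mathrm{ord}(\pi)\cdot e$, where $e$ is the maximal element order in $\Aut(T)$ (respectively in $H$).

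Now compare with $n\le 2\,\mathrm{ord}(\sigma)$. The degree grows exponentially in $m$, while $\mathrm{ord}(\pi)\le e^{O(\sqrt{m\log m})}$ grows much more slowly. So $m\ge 3$ is impossible in all three types. The same comparison kills the twisted wreath type altogether, since there $m\ge 6$. What survives is $m\le 2$: the product type $H\wr C_2$ acting on $d^2$ points, and the diagonal type with $m=2$. Both are contained in $(\Aut(S)\times\Aut(S))\rtimes C_2$ for a simple group $S$.

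\emph{Step 3 (unique minimal normal subgroup).} In the affine and almost-simple cases, and in the product type, the socle is the unique minimal normal subgroup of $G$. The only primitive groups with two minimal normal subgroups are of diagonal type with $m=2$ in which no element of $G$ swaps the two factors. Then $G\le \Aut(S)\times\Aut(S)$ (modulo diagonal inner automorphisms), acting on $S$ by $x\mapsto \alpha(x)$ composed with left and right translations. Here $\mathrm{ord}(\sigma)$ divides the order of an element of $\Aut(S)\times\Aut(S)$, which is at most $e^2$. It therefore suffices to show $e^2<|S|/2$, or to argue more finely that a holomorph-type element $x\mapsto a\alpha(x)b^{-1}$ cannot have only two orbits on $S$. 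Either bound contradicts $|S|=n\le 2\,\mathrm{ord}(\sigma)$.

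\emph{Main obstacle.} The hard part is making the element-order bounds $e\ll |T|^{1/2}$ precise for all simple groups $T$. This is where the ``mild'' use of the CFSG enters. Concretely, one needs $\mathrm{ord}(\alpha)\le |T|^{c}$ with $c<1/2$ for every $\alpha\in\Aut(T)$, which follows from the smallness of $|\mathrm{Out}(T)|$ combined with bounds on element orders in $T$. A few small simple groups may have to be checked directly, for instance $A_5$ in the diagonal and $m=2$ cases. I would first try the crude bound $\mathrm{ord}(\alpha)\le |\mathrm{Out}(T)|\cdot\max_{t\in T}\mathrm{ord}(t)$, and only then treat exceptional small cases by hand.
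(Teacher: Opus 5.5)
Your Step 2 has a genuine gap, and it breaks the argument for the product type. First, the bound $\mathrm{ord}(\sigma)\le \mathrm{ord}(\pi)\cdot e$ is false. The power $\sigma^{\mathrm{ord}(\pi)}$ does lie in the base group, but an element of $H^m$ has order equal to the $\lcm$ of the orders of its components, not their maximum. Take $S_7\wr C_2$ in product action on $49$ points, which is the shape $S_n\wr C_2\le S_{n^2}$ the paper meets in the proof of Lemma~\ref{lem:only_one_nonsolv2}. Let $\sigma=(a_1,a_2)$ with $a_1$ a $7$-cycle and $a_2=(1\,2\,3)(4\,5\,6\,7)$. Then $\sigma$ has exactly two orbits, of lengths $21$ and $28$, and order $84$, although $\pi=1$ and $e=12$. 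The correct bound is only $\mathrm{ord}(\pi)\cdot e^{r}$, where $r$ is the number of cycles of $\pi$.

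Second, and more seriously, your declared strategy of using only $n\le 2\,\mathrm{ord}(\sigma)$ plus primitivity cannot exclude product type with $m\ge 3$. In product type the relevant comparison is between $e$ and the degree $d$ of $H$, not $|T|$. For $H=S_d$, Landau's function $e=g(d)$ eventually exceeds every power of $d$. Concretely, $S_7\wr S_3$ in its primitive product action on $343$ points contains $\sigma=(a_1,a_2,a_3)$ with cycle types $(7)$, $(3,4)$, $(5,2)$. Its order is $\lcm(7,12,10)=420\ge 343/2$. So ``degree exponential in $m$ versus $\mathrm{ord}(\pi)$'' compares the wrong quantities. Your main-obstacle paragraph, which bounds $e$ against $|T|^{1/2}$, only matters in the diagonal and twisted wreath types, where $n$ is a power of $|T|$.

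The missing idea is to use the two-orbit condition itself, via projections, rather than the order of $\sigma$. Orbits of $\langle\sigma\rangle$ on $\Delta^m$ map onto orbits on $\Delta^J$ for every $\pi$-invariant set $J$ of coordinates, so $\sigma$ has at most two orbits on each $\Delta^J$.

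\emph{Ruling out nontrivial $\pi$.} Let $J$ be a cycle of $\pi$ of length $c\ge 2$. After conjugating in $\mathrm{Sym}(\Delta)\wr C_c$, $\sigma$ acts on $\Delta^J$ as $(b,1,\dots,1)\pi_J$.
If $b$ has $s\ge 2$ cycles, passing to $\Delta/\langle b\rangle$ induces the cyclic shift on $s^c$ tuples, which has at least three orbits.
If $b$ is a $d$-cycle, then $\sigma|_J$ has order $cd$, hence at least $d^{c-1}/c>2$ orbits.
Either way this is a contradiction, so $\pi=1$.

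\emph{Bounding $m$.} With $\pi=1$, each $a_i$ has at most two cycles on $\Delta$. Two coordinates both carrying $d$-cycles give $d$ orbits on $\Delta^2$. Two coordinates each carrying two cycles give at least four. By pigeonhole among any three coordinates, $m\le 2$.

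This argument is elementary and needs no CFSG. It also reduces the compound types (CD and HC), which your list omits, to a diagonal-type component containing a full cycle. Note that HC groups also have two minimal normal subgroups, so your Step 3 claim that only $m=2$ diagonal groups do is incomplete.

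With product type settled this way, your element-order treatment of the diagonal (including HS) and twisted wreath types is plausibly repairable using the corrected bound. That route leans on CFSG element-order estimates more heavily than the one the paper cites from \cite[Theorem 2.2]{KN}, where the classification enters only through bounds on outer automorphism groups.
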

We shall also use the following elementary property of composite Siegel functions. 
\begin{lem}
\label{lem:siegel_by_cyclic}
Let $g,h\in k(X)\setminus k$ be such that $f=g\circ h$ is a Siegel function. Then either $|g^{-1}(\infty)|=1$ or $\Mon_{\overline{k}}(h)$ is cyclic.
\end{lem}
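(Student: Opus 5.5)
We argue by counting places over infinity. Suppose $|g^{-1}(\infty)|\ge 2$. The preimage $f^{-1}(\infty)=h^{-1}(g^{-1}(\infty))$ has at most two points, because $f$ is a Siegel function. Each point of $g^{-1}(\infty)$ has at least one preimage under $h$, since $h$ is surjective as a nonconstant map $\mP^1\to\mP^1$. It follows that $|g^{-1}(\infty)|=2$, say $g^{-1}(\infty)=\{a,b\}$ over $\oline k$, and that $h^{-1}(a)$ and $h^{-1}(b)$ each consist of exactly one point. We may pass to $\oline k$ throughout, since the Siegel condition and the claim about $\Mon_{\oline k}(h)$ are both geometric.

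Next, normalize. After composing $h$ on the left with a linear fractional over $\oline k$ sending $a\mapsto\infty$ and $b\mapsto 0$, and composing $g$ on the right with its inverse, we may assume $h^{-1}(\infty)$ and $h^{-1}(0)$ are single points. Then $h$ is totally ramified over both $0$ and $\infty$, with ramification index $n=\deg h$ at each. Composing $h$ on the right with a linear fractional moves these two preimages to $\infty$ and $0$. The zeros and poles of $h$ are then concentrated at $0$ and $\infty$, so $h=cX^{\pm n}$, and after a further adjustment $h$ is linearly related to $X^n$. Right and left composition with linear fractionals does not change the geometric monodromy group up to permutation isomorphism.

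It remains to compute $\Mon_{\oline k}(X^n)$. The Galois closure of $\oline k(x)/\oline k(t)$ with $x^n=t$ is $\oline k(x)$ itself, because $\oline k$ contains the $n$-th roots of unity. So the extension is Kummer and $\Mon_{\oline k}(h)\cong C_n$ is cyclic, as claimed. Equivalently, one can apply Riemann--Hurwitz: $2n-2\ge 2(n-1)$ forces $0$ and $\infty$ to be the only branch points, and then the fundamental group of $\mP^1\setminus\{0,\infty\}$ is cyclic.

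The only point needing care is that the preimage count has to be done over $\oline k$ rather than with $k$-rational places. The definition of a Siegel function counts $|f^{-1}(\infty)|$ geometrically, so this is consistent. No step here is a real obstacle.
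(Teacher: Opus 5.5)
Your proof is correct and follows essentially the same route as the paper. You use $|f^{-1}(\infty)|\le 2$ to force $h$ to be totally ramified over the two points of $g^{-1}(\infty)$, normalize by linear fractionals so that $h=cX^{\pm n}$, and conclude that $\Mon_{\oline k}(h)$ is cyclic; your extra remarks (that $|g^{-1}(\infty)|\ge 3$ is impossible, and that the preimage count is geometric) only make explicit what the paper leaves implicit.
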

\begin{proof}
Assume that $g^{-1}(\infty) = \{a,b\}$ consists of two elements. Since $|f^{-1}(\infty)|\le 2$, this enforces $|h^{-1}(a)|=|h^{-1}(b)|=1$, i.e., the rational function $h$ is totally ramified over these two points. 
Thus, the Riemann--Hurwitz formula shows that $h$ has no further branch points. 
By composing $h$ with linear fractionals, we may obtain a rational map totally ramified over $0,\infty$ with preimages $0,\infty$, resp., forcing it to be $cX^n$, where $n=\deg(h)$ and $c\in \oline k[X]$. Thus, $h$ is linearly related over $\overline{k}$ to $X^n$ and $\Mon_{\oline k}(h)$ is cyclic.
\end{proof}

\subsection{Composing maps}\label{ss:polydecom}
Recall that the monodromy group $\Mon_k(f)$ of a composition $f=g\circ h$ of two maps $g:\mathcal{Y}\ra\mZ,h:\mathcal{X}\ra \mathcal{Y}$ is a subgroup of $A\wr B:=A^d\rtimes B$, where $A:=\Mon_k(h)$ and $B:=\Mon_k(g)$.  
In particular,  $B$ is a natural quotient of $\Mon_k(f)$. Letting $B$ act on the set of roots $\mathcal B$ {of a minimal polynomial of $k(\mathcal{Y})/k(\mZ)$},  the  {\it stabilizer} of a block $b\in\mathcal B$ is the subgroup of $\Mon_k(f)$ fixing $b$ under its action through $B$. The {\it block kernel} is the kernel of the action of $\Mon_k(f)$ on $\mathcal B$. Letting $\Omega'\subseteq \Omega$ be the Galois closures of $g^*$, and $f^*$, resp., the block kernel coincides with $\Gal(\Omega/\Omega')$, while the block stabilizer coincides with $\Gal(\Omega/F(b))$, where $F=k(\mZ)$ and  $F(b)$ is the conjugate of $k(\mathcal{Y})$ corresponding to $b$. 

\subsubsection*{The block kernel for polynomials maps} We shall often use the following observation: 
\begin{lem}\label{lem:blocknontriv}
Suppose $f=g\circ h \in k[X]\setminus k$. 
Then the block kernel $K=\ker(\Mon_k(f) \rightarrow \Mon_k(g))$ contains an element of order 
$\deg(h)$.
\end{lem}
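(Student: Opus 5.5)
We use inertia at infinity. Write $n=\deg(h)$, $m=\deg(g)$, $N=\deg(f)=mn$, and let $\Omega$ be the Galois closure of $k(x)/k(t)$, where $f(x)=t$. Since $f$ is a polynomial, the place $t\mapsto\infty$ is totally ramified in $k(x)/k(t)$.

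First we pass to the geometric monodromy group. Over $\oline k$, the geometric monodromy group $\Mon_{\oline k}(f)$ is a subgroup of $\Mon_k(f)$, compatibly with the projections to the monodromy of $g$. Its block kernel therefore lies in $K$, and it suffices to find the required element there. Over $\oline k((1/t))$, the extension generated by the roots of $f(X)-t$ is the Kummer-type extension obtained by adjoining $t^{1/N}$. Hence the inertia group at a place of $\Omega\oline k$ over $\infty$ is cyclic, generated by an element $\sigma$ that acts as an $N$-cycle on the $N$ roots of $f(X)-t$. This is the standard fact that the inertia generator at infinity of a degree-$N$ polynomial is an $N$-cycle.

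Next we compute the action of $\sigma$ on blocks. The $N$ roots fall into $m$ blocks of size $n$, namely the fibers over the roots $y_1,\dots,y_m$ of $g(Y)-t$, where $y_i=h(x)$ for the roots $x$ in block $i$. The block system is preserved by $\sigma$, and $\sigma$ acts on the $m$ blocks as an $m$-cycle, since $g$ is itself a polynomial and the image of $\sigma$ is the inertia generator for $g$. Consequently $\sigma^m$ fixes every block, so $\sigma^m\in K$.

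Finally, $\sigma^m$ is the $m$-th power of an $N$-cycle with $N=mn$. It therefore decomposes into $m$ disjoint $n$-cycles, one on each block, and has order exactly $n=\deg(h)$. This gives the claimed element of order $\deg(h)$ in $K$.

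The only point that needs care is the first step: identifying the inertia at $\infty$ as generated by an $N$-cycle in the possibly non-Galois, non-algebraically-closed setting. Passing to $\oline k$, where the roots are Puiseux series in $t^{-1/N}$ permuted cyclically by $t^{1/N}\mapsto\zeta_N t^{1/N}$, handles this. One also needs $\Mon_{\oline k}(f)\le\Mon_k(f)$ as permutation groups, which follows because $\Omega\oline k/\oline k(t)$ has Galois group isomorphic to the subgroup $\Gal(\Omega/\Omega\cap\oline k(t))$ of $\Gal(\Omega/k(t))$.
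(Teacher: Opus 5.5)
Your proof is correct, but it is organized differently from the paper's.

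\textbf{The paper's route.} The paper works inside the subextension $\Omega/\Omega^K$. With $w=h(x)$, Abhyankar's lemma (Remark \ref{rem:abh}(1)) shows that $\infty$ is unramified in the Galois extension $\Omega^K/k(w)$, while it is totally ramified in $k(x)/k(w)$. Lemma \ref{lem:transitive-normal}, applied to $k(x)/k(w)$ with $M=\Omega^K$, then shows that $K=\Gal(\Omega/\Omega^K)$ acts on a block through a group containing a cyclic transitive subgroup, i.e.\ an $n$-cycle.

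\textbf{Your route.} You instead take the full inertia generator $\sigma$ at $\infty$, which is an $N$-cycle. You then read off both $\sigma^m\in K$ and its order purely combinatorially from the block structure.

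\textbf{Comparison.} Both arguments find the same element: the inertia group of $\Omega/\Omega^K$ at a place over $\infty$ is $\langle\sigma\rangle\cap K=\langle\sigma^m\rangle$, of order $N/m=n$. Your version gives slightly more explicit information: an element of $K$ that is an $n$-cycle on every block simultaneously, hence of order exactly $n$. The paper's version packages the ramification input into Lemma \ref{lem:transitive-normal}, which it reuses later with other unramified Galois $M$ and in the $S_4$ refinement.

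\textbf{A simplification.} Your detour through $\oline k$ and Puiseux series, and the comparison $\Mon_{\oline k}(f)\le \Mon_k(f)$, are not needed. In characteristic $0$ inertia groups are cyclic. Total ramification of $\infty$ in $k(x)/k(t)$ already forces the inertia group in $\Omega/k(t)$ to be transitive on the $N$ roots. Hence it is regular, and its generator is an $N$-cycle directly in $\Mon_k(f)$.
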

We  deduce the lemma from the following: 
\begin{lem}\label{lem:transitive-normal}
Let $L/K$ be a degree-$n$ extension totally ramified over a place $P$ of perfect residue,  with Galois closure $\Omega$, and Galois group $G:=\Gal(\Omega/K)$. Let $M/K$ be a Galois extension in which $P$ {is unramified}. 
Then: 
\begin{enumerate}[leftmargin=*,label={(\arabic*)},ref=(\arabic*)]
\item \label{case:transitive-normal_1}
$N=\Gal(\Omega M/M)$ is a normal subgroup of $G$ containing a cyclic transitive subgroup, upon identifying it with its image under restriction. In particular, $[LM:M]=[L:K]$.
\item For  $n=4$ and $G=S_4$, 
one has $N=S_4$ as well. 
\label{case:transitive-normal_2}
\end{enumerate}
\end{lem}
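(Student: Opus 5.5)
The plan is to identify $N$ with a subgroup of $G$ and then show that it contains an inertia group at $P$, which is cyclic and transitive.

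First I set up the identification. Restriction gives an isomorphism $N=\Gal(\Omega M/M)\cong \Gal(\Omega/\Omega\cap M)$. Since $M/K$ is Galois, $\Omega\cap M$ is Galois over $K$, so $\Gal(\Omega/\Omega\cap M)$ is a normal subgroup of $G$. Under this identification the action of $N$ on the roots of a minimal polynomial of $L/K$ is the restriction of the action of $G$. Consequently $[LM:M]$ equals the length of the $N$-orbit of a root.

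Next I produce the cyclic transitive subgroup. Fix a place $Q$ of $\Omega$ over $P$ and let $I=I(Q|P)\le G$ be its inertia group.

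In our setting ($k$ of characteristic $0$, function fields over $k$) the residue field has characteristic $0$ and is perfect by hypothesis. So ramification is tame, and $I$ is cyclic: by standard ramification theory it embeds into the multiplicative group of the residue field of $Q$ via $\sigma\mapsto \sigma(\pi)/\pi$.

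The orbits of $I$ on the roots of the minimal polynomial of $L/K$ (equivalently on $G/\Gal(\Omega/L)$) refine the decomposition orbits. Each $I$-orbit has length equal to the ramification index of the corresponding place of $L$ over $P$, because the residue extension is separable. Since $L/K$ is totally ramified at $P$, there is a single place of $L$ above $P$, with ramification index $n$. Hence $I$ has one orbit of length $n$, i.e.\ $I$ is transitive.

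Now use that $P$ is unramified in $M$. Then $P$ is unramified in $\Omega\cap M$, so every element of $I$ acts trivially on $\Omega\cap M$; that is, $I\le\Gal(\Omega/\Omega\cap M)=N$. Thus $N$ contains the cyclic transitive subgroup $I$. In particular $N$ is transitive, which gives $[LM:M]=n=[L:K]$.

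For (2), $N$ is a normal subgroup of $S_4$ containing a cyclic transitive subgroup of degree $4$, that is, a $4$-cycle. The normal subgroups of $S_4$ are $1$, $V_4$, $A_4$ and $S_4$, and the first three consist of even permutations while a $4$-cycle is odd. Hence $N=S_4$.

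The only delicate point is the cyclicity and transitivity of inertia. This relies on tameness and on separability of the residue extensions, which are guaranteed by the characteristic-$0$ / perfect residue field hypothesis. I would pass to completions at $P$ and $Q$ to make this precise, where the inertia group is the Galois group of the totally ramified tame part.
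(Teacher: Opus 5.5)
Your proof is correct, and its core is the same as the paper's: an inertia group over $P$ is a cyclic transitive subgroup lying in $N$. Part (2) then follows because a $4$-cycle is odd. The difference is in how the inertia group is placed inside $N$.

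The paper first uses Remark~\ref{rem:abh}(2), a consequence of Abhyankar's lemma, to show two things: $L$ and $M$ are linearly disjoint over $K$, so $[LM:M]=[L:K]$ is obtained first, and every place of $M$ over $P$ is totally ramified in $LM$. It then takes inertia groups of $\Omega M/M$ itself, which lie in $N$ by definition and have order $n$.

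You instead take the inertia group $I$ of $\Omega/K$ at $Q$ and get transitivity from the total ramification of $L/K$ alone. You put $I$ into $N=\Gal(\Omega/\Omega\cap M)$ via the elementary fact that inertia restricts into the inertia group of the Galois subextension $\Omega\cap M$, which is trivial because $P$ is unramified there. The equality $[LM:M]=n$ then comes out as a consequence of transitivity rather than as an input.

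Your route is more self-contained, since it avoids Abhyankar's lemma entirely. The paper's route stays within the toolkit of Remark~\ref{rem:abh} used throughout. As by-products it yields that $\Omega M$ is the Galois closure of $LM/M$ (Remark~\ref{rem:Gal-closure}) and that $LM/M$ is totally ramified over $P$.

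Your explicit appeal to characteristic $0$ for tameness is appropriate rather than an extra assumption. The literal hypothesis of a perfect residue field would not force cyclic inertia in positive characteristic. The paper's proof tacitly uses the same standing characteristic-$0$ assumption, both for cyclicity of inertia and for Abhyankar's lemma.
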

This is often applied together with the following consequences of Abhyankar's lemma: 
\begin{rem}\label{rem:abh} Let $L_1/K$ and $L_2/K$ be finite extensions of degrees $m,n$, resp. Let $P$ be a place of $K$ with a perfect residue field.  Abhyankar's lemma asserts that the ramification index of every place $Q$ of $L_1L_2$ over $K$ is $\lcm(e_1,e_2)$, where $e_i$ is the ramification index of $Q\cap L_i$ in $L_i/K$, see e.g.\ \cite[Cor.\ 5.2]{DZ}. It follows that:  
\vskip 1mm
\noindent
(1) Assume $n=m$ and let $\Omega$ be the Galois closure of $L_1L_2/K$. Then every place $Q$ of $\Omega$ above $P$ has ramification index $n$ in $\Omega/K$, but is unramified in $\Omega/L_1$. Indeed, the first statement follows by applying Abhyankar's lemma repeatedly to conjugates of $L_1$ and $L_2$ in $\Omega$, and the second one from the multiplicativity of ramification indices. 
\vskip 1mm
\noindent
(2) 
Assume that  $P$  is totally ramified in $L_1/K$ but unramified in $L_2/K$. Then: 
\begin{enumerate}
\item[a)] $L_1/K$ and $L_2/K$ are linearly disjoint.
\item[b)] Any place $Q$ of $L_2$ lying above $P$ is totally ramified in $L_1L_2/L_2$.
\end{enumerate}
Indeed: 
a)  Otherwise, by the multiplicativity of ramifications indices, the ramification index of a place $R$ of $L_1 L_2$ over $P$ would be at most $[L_1L_2:L_2]<m$, contradicting the fact that it must be divisible by the ramification index of $R \cap L_1$ over $P$, which is $[L_1:K]=m$. 
For b), let $R$ be a place of $L_1L_2$ lying above $Q$. By Abhyankar's lemma, the ramification index of $R$ over  $P$ is $m$. Since $P$ is unramified in $L_2/K$, the ramification index of $R$ over $Q$ is also $m$. Since $[L_1L_2:L_2] \leq m$, it follows that $[L_1L_2:L_2]=m$ and that $Q$ is totally ramified in $L_1L_2/L_2$.
\vskip 1mm
\noindent
(3)  Let $P$ be a place of $K$ that is ramified in $L_1/K$. If every place of $L_2$ above $P$ is unramified in $L_1L_2/L_2$, then $P$ is ramified in $L_2/K$. Indeed, otherwise, by the multiplicativity of ramification indices, $P$ would then be unramified in $L_1L_2/K$, a contradiction.
\begin{rem}\label{rem:Gal-closure}
If $L_1/K$ and $L_2/K$ are linearly disjoint extensions, and $\Omega$ is the Galois closure of $L_1/K$. Then the Galois closure of $L_1L_2/L_2$ is $\Omega L_2$, e.g.\ by \cite[Lemma\ 2.12 and Remark\ 2.13]{KN}. We shall therefore often identify $\Gal(\Omega L_2/L_2)$ with its image in $\Gal(\Omega/K)$ under the restriction map. 
\end{rem}

\begin{proof}[Proof of Lemma \ref{lem:transitive-normal}]
1) Since $L/K$ is totally ramified at $P$ while $P$ is unramified in $M/K$, the two extensions are linearly disjoint by Remark \ref{rem:abh}(2). Thus,  Remark \ref{rem:Gal-closure} implies that the Galois closure of $LM/M$ is the compositum $\Omega M$, and that we may 
identify $N:=\Gal(\Omega M/M)$ with its image in $G$. Moreover, since $M/K$ is Galois, $N$ is normal in $G$. 

Since $M/K$ is unramified over $P$, and $L/K$ is totally ramified over $P$, the extension $LM/M$ is also totally ramified (of the same ramification index $n$) over places of $M$  lying over $P$ by Remark \ref{rem:abh}(2).  
Thus, the inertia groups over such places are of order $n$. Consequently, $N$ contains a cyclic transitive subgroup.\\
2) Since $N\lhd S_4$ and  
contains a $4$-cycle by 1), it follows that $N=S_4$. 
\end{proof}

\begin{proof}[Proof of Lemma \ref{lem:blocknontriv}] Let $x$ be a root of $f(X)=t$,  let $w=h(x)$,  let  $\Omega/k(t)$ be the Galois closure of $k(x)/k(t)$, and $\Omega^K$ the fixed field of $K$. Since  $\infty$ {is unramified} in $\Omega^K/k(w)$ by Remark \ref{rem:abh}(1), and is totally ramified in $k(x)/k(t)$, Lemma \ref{lem:transitive-normal} gives the claim.  
\end{proof}

\end{rem}


\subsubsection*{Ritt moves} Recall that Ritt's theorems relate  \emph{complete decompositions} of $f$, that is,  decompositions $f=f_1\circ \dots \circ f_r$ for indecomposable polynomials $f_1,\dots,f_r \in k[X]$ \cite[Theorem\ 2.1.]{MZ}. Two complete decompositions \( f=f_1 \circ \dots \circ f_r \) and \( f=\tilde{f}_1 \circ \dots \circ \tilde{f}_r \) are related by a \emph{Ritt move} if   
$f_j = \tilde{f}_j$ 
for \( j \not\in \{i, i+1\} \), while  
\( f_i \circ f_{i+1} = \tilde{f}_i \circ \tilde{f}_{i+1} \) for some  \( 1 \leq i < r \),
and there is no linear $\mu\in k[X]$ such that $f_i=\tilde f_i\circ \mu$ and $f_{i+1}=\mu^{-1}\circ \tilde f_{i+1}$.
Given a pair of complete decompositions for a polynomial, Ritt's first theorem then asserts that 
it is possible to pass from one to another by a sequence of consecutive Ritt moves. Moreover, a complete decomposition $f=f_1\circ \cdots \circ f_r$,  $f_i\in\oline k[X]$, over $\oline k$ has to be defined over $k$ by Fried--MacRae  \cite{FM}, that is, 
there exist linear  $\ell_1,\dots,\ell_{r-1} \in \overline{k}[X]$ such that $f_1 \circ \ell_1, \ell_{r-1}^{-1} \circ f_r, \ell_{i-1}^{-1} \circ f_i \circ \ell_i \in k[X]$ for all $2\leq i \leq r-1$. 

We shall mainly use the following consequence of Ritt's  theorems, which follows from the above since the degrees in a Ritt move are coprime \cite[Corollary 2.11]{MZ}. 
Say that $v\in k[X]$ is a {\it right-unique} factor if, for every complete decomposition $f=f_1\circ\cdots\circ f_r$, $f_i\in k[X]$, there exists some $i$ and a linear $\mu\in k[X]$ such that $\mu\circ v=f_{i+1}\circ \cdots\circ f_r$. In this case,  we also call a decomposition $f=u\circ v$ right-unique. 
The decomposition is further called {\it strongly-unique} if furthermore it  does not have a right-factor\footnote{In this terminology $X^{p^2}=X^p\circ X^p$ and $T_{p^2}=T_p\circ T_p$ are right-unique but not strongly-unique. } that is linearly related to  
$X^{p^2}$ or $T_{p^2}$ over $\oline k$. 

\begin{cor}\label{cor:ritt}
Let $f=h\circ f_r\in k[X]\setminus k$ be a decomposition with $h,f_r\in k[X]$ and $f_r\in k[X]$ indecomposable. If $f_r$ is not a right-unique factor, then there exist indecomposable $u,u',v'\in k[X]$ for which $u\circ f_r=v'\circ u'$ is a right-factor of $f$ that is a Ritt move. Moreover, $\deg(u)=\deg(u')$ and $\deg(f_r)=\deg(v')$ are coprime. 
\end{cor}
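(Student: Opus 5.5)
We will read Corollary \ref{cor:ritt} off Ritt's first theorem, in the form recalled above, together with the fact that the two degrees involved in a Ritt move are coprime \cite[Corollary 2.11]{MZ}.

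\textbf{Step 1: set up the chain of decompositions.} Fix a complete decomposition $h=f_1\circ\cdots\circ f_{r-1}$ over $k$, so that $f=f_1\circ\cdots\circ f_r$ is a complete decomposition ending in $f_r$. Since $f_r$ is not right-unique, there is a complete decomposition $f=\tilde f_1\circ\cdots\circ\tilde f_s$ whose last factor is not of the form $\mu\circ f_r$ for any linear $\mu$. By Ritt's first theorem (and Fried--MacRae \cite{FM} to stay over $k$), the two decompositions are connected by a finite sequence of consecutive Ritt moves. In particular $s=r$.

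\textbf{Step 2: find the first move that touches $f_r$.} Along this sequence, consider the last factor. A Ritt move at a position $i<r-1$ leaves it unchanged. Since the last factor eventually stops being linearly equivalent to $f_r$, some move must occur at position $r-1$ while the current last factor is still $\mu\circ f_r$ for some linear $\mu$. Take the first such move.

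Just before it, the decomposition is $g_1\circ\cdots\circ g_{r-1}\circ(\mu\circ f_r)$. Set $u:=g_{r-1}\circ\mu$, which is indecomposable. The move replaces $u\circ f_r=g_{r-1}\circ(\mu\circ f_r)$ by $v'\circ u'$ with $v',u'$ indecomposable. This is a Ritt move by definition: there is no linear $\nu$ with $u=v'\circ\nu$ and $f_r=\nu^{-1}\circ u'$. Otherwise the original pair $(g_{r-1},\mu\circ f_r)$ would be linearly equivalent to $(v',u')$, contradicting the definition of a Ritt move. Moreover $u\circ f_r$ is a right factor of $f$, since $f=(g_1\circ\cdots\circ g_{r-2})\circ(u\circ f_r)$.

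\textbf{Step 3: degrees.} By \cite[Corollary 2.11]{MZ}, the degrees of the two factors in a Ritt move are coprime, and the two sides have swapped degrees: $\deg u=\deg u'$ and $\deg f_r=\deg v'$. Coprimality follows.

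\textbf{Main obstacle.} The delicate point is the bookkeeping in Step 2: the last factor is only determined up to a linear $\mu$, and that $\mu$ has to be absorbed into $u$ so that the statement holds with $f_r$ itself. One also has to check that all the factors can be kept in $k[X]$, which is where Fried--MacRae is used.
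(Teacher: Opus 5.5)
Your proposal is correct and takes the same route as the paper. The paper derives the corollary directly from Ritt's first theorem (with Fried--MacRae to stay over $k$) together with the coprimality and swapping of degrees in a Ritt move from \cite[Corollary 2.11]{MZ}. Your Step 2 supplies the bookkeeping the paper leaves implicit: tracking the linear factor $\mu$ on the last factor, absorbing it into $u$, and checking that the resulting pair is still a genuine Ritt move.
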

We say that $f\in k[X]\setminus k$ is {\it simply branched} if each of its finite branch points has a unique ramified $f$-preimage and this preimage has ramification index $2$. This is the type of ramification occuring generically. 
\begin{rem}\label{rem:further-Ritt}
In Appendix \ref{app:mn}, we shall also use Ritt's second theorem \cite[Theorem 2.17]{MZ} directly: If $a\circ b=c\circ d$ for $a,b,c,d\in \mC[X]$ of degree $>1$ with $\gcd(\deg(a),\deg(b))=\gcd(\deg(c),\deg(d))=1$, then (up to switching $a,b$ and $c,d$), there exist $\ell_j\in\mC[X]$ of degree $1$ such that $(\ell_1\circ a\circ\ell_2^{-1},\ell_2\circ b\circ \ell_3^{-1}, \ell_1\circ c\circ \ell_4^{-1},\ell_4\circ d\circ \ell_3^{-1})$ is either (1) $(T_m,T_n,T_n,T_m)$ or (2) $(X^n,X^sh(X^n),X^sh(X)^n,X^n$), for some coprime $m,n> 1$, some $s\geq 1$ coprime to $n$, and $h\in \mC[X]\setminus X\mC[X]$.
In particular, simply-branched polynomials of degree $\geq 4$ do not occur as left factors of Ritt moves. 
It also follows immediately  that two left-factors involved in a Ritt step must share a finite branch point. In (1) and (2), these are $-2$ and $0$, resp. 
\end{rem}


We shall also use the following part of Ritt's first theorem which applies more generally to maps (or their corresponding extensions) with a totally ramified point. 
\begin{lem}\label{lem:towerext}
Let  $L/K$ be a finite extension with Galois closure $M/K$, and  $P$ be a place of $K$  that is totally ramified in $L/K$ with a perfect residue field. 
Let $I$ be the inertia group at a place of $M$ above $P$ in $G={\rm Gal}(M/K)$, and let $H={\rm Gal}(M/L)$. Then $G= H I$ and $H \cap I=1$. Moreover, for any intermediate subgroup $H \leq T \leq G$, we have $T= H (T \cap I)$.
\end{lem}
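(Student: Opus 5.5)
The plan is to use the ramification at $P$ to produce an inertia group that is regular, and then derive all three claims from that single fact together with index counting.

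First, describe $I$. Since $L/K$ is totally ramified at $P$ and the residue field is perfect, the ramification index of any place of $L$ above $P$ is $n=[L:K]$. Fix a place $Q$ of $M$ above $P$ with inertia group $I$. The inertia group of $Q$ in $M/L$ is $I\cap H$. Multiplicativity of ramification indices gives $e(Q|P)=e(Q|Q\cap L)\cdot n$. Therefore
\[
|I| = e(Q|P) = |I\cap H|\cdot n .
\]
This uses that the residue field is perfect, so that the inertia group order equals the ramification index.

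Next, show $I\cap H=1$. Because $P$ has a single place $Q\cap L$ above it in $L$, the group $I$ acts transitively on the cosets $G/H$. The reason is that the $I$-orbits on $G/H$ correspond to the places of $L$ above $P$ lying under $Q$, with orbit sizes equal to ramification indices times residue degrees. Alternatively, the orbit of $H$ under $I$ has size $[I:I\cap H]=|I|/|I\cap H|=n=[G:H]$, so $I$ is transitive and $G=IH=HI$.

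It remains to show the kernel $I\cap H$ is trivial. The core $\bigcap_g gHg^{-1}$ is trivial because $M$ is the Galois closure of $L$. The stabilizer $I\cap gHg^{-1}$ of the point $gH$ is conjugate within $I$ to $I\cap H$, since $I$ is transitive: write $g=ih$, so $I\cap gHg^{-1}=i(I\cap H)i^{-1}$. I expect this to be the main obstacle, and I would handle it via tame ramification in characteristic $0$ residue fields, where $I$ is cyclic; more generally, in this setting the inertia group modulo wild inertia is cyclic. When $I$ is abelian, all the point stabilizers $I\cap gHg^{-1}$ coincide with $I\cap H$. Hence $I\cap H$ lies in every conjugate of $H$, so it lies in the trivial core, and $I\cap H=1$. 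In the paper's setting the characteristic is $0$, so ramification is tame and $I$ is cyclic, and this argument applies. (If a wild case needed handling, one would argue with the transitive $p$-group instead, but I would rely on tameness.)

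Finally, the intermediate statement. Let $H\le T\le G$. Clearly $H(T\cap I)\subseteq T$. For the reverse inclusion, take $t\in T$ and write $t=hi$ with $h\in H$ and $i\in I$, using $G=HI$. Then $i=h^{-1}t\in T$, since $H\le T$. Thus $i\in T\cap I$, which gives $T=H(T\cap I)$.
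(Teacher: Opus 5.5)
Your proof is correct, but you obtain the key fact $H\cap I=1$ by a different mechanism than the paper.

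\textbf{The paper's route.} It applies Abhyankar's lemma (Remark~\ref{rem:abh}(1)). $M$ is the compositum of the $K$-conjugates of $L$, each totally ramified of index $n=[L:K]$ over $P$, so every place of $M$ above $P$ has ramification index $\lcm(n,\dots,n)=n$. Hence $M/L$ is unramified above $P$, which gives $H\cap I=1$ and $|I|=n$. After that, $G=HI$ is a counting statement, and the last assertion is quoted from \cite[Lemma 2.5]{MZ}; that citation is exactly your modular-law argument.

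\textbf{Your route.} You go in the opposite order. Multiplicativity of ramification indices alone gives $[I:I\cap H]=n=[G:H]$, so $I$ is transitive on $G/H$ and $G=HI$. Then cyclicity of tame inertia makes $I$ a transitive abelian subgroup of the permutation group $G$ on $G/H$. That action is faithful since $\bigcap_{g}gHg^{-1}=1$, so $I$ acts regularly and $I\cap H=1$.

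\textbf{Comparison.} Your route avoids Abhyankar's lemma, replacing it with the structure of tame inertia and elementary permutation-group theory. It also isolates exactly which conclusion needs tameness: $G=HI$ and $T=H(T\cap I)$ hold for any totally ramified $P$ with perfect residue field, and only $H\cap I=1$ does not. The paper's route is shorter in context, because Remark~\ref{rem:abh} is already in place and is used throughout.

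A small slip: in your first justification of transitivity, the orbits you describe, indexed by places of $L$ with sizes $e\cdot f$, are orbits of the decomposition group, not of $I$. Since $f=1$ here this is harmless, and your counting argument is the right one.

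One correction to your parenthetical remark: no argument with $p$-groups can rescue the wild case, because there $H\cap I=1$ is simply false. Take $K=\mathbb{Q}$, $P=3$, $L=\mathbb{Q}(\sqrt[3]{3})$. Then $P$ is totally ramified in $L$ (Eisenstein) with perfect residue field $\mathbb{F}_3$. But $M=L(\zeta_3)$ has ramification index $6$ over $3$, so $I=G\cong S_3$ and $H\cap I=H\cong C_2$. So the lemma genuinely depends on the characteristic-$0$ (tame) setting of the paper, as does the $\lcm$ form of Abhyankar's lemma the paper invokes. Your reliance on tameness is therefore essential rather than a convenience.
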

\begin{proof}
The extension $M/L$ is unramified at any place of $L$ over $P$ by Remark \ref{rem:abh}(1), which implies that $H \cap I=1$. Since $P$ is totally ramified in $L/K$, it follows that $G=H I$. The last assertion then follows directly, see \cite[Lemma 2.5]{MZ}.
\end{proof}

\subsection{Reducibility}\label{sss:minredpair} 
Given $f,g\in k(X)\setminus k$,  the curve $f(X)=g(Y)$ is birational to the fiber product  $\mP^1\#_{f,g}\mP^1$ of $f:\mP^1\ra\mP^1$ and $g:\mP^1\ra\mP^1$. Moreover, this fiber product is irreducible over $k$ if and only if the root fields $k(x)$ and $k(y)$ of $f(X)-t$ and $g(Y)-t$ over $k(t)$, respectively, are linearly disjoint over $k(t)$. 
We shall repeatedly use the group-theoretic translation of this condition via the Galois coorespondence: letting $\Omega$ denote the Galois closure of $k(x,y)/k(t)$, and setting $G=\Gal(\Omega/k(t)),H=\Gal(\Omega/k(x)),H'=\Gal(\Omega/k(y))$, the linear disjointness of $k(x)/k(t)$ and $k(y)/k(t)$ is equivalent to the transitivity of $H$ on $G/H'$, that is,  to $H\cdot H'=G$.

The following well-known lemma shows that minimally reducible pairs have a common Galois closure.  As for polynomials, say that two extensions $M/K$ and $L/K$, which are not linearly disjoint, form a \emph{minimally reducible pair} if $M_1/K$ and $L_1/K$ are linearly disjoint for any $K \subset L_1 \subset L$ and $K \subset M_1 \subset M$ such that $L \neq L_1$ or $M \neq M_1$.
\begin{lem}\label{lem:friedarg} Let $L/K$ and $M/K$ be finite separable extensions forming a minimally reducible pair. Then their Galois closures coincide. 
\end{lem}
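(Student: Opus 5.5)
Let $\hat L$ and $\hat M$ denote the Galois closures of $L/K$ and $M/K$. The plan is to show that $\hat M \subseteq \hat L$; by symmetry this gives $\hat L\subseteq \hat M$ as well, hence equality.

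Set $M_1 := M\cap \hat L$, the intersection taken inside a common algebraic closure. The first step is to check that $M$ and $L$ fail to be linearly disjoint over $K$ already at the level of $M_1$. Since $\hat L/K$ is Galois, $\hat L$ and $M$ are linearly disjoint over $M_1$. Hence
\[
[LM:M] = [L M_1 M : M] = [LM_1 : M_1],
\]
because $LM_1\subseteq \hat L$ and $\hat L$ is linearly disjoint from $M$ over $M_1$. Therefore
\[
[LM:K]=[M:M_1]\,[LM_1:K].
\]
If $L$ and $M_1$ were linearly disjoint over $K$, then $[LM_1:K]=[L:K][M_1:K]$, and so $[LM:K]=[L:K][M:K]$. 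This would contradict the assumption that $L$ and $M$ are not linearly disjoint. So $L$ and $M_1$ are not linearly disjoint.

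By minimal reducibility, every proper subfield $M_1\subsetneq M$ is linearly disjoint from $L$. Since $L$ and $M_1$ are not linearly disjoint, we conclude $M_1=M$, that is, $M\subseteq \hat L$. Because $\hat L/K$ is Galois, every $K$-conjugate of $M$ also lies in $\hat L$, and hence $\hat M\subseteq \hat L$. The symmetric argument, with the roles of $L$ and $M$ exchanged, gives $\hat L\subseteq\hat M$.

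The only delicate step is the linear disjointness of $\hat L$ and $M$ over $M_1$. This is the standard fact that a Galois extension $\hat L/K$ and an arbitrary finite separable extension $M/K$ are linearly disjoint over $\hat L\cap M$; equivalently, $[\hat L M:M]=[\hat L:\hat L\cap M]$, via restriction $\Gal(\hat LM/M)\cong\Gal(\hat L/\hat L\cap M)$. We then apply it to the subfield $LM_1\subseteq \hat L$, which gives $[LM_1M:M]=[LM_1:M_1]$. The rest is degree bookkeeping.

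The argument could also be phrased group-theoretically. Write $G=\Gal(\Omega/K)$ for the Galois closure $\Omega$ of $LM/K$, and let $H,H'$ correspond to $L,M$. One replaces $H'$ by $H'\,\mathrm{core}_G(H)$ and checks that $H\cdot H'=G$ is still violated. The field version above is more transparent, so I would present that one.
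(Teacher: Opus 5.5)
Your proof is correct and is essentially the paper's argument: intersect $M$ with the Galois closure $\hat L$ of $L$, show that $L$ and $M\cap\hat L$ still fail to be linearly disjoint, conclude $M\subseteq\hat L$ by minimality, and then apply the same argument with $L$ and $M$ exchanged. The only difference is that the paper cites Remark~\ref{rem:genFriarg} for the non-disjointness of $L$ and $M\cap\hat L$, whereas you prove it directly from $\Gal(\hat LM/M)\cong\Gal(\hat L/\hat L\cap M)$ and a degree count.
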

\begin{rem}\label{rem:genFriarg} 
Letting $\Omega/K$ be the Galois closure of $L/K$, since $L/K$ and $M/K$ are not linearly disjoint, so are $L/K$ and $M\cap \Omega/K$, see e.g.\footnote{The proof is given for curves. Replacing these by their function fields, it clearly applies to general fields.} \cite[Lemma\ 2.11]{KN}. 
\end{rem}

\begin{proof}[Proof of Lemma \ref{lem:friedarg}]
Suppose on the contrary $M$ is not contained in the Galois closure $\Omega$ of $L/K$. Since $L$ and $M$ are not linearly disjoint over $K$, so are $L$ and $M\cap \Omega$ by Remark \ref{rem:genFriarg}. Since $M_1:=M\cap \Omega$ is properly contained in $M$, the fact that $M_1/K$ and $L/K$ are not linearly disjoint contradicts the minimal reducibility of $M/K$ and $L/K$. 
\end{proof}

\begin{cor}\label{cor:min-red-pol}
If $f,g\in k[X]\setminus k$ is a minimally reducible  pair, then  $\deg(f)=\deg(g)$ and the branch loci of $f$ and $g$ coincide. 
\end{cor}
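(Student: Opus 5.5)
By Lemma \ref{lem:friedarg}, applied to the root fields $L=k(x)$ and $M=k(y)$ of $f(X)-t$ and $g(Y)-t$ over $K=k(t)$, the two extensions have a common Galois closure $\Omega$. The hypothesis that the pair is minimally reducible as polynomials must first be translated into minimal reducibility of the extensions. By Lüroth's theorem, every intermediate field $K\subset L_1\subset L$ has the form $L_1=k(f_2(x))$ with $f=f_1\circ f_2$ for rational functions $f_1,f_2$. Since $\infty$ is totally ramified in $k(x)/k(t)$, these rational functions can be normalized to polynomials, because a decomposition of a polynomial into rational functions is, up to a linear fractional change, a decomposition into polynomials. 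So proper subfields of $L$ and $M$ correspond exactly to right factors removed in the polynomial sense, and the hypothesis gives linear disjointness of every such pair $(L_1,M_1)$. Thus the Galois closures of $k(x)/k(t)$ and $k(y)/k(t)$ coincide and equal $\Omega$.

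The branch loci are then read off from $\Omega$. A place $t\mapsto a$ of $k(t)$, or of $\oline k(t)$ after base change, is ramified in $k(x)/k(t)$ if and only if it is ramified in its Galois closure $\Omega/k(t)$, and likewise for $k(y)$. Hence $f$ and $g$ have the same finite branch points; over $\oline k$ one notes that $\Omega\oline k$ is the common Galois closure of $\oline k(x)$ and $\oline k(y)$ over $\oline k(t)$. The point $\infty$ is branched for both maps in any case, since $\deg f,\deg g>1$.

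For the degrees, I would use the ramification at $\infty$. The place $\infty$ is totally ramified in $k(x)/k(t)$ with index $\deg f$, so Remark \ref{rem:abh}(1) shows that the ramification index of any place of $\Omega$ above $\infty$ equals $\deg f$. Indeed, $\Omega$ is the compositum of the conjugates of $k(x)$, and by Abhyankar's lemma the index is the lcm of the indices in those conjugates, each of which is $\deg f$. Running the same argument with $k(y)$, whose Galois closure is also $\Omega$, the same index equals $\deg g$. Therefore $\deg f=\deg g$.

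The only slightly delicate step is the first one: ensuring that polynomial minimal reducibility implies field-theoretic minimal reducibility. I expect this to be the main obstacle, and the standard Lüroth argument above handles it.
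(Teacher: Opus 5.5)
Your proof is correct and follows the paper's argument. You take the common Galois closure from Lemma \ref{lem:friedarg}, then use Remark \ref{rem:abh}(1) (Abhyankar) to see that the ramification index of $\infty$ in $\Omega$ equals both $\deg(f)$ and $\deg(g)$, and that the branch loci of $k(x)$, $k(y)$ and $\Omega$ over $k(t)$ coincide. Your L\"uroth step, which translates polynomial minimal reducibility into the field-theoretic notion, validly justifies something the paper uses implicitly.
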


\begin{proof}
Letting $k(x),k(y)$ be root fields of $f(X)-t,g(X)-t$, resp., their Galois closures $\Omega$ coincide by Lemma \ref{lem:friedarg}. By Remark  \ref{rem:abh}(1), the ramification index $\deg(f)$ (resp., $\deg(g)$) of $\infty$ in $k(x)/k(t)$ (resp., $k(y)/k(t)$) coincides with that in $\Omega/k(t)$. Thus   $\deg(f)=\deg(g)$. Moreover, the remark implies the branch loci of $k(x)/k(t)$, $k(y)/k(t)$ and $\Omega/k(t)$ coincide. 
\end{proof}

In particular, one deduces the examples appearing in Theorems \ref{thm:DLS-C} and \ref{thm:DLS}: 
\begin{exam}\label{ex:equtoT4-T4}
Let $f,g:\mathbb{P}^1_k\rightarrow \mathbb{P}^1_k$ be polynomial maps having the same Galois closure $\tilde{X}$, which has genus $0$ and monodromy group $G=D_4$. 
Then one of the following holds:\\
(1) $\Mon_{\overline{k}}(f)=D_4$. In this case, $\{\mu \circ f \circ \eta_1,\mu \circ g \circ \eta_2\}=\{D_{4,\alpha},-\frac{1}{4}D_{4,2\alpha}\}$ for some $\mu, \eta_1,\eta_2 \in k[X]$ of degree $1$ and $\alpha\in k^\times$, with $D_{4,\alpha} = X^4-4\alpha X^2+2\alpha^2$ the degree-$4$ Dickson polynomial with parameter $\alpha$. 
Indeed, it is well known  that $f$ must be of the given shape, see, e.g., \cite[Theorem 5.2]{GMS}. For $g$, one notes that:
\begin{equation}\label{eq:facD4al}
D_{4,\alpha}(X) + \frac{1}{4}D_{4,2\alpha}(Y) = \left(X^2 - XY + \frac{1}{2}Y^2 - 2\alpha\right)\left(X^2 + XY + \frac{1}{2}Y^2 - 2\alpha\right),
\end{equation} so that $D_{4,\alpha}$ and $-(1/4)D_{4,2\alpha}$ is a {minimally reducible} pair. 
Letting $s$ be a reflection and $r$ a rotation in $D_4$, note that  $\langle rs\rangle$ is the only maximal subgroup of $D_4$ which is intransitive on $D_4/\langle s\rangle$ and does not contain a conjugate of $\langle s\rangle$. 
It follows that the unique minimally reducible pair  of degree-$4$ maps with Galois closure $\tilde X$ is $\tilde X/\langle s\rangle\to \tilde X/G$ and $\tilde X/\langle rs\rangle\to \tilde X/G$, up to automorphisms of $\tilde X/G, \tilde X/\langle rs\rangle, \tilde X/\langle s\rangle\cong \mP^1_k$ given by degree-$1$ polynomials. Thus, up to swapping $f,g$, they are related to $D_{4,\alpha}$, $-(1/4)D_{4,2\alpha}$, resp., by  linear $\mu,\eta,\eta'\in k[X]$ as claimed. 
\\
(2) $\Mon_{\overline{k}}(f)=C_4$. In this case, $\{\mu \circ f \circ \eta_1,\mu \circ g \circ \eta_2\}=\{X^4,-\frac{1}{4}X^4\}$ for some $\mu, \eta_1,\eta_2 \in k[X]$ of degree $1$.
Indeed, the Riemann-Hurwitz formula shows instantly that $f$ has exactly one finite branch point $P$, whose ramification index equals $4$. Since $k$-conjugates of branch points of $f\in k[X]$ are also branch points, this shows that $P$ is $k$-rational.
Thus, $f$ is linearly related to $X^4$ over $k$. The pair of projections $\tilde X\to \tilde X/\langle s\rangle, \tilde X\to \tilde X/\langle rs\rangle$ is again the unique minimally reducible pair of degree $4$ polynomial maps with Galois closure $\tilde X$, up to  composing with automorphisms of $\tilde X/G,\tilde X/\langle rs\rangle, \tilde X/\langle s\rangle\cong \mP^1_k$ given by degree-$1$ polynomials. 
Since $X^4, -\frac{1}{4}X^4$ is  a minimally reducible pair of monodromy $D_4$ and geometric monodromy $C_4$, up to swapping $f,g$, it follows as in (1) that these are related  to $X^4,-\frac{1}{4}X^4$ by linear polynomails as claimed. Moreover, in this case one necessarily has $\sqrt{-1}\notin k$, since otherwise the polynomial $X^4-t$ would have Galois group $C_4$, rather than $D_4$, over $k(t)$.  
Note that $X^4=D_{4,0}$, whence this case may be in fact be seen as a degeneration of the family in Case (1).
\end{exam}


The following proposition restricts right factors of minimally reducible pairs. 
\begin{prop}\label{lem:norittstepp}
\label{cor:right-unique}
\label{lem:coprime}
Let $f,g\in k(X)\setminus k$  be a minimally reducible pair with  decompositions $f=f_1\circ f_2$, $g=g_1\circ g_2$ such that $n=\deg(f_2)\geq 2$, and $m=\deg(g_2)\geq 2$. 
\begin{enumerate}[leftmargin=*,label={(\arabic*)},ref=(\arabic*)]
\item 
If $n\neq 4$ is composite, then $f_2$ is not linearly related over $\oline k$ to $X^n,T_n$. 
\label{case:norittstepp_1}
\item  If $n=4$,  then $\Mon_k(f_2)$ is noncyclic. 
\label{case:norittstepp_2}
\item The integers $m,n$ have a nontrivial common divisor. 
\label{case:norittstepp_3}
\item 
When $f,g\in k[X]$ are polynomials and $f_2,g_2\in k[X]$ are indecomposable, then furthermore $m=n$ 
or $\{m,n\}=\{2,4\}$. In particular, $f_2$ is right-unique.
\label{case:norittstepp_4}
\item If $f\in k[X]$ is a polynomial, $g\in k(X)$ is a Siegel function, and $f_2,g_2$ are indecomposable, then $f_2$ is right-unique. 
\label{case:norittstepp_5}
\end{enumerate}
\end{prop}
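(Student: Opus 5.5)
We would work with the root fields: let $x,y$ be roots of $f(X)-t$, $g(Y)-t$ over $k(t)$, put $w=f_2(x)$, $z=g_2(y)$, and let $\Omega$ be the common Galois closure from Lemma \ref{lem:friedarg}. The basic tool is minimality: $k(w)$ and $k(y)$ are linearly disjoint over $k(t)$, and so are $k(x)$ and $k(z)$. By Remark \ref{rem:Gal-closure} this lets us pass to the base field $k(w)$. There $k(x)/k(w)$ is a root field of $f_2(X)-w$, and reducibility of the pair forces $k(x)$ and $k(w,y)$ to fail to be linearly disjoint over $k(w)$. By a degree count (see below), $[k(x,y):k(w,y)]<n$, i.e. $f_2(X)-w$ becomes reducible over $k(w,y)=k(y)(f_2^{-1}(f(y)))$, the field generated by one $f_1$-preimage.

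For \ref{case:norittstepp_3} we compare degrees. If $\gcd(m,n)=1$, then $[k(x,y):k(t)]$ is divisible by $[k(x):k(t)]$ and by $[k(x,z)\cap\dots]$. Concretely, $[k(x,z):k(t)]=[k(x):k(t)][k(z):k(t)]$ by minimality. Hence $[k(x,y):k(x)]$ is divisible by $[k(x,z):k(x)]=[k(z):k(t)]$, and similarly with $w$. A tower-law computation then shows that $[k(x,y):k(t)]$ is divisible by $\deg f\cdot \deg g/\gcd$-type quantities. With $\gcd(m,n)=1$ this gives $[k(x,y):k(t)]=\deg f\deg g$, so the pair would be irreducible, a contradiction. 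More precisely, $[k(x,y):k(w,y)]$ divides $n$, $[k(x,y):k(x,z)]$ divides $m$, and both quotients equal $[k(x,y):k(t)]$ divided by something. Comparing $[k(x,y):k(w,y)]\cdot \deg(g)\deg(f_1)=[k(x,y):k(x,z)]\cdot \deg(f)\deg(g_1)$ gives $n\,e_1 = m\,e_2$-type relations that force $e_1=n$ when $\gcd(m,n)=1$.

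For \ref{case:norittstepp_1} and \ref{case:norittstepp_2}, suppose $f_2$ is linearly related to $X^n$ or $T_n$ over $\oline k$. Then $\Mon_k(f_2)\le\AGL_1(n)$, and the reducibility over $k(w,y)$ says that $\Gal(\Omega/k(w,y))$ maps to an intransitive subgroup $U$ of $\Mon_k(f_2)$. By Lemma \ref{lem:onAGL1}(2), $U$ already projects to an intransitive subgroup of $\AGL_1(d)$ for some $d\mid n$ with $d$ prime or $4$. Write $f_2=a\circ b$ with $\deg a=d$; the factor $a$ is again of cyclic/Chebyshev type, and this decomposition is defined over $k$ by Fried--MacRae. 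Then $k(b(x))$ and $k(y)$ are not linearly disjoint, which contradicts minimality whenever $d<n$. This settles \ref{case:norittstepp_1}. For \ref{case:norittstepp_2}, a cyclic $\Mon_k(f_2)$ of order $4$ has the unique intransitive index-$2$ quotient structure, which again descends to the degree-$2$ factor.

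For \ref{case:norittstepp_4}, Corollary \ref{cor:min-red-pol} gives $\deg f=\deg g$, and we use Lemma \ref{lem:blocknontriv} together with the inertia at $\infty$ (Lemma \ref{lem:transitive-normal}). The inertia group at $\infty$ is cyclic of order $\deg f$, and it is contained in $\Gal(\Omega/k(w,y))$ after restriction, by Remark \ref{rem:abh}(2). It therefore acts on the fibre of $f_2$ over $w$ with orbit length $n/\gcd$ determined by $m$. Intransitivity then forces the block structures to match: the point stabilisers of $\Mon(f_2)$ and $\Mon(g_2)$ inside the block kernels must have common orbit lengths, giving $m=n$ except in the $S_4$/$D_4$ situation $\{2,4\}$. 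Right-uniqueness follows from Corollary \ref{cor:ritt}. If $f_2$ were not right-unique, a Ritt move $u\circ f_2=v'\circ u'$ would supply another indecomposable right factor $u'$ of $f$, of degree coprime to $n$, and applying \ref{case:norittstepp_3}/\ref{case:norittstepp_4} to $u'$ and $g_2$ gives a contradiction.

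For \ref{case:norittstepp_5}, the same argument is run with $g$ a Siegel function, using Lemma \ref{lem:siegel_by_cyclic} to control $\infty$ on the $g$ side. The \textbf{main obstacle} is \ref{case:norittstepp_4}: turning intransitivity on the $f_2$-fibre into the degree equality $m=n$ requires careful use of the cyclic inertia at $\infty$, and the exceptional case $\{2,4\}$ must be isolated via Example \ref{ex:equtoT4-T4}.
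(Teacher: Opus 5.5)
Your arguments for \ref{case:norittstepp_1}--\ref{case:norittstepp_3} are essentially the paper's, up to two small repairs. In \ref{case:norittstepp_3}, $e_1=[k(x,y):k(w,y)]$ is only $\le n$, not a divisor of $n$; still, $e_1m=e_2n$ with $\gcd(m,n)=1$ forces $e_1=n$. In \ref{case:norittstepp_1}, the intermediate field should come from the Galois correspondence, since Fried--MacRae concerns polynomials and $f_2$ is only rational.

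The first genuine gap is \ref{case:norittstepp_4}.
\begin{itemize}
\item Your claim that the inertia group at $\infty$, of order $\deg f$, lies in $\Gal(\Omega/k(w,y))$ is false. By Abhyankar's lemma the places of $k(w,y)$ over $\infty$ already have ramification index $\lcm(\deg f_1,\deg g)=\deg f$, which is the full index in $\Omega$. So $\Omega/k(w,y)$ is unramified over $\infty$, and inertia gives no information on the $f_2$-fibre there. The subsequent ``block structures must match'' is not an argument.
\item Your sketch never handles nonsolvable $\Mon_k(f_2)$ or $\Mon_k(g_2)$. For example, $f_2$ quadratic and $g_2$ of degree $6$ with monodromy $S_6$ is compatible with \ref{case:norittstepp_3}. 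If both groups are solvable, \ref{case:norittstepp_3} together with Proposition \ref{prop:indec_poly} immediately gives $m=n$ or $\{m,n\}=\{2,4\}$; Example \ref{ex:equtoT4-T4} plays no role.
\item The missing idea is a socle comparison. Let $\Omega_z$ be the Galois closure of $k(z)/k(t)$. The Galois group of the Galois closure of $\Omega_z(x)/\Omega_z(w)$ is a normal subgroup of the primitive group $\Mon_k(f_2)$. It is nontrivial: for a prime $p\mid\gcd(m,n)$, $\infty$ has ramification index dividing $\deg f/p$ in $\Omega_z(w)$, but $\deg f$ in $k(x)$. Hence it contains the transitive socle, and symmetrically for $g_2$. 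Non-linear-disjointness of $k(x,z)/k(w,z)$ and $k(w,y)/k(w,z)$ then forces the socles to match.
\item In the nonsolvable case you still need M\"uller's classification, hence CFSG, to exclude noncoprime $m\ne n$. The only candidate is $\{6,10\}$ with socle $A_6$, which fails because index-$10$ subgroups of $A_6$ are transitive on $6$ points.
\end{itemize}

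The second gap is \ref{case:norittstepp_5}, where ``the same argument'' is not available. For a Siegel function $g$ you have neither $\deg f=\deg g$ nor a polynomial-type classification of $\Mon_k(g_2)$; indeed $\deg f=5$, $\deg g=10$ occurs in Lemma \ref{lem:poly_nonaffinesiegel}. So there is no analogue of $m=n$ to feed into your Ritt-move step. The paper argues quite differently:
\begin{itemize}
\item If $f_2$ is not right-unique, Corollary \ref{cor:ritt} gives another indecomposable right factor $f_2'$ of $f$ of degree coprime to $n$.
\item By \ref{case:norittstepp_3}, every indecomposable right factor of $g$ then has non-prime-power degree, hence nonsolvable primitive monodromy by Galois' theorem.
\item Pakovich's Ritt theorem for Siegel functions makes $g_2$ right-unique.
\item Proposition \ref{prop:siegel_indec} with \cite[Proposition 3.3]{KNR24} then shows that $\Mon_k(g)=\Mon_k(f)$ has a unique minimal normal subgroup, a power of a nonabelian simple group.
\item By Ritt's theorems one of $f_2,f_2'$, say $f_2$, has solvable monodromy. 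Then $\ker(\Mon_k(f)\to\Mon_k(f_1))$ is a nontrivial solvable normal subgroup, which is a contradiction.
\end{itemize}
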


\begin{proof}
Let $x,y$ be roots of $f(X)-t,g(X)-t$, respectively. Let $\Omega$ be the common Galois closure of $k(x)/k(t)$ and $k(y)/k(t)$ given by Lemma \ref{lem:friedarg}. Set $u=f_2(x)$ and $v=g_2(y)$. Let 
$\tilde \Omega$ be the Galois closure of $k(x)/k(u)$.
Since $k(y,u)/k(u)$ and $k(x)/k(u)$ are not linearly disjoint, $L=k(y,u)\cap \tilde \Omega$ is not linearly disjoint from $k(x)$ over $k(u)$ by Remark \ref{rem:genFriarg}.\\
(1) Suppose on the contrary that $f_2$ is linearly related over $\overline{k}$ to $T_n$ or $X^n$, so that $\Mon_k(f_2)\le \AGL_1(n)$. 
Since $n\neq 4$ is composite, Lemma \ref{lem:onAGL1} and the Galois correspondence imply 
there exists an intermediate field $k(u) \subsetneq k(u') \subsetneq k(x)$ such that $L/k(u)$ and hence $k(y,u)/k(u)$ are not linearly disjoint from  $k(u')/k(u)$, contradicting the minimal reducibility of $f$ and $g$. \\
(2) Assume on the contrary $\Mon_k(f_2)=C_4$. Then $\tilde \Omega=k(x)$ and $L\subseteq k(x)$. Since $L$ and $k(x)$ are not linearly disjoint over $k(u)$, the extension $L/k(u)$ is nontrivial, and hence contains the quadratic subextension $k(u')/k(u)$ of $k(x)/k(u)$, contradicting the minimal reducibility of $f$ and $g$. \\
(3)  Since $f,g$ are minimally reducible,   $k(x)$ and $k(v)$ are linearly disjoint over $k(t)$, so that $[k(x,v):k(u,v)]=[k(x):k(u)]=n$ and similarly $[k(u,y):k(u,v)]=[k(y):k(v)]=m$. If $m$ and $n$ are coprime, the extensions $k(x,v)/k(u,v)$ and $k(u,y)/k(u,v)$ are  linearly disjoint, contradicting the minimal  reducibility of the pair $f,g$.  \\
(4) 
We first claim that $\soc(\Mon_k(f_2))$ and $\soc(\Mon_k(g_2))$ coincide. Denote by $\Omega_u$  and $\Omega_v$ the Galois closure of $k(u)/k(t)$ and of $k(v)/k(t)$, respectively. 
Then the ramification index of $t\mapsto \infty$ in the Galois closure of $k(u,v)/k(t)$,
and hence in particular in $\Omega_v(u)$ divides $\deg(f)/p$ by Remark \ref{rem:abh} (Abhyankar's lemma), for some prime divisor $p$ of $\gcd(m,n)$, which exists by (3). In particular, $\Omega_{v}\tilde \Omega/\Omega_{v}(u)$ must be nontrivial, and hence its Galois group identifies via restriction with a nontrivial normal subgroup of the primitive group $\Mon_k(f_2)$. Thus it must contain the transitive subgroup $\soc(\Mon_k(f_2))$. In particular, this transitivity implies that $\Omega_{v}\tilde \Omega/\Omega_v(u)$ is the Galois closure of $\Omega_v(x)/\Omega_v(u)$. In the same way, the Galois group of the Galois closure of $\Omega_u(y)/\Omega_u(v)$ must contain $\soc(\Mon_k(g_2))$. 
Since $k(x,v)/k(u,v)$ and $k(u,y)/k(u,v)$ are not linearly disjoint,  this implies that $\Mon_k(g_2)$ must contain 
a copy of $\soc(\Mon_k(f_2))$ and vice versa, yielding the  claim by Proposition \ref{prop:indec_poly}.

If one (and hence both) of $\Mon_k(f_2) ,\Mon_k(g_2)$ is solvable, it follows  that $m=n$ or $\{m,n\}=\{2,4\}$ by Proposition \ref{prop:indec_poly}. For nonsolvable $\Mon_k(f_2)$, we may use the classification in \cite{Mul2} to verify that the only instance of noncoprime degrees $m\ne n$ for which $\Mon_k(f_2), \Mon_k(g_2)$ share their nonabelian composition factor is $\{m,n\}=\{6,10\}$ (and $\soc(\Mon_k(f_2))\cong A_6$). But note that the index-$10$ subgroups of $A_6$ still act transitively on $6$ points, contradicting linear disjointness of $k(x,v)/k(u,v)$ and $k(u,y)/k(u,v)$.
Finally, assume $f_2$ is not right-unique. Then   $f=f_1'\circ f_2'$ for an indecomposable $f_2'\in k[X]\setminus k$ of degree $n'$ coprime to $n=\deg(f_2)$ by Corollary \ref{cor:ritt}. But clearly $m=n$ or $\{m,n\}=\{2,4\}$ implies that $m$ is coprime to $n'$, a contradiction. \\
(5) Suppose on the contrary $f_2$ is not right-unique.  Then there must be another right factor $f_2'$ of $f$ of degree coprime to $\deg(f_2)$ by Corollary \ref{cor:ritt}. It follows from (3) that an indecomposable right factor $g_2$ of $g$ must have a primitive monodromy group of degree not a prime power, which is thus necessarily nonsolvable by Galois' theorem \cite[Theorem 14.3.17]{Cox}). 
The same would hold for {\it any} indecomposable right factor of $g$. Using an analog of Ritt's theorem for Siegel functions  \cite[Theorem 1.1]{Pak}, it follows that $g_2$ must be right-unique, since in particular a Ritt move for such functions cannot involve two indecomposables with nonsolvable monodromy group. Since $\deg(g_2)$ is not a prime power, it follows from Proposition \ref{prop:siegel_indec} that $\Mon_k(g_2)$ is either almost-simple or of product-type, and in particular has a unique and nonsolvable minimal normal subgroup $S^d$ for a simple group $S$. Together with the right-uniqueness of $g_2$, it follows from \cite[Proposition 3.3]{KNR24} that $\soc(\ker(\Mon_k(g)\to\Mon_k(g_1)))$ is the unique minimal normal subgroup of $\Mon_k(g)=\Mon_k(f)$, and is a power of the simple group $S$. But by Ritt's theorems one of $f_2$ and $f_2'$ (say, $f_2$) has solvable monodromy group, meaning that $\ker(\Mon_k(f)\to\Mon_k(f_1))$ is a nontrivial solvable normal subgroup of $\Mon_k(f)$, a contradiction.
\end{proof}
The CFSG is used in parts (4)-(5) of the above proof. The proof of  Theorems \ref{thm:DLS-C} and \ref{thm:DLS} apply part (4) only when $\Mon(f)=\Mon(g)$ is solvable, where the CFSG is not applied. 

Finally, we will also need the following description of right quadratic factors\footnote{The assertion can be strengthened as follows: If $f_r$ is a composition of $r$ quadratics, then $g$ has a right factor which is a composition of $r-1$ quadratics. However, we shall not make use of such an extension.}: 
\begin{lem}\label{lem:2-power}
Let $f,g\in k[X]$ be a minimally reducible pair such that $f,g$ have  right composition factors $f_r,g_s$, resp., where $f_r$ is a composition of two quadratic polynomials. Then $g$ has a quadratic right composition  factor. 
\end{lem}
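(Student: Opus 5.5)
Let $x,y$ be roots of $f(X)-t$, $g(Y)-t$, let $\Omega$ be their common Galois closure (Lemma \ref{lem:friedarg}), and set $G=\Gal(\Omega/k(t))$, $H=\Gal(\Omega/k(x))$, $H'=\Gal(\Omega/k(y))$. Write $f_r=a\circ b$ with $a,b$ quadratic, and put $u=b(x)$, $w=f_r(x)$, so that $k(w)\subset k(u)\subset k(x)$ is a tower of quadratic extensions. By Corollary \ref{cor:min-red-pol}, $\deg g=\deg f$ is even. Let $g_s$ be an indecomposable right factor of $g$, and set $v=g_s(y)$.

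\textbf{Step 1: reduce to the solvable case.} We apply Proposition \ref{lem:norittstepp}\ref{case:norittstepp_4} to the pair $f=(h\circ a)\circ b$ and $g=g'\circ g_s$, where $b$ is indecomposable. It gives $\deg g_s=\deg b=2$, which is what we want, or $\{\deg g_s,\deg b\}=\{2,4\}$, i.e.\ $\deg g_s=4$. So it suffices to rule out the case in which every indecomposable right factor of $g$ has degree $4$. In that case $\Mon_k(g_s)$ is primitive of degree $4$, hence $A_4$ or $S_4$ (up to an indecomposable $C_4$ or $D_4$, which are imprimitive and so do not occur). To stay CFSG-free, I would instead argue directly: $\soc(\Mon_k(g_s))=V_4$ and $\soc(\Mon_k(b))=C_2$ must coincide up to the mutual containment shown in that proof. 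Hence $\Mon_k(g_s)\in\{A_4,S_4\}$.

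\textbf{Step 2: the linear-disjointness contradiction.} By minimality, $k(x)$ and $k(v)$ are linearly disjoint over $k(t)$, and so are $k(u)$ and $k(y)$. Thus $[k(x,v):k(w,v)]=4$ and $[k(w,y):k(w,v)]=4$. The first extension is a tower of two quadratics; the second has Galois group containing $\Mon$ of $g_s$ over $k(w,v)$. Apply Lemma \ref{lem:transitive-normal}: $\infty$ is totally ramified in $k(y)/k(v)$, and the relevant part is unramified by Remark \ref{rem:abh}(1). So the Galois group $N$ of $k(w,y)$ over $k(w,v)$ is a normal subgroup of $\Mon_k(g_s)$ containing a transitive cyclic subgroup. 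If $\Mon_k(g_s)=S_4$, then $N=S_4$ by part \ref{case:transitive-normal_2}. If $\Mon_k(g_s)=A_4$, there is no $4$-cycle, so the case $A_4$ is impossible already for a polynomial $g_s$ with a totally ramified point at $\infty$. So $N=S_4$, which is primitive with point stabilizer $S_3$ maximal. Reducibility means the extensions $k(x,v)$ and $k(w,y)$ over $k(w,v)$ are not linearly disjoint. By Remark \ref{rem:genFriarg}, $k(x,v)$ then meets the $S_4$-Galois closure of $k(w,y)$ nontrivially. But $k(x,v)/k(w,v)$ has a quadratic subfield $k(u,v)$, and the only quadratic subfield of an $S_4$-closure is the sign field. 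Minimality forces $k(u,v)$ to be linearly disjoint from $k(y,w)$, whereas $k(x,v)$ is not. I would then do the group computation: a subgroup of index $4$ in $S_4$ with an intermediate subgroup of index $2$, i.e.\ $D_4$-type point stabilizers inside $A_4$ or $S_4$. Index-$4$ subgroups of $S_4$ containing an index-$2$ overgroup are not conjugate to $S_3$ and act transitively on $S_4/S_3$. This contradicts non-disjointness unless $k(u,v)$ itself is already non-disjoint from $k(w,y)$, which contradicts minimality.

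\textbf{Main obstacle.} The hardest step is making Step 2 rigorous: identifying exactly the Galois group of the compositum over $k(w,v)$, which lies in a subgroup of $D_4\times S_4$ (fibered over common quotients), and checking the transitivity of $H$ on the cosets of $H'$ there. I expect a short case check over the fibered products $D_4\times_Q S_4$ with $Q\in\{1,C_2\}$. The check must show that a point stabilizer of the $D_4$-side which is intransitive on the $S_4$-side forces an intransitive overgroup of index $2$, i.e.\ reducibility already at the level of $a$ rather than $f_r$. That contradicts minimal reducibility.
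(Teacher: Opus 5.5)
Your proposal is correct in substance. It rests on the same incompatibility as the paper's proof: a tower of two quadratics only produces $2$-groups, which cannot absorb a full $S_4$. The difference is that you exploit this locally rather than globally.

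The paper never looks at the compositum over $k(w,v)$ (in your notation $w=f_r(x)$). Instead it argues as follows:
\begin{itemize}
\item Let $\Omega'$ be the Galois closure of $k(w,v)/k(t)$. By Abhyankar, $\Omega'$ is unramified over $\infty$ above $k(w)$ and above $k(v)$.
\item The common closure $\Omega$ from Lemma~\ref{lem:friedarg} is generated over the closure $\Omega_w\subseteq\Omega'$ of $k(w)/k(t)$ by roots of $f_r(X)-w'$. Hence $\Gal(\Omega/\Omega')$ is a $2$-group.
\item But $\Omega'(y)\subseteq\Omega$ has Galois closure with group $S_4$ over $\Omega'$, by Lemma~\ref{lem:transitive-normal}\ref{case:transitive-normal_2}. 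Contradiction.
\end{itemize}
You instead use only the non-disjointness of $L_1=k(x,v)$ and $L_2=k(w,y)$ over $K_0=k(w,v)$, and never need that $f$ and $g$ share a Galois closure. The paper's version is shorter once Lemma~\ref{lem:friedarg} is available. Yours is self-contained at a single level of the towers.

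A few points need tightening.
\begin{itemize}
\item To get $\Gal(\Omega_c/K_0)=S_4$ for the closure $\Omega_c$ of $L_2/K_0$, you must apply Lemma~\ref{lem:transitive-normal} with a Galois extension of $k(v)$, such as $\Omega'$, and then restrict. The extension $k(w,v)/k(v)$ is not Galois, so your $N$ need not be normal in $\Mon_k(g_s)$, although the conclusion $S_4$ still holds.
\item $\Mon_k(g_s)=S_4$ follows at once from Proposition~\ref{prop:indec_poly}, so the garbled socle remark in Step~1 can simply be dropped.
\item Your sentence about ``index-$4$ subgroups of $S_4$ containing an index-$2$ overgroup, not conjugate to $S_3$'' is vacuous. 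Every index-$4$ subgroup of $S_4$ is a point stabilizer $S_3$, and $S_3$ is maximal in $S_4$.
\item The case check you flag as the main obstacle is immediate. It never requires passing to an overgroup or invoking minimality at the level of $a$. Let $\Omega_1$ be the closure of $L_1/K_0$, a $2$-group extension. Then $L_1\cap\Omega_c\subseteq\Omega_1\cap\Omega_c$, and the group of the latter over $K_0$ is a common quotient of a $2$-group and $S_4$, hence has order at most $2$. So $L_1\cap\Omega_c$ is either $K_0$ or the sign field $\Omega_c^{A_4}$. Both are linearly disjoint from $L_2$, because $A_4$ is transitive on $4$ points. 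This contradicts Remark~\ref{rem:genFriarg}.
\item Equivalently, in the fibre product $P\times_Q S_4$ with $P=\Gal(\Omega_1/K_0)$ and $Q\in\{1,C_2\}$, the stabilizer of $L_1$ projects onto a subgroup of $S_4$ containing $A_4$. It is therefore transitive on $S_4/S_3$.
\end{itemize}
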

\begin{proof}
Let $\Omega$ be the common Galois closure of $k(x)/k(t)$ and $k(y)/k(t)$ given by Lemma \ref{lem:friedarg}, where $x,y$ are roots of $f(X)-t,g(X)-t$, respectively.
Let $u=f_r(x)$, and $v=g_s(y)$ for an indecomposable right factor $g_s$ of $g$. Note that since $f,g$ are minimally reducible, by Proposition \ref{lem:coprime}\ref{case:norittstepp_4}, $g_s$ is right-unique and $\deg(g_s)$ is $2$ or $4$. Assume on the contrary $\deg(g_s)=4$ and hence $\Mon_k(g_s)=S_4$.

Let $\Omega'$ be the Galois closure of $k(u,v)/k(t)$. Since both $k(u)/k(t)$ and $k(v)/k(t)$ are totally ramified over $\infty$ of the same degree $m:=\deg(f)/4$, this is also  the ramification index of $\infty$ in $\Omega'/k(t)$ by Remark \ref{rem:abh}. Moreover, since $\infty$  is unramified in $\Omega'/k(u)$ and $\Omega'/k(v)$, the remark shows  these extensions are linearly disjoint from $k(x)$ and $k(y)$, resp. 

Let $\Omega_u\subseteq \Omega'$ be the Galois closure of $k(u)/k(t)$. Since  $\Omega$ is  the splitting fields of $f_r(X)-u'$ over $\Omega_u$ when $u'$ ranges over conjugates of $u$ (similarly to Lemma \ref{lem:subdirect} below), 
$\Omega/\Omega_u$ is a compositum of $2$-extensions, and hence  $\Gal(\Omega/\Omega_u)$ and $\Gal(\Omega/\Omega')$ are  $2$-groups. This contradicts that the Galois closure of $\Omega'(y)/\Omega'$ has group $S_4$ by Lemma \ref{lem:transitive-normal}\ref{case:transitive-normal_2}.
\end{proof}

\subsection{Subdirect powers}\label{sec:subdirect}
Say $G\leq \prod_{i\in I}H_i$ is a subdirect product, for some groups $H_i$ indexed by a set $I$, if the projection of $G$ to $H_i$ is surjective for each $i\in I$.
We say that $K$ is a subdirect power of $\oline K$ if it is a subdirect product of $\prod_{i\in I}\oline K$ for some $I$. 
\begin{lem}\label{lem:subdirect}
Let $f=g\circ h:\mX\ra \mZ$ be a composition of maps. Let $K$ be the kernel of the projection $\Mon_k(f)\to \Mon_k(g)$. Let $M\leq K$ be normal in $\Mon_k(f)$, and $\oline M$ its image in $\Mon_k(h)$ via the action on some fixed block. Then $M$ is a subdirect power of $\oline M$. 
\end{lem}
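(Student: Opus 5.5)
The plan is to set up notation in the Galois closure and use the transitivity of $\Mon_k(f)$ on blocks to move any one projection to any other.

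Let $\Omega$ be the Galois closure of $f$ and $G=\Mon_k(f)=\Gal(\Omega/k(\mZ))$. The blocks $b\in\mathcal B$ correspond to the conjugates $F(b)$ of $k(\mathcal Y)$. For each block $b$, let $G_b$ be the block stabilizer, and let $\pi_b\colon G_b\to \Mon_k(h)$ be the map given by the action of $G_b$ on the fiber of $h$ over $b$. Equivalently, $\pi_b$ is restriction to the Galois closure $\Omega_b$ of $\Omega^{G_{x}}/F(b)$, where $x$ lies in the block $b$. Restriction of $\pi_b$ to $\Gal(\Omega/F(b))$ is surjective onto $\Gal(\Omega_b/F(b))\cong\Mon_k(h)$, since $\Omega_b\subseteq\Omega$. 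The kernel $K=\Gal(\Omega/\Omega')$ lies in every $G_b$. The block action embeds $K$ into $\prod_{b\in\mathcal B}\Mon_k(h)$, with coordinates $\pi_b|_K$ after fixing identifications of each fiber with a standard copy.

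Fix a block $b_0$ and set $\oline M=\pi_{b_0}(M)$. Now take another block $b$. By transitivity of $G$ on $\mathcal B$, choose $\sigma\in G$ with $\sigma b_0=b$. Conjugation by $\sigma$ maps $G_{b_0}$ onto $G_b$ and carries the fiber over $b_0$ to the fiber over $b$. Hence, after choosing the identification of the fiber over $b$ with the standard one via $\sigma$, we get $\pi_b(\sigma m\sigma^{-1})=\pi_{b_0}(m)$ for all $m\in G_{b_0}$. Because $M\lhd G$, we have $\sigma M\sigma^{-1}=M$. Therefore $\pi_b(M)=\pi_{b_0}(M)=\oline M$ as subgroups of $\Mon_k(h)$, up to the conjugation inherent in the chosen identification. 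So with compatible identifications (those induced by coset representatives $\sigma_b$), $M$ embeds in $\prod_b\oline M$ with every projection surjective. This is exactly the statement that $M$ is a subdirect power of $\oline M$.

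The main point needing care is well-definedness: the image $\oline M$ should be independent of the choice of block and of identification up to conjugacy in $\Mon_k(h)$. Changing the identification changes $\pi_b$ by an inner automorphism coming from $G_b$, and $M$ is normal in $G$, so its image is stable. Beyond that the argument is formal. The field-theoretic description is consistent with the group-theoretic one because $\Mon_k(h)$ is recovered as the image of $G_b$ acting on the block $b$, which is the standard identification recalled in \S\ref{ss:polydecom}.
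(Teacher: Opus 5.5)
Your proof is correct and is essentially the paper's argument: normality of $M$ together with transitivity of $\Mon_k(f)$ on the blocks lets conjugation by a block-moving element identify each block image of $M$ with $\oline M$. Faithfulness of the action on the union of the blocks then embeds $M$ into $\prod_b \oline M$ with surjective projections.
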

In particular, the lemma applies to $M=K$ and $M=\soc(K)$. 
\begin{proof}
Let $t$ be the  generic point of $\mZ$.
For every $y\in g^{-1}(t)$, let $\mathcal{X}_{y}$ be the set of $x\in f^{-1}(t)$ with $h(x)=y$. 
Let $y_0\in g^{-1}(t)$ be our fixed block. 
Since $M$ is normal in $G=\Mon_k(f)$ and the latter is transitive on $g^{-1}(t)$, for every $y\in g^{-1}(t)$, there is an element of $G$ that sends $y_0$ to $y$, and hence conjugation by that element maps $\oline M$ isomorphically to the image $M_{y}$ of the action of $M$ on $\mX_{y}$. 
Since  $M$ acts faithfully on  $\bigcup_{y\in \mY}\mX_{y}$, it embedds in $\prod_{y\in\mY}M_{y}\cong \prod_{y\in\mY}\oline M$ with surjective projections, so that $M$ is a subdirect power of $\oline M$. 
\end{proof}

The following is our key consequence to relating the $1$-step and $2$-step kernels. 
\begin{cor}\label{cor:3-useful} 
Let $f=f_1\circ f_2\circ f_3: \mX\to \mW$ be a composition of (nonconstant) maps $f_1,f_2,f_3$. 
Let $K,N$ be the kernels in the action of $G:=\Mon_k(f)$ through $\Mon_k(f_1\circ f_2), \Mon_k(f_1)$, resp., and $M\leq N$ be normal in $G$.  
Let $\oline K,\oline M, \oline{K\cap M}$ be the images of $K,M,K\cap M$, resp., in the action of $G_2:=\Mon_k(f_2\circ f_3)$ on a fixed block, 
and $K_2$ the kernel of the action of $G_2$ through $\Mon_k(f_2)$.  
Then $M/(K\cap M)$ is a subdirect power of $\oline M/(K_2\cap \oline M)$. In particular, $\oline M/\oline{K\cap  M}$ is a quotient of a subdirect power of $\oline M/(K_2\cap \oline M)$. 
\end{cor}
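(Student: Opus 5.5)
The plan is to deduce the statement from Lemma \ref{lem:subdirect}, applied to the decomposition $f=f_1\circ (f_2\circ f_3)$ rather than to $f=(f_1\circ f_2)\circ f_3$. Write $h:=f_2\circ f_3$, so that $N=\ker(G\to \Mon_k(f_1))$ is the block kernel for $f=f_1\circ h$. The normal subgroups $M$ and $K\cap M$ of $G$ both lie in $N$. Index the blocks by the roots $y\in f_1^{-1}(t)$ and let $\mX_y$ be the fiber of $h$ over $y$. Then $M$ embeds into $\prod_y M_y$, where $M_y$ is the image of $M$ acting on $\mX_y$, and each $M_y$ is conjugate in $G$ to $\oline M=M_{y_0}$. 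The conjugating elements are elements of $G$ mapping $y_0$ to $y$, and they transport the structure of $G_2$ on the block $y_0$ to the corresponding structure on the block $y$.

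Next I will identify the kernel of $M\to M_y\to M_y/(K_{2,y}\cap M_y)$, where $K_{2,y}$ is the conjugate of $K_2$. The point is that $K_2\cap\oline M$ consists of those elements of $\oline M$ that act trivially on the intermediate roots $f_3(x)$, $x\in\mX_{y_0}$, that is, on the fiber of $f_2$ over $y_0$. An element of $M$ therefore lies in $K\cap M$ if and only if, for every $y$, its image in $M_y$ acts trivially on the roots of $f_2$ over $y$. Here one uses that the roots of $f_1\circ f_2$ are the disjoint union of the fibers $f_2^{-1}(y)$. Hence the combined map $M\to\prod_y M_y/(K_{2,y}\cap M_y)$ has kernel exactly $K\cap M$. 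Each coordinate projection is surjective because $M\to M_y$ is surjective. This exhibits $M/(K\cap M)$ as a subdirect product of copies of $\oline M/(K_2\cap\oline M)$, which is the first claim.

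For the ``in particular'', I will project $M/(K\cap M)$ onto the coordinate $y_0$. The image of $M$ in $\oline M$ is $\oline M$ itself, and the image of $K\cap M$ is $\oline{K\cap M}$. So $\oline M/\oline{K\cap M}$ is the quotient of $M/(K\cap M)$ by the image of the kernel of $M\to\oline M$, and is therefore a quotient of a subdirect power of $\oline M/(K_2\cap\oline M)$.

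The main obstacle is the careful identification of the action of $G_2$ on a block with the action of the block stabilizer of $G$ on $\mX_{y_0}$. One must check that $\Mon_k(f_2\circ f_3)$, defined over $k(y_0)$, is the image of the stabilizer of $y_0$, which is exactly the setup of \S\ref{ss:polydecom}. One must also check that $K_2$ corresponds to the elements that are trivial on $f_2^{-1}(y_0)$. Once this dictionary is set, the argument is bookkeeping.
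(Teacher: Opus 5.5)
Your proof is correct and is essentially the paper's argument. The paper applies Lemma \ref{lem:subdirect} directly to the $G/K$-action on $(f_1\circ f_2)^{-1}(t)$, with blocks $f_2^{-1}(z)$, and identifies the image of $M$ on the fixed block with $\oline M/(K_2\cap\oline M)$; your coordinatewise quotient of the embedding $M\hookrightarrow\prod_y M_y$ does exactly this, and, as in the paper, the ``in particular'' comes from the surjection $M/(K\cap M)\to\oline M/\oline{K\cap M}$ induced by restricting to the block (not from a coordinate projection of the subdirect product, which lands in $\oline M/(K_2\cap\oline M)$).
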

\begin{proof}
Let $t$ be the  generic point of $\mW$, and $z_0\in f_1^{-1}(t)$ our fixed block. 
Then $K$ is the kernel of the action of $G$ on $(f_1\circ f_2)^{-1}(t)$, and $N$ is the kernel of the action of $G$ on $f_1^{-1}(t)$. 
Thus, $N/K$ and its subgroup $M/(K\cap M)$ act faithfully on $(f_1\circ f_2)^{-1}(t)$. 
Write $(f_1\circ f_2)^{-1}(t)$ as a disjoint union of blocks $\mathcal Y_z:=f_2^{-1}(z)$, $z\in f_1^{-1}(t)$. 
Since the kernel of the action of $\oline M$ on 
each block $\mathcal Y_z$ is $\oline M\cap K_2$, the image of the action of $M$ on  $\mathcal Y_{z_0}$ is $\oline M/(K_2 \cap \oline M)$. 
Thus, $M/(K\cap M)$ is a subdirect power of $\oline M/(K_2\cap \oline M)$  by Lemma \ref{lem:subdirect} applied to the $G/K$-action on $(f_1\circ f_2)^{-1}(t)$. The last conclusion follows since $\oline M/\oline{K\cap M}$ is a quotient of $M/(K\cap M)$.  
\end{proof}

The following describes the socle of  subdirect powers of $\AGL_1(p)$ and $\AGL_2(2)\cong S_4$. 
\begin{lem}\label{lem:interandsoc}
Let $p$ be a prime, and suppose either $(r,p)=(2,2)$ or $r=1$. Let $K \leq \AGL_r(p)^n$ be a subgroup whose component projections contain $C_{p}$ if $r=1$ (resp.\ the $A_4$ copy in $S_4\cong \AGL_2(2)$ if $r=2$). Then $\soc(K)=K \cap C_p^{rn}$.
\end{lem}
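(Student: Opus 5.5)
Write $A:=C_p^{rn}$ for the translation subgroup of $\AGL_r(p)^n$ and $K_0:=K\cap A$. The plan is to prove two inclusions: first that $K_0\le\soc(K)$, and then that every minimal normal subgroup of $K$ lies in $A$. The second gives $\soc(K)\le A\cap K=K_0$. Write $\pi_i:K\to\AGL_r(p)$ for the $i$-th projection, and $A_i\cong C_p^r$ for the translation subgroup of the $i$-th factor.

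\emph{Step 1: $K_0\le\soc(K)$.} $K_0$ is an elementary abelian normal subgroup of $K$, so it is an $\F_pK$-module. I will show that this module is semisimple. It then decomposes as a direct sum of irreducible $\F_p K$-submodules, each of which is a minimal normal subgroup of $K$, and so $K_0\le\soc(K)$.

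To get semisimplicity, note that $K_0$ embeds $K$-equivariantly into $\bigoplus_i A_i$, where $K$ acts on $A_i$ through $\pi_i(K)$. Each $A_i$ is an irreducible, hence simple, module for $\pi_i(K)$, and this is where the hypothesis enters. For $r=1$, $A_i=C_p$ is one-dimensional. For $r=2$, $\pi_i(K)\supseteq A_4$, which acts irreducibly on $V_4\cong\F_2^2$ because a $3$-cycle acts fixed-point-freely on the nonzero vectors. Hence $\bigoplus_i A_i$ is a semisimple $\F_pK$-module, and so is its submodule $K_0$.

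\emph{Step 2: every minimal normal subgroup $N$ of $K$ lies in $A$.} Suppose instead that $N\not\le A$. Then some projection $\pi_i(N)$ is not contained in $A_i$. Since $\pi_i(N)$ is normal in $\pi_i(K)$, the first task is to show that $\pi_i(N)$ contains all of $A_i$.
\begin{itemize}
\item For $r=1$: $\pi_i(K)\ge C_p$. If $g\in\pi_i(N)$ is a non-translation, then $g$ fixes a point, and its commutator with a translation $\tau$ is a nontrivial translation. That commutator lies in $\pi_i(N)$, so $\pi_i(N)\ge C_p$.
\item For $r=2$: $A_4\le\pi_i(K)\le S_4$, and the normal subgroups of such a group not contained in $V_4$ are $A_4$ and $S_4$, both of which contain $V_4$.
\end{itemize}
Now lift this back to $N$. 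Take $g\in N$ with $\pi_i(g)\notin A_i$, and take $a\in K_0$ with $\pi_i(a)\neq 1$. The commutator $[g,a]$ lies in $N\cap K_0$. To see that it is nontrivial, note that the non-translation $\pi_i(g)$ acts on $A_i$ without fixing every vector: for $r=1$ it is a nontrivial scalar, and for $r=2$ it is an element of order $3$ or a transposition. So after choosing $a$ suitably, $[g,a]\neq1$. Minimality of $N$ then gives $N=N\cap K_0\le A$, which contradicts $N\not\le A$.

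\emph{Main obstacle.} The lifting step needs $K_0$ to project onto enough of $A_i$, so that a suitable $a$ exists. If $\pi_i(K_0)=1$, the argument above fails. In that case $K_0$ is killed by $\pi_i$, and I would first try to handle it directly: $N$ is then a minimal normal subgroup meeting $K_0$ trivially, so $[N,K_0]=1$. I would then need to argue that the projections force $\pi_i(N)$ to commute with $\pi_i(\text{something})$ in a way that contradicts the fixed-point-freeness of the action on $A_i$. The naive version of this fails for general subdirect-type groups, because $N$ could project to a non-translation in every coordinate.

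The fallback, which is what I expect the intended proof to be, is an iterated-commutator argument. Take $g\in N$ and $h\in K$ with $\pi_i(h)$ a translation such that $[\pi_i(g),\pi_i(h)]\neq 1$; such an $h$ exists because $\pi_i(K)\supseteq A_i$ and $\pi_i(g)$ moves some vector. Then $c=[g,h]\in N$ and $\pi_i(c)\in A_i\setminus\{1\}$. Next, for $r=2$ use commutators with preimages of $3$-cycles to push the remaining coordinates of $c$ down into the translations. This produces a nontrivial element of $N\cap A$, and minimality of $N$ concludes $N\le A$.

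Ensuring that this second commutator stays nontrivial while all coordinates become translations is the step I expect to need the most care.
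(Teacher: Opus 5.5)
Your Step 1 is correct. It is the paper's argument for the inclusion $K\cap C_p^{rn}\subseteq\soc(K)$: $K_0$ is a submodule of the semisimple module $\bigoplus_i A_i$, hence semisimple, hence a product of minimal normal subgroups.

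The gap is in Step 2, where you suspected it. The commutator $[g,a]$ is nontrivial only if $\pi_i(K_0)$ contains a vector moved by $\pi_i(g)$, and you never show that $\pi_i(K_0)\neq 1$. The iterated-commutator fallback is only a sketch. Nothing in it guarantees that you reach a nontrivial element all of whose coordinates are translations. As written, the inclusion $\soc(K)\subseteq K\cap C_p^{rn}$ is therefore not proved.

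The missing idea is solvability, which you never use. $K$ is a subgroup of $\AGL_1(p)^n$ or of $S_4^n$, so it is solvable, and every minimal normal subgroup $N$ of $K$ is elementary abelian. Each $\pi_i(N)$ is then an \emph{abelian} normal subgroup of $\pi_i(K)$, and your own two bullet points finish the argument with no lifting needed. If $\pi_i(N)\not\le A_i$, then $\pi_i(N)$ contains all of $A_i$ together with a non-translation. A non-translation has nontrivial linear part, so it fails to centralize $A_i$, and $\pi_i(N)$ would be nonabelian. Hence $\pi_i(N)\le A_i$ for every $i$, that is, $N\le A$. This is how the paper argues.

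Alternatively, your lifting argument can be rescued by proving $\pi_i(K_0)=A_i$; the case you worry about never occurs.
\begin{itemize}
\item For $r=1$: $[K:K_0]$ divides $(p-1)^n$, so $K_0$ is the normal Sylow $p$-subgroup of $K$. The $p$-part of any $k\in K$ with $\pi_i(k)$ a nontrivial translation therefore lies in $K_0$ and has the same $i$-th coordinate.
\item For $r=2$: choose $z\in[K,K]\le A_4^n$ with $\pi_i(z)\in V_4\setminus\{1\}$. Its $2$-part lies in $V_4^n$, because the $2$-elements of $A_4$ are exactly $V_4$, and its $i$-th coordinate is still $\pi_i(z)$.
\end{itemize}
In both cases $\pi_i(K_0)$ is a nontrivial normal subgroup of $\pi_i(K)$ contained in $A_i$, hence equal to $A_i$, and your commutator $[g,a]$ then goes through.
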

\begin{proof}
Since $K$ is solvable by our assumptions on $r,p$, $\soc(K)$ is the direct product of elementary abelian $q$-subgroups $H_q$ for various primes $q$. Since the component projections of each $H_q$ are abelian normal subgroups of a transitive subgroup of $\AGL_r(p)$, and hence contained in $C_p^r$, it follows that $H_q=1$ for every $q \neq p$. Thus, $\soc(K)\subseteq C_p^{rn} \cap K$. 

For the reverse inclusion, note that 
$C_p^{n}$ is a semisimple module under the action of $K/(C_p^{rn} \cap K)$ if $r=1$. Similarly if 
$r=2$, since the Klein group $V_4\cong C_2^2$ forms an irreducible module under the $A_4$-action and since the component projection contains $A_4$, the module $V_4^n$ is semisimple under the action of $K/(V_4^{n} \cap K)$. 
Thus in both cases, the submodule $K\cap C_p^{rn}$ is also semisimple, and hence its submodule $\soc(K)$ has a complement $N$, which in particular is normal in $K$. By the definition of the socle, this implies $N=1$.
\end{proof}
The smallest subdirect powers are the diagonal ones:  say that a subdirect power $K\leq H^n$ is {\it diagonal} if each of its $n$ projections $K\to H$ is injective. The following remark shows that the diagonaliy of $\soc(K)$ implies that of $K$. 
\begin{rem}\label{rem:diag}
Suppose $K\leq \oline K^m$ is a subdirect power, and $\soc(K)\leq\soc(\oline K)^m$ is diagonal. Then $K\leq\oline K^m$ is diagonal. Indeed, the kernel $C$ of the coordinate projection $K\to \oline K$ is a normal subgroup which is disjoint from the diagonal subgroup $\soc(K)$, and hence $C=1$ by definition of $\soc(K)$, as desired. 
\end{rem}
If $G$ acts on a power $A^d$ of a group $A$ 
by acting transitively on the $d$ copies of $A$ (supported only on one of the coordinates)\footnote{Equivalently, the action of $G$ on $A^d$ has a partition that corresponds to the action of $G$ on $\{1,\ldots,d\}$.}. 
Then there is a unique diagonal $G$-invariant copy of $A$ in $A^d$ which we refer to as {\it the diagonal subgroup of $A^d$ with respect to the $G$-action}. 

{On the other extreme, if $A=C_q$ and $A^d$ is a permutation $G$-module, the following submodule is among the largest submodules.  Throughout the paper, we denote by $I_d(q)$ the $G$-submodule of all $(x_1,\ldots,x_d)\in A^d$ with sum $\sum_{i=1}^dx_i=0$. }
\subsection{Wreath products of affine groups}
For polynomial maps $f,g \in k[X]$ of degrees $d$ and $q$, resp.,  with $q$ prime and $\Mon_k(g)$ solvable we 
have $\Mon_k(f\circ g)\leq A\wr B$, where $C_q\leq A\leq 
\AGL_1(q)$ and $B\leq S_d$ (see \S\ref{ss:polydecom}); more 
precisely, we take $A=\Mon_k(g)$ and $B=\Mon_k(f)$. This section presents preliminary results on  subgroups of $A\wr B$ for such~$A,B$. 

We  describe normalizers and commutator subgroups  of transitive subgroups of $G:=\AGL_1(q)\wr S_d$ as follows. For $C,H\leq G$, let $\mathcal N_C(H)$ denote the elements of $C$ normalizing $H$, and $[C,H]\leq G$ the commutator subgroup, generated by commutators $[c,h],c\in C,h\in H$.  

\begin{prop}\label{lem:normalofHatmostp_new}
Let $V\le S_d$ be a group containing a cyclic transitive subgroup, let $q$ be a prime,  let $G\le  C_q\wr V$ be a subgroup surjecting onto $V$, and
$H=C_q^d \cap G\trianglelefteq G$. 
Let $U\le C_q^d$ be such that $H\le U\le \mathcal{N}_{C_{q}^d}(G)$. 
Then the following hold:
\begin{enumerate}[leftmargin=*,label={(\arabic*)},ref=(\arabic*)]
\item $H$ is an $\mathbb{F}_q[V]$-submodule of $U$ under the action $\overline{\sigma}\cdot u:=\sigma u\sigma^{-1}$, where $G\to V$, $\sigma \mapsto \overline{\sigma}$ is the projection.
\label{case:normalofHatmostp_new_1}
\item Letting 
$Z:={\rm diag}(C_q^d) \cap U$, we have an embedding of modules $U/Z \hookrightarrow H$. 
\label{case:normalofHatmostp_new_2}
\item In particular, $[U:H]$ divides $q$.
\label{case:normalofHatmostp_new_3}
\end{enumerate}
\end{prop}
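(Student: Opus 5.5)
The plan is to work entirely inside the abelian normal subgroup $C_q^d\trianglelefteq C_q\wr V$. I write it additively as the permutation module $\mathbb F_q^d$ on which $V$ permutes coordinates. Conjugation by an element of $C_q\wr V$ acts on $C_q^d$ through its image in $V$, because $C_q^d$ is abelian and so acts trivially on itself.

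For \ref{case:normalofHatmostp_new_1}, I note that $H=C_q^d\cap G$ is normal in $G$, since $C_q^d$ is normal in the wreath product. Hence conjugation by $\sigma\in G$ preserves $H$, and the action depends only on $\overline\sigma\in V$. Because $G\to V$ is surjective, this gives an $\mathbb F_q[V]$-action $\overline\sigma\cdot u=\sigma u\sigma^{-1}$.

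Next I check that $\mathcal N_{C_q^d}(G)$ is itself $V$-stable. If $u$ normalizes $G$ and $\sigma\in G$, then $\sigma u\sigma^{-1}$ also normalizes $G$ and still lies in $C_q^d$. So $H$ is a submodule of $\mathcal N_{C_q^d}(G)$, and hence of any $V$-stable $U$ in between. If $U$ is not assumed $V$-stable, I would read \ref{case:normalofHatmostp_new_1} as a statement about the ambient module $\mathcal N_{C_q^d}(G)$, and the later parts only need $U$ as a group.

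For \ref{case:normalofHatmostp_new_2}, I fix a cyclic transitive subgroup $\langle c\rangle\le V$ and a lift $\sigma\in G$ of $c$. I define $\varphi:U\to C_q^d$ by $\varphi(u)=[u,\sigma]=u\,(\sigma u^{-1}\sigma^{-1})$, which additively is $u\mapsto (1-c)\cdot u$.
\begin{itemize}
\item The map $\varphi$ is a homomorphism, since $C_q^d$ is abelian.
\item Its image lies in $H$: since $u$ normalizes $G$, we have $u\sigma u^{-1}\in G$, so $[u,\sigma]\in G\cap C_q^d=H$.
\item Its kernel is $Z$: $(1-c)u=0$ means $u$ is constant on $\langle c\rangle$-orbits of coordinates. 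By transitivity of $\langle c\rangle$ this means $u\in{\rm diag}(C_q^d)$, so $\ker\varphi={\rm diag}(C_q^d)\cap U=Z$.
\end{itemize}
This gives an injection $U/Z\hookrightarrow H$ of groups, and it is $\langle c\rangle$-equivariant.

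The main obstacle is upgrading this to an embedding of $\mathbb F_q[V]$-modules, since $1-c$ commutes only with the centralizer of $c$. I would first try to show that $\varphi$ becomes equivariant after replacing it by the sum or compatible family of the maps $1-\tau c\tau^{-1}$, for $\tau\in V$, all of which land in $H$ by the same argument. Alternatively, I would use that $Z$ and ${\rm diag}$ are trivial submodules and compare composition factors.

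Part \ref{case:normalofHatmostp_new_3} only needs the group-level injection. From $|U|/|Z|\le |H|$ and $|Z|\le|{\rm diag}(C_q^d)|=q$ we get $[U:H]\le q$. Since $U$ is a $q$-group containing $H$, the index $[U:H]$ is a power of $q$, and therefore it divides $q$.
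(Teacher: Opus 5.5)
Your parts (2) and (3) follow the paper's argument: the same map $u\mapsto[u,\sigma]=(1-\overline{\sigma})u$ for a lift $\sigma\in G$ of a generator of a cyclic transitive subgroup, kernel $Z$ by transitivity, and (3) from $|Z|\le q$. The genuine gap is in (1). You never show that $U$ is stable under conjugation by $G$, and instead propose to reread (1) as a statement about $\mathcal{N}_{C_q^d}(G)$. That does not prove what is claimed. The same issue affects (2): calling $\varphi$ $\langle c\rangle$-equivariant on $U/Z$ presupposes that $U$ is at least $\langle c\rangle$-stable. The missing step is already in your part (2). The argument that $[u,\sigma]\in H$ uses only that $u$ normalizes $G$, so it holds for \emph{every} $\sigma\in G$. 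Hence $\sigma u^{-1}\sigma^{-1}\in u^{-1}H$, so $\sigma U\sigma^{-1}\subseteq UH=U$ because $H\le U$. This is exactly how the paper proves (1): it derives $\sigma u\sigma^{-1}u^{-1}\in H$ and concludes $\sigma U\sigma^{-1}\subseteq HU=U$. So every $U$ in the interval is automatically $V$-stable, with $V$ acting trivially on $U/H$. No reinterpretation is needed, and your detour showing that $\mathcal{N}_{C_q^d}(G)$ is $V$-stable is unnecessary.

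Your ``main obstacle'' in (2) is not one: once $U$ is known to be $\langle\overline{\sigma}\rangle$-stable, you are done. The paper's proof only asserts that $\varphi$ is a homomorphism of $\mathbb{F}_q[\langle\overline{\sigma}\rangle]$-modules, and that is all that is used later; e.g.\ Corollary~\ref{cor:normalofHatmostp_doneright}(2) only uses the action of $\overline{\sigma}$. The upgrade to an $\mathbb{F}_q[V]$-embedding is false in general, so neither of your proposed repairs can succeed. Take $d=q=3$, $V=S_3$, $G=I_3(3)\rtimes S_3\le C_3\wr S_3$ with $S_3$ the coordinate-permuting complement, and $U=C_3^3$. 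This $U$ normalizes $G$ because $u-\overline{\sigma}u\in I_3(3)$ for all $u$. Then $H=I_3(3)$ has socle the diagonal, a trivial module. On the other hand, $U/Z=\mathbb{F}_3^3/{\rm diag}$ has socle $I_3(3)/{\rm diag}$, on which transpositions act by $-1$. Both are $2$-dimensional, so there is no $S_3$-equivariant embedding $U/Z\hookrightarrow H$.
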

\begin{proof}
Regarding 1), let $u\in U$ and $\sigma\in G$ be arbitrary. Then $u\sigma^{-1} u^{-1}\in G$. By considering projection modulo $C_q^d$, it follows that $u\sigma^{-1}u^{-1} = \sigma^{-1}h$ for some $h\in C_q^d\cap G = H$.
Hence, 
\begin{equation}
\label{eq:commutator}
\sigma u \sigma^{-1}u^{-1}\in H. 
\end{equation} Thus $\sigma U\sigma^{-1} \subseteq H U = U$, showing i).

Regarding 2),  choose $\sigma \in G$ as a preimage of a generator of a cyclic transitive subgroup of $V$. Note that due to \eqref{eq:commutator}, $H$ contains the $\mathbb{F}_q[\oline \sigma]$-module $[U,\langle \sigma \rangle]$.
Note that when identifying the $\mathbb{F}_q[\oline \sigma]$-permutation module $C_q^d$ with $\mathbb F_q[\oline \sigma]$,  the commutator
$[u,\sigma]=u\sigma u^{-1}\sigma^{-1}=u(\oline\sigma\cdot u^{-1})\in U$ identifies with 
$   u-\overline{\sigma}u=(1-\overline{\sigma})u\in \mathbb F_q[\oline \sigma]$.
Now $\varphi: U\to [U,\langle\sigma\rangle]$, $u\mapsto [u,\sigma]$ is a homomorphism of submodules of the $\mathbb{F}_q[\oline\sigma]$-permutation module, whose kernel $\ker(\varphi)=\ker(1-\overline{\sigma})$ is clearly the diagonal $Z$.

Finally, 3) is an immediate consequence of 2), since $\dim(Z)\le 1$.
\end{proof}

\begin{cor}\label{cor:normalofHatmostp_doneright}
Let $q$ be a prime and $d$ an integer. Let $G\le \AGL_1(q)\wr S_d$, 
and let $N\trianglelefteq G$ be a normal subgroup. Set $G_q:=G\cap C_q^d$ and $N_q:=N\cap C_q^d$. Assume that $N$ contains an element $\sigma$ whose image $\oline \sigma$ in $S_d$ 
is a $d$-cycle, and  $\sigma^d\in N_q$. Then the following hold:
\begin{enumerate}[leftmargin=*,label={(\arabic*)},ref=(\arabic*)]
\item $G_q/Z$ embeds into $N_q$, where $Z\le G_q$ denotes  the intersection of  $G_q$ with the diagonal under the action of $\sigma$. In particular $[G_q:N_q]\divides q$.
\label{case:normalofHatmostp_doneright_1}
\item If $\sigma$ is a $qd$-cycle and $(d,q)=1$, then $G_q=N_q$.
\label{case:normalofHatmostp_doneright_2}
\end{enumerate}
\end{cor}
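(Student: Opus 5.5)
The plan is to rerun the commutator argument of Proposition \ref{lem:normalofHatmostp_new}. The new ingredient is that $\sigma$ now acts on $C_q^d$ by a \emph{monomial} rather than a permutation matrix, and the point is that this does not change the argument.

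\emph{Setup.} Write $\sigma=(a_1,\dots,a_d;\oline\sigma)$ with $a_i\in \AGL_1(q)$. The group $C_q^d$ is normal in $\AGL_1(q)\wr S_d$, so conjugation by $\sigma$ preserves $C_q^d$. In additive notation it acts as an $\mathbb F_q$-linear map $T$: it permutes the coordinates according to the $d$-cycle $\oline\sigma$ and multiplies each coordinate by the linear part in $\mathbb F_q^\times$ of the relevant $a_i$. Since $\sigma\in G$, $T$ preserves $G_q$, and since $N\trianglelefteq G$, it also preserves $N_q$.

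\emph{Step 1: the map into $N_q$.} For $u\in G_q$, the commutator $[u,\sigma]=u\sigma u^{-1}\sigma^{-1}$ lies in $N$ because $N$ is normal and $\sigma\in N$. It also lies in $C_q^d$. Hence $[u,\sigma]\in N_q$, and additively it equals $(1-T)u$. So $\varphi:=1-T$ restricts to an $\mathbb F_q$-linear map $G_q\to N_q$, with kernel $Z=\{u\in G_q: Tu=u\}$, the diagonal under the action of $\sigma$. Therefore $G_q/Z$ embeds into $N_q$.

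\emph{Step 2: $\dim Z\le 1$, giving part (1).} Because $\oline\sigma$ is a $d$-cycle and $T$ is monomial, a $T$-fixed vector is determined by any one of its coordinates: the fixed-point equations propagate that coordinate around the cycle. Hence $|Z|\le q$. Combined with $N_q\le G_q$ and $|G_q|\le |Z|\,|N_q|$, this gives $[G_q:N_q]\divides q$.

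\emph{Step 3: part (2), and the main obstacle.} The hard part is showing that the possible index-$q$ gap does not occur, i.e.\ that $Z\subseteq N_q$ and $Z\cap\varphi(G_q)=1$.

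\begin{enumerate}
\item[(a)] Suppose $\sigma$ is a $qd$-cycle. Then $\sigma^d$ acts on each of the $d$ blocks as a $q$-cycle. So $\sigma^d$ is an element of $C_q^d$ with all coordinates nonzero, and by hypothesis it lies in $N_q$.
\item[(b)] $\sigma^d$ commutes with $\sigma$, so $\sigma^d\in Z$. Since $\sigma^d\neq 1$ and $|Z|\le q$, we get $Z=\langle\sigma^d\rangle\subseteq N_q$.
\item[(c)] Conjugation by $\sigma^d\in C_q^d$ is trivial on the abelian group $C_q^d$, so $T^d=1$. Since $\gcd(d,q)=1$, the cyclic group $\langle T\rangle$ has order prime to $q$. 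By Maschke's theorem $G_q$ is a semisimple $\mathbb F_q\langle T\rangle$-module, and hence $G_q=\ker(1-T)\oplus\operatorname{im}(1-T)=Z\oplus\varphi(G_q)$.
\end{enumerate}

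Both summands $Z$ and $\varphi(G_q)$ lie in $N_q$, so $G_q\subseteq N_q\subseteq G_q$, which gives $G_q=N_q$.
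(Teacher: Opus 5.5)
Your proof is correct and follows essentially the paper's route. Part (1) is the commutator map $u\mapsto[u,\sigma]$ with kernel $Z$ from Proposition~\ref{lem:normalofHatmostp_new}, which the paper invokes after checking that $G_q$ normalizes $\langle N_q,\sigma\rangle$. Part (2) combines semisimplicity for $\gcd(d,q)=1$ with the fact that $\sigma^d$ spans the fixed space and lies in $N_q$; you phrase this as a direct splitting $G_q=Z\oplus(1-T)G_q$, whereas the paper argues by contradiction via the multiplicity of the trivial module. You also explicitly handle the monomial (rather than permutation) action of $\sigma$ on $C_q^d$, using $T^d=1$. The paper leaves this implicit when quoting the proposition, which is stated for subgroups of $C_q\wr V$.
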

\begin{proof}
To see 1), note that $\langle N_q,\sigma\rangle\trianglelefteq \langle G_q,\sigma\rangle$, and hence $G_q\le \mathcal{N}_{C_q^{d}}(\langle N_q, \sigma \rangle)$:     
Indeed, for any $a\in G_q$, we have $a\sigma a^{-1}\in N$ since $\sigma\in N\lhd G$, so $a\sigma a^{-1}\sigma^{-1}\in N\cap C_q^{d}=N_q$, and thus $a\sigma a^{-1}\in \langle N_q, \sigma\rangle$. Now 1) follows  from Proposition \ref{lem:normalofHatmostp_new}.

For 2), note that if $(d,q)=1$, the $\mathbb{F}_q[\oline \sigma]$-module $G_q$ is semisimple and, being a submodule of the permutation module $\mathbb F_q[\oline\sigma]$, contains at most one copy of the trivial module. Thus, if $N_q\subsetneq G_q$, then by 1), we must have $\dim(Z)=1$, and by the above, $G_q/Z$ contains no trivial module under the action of $\oline \sigma$. However, when $\sigma$ is a $dq$-cycle, the $d$-th power of $\sigma$ generates such a one-dimensional trivial module inside $N_q$, making $G_q/Z\cong N_q$ impossible. Therefore $G_q=N_q$. 
\end{proof}

The following consequence is useful when dealing with permutation groups of prime power degree.
\begin{cor}
\label{cor:nilp_class}
Let $q$ be a prime and $d=q^r$ a power of $q$ ($r\ge 1$). Let $G\le \AGL_1(q)\wr S_d$ and let $\sigma\in G$ be an element mapping to a $d$-cycle under the projection $G\to S_d$. Set $G_q:=G\cap C_q^d\triangleleft G$ and let $e$ be the rank of the elementary-abelian group $G_q$. Let $Q\le G$ be a $q$-Sylow subgroup. Then $Q$ has nilpotency class at least $e$.
\end{cor}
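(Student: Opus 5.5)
Here is the plan. I will exhibit a nontrivial iterated commutator of length $e$ in $Q$. The commutator is built from an element of $G_q$ and $e-1$ copies of a suitable $q$-element $\sigma$. The point is that $G_q$ is a module for a cyclic $q$-group acting through one Jordan block, so it is uniserial.

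First, I normalize $\sigma$ and place everything inside $Q$. Write the order of $\sigma$ as $q^a m$ with $\gcd(m,q)=1$. Then $\sigma^m$ still maps to a $d$-cycle, since $m$ is coprime to $d=q^r$. So I may replace $\sigma$ by $\sigma^m$ and assume $\sigma$ is a $q$-element. By Sylow's theorem some conjugate of $\sigma$ lies in $Q$, and that conjugate still maps to a $d$-cycle. So I may assume $\sigma\in Q$. Since $G_q\trianglelefteq G$ is a normal $q$-subgroup, it lies in every Sylow $q$-subgroup, so $G_q\le Q$.

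Next, I determine the conjugation action of $\sigma$ on $C_q^d$. Because the coordinates of $\sigma$ lie in $\AGL_1(q)$, conjugation by $\sigma$ acts on $C_q^d\cong\mathbb F_q^d$ by a monomial matrix. Its permutation pattern is the $d$-cycle $\oline\sigma$, and its entries are scalars $c_1,\dots,c_d\in\mathbb F_q^\times$. The element $\sigma^d$ lies in $\AGL_1(q)^d$ and is a $q$-element, so it lies in $C_q^d$ and acts trivially by conjugation. Hence $c_1\cdots c_d=1$. A diagonal change of basis then turns the monomial matrix into the permutation matrix of $\oline\sigma$. So, as an $\mathbb F_q[\langle\sigma\rangle]$-module, $C_q^d\cong\mathbb F_q[x]/(x^d-1)=\mathbb F_q[x]/(x-1)^{q^r}$. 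This module is a single Jordan block for the unipotent operator $x$, hence uniserial, with unique submodule of each dimension $j$, namely $(x-1)^{d-j}\mathbb F_q[x]/(x-1)^d$.

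The subgroup $G_q$ is normalized by $\sigma$, so it is a submodule of rank $e$, and therefore $G_q=(x-1)^{d-e}\mathbb F_q[x]/(x-1)^d$. In particular $(x-1)^{e-1}$ does not annihilate $G_q$. The commutator $[u,\sigma]$ for $u\in G_q$ corresponds, up to sign convention, to $(1-x)u$, as in the proof of Proposition \ref{lem:normalofHatmostp_new}. So for a generator $u$ of $G_q$, the element $[u,\sigma,\dots,\sigma]$ with $e-1$ copies of $\sigma$ is nontrivial. This element lies in the $e$-th term $\gamma_e(Q)$ of the lower central series. Thus $\gamma_e(Q)\neq 1$, and $Q$ has nilpotency class at least $e$.

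I expect the main obstacle to be the conjugation-action step. One has to check carefully that the $\AGL_1(q)$-scalars twisting the permutation action are harmless, which is what the $q$-element reduction and the identity $c_1\cdots c_d=1$ ensure. Without that, the module might fail to be a single Jordan block.
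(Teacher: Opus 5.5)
Your proof is correct, but it argues more directly than the paper does.

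\textbf{Common starting point.} Both arguments begin with the same reduction:
\begin{itemize}
\item replace $\sigma$ by a $q$-element that still maps to a $d$-cycle;
\item conjugate it into $Q$;
\item note that $G_q\le Q$.
\end{itemize}

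\textbf{The paper's route.} The paper walks down the lower central series $Q_0=Q$, $Q_i=[Q_{i-1},Q]$ and forms $\widehat Q_i=\langle Q_i\cap G_q,\sigma\rangle$. It uses Corollary \ref{cor:normalofHatmostp_doneright} to show that each step $\widehat Q_{i-1}\to\widehat Q_i$ shrinks the $G_q$-part by a factor dividing $q$. So after $e-2$ steps $Q_{e-2}\cap G_q$ is still nondiagonal, and one more commutator is nontrivial. This is a counting argument: it never exhibits a specific element of $Q_{e-1}$.

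\textbf{Your route.} You compute the $\langle\sigma\rangle$-module structure outright.
\begin{itemize}
\item The identity $c_1\cdots c_d=1$, forced by $\sigma^d\in C_q^d$, lets you untwist the $\AGL_1(q)$-scalars.
\item Hence $C_q^d\cong\mathbb F_q[x]/(x-1)^{q^r}$, which is uniserial.
\item So $G_q$ is the ideal generated by $(x-1)^{d-e}$.
\item Therefore $[u,\sigma,\dots,\sigma]$, with $e-1$ copies of $\sigma$, is an explicit nontrivial element of $\gamma_e(Q)$.
\end{itemize}

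\textbf{Comparison.} The underlying fact is the same in both: $\sigma$ has at most a one-dimensional fixed space on $C_q^d$. This is exactly what drives Proposition \ref{lem:normalofHatmostp_new}\ref{case:normalofHatmostp_new_2}. Your version has several advantages:
\begin{itemize}
\item it is self-contained;
\item it produces a concrete witness;
\item it shows that already $\langle G_q,\sigma\rangle$ has class at least $e$;
\item it handles the scalar twist explicitly, whereas the paper absorbs it into the phrase ``diagonal under the action of $\sigma$''.
\end{itemize}
The paper's version instead reuses the index-$q$ lemma it needs elsewhere anyway. That lemma applies to arbitrary normal subgroups containing $\sigma$, not only to iterated commutators with $\sigma$.
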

\begin{proof}
Up to replacing $\langle\sigma\rangle$ by its $q$-Sylow subgroup and conjugating in $G$, we may assume that $\sigma\in Q$ and furthermore that $e\ge 2$. Let $Q_0:=Q$ and $Q_i:=[Q_{i-1},Q]$, $i\ge 1$ be the descending central series of $Q$ (so that the nilpotency class is defined as the smallest index $i$ with $Q_i=\{1\}$). 
Set $\widehat{Q}_{i}:=\langle Q_{i}\cap G_q ,\sigma\rangle$, $i\ge 0$. 
Note that $\widehat{Q}_{i}\trianglelefteq \widehat{Q}_{i-1}$ for all $i$, since indeed $[Q_{i-1}\cap G_q, 
\langle\sigma\rangle]\subseteq [Q_{i-1}, Q]\cap G_q\subseteq Q_i\cap G_q$. Thus $[\widehat{Q}_{i-1}:\widehat{Q}_i ] \divides[\widehat{Q}_{i-1}\cap G_q:\widehat{Q}_i\cap G_q]\divides q$ by Corollary \ref{cor:normalofHatmostp_doneright}.
As $\widehat{Q}_0 \cap G_q=G_q$, it follows that $\widehat{Q}_{e-2} \cap G_q = \langle Q_{e-2}\cap G_q,\sigma\rangle \cap G_q$ must still be of order $\ge q^2$. Since $\langle\sigma \rangle \cap G_q$ is either trivial or the diagonal under the action of $\sigma$, the subgroup $Q_{e-2}\cap G_q$ must still be nondiagonal. As $[\widehat Q_{e-2}:\widehat Q_{e-1}]\divides q$, 
this implies $Q_{e-1}\ne \{1\}$, i.e., $Q$ is of nilpotency class at least $e$.
\end{proof}

\section{Lower bounds on solvable monodromy groups of polynomials}
\label{sec:lowerbounds}
Let $k$ be a field of characteristic $0$. Our method for proving Theorem \ref{thm:DLS} relies on  ``largeness'' properties for monodromy groups of  polynomials from \cite{BKN}. 
For indecomposable $f,g\in k[X]$, we say that a subgroup $N \leq \Mon_k(f \circ g)$ has a \emph{large kernel} if either 
$$
\soc(\Mon_k(g))^{\deg(f)}\le \ker(N\to \Mon_k(f)),
$$ 
or 
$$\soc(\Mon_k(g))\,\,\textrm{is cyclic and}\,\, \soc(\Mon_k(g))^{\deg(f)-1}\le \ker(N\to \Mon_k(f)).
$$
We say that $f\circ g$ has a \emph{large kernel} if $\Mon_k(f\circ g)$ has a large kernel.
\begin{thm}[Theorems 1.2 and 2.1 in  \cite{BKN}]\label{thm:new-large}\label{thm:largemon}
Let $f,g\in k[X]$ be indecomposable with solvable monodromy. Then either $f\circ g$ has a large kernel or it is linearly equivalent over $\overline{k}$ to a monomial, a Chebyshev polynomial or one of only two other cases holds: 
\begin{itemize}
\item $\Mon_{\overline{k}}(f \circ g)=C_3\times S_4\le S_{12}$ with a  Ritt move;
\item $\Mon_{\overline{k}}(f \circ g)={\rm GL}_2(3)\le S_8$ with no  Ritt move.
\end{itemize}
In the large kernel case, if moreover, $f$ is linearly related over $\overline{k}$ to $X^p$ or $T_p$, and $g$ is linearly related over $\overline{k}$ to $X^q$ or $T_q$, for primes $p,q\geq 2$, then $\ker(\Mon_k(f\circ g)\to \Mon_k(f))$ contains $C_q^p$. 
\end{thm}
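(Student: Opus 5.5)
The plan is to study the block kernel $K=\ker(\Mon_k(f\circ g)\to\Mon_k(f))$ inside $\Mon_k(g)^d$, where $d=\deg(f)$, and to obtain lower bounds on it through Kummer theory and divisor computations on the Galois closure of $f$.

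\textbf{Setup.} Write $A=\Mon_k(g)$ and $B=\Mon_k(f)$, and let $u_1,\dots,u_d$ be the roots of $f(U)=t$. Let $\Omega_f$ be the splitting field of $f(U)-t$, and let $\Omega$ be the Galois closure of $k(x)/k(t)$ for a root $x$ of $f(g(X))-t$.

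By Lemma \ref{lem:subdirect}, $K\le A^d$ is a subdirect power of its image $\oline K\le A$. By Lemma \ref{lem:blocknontriv}, $\oline K$ contains an element of order $\deg(g)$. By Proposition \ref{prop:indec_poly}, $A\le\AGL_1(q)$ or $A=S_4$. In the first case $\oline K\supseteq C_q$. In the second case $\oline K$ contains $A_4$: a normal subgroup of $S_4$ containing a $4$-cycle is $S_4$ itself, cf.\ Lemma \ref{lem:transitive-normal}. Lemma \ref{lem:interandsoc} then gives $\soc(K)=K\cap C_q^{d}$, respectively $K\cap V_4^d$. This is an $\F_q[B]$-submodule of the permutation module $\F_q^{d}$, respectively $V_4^d$, and $B$ contains a $d$-cycle coming from inertia at $\infty$. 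Being large means that this submodule is everything, or has codimension $\le1$ (in the cyclic case). So the task reduces to ruling out small submodules.

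\textbf{Main step: translate the rank of $K\cap C_q^d$ into multiplicative independence.} Take first $g$ linearly related over $\oline k$ to $X^q$, and work over $\oline k$ after adjoining $\zeta_q$, so that $g=X^q+c$. Then $\Omega=\Omega_f(\sqrt[q]{u_1-c},\dots,\sqrt[q]{u_d-c})$. By Kummer theory, $K\cap C_q^d$ is dual to the subgroup of $\Omega_f^\times/\Omega_f^{\times q}$ generated by the $u_i-c$.

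Hence a drop in rank is equivalent to a relation
\[
\prod_i (u_i-c)^{a_i}=h^q,\qquad h\in\Omega_f .
\]
The divisor of $u_i-c$ on the curve of $\Omega_f$ is supported over $t=f(c)$ and $t=\infty$. Over $\infty$ every $u_i$ has the same pole order, by Remark \ref{rem:abh}(1). Over $f(c)$, the zero of $u_i-c$ sits only at places where $u_i\mapsto c$, with multiplicity governed by the ramification index $e$ of $c$ under $f$.

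So a nontrivial relation with $(a_i)$ outside the span of the all-ones vector forces $q\mid e$ for the relevant places. Combined with Riemann--Hurwitz for $f$ (genus $0$ with a totally ramified point), and with the possible shapes of $B$ (subgroups of $\AGL_1(p)$, or $S_4$), this should force one of two situations. Either $f$ is itself a monomial or Chebyshev polynomial with $c$ at its special branch point, which gives the monomial/Chebyshev conclusion for $f\circ g$. Or the degrees are tiny, $(d,q)=(4,3)$ or $(2,4)$, which produce the $C_3\times S_4$ and $\GL_2(3)$ exceptions. Those I would identify by hand via Lemma \ref{lem:reg_normal_sub} and the Ritt-move description in Remark \ref{rem:further-Ritt}.

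\textbf{Remaining cases.}
\begin{itemize}
\item \emph{Chebyshev $g$.} The same argument applies with $u_i-c$ replaced by the quadratic-extension elements $w_i$ satisfying $w_i+1/w_i=u_i$. One first passes to the $C_2$-subextension and then applies Kummer theory over it.
\item \emph{$A=S_4$.} Use the $V_4$-socle. The relevant extension is obtained by adjoining the cubic resolvent roots and then square roots, so Kummer theory mod squares applies with the $A_4$-irreducibility of $V_4$.
\item \emph{Final addendum.} When both $f$ and $g$ are of type $X^p/T_p$ and $X^q/T_q$, the module analysis must give the full $C_q^p$ rather than codimension one. I would get this from Corollary \ref{cor:normalofHatmostp_doneright}(2) when $p\neq q$. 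When $p=q$, I would get it from the explicit divisor relation: the norm relation $\prod_i(u_i-c)=f(c)-t$ is not a $q$-th power unless the ramification is total.
\end{itemize}

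\textbf{Expected obstacle.} The hardest part is the divisor bookkeeping on $\Omega_f$ when $B$ has nontrivial point stabilizers, namely $\AGL_1(p)$ with large multiplier group, or $S_4$. In that situation the places over $f(c)$ split in complicated ways, and one must show that the only $B$-stable relations are the diagonal (norm) one. My first attempt would be to project the relation to a single decomposition group over $f(c)$ and use that the $d$-cycle from $\infty$ permutes the $u_i$ transitively.
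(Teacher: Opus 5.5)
\noindent The paper does not prove this statement; it is quoted from \cite{BKN}. Judged on its own terms, your Kummer plan is correct as far as it goes: in the genus-$0$ situations ($f\sim X^p,T_p$, or a quartic of branch type (transposition, $3$-cycle)) the divisor bookkeeping is exact. There it does recover the monomial and Chebyshev cases and the $(\deg f,\deg g)=(4,3)$ and $(4,2)$ exceptions.

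The gap is in the main step. A relation $\prod_i(u_i-c)^{a_i}\in\Omega_f^{\times q}$ forces $\sum_i a_i\,\mathrm{div}(u_i-c)\in q\cdot\mathrm{Div}(\Omega_f)$. The converse also needs the class of $\frac1q\sum_i a_i\,\mathrm{div}(u_i-c)$ to vanish in the Jacobian, which is automatic only in genus $0$. So the bookkeeping proves largeness when the local constraints cut the relations down to $\langle\mathbf 1\rangle$, but says nothing otherwise, and those configurations are not all exceptional.

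A concrete instance: a generic quartic $f$ (three transposition branch points) has Galois closure of genus $4$. Take $g=X^2+c$ with $c$ an unramified preimage of a branch point $t_0$. Every place $P$ over $t_0$ has $e_P=2$ and $v_P(u_i-c)\in\{0,2\}$, so the only constraint is $\sum_i a_i\equiv 0\pmod 2$. These local data are identical to those of the genuine $\GL[2](3)$ configuration (type (transposition, $3$-cycle), $c$ unramified over the transposition point), where $K\cong C_2$ really occurs. Here, however, the kernel is large, and your method cannot tell the two apart.

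What separates them is ramification of the composite. The $\F_2[S_4]$-submodules of $\F_2^4$ are $0\subset\langle\mathbf 1\rangle\subset I_4(2)\subset\F_2^4$, so a non-large $K$ equals $\langle\mathbf 1\rangle$. Then $\Mon_{\oline k}(f\circ g)$ is a degree-$8$ group $C_2.S_4$ with an $8$-cycle, i.e.\ $\GL[2](3)$. Nonidentity elements of $\GL[2](3)$ on $\F_3^2\setminus\{0\}$ have index in $\{3,4,6,7\}$. The finite indices must sum to $7$, so Riemann--Hurwitz for $f\circ g$ allows at most two finite branch points, while here there are three. Riemann--Hurwitz for $f$ alone, Lemma~\ref{lem:reg_normal_sub}, and Ritt moves do not supply this.

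The same problem is worse in the branches you only sketch. For $g\sim T_q$ and for $\Mon(g)=S_4$, Kummer theory runs over the compositum of the fields $\Omega_f(w_i)$, resp.\ of the cubic-resolvent fields of $g(X)-u_i$. These generally have large genus, so divisors again give only necessary conditions.

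The $S_4$ branch also contains an exception you do not locate. $C_3\times S_4$ arises from $(4,3)$ as $X(X+b)^3\circ X^3$ (your main step finds this, with $c=0$ the unramified preimage of the $3$-cycle branch point). It arises equally from $(3,4)$ as $X^3\circ X(X^3+b)$, where $\ker(\Mon_{\oline k}(f\circ g)\to\Mon_{\oline k}(f))$ is a diagonal $S_4\le S_4^3$. Also, $\GL[2](3)$ corresponds to $(\deg f,\deg g)=(4,2)$, not $(2,4)$.

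To close the argument, each branch needs either explicit $q$-torsion information on the relevant Jacobians, or the module structure of $K$ under $B$ combined with Riemann--Hurwitz for $f\circ g$ to classify the candidate groups.
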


We shall need the following extension of Theorem \ref{thm:largemon}, which takes into account normal subgroups of $\Mon_k(f\circ g)$ and strengthens the largeness assertions in some cases.
\begin{cor}
\label{cor:largemon_normal}
Let $f,g\in k[X]$ be indecomposable with solvable monodromy of degrees $p:=\deg(f)$ and $q:=\deg(g)$, respectively, and such that 
$f\circ g$ is not among the exceptional cases of Theorem \ref{thm:largemon}. Let $N\trianglelefteq \Mon_{k}(f\circ g)$ be a normal subgroup containing a cyclic transitive subgroup. Then the following hold:
\begin{enumerate}[leftmargin=*,label={(\arabic*)},ref=(\arabic*)]
\item
$\ker(N\to \Mon_{k}(f))$ is large.
\label{case:largemon_normal_1}
\item If either $q=4$ or $p,q$ are distinct primes, then more strongly $\ker(N\to \Mon_{k}(f)) \ge \soc(\Mon_{k}(g))^p$.
\label{case:largemon_normal_2}
\item If $\Mon_{\oline{k}}(g)$ is noncyclic, and either $q=4$ or $p>2$ is prime, then more strongly $\soc(\Mon_{k}(g))^p$ is a minimal normal subgroup of $\Mon_{k}(f\circ g)$.
\label{case:largemon_normal_3}
\end{enumerate}
\end{cor}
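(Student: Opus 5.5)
Write $G:=\Mon_k(f\circ g)\le \Mon_k(g)\wr\Mon_k(f)$, $A:=\Mon_k(g)$, $S:=\soc(A)$ (so $S=C_q$ when $q$ is prime and $S=V_4$ when $q=4$, by Proposition~\ref{prop:indec_poly}), and $K:=\ker(G\to\Mon_k(f))$, $K_N:=K\cap N=\ker(N\to\Mon_k(f))$. By Theorem~\ref{thm:largemon}, $K$ is large: $K\ge S^p$, or $S$ is cyclic and $K\ge I$ for some index-$q$ subgroup $I\le S^p$ of rank $p-1$. The plan is to show that passing from $G$ to the normal subgroup $N$ loses at most a factor of $q$ in $\soc(K)=K\cap S^p$ (Lemma~\ref{lem:interandsoc}), and then to use the extra hypotheses in (2),(3) to recover the full $S^p$.

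For (1) with $q$ prime: $\sigma$ is taken to be a generator of the cyclic transitive subgroup of $N$, which is a $pq$-cycle. Then $\oline\sigma$ is a $p$-cycle and $\sigma^p\in K\cap N$ is a product of $q$-cycles, i.e.\ it lies in $N_q:=N\cap C_q^p$. Corollary~\ref{cor:normalofHatmostp_doneright}(1) gives $[G_q:N_q]\divides q$ with $G_q=G\cap C_q^p=\soc(K)$. If $G_q=C_q^p$, then $N_q$ has rank $\ge p-1$, which is large since $S$ is cyclic. If $G_q$ itself has rank only $p-1$, I need that no further loss occurs. Here I would use Proposition~\ref{lem:normalofHatmostp_new}(2): $G_q/Z\hookrightarrow N_q$, with $Z$ the diagonal part of $G_q$. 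The diagonal $\langle\sigma^p\rangle$ already lies in $N_q$, so if $Z\neq 1$ then $Z\le N_q$. Counting orders then forces $N_q=G_q$ unless $Z=1$, and when $Z=1$ we get $G_q\cong N_q$ directly. For $q=4$, the group $A=S_4$, and Lemma~\ref{lem:transitive-normal}(2) together with a block-wise argument shows that the image of $K_N$ on each block contains $A_4$. I would then argue with $\mathbb F_2[\oline\sigma]$-modules inside $V_4^p$, where $V_4$ is irreducible for $A_4$. Any normal subgroup of $G$ inside $V_4^p$ is then a sum of isotypic pieces, and the commutator bound of Proposition~\ref{lem:normalofHatmostp_new} shows that $N$ contains all of $K\cap V_4^p=V_4^p$.

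For (2) with $p\neq q$ primes, Corollary~\ref{cor:normalofHatmostp_doneright}(2) applies directly ($\sigma$ is a $pq$-cycle and $(p,q)=1$) and gives $N_q=G_q$. It therefore suffices that $G_q=C_q^p$ in this case. I would derive this from the last clause of Theorem~\ref{thm:largemon} when both factors are monomial or Chebyshev. In the remaining subcase $p=4$ (where $f$ has $\Mon=S_4$), the permutation module $\mathbb F_q^4$ is semisimple for $q$ odd and contains the fixed line $\langle\sigma^p\rangle$, so $I\oplus\text{diag}$ covers everything. The case $q=4$ follows from the argument in (1).

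For (3), let $M\le S^p$ be a nontrivial normal subgroup of $G$. The aim is to show $M=S^p$. Applying (1)/(2) to $N=M\langle\ldots\rangle$ is not possible since $M$ is not transitive, so instead I would work with modules. Noncyclicity of $\Mon_{\oline k}(g)$ means the point stabilizer in $A$ acts nontrivially on $S$, giving $S$ a nontrivial irreducible action ($x\mapsto -x$ on $C_q$ for dihedral $A$, or $A_4$ on $V_4$). Hence $K$ contains elements acting by $-1$ on a single coordinate and trivially on the others, or by a nontrivial automorphism there. Commutators of $M$ with such elements then produce a nonzero element supported on one coordinate, and transitivity of $\oline\sigma$ spreads this to all of $S^p$. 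The restriction to $p>2$ (or $q=4$) is needed so that a nonzero vector of $M$ is not forced to be diagonal, where the sign trick can fail for $p=2$. I expect this step, namely extracting single-coordinate elements of $K$ from the noncyclicity of the geometric monodromy while working over $k$ rather than $\oline k$, to be the main obstacle. My first attempt would be to pass to $\oline k$ using Lemma~\ref{lem:transitive-normal} with $M=\oline k(t)$, since $\Mon_{\oline k}(f\circ g)$ is itself a normal subgroup containing a cyclic transitive subgroup.
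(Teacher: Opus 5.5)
Your argument for (2) with $p\neq q$ primes is the same as the paper's: Theorem~\ref{thm:largemon} gives $G_q=C_q^p$, and then Corollary~\ref{cor:normalofHatmostp_doneright}(2) applies. There are, however, two gaps, plus a different route for $q=4$ that needs one more step.

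\textbf{Part (3) is the main gap.} Noncyclicity of $\Mon_{\oline k}(g)$ only says that $\Mon_k(g)$, the image of a block stabilizer on its block, acts nontrivially on $S$. It says nothing about the \emph{support} of elements of $K$ in $(\Mon_k(g)/S)^p$. So the step ``hence $K$ contains elements acting by $-1$ on a single coordinate and trivially on the others'' does not follow.

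The information you use cannot suffice. For $q$ odd, take $G=C_q^p\rtimes(\langle\iota\rangle\times C_p)$, with $\iota$ inverting all coordinates simultaneously. It has block images $D_q$, contains $C_q\wr C_p$ (hence $pq$-cycles), and has $K\ge C_q^p$. Yet the diagonal copy of $C_q$ is normal, so $C_q^p$ is not minimal normal. The same happens for $q=4$ and $p$ odd, with a diagonally embedded $S_3$ acting on $V_4^p$.

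What excludes such groups is an arithmetic input: the refinements \cite[Theorems 2.1 and 2.3]{BKN} of Theorem~\ref{thm:largemon}. These say $\ker(G/\soc(K)\to\Mon_k(f))$ contains $C_2^{p-1}\le\Aut(C_q)^p$, resp.\ $C_3^{p-1}\le(S_4/V_4)^p$. The paper feeds this into \cite[Lemma 3.8]{BKN} to get minimality, and settles $(p,q)=(2,4),(4,4)$ by Magma. Even this input need not give single-coordinate elements, only elements of support at least $2$; for $p\ge 3$, two successive commutators with such elements then isolate a coordinate. Passing to $\oline k$ via Lemma~\ref{lem:transitive-normal} does not help, since $\Mon_{\oline k}(f\circ g)$ is just another normal subgroup with the same ambiguity.

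\textbf{Part (1) at $(p,q)=(4,2)$.} For $q$ prime, Theorem~\ref{thm:largemon} gives $G_q=C_q^p$ when $p$ is prime. So $G_q$ can have rank $p-1$ only for $p=4$, and it meets the diagonal only if moreover $q=2$, i.e.\ $G_q=I_4(2)$.

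There your count breaks. The facts $Z\le N_q$ and $G_q/Z\hookrightarrow N_q$ force $N_q=G_q$ only if the image $[G_q,\sigma]=(1-\oline\sigma)G_q$ meets $Z$ trivially. That holds in the coprime case but not when $q\mid p$. Identifying $\mathbb F_2^4$ with $\mathbb F_2[x]/(x-1)^4$, one has $G_q=(x-1)$ and $[G_q,\sigma]=(x-1)^2$, which already contains $\sigma^4$. So you only get $\dim N_q\ge 2<p-1$.

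The missing observation, which the paper adds for exactly this case, is that $N_q$ is a submodule of the $\mathbb F_2[S_4]$-permutation module (here $K=G_q$ and $G/K=S_4$). This module has no $2$-dimensional submodule.

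\textbf{The case $q=4$ in (1),(2).} Here your route differs from the paper's, which derives these cases from (3), hence from \cite{BKN} and Magma. It can be completed elementarily, though not via ``isotypic pieces'', since the coordinate $K$-modules $V_4^{(i)}$ may all be isomorphic.

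The map $u\mapsto[u,\sigma]$ is a homomorphism $V_4^p\to N\cap V_4^p$ with kernel $C_{V_4^p}(\sigma)$ of order $2$. Indeed, such a centralizing element is determined by one coordinate, which must commute with the $4$-cycle induced by $\sigma^p$ on that block. Hence $[V_4^p:N\cap V_4^p]\le 2$.

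A $G$-normal subgroup of index $2$ in $V_4^p$ would contain $[K,V_4^p]$. But each coordinate projection of $K$ is $S_4$, being normal in $\Mon_k(g)=S_4$ and containing that $4$-cycle. Elements projecting to $3$-cycles then give $[K,V_4^p]=V_4^p$.

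This yields (1),(2) for $q=4$ and every $p$ without computer checks, which is more elementary than the paper's route. It does not help with (3).
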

\begin{proof}
Since we will show 3) (which implies 2) and 1)) for $q=4$, we first assume $q\ne 4$. Since we are not in the exceptional cases, either $K:=\ker(G\to\Mon_{k}(f))$ contains $C_q^p$ by Theorem \ref{thm:largemon} or ($p=4$ and $K$ contains a subgroup $C_q^3$). Since $N$ contains a cyclic transitive subgroup,  1) and 2) then follow directly from 
Corollary \ref{cor:normalofHatmostp_doneright},  together with the observation that the $\mathbb{F}_2[S_4]$-permutation module has no submodule of dimension $2$, to deal with the case $(p,q)=(4,2)$. 

Assume now that $\Mon_{\oline{k}}(g)$ is noncyclic (allowing also $q=4$) and $p>2$ is prime. Set $G:=\Mon_{k}(f\circ g)$. 
It then follows from refinements of Theorem \ref{thm:largemon}, 
namely  \cite[Theorem 2.1]{BKN} (in case $p,q$ prime) and \cite[Theorem 2.3]{BKN} (in case $q=4$ and $p$ prime), that $\ker(G/\soc(K)\to \Mon_{k}(f))$ contains a subgroup $C_2^{p-1} \le \textrm{Aut}(C_q)^p$ (when $q$ prime) or a subgroup $C_3^{p-1}\le (S_4/V_4)^p\cong S_3^p$ (when $q=4$)). 
Therefore, \cite[Lemma 3.8]{BKN} implies that $K$ contains $K_0:=\soc(\Mon_{k}(g))^p$ as a {\it minimal} normal subgroup, showing 3) in all cases except for $(p,q)=(2,4),(4,4)$. Those last two cases are treated by a direct inspection with Magma.
\end{proof}

We shall need the following addition for compositions of three polynomials.
The reader might choose to skip part (4) as it is applied only with $q=p=3$ (a case that is also verified directly with Magma). 
For a prime $q$ and an integer $n\ge 2$, recall that $I_n(q)$ denotes the augmentation ideal of the module $\mathbb{F}_q^n$, viewed as the permutation module under a fixed cyclic group $C_n$.
\begin{prop}\label{prop:large-three}
Let $f=f_1\circ f_2\circ f_3$, where $f_1,f_2,f_3\in k[X]$ are indecomposable of degrees $m,p,q$, respectively, with  $\Mon_{\oline k}(f)$ solvable and  $\Mon_{\oline k}(f_3)=C_q$ (for $q$ prime).
Assume   $f_1\circ f_2$ and $f_2\circ f_3$ are either strongly-unique  or 
have monodromy group $D_4$ over $k$.
Assume moreover that none of $\Gamma:=\Mon_k(f_1\circ f_2)$ and $G_2:=\Mon_k(f_2\circ f_3)$ equals $\GL[2](3)$. 
Let $N$ be a normal subgroup of $G_3:=\Mon_k(f)$ containing a cycle $\sigma$ of length $m\gp q$, and 
$K,M$ the kernels of the maps $G_3\to \Gamma=\Mon_k(f_1\circ f_2)$ and $G_3\to \Mon_k(f_1)$, respectively.  
\begin{enumerate}[leftmargin=*,label={(\arabic*)},ref=(\arabic*)]
\item If  $\gp\ne q$ is a prime,
then $N\cap K$   properly contains $I_\gp(q)^m$. 
\label{case:large-three_1}
\item If $\gp=4$, 
then $N\cap K$ contains 
$I_4(q)^m$, and if additionally $q>2$, this containment is proper.
\label{case:large-three_2}
\item If $\gp=q$ and $m\ne 4$, then $N\cap K$ contains an index-$q$ 
subgroup  
of $I_\gp(q)^m$. If moreover $\gp=q=2\ne m$, then the subgroup $(N\cap M)^2$, generated by squares in $N\cap M$, contains {$I_2(2)^m\cong C_2^{m}$}. 
\label{case:large-three_3}
\item If $\gp=q$ and $m=4$, then $N\cap K\supseteq C_q^{3q-4}$. More precisely, if $\widehat{N}_0\le S_{3q^2}$ denotes the image of the stabilizer $N_0\le N$ of a root of $f_1(X)-t$ in its natural action on $3q^2$ points, then the image $\widehat{N_0\cap K}$, of $N_0\cap K$ in $S_{3q^2}$, contains an index-$q$ subgroup of $I_q(q)^3$.
\label{case:large-three_4}
\end{enumerate}
\end{prop}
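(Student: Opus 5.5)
The plan is to pass between the $2$-step and $3$-step kernels using Corollary \ref{cor:3-useful}, and to get largeness from Theorem \ref{thm:largemon} and Corollary \ref{cor:largemon_normal}.

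\textbf{Setup.} Let $K_2=\ker(G_2\to\Mon_k(f_2))\le C_q^{\gp}$ be the $2$-step kernel of $f_2\circ f_3$. By Lemma \ref{lem:subdirect}, $K\le C_q^{m\gp}$ is a subdirect power, viewed as a union of $m$ blocks of size $\gp$ indexed by the roots of $f_1(X)-t$. The $\gp$-th power of $\sigma$ is a product of $\gp q$-cycles. So the image of $N_0\cap N$ in $G_2$, acting on a fixed block, is a normal subgroup of $G_2$ containing a cyclic transitive subgroup. Here $N_0$ is the stabilizer of a block; normality follows from Lemma \ref{lem:transitive-normal}.

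\textbf{Step 1: size of the image of $N\cap K$ in $C_q^{\gp}$.} Since $f_2\circ f_3$ is strongly-unique and not $\GL[2](3)$, it is not an exceptional case of Theorem \ref{thm:largemon}. The $D_4$ case, with $\gp=q=2$, is handled by hand. Corollary \ref{cor:largemon_normal} then gives the following for this normal subgroup $\oline N$ of $G_2$.
\begin{itemize}
\item If $\gp$ is prime with $\gp\ne q$, then $\ker(\oline N\to \Mon_k(f_2))\ge C_q^{\gp}$.
\item If $\gp=4$, it contains $C_q^3$, which must be $I_4(q)$ since it is a submodule.
\item If $\gp=q$, it contains at least $I_{\gp}(q)$.
\end{itemize}

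\textbf{Step 2: lift the block bound to all $m$ blocks.} To lift to $N\cap K$, apply Corollary \ref{cor:normalofHatmostp_doneright} to $G_3\le \AGL_1(q)\wr S_{m\gp}$ with the $m\gp$-cycle $\sigma$. This gives $[G_q:N_q]\mid q$, and $G_q=N_q$ when $\sigma$ is an $m\gp q$-cycle with $\gcd(m\gp,q)=1$.

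Next compare $G_q=K$ with its projections. $\Gamma$ acts on the $m\gp$ copies through $f_1\circ f_2$. Since $f_1\circ f_2$ is strongly-unique, Corollary \ref{cor:largemon_normal} applies to $\Gamma$ and shows that its $2$-step kernel $C_\gp^m$ (or $V_4^m$) is large.

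The key point is the following. Consider the commutators $[K,\tau]$ for $\tau$ in the kernel $\Gamma_\gp:=\ker(\Gamma\to\Mon_k(f_1))$, which acts independently on the $m$ blocks. These commutators produce inside $K$ the full product over blocks of $(1-\tau_i)K_i$. For $\tau_i$ a $\gp$-cycle, $(1-\tau_i)$ applied to $C_q^{\gp}$ gives $I_{\gp}(q)$. This yields $I_{\gp}(q)^m\le K$.

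\textbf{Step 3: intersect with $N$.} Intersecting with $N$ costs at most index $q$ by Corollary \ref{cor:normalofHatmostp_doneright}(1). When $\gp\ne q$ is prime, the extra trivial summand coming from Step 1 and semisimplicity give proper containment, and $\gcd$ considerations remove the index loss. For $\gp=4$ with $q>2$, the same semisimplicity argument works. For $\gp=q$, the modules are not semisimple, and only an index-$q$ subgroup survives; this is statement (3).

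For $\gp=q=2\ne m$, the claim about squares follows because squares of lifts of transpositions in $N\cap M$ map to diagonal elements $(1,1)$ of each block. These squares span $C_2^m$.

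\textbf{Step 4: part (4).} Part (4), with $m=4$, uses the same argument but restricted to the stabilizer $N_0$, which acts on the remaining $3$ blocks. This gives an index-$q$ subgroup of $I_q(q)^3$, and counting then gives $C_q^{3q-4}$.

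\textbf{Main obstacle.} I expect the hardest step to be the lifting in Step 2. One must check that $\Gamma_\gp$ genuinely acts block-independently, which is where the largeness of $\Gamma$'s kernel is essential. One must also control precisely the index-$q$ loss in the non-semisimple case $\gp=q$.
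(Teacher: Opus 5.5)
The overall plan is reasonable: largeness from Theorem \ref{thm:largemon} and Corollary \ref{cor:largemon_normal}, commutators with lifts of elements of $\ker(\Gamma\to\Mon_k(f_1))$ to produce elements of $K$, and Corollary \ref{cor:normalofHatmostp_doneright} to pass to $N$. However, Step 2, which carries all the content, does not work as stated.

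\emph{Step 1 bounds the wrong group.} Corollary \ref{cor:largemon_normal} controls $\ker(\oline N\to\Mon_k(f_2))$, i.e.\ images of elements of $N$ that fix the $\gp$ sub-blocks of \emph{one} block. The projection of $K$ (let alone of $N\cap K$) to a block can a priori be much smaller, e.g.\ if $K$ were diagonal as in Theorem \ref{thm:diagkern}. Commuting such an element with a $\tau$ whose $\Gamma$-image is supported on block $i$ does give an element of $K$. But its components on the other blocks are uncontrolled, so a second commutator is needed to isolate block $i$. You therefore do not get ``the full product of $(1-\tau_i)K_i$'' in one step.

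\emph{Block-supported elements are not available in general.} Isolating a block needs elements of $\ker(\Gamma\to\Mon_k(f_1))$ that are supported on a single block and act suitably on $I_\gp(q)$. Largeness supplies these only when $m$ and $\gp$ are both prime. For $m=4$ one only knows a $3$-dimensional submodule of $\F_\gp^4$, which need not contain any weight-one vector. For $\gp=4$ no block-supported $4$-cycle is guaranteed. For $\gp=4$, $q=2$ the block-supported elements of $V_4^m$ are useless, since $(1-\tau)I_4(2)$ is just the diagonal for a double transposition $\tau$. The paper instead picks two elements of $\ker(\Gamma\to\Mon_k(f_1))$ whose supports meet in exactly one block. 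These have order $\gp$ if $m=4$ and order $3$ if $\gp=4$; when $m=\gp=4$ it uses three of them, found with Magma. It then feeds them to \cite[Lemma 3.9]{BKN} (or \cite[Lemma B.1]{BKN} when $q=2$) to get $I_\gp(q)^m\le K$.

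\emph{The case $\gp=q$ fails in your approach.} Here $1-\tau_1$ is nilpotent on $\F_q[C_q]$, so iterated block-by-block commutators lose dimensions of $I_q(q)$ in every block. They cannot yield an index-$q$ subgroup of $I_q(q)^m$. Your Step 3 attributes the index-$q$ loss to intersecting with $N$, which presupposes $I_q(q)^m\le K$; you never establish this, and the paper does not claim it. The paper's route is:
\begin{enumerate}
\item check that the block image of $M$ contains $C_q\wr C_q$;
\item apply Lemma \ref{lem:triple-wreath} (i.e.\ \cite[Lemma B.3]{BKN}) to get an index-$q$ subgroup $U\le I_q(q)^m$ inside $[M_q,M_q]\cap K$;
\item show $[M_q,M_q]\le N$: two applications of Corollary \ref{cor:normalofHatmostp_doneright} give $[M_q:N\cap M_q]\mid q^2$, so $M_q/(N\cap M_q)$ is abelian.
\end{enumerate}

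\emph{Further missing pieces.}
\begin{enumerate}
\item For $\gp=4$, $q=2$ nothing is semisimple. The paper uses that $I_4(2)$ has no codimension-$1$ submodule, together with Corollary \ref{cor:normalofHatmostp_doneright}\ref{case:normalofHatmostp_doneright_1} and transitivity on blocks.
\item For $\gp=q=2\ne m$, ``these squares span $C_2^m$'' is the claim itself, not an argument. The paper combines the index-$2$ submodule with the odd-weight element $\sigma^{2m}=(\sigma^m)^2\in(N\cap M)^2$. It also treats separately the case where the block image of $M$ is $C_4$.
\item Part (4) needs the descent to $H_3=M.S_3$ acting on $3q^2$ points. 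One must check that $\widehat K$ is the full minimal block kernel, using that the $q$-Sylow subgroup of $\ker(\Gamma\to\Mon_k(f_1))$ is $C_q^3$ rather than $C_q^4$. One must also verify the hypotheses of Lemma \ref{lem:triple-wreath} there.
\item Minor: the power of $\sigma$ stabilizing every block is $\sigma^m$, not $\sigma^{\gp}$.
\end{enumerate}
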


The proofs of parts \ref{case:large-three_3} and \ref{case:large-three_4} require the following lemma.
\begin{lem}\label{lem:triple-wreath}
Let $q$ be a prime, $m\ge 2$ an integer, 
$H\le S_m$ a primitive group and $G_3\leq \AGL_1(q)\wr \AGL_1(q)\wr H$. Let $M,K$ be the kernels of the projections $\psi: G_3\to H$ and $\pi:G_3\to \AGL_1(q)\wr H$, respectively. Assume that 1) $\pi(G_3)$ contains $C_q^m$, and 2) the image of $M\le (\AGL_1(q)\wr \AGL_1(q))^m$ under projection to a component contains $C_q\wr C_q$. Then there exists an index-$q$ subgroup $U$ of $I_q(q)^m$ such that $U\le [M_q,M_q] \cap K$ for a $q$-Sylow subgroup $M_q\le M$. If moreover $q=2$, then $U\le M^2$, where $M^2$ denotes the subgroup generated by all elements $x^2$, for $x\in M$.
\end{lem}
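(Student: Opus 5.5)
The plan is to build $U$ out of explicit commutators $[x_j,y]$, where $x_j\in M_q$ moves only the $j$-th block at the middle level and $y\in M_q$ acts on block $j$ by a base element. Throughout, write the natural action as $m$ blocks $B_1,\dots,B_m$ (the fibres over the $m$ points of $H$). Each $B_j$ is split into $q$ sub-blocks of size $q$, so that $M\le (\AGL_1(q)\wr\AGL_1(q))^m$ acts blockwise. The bottom layer $C_q^{qm}=\prod_j C_q^{q}$ is the $q$-part of the kernel of the map to $\AGL_1(q)\wr H$, and $I_q(q)^m\le \prod_j C_q^q$ is the product of the augmentation submodules in the blocks. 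First fix a $q$-Sylow subgroup $M_q\le M$. Since $C_q\wr C_q$ is a $q$-group, hypothesis 2) lets us choose $M_q$ so that its projection to each block still contains $C_q\wr C_q$.

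\emph{Step 1 (elements moving a single block).} By hypothesis 1), $\pi(G_3)\supseteq C_q^m$, and $C_q^m\le\pi(M)$. So for each $j$ there is $x_j\in M$ whose image in the middle layer $\AGL_1(q)^m$ is a generator of $C_q$ in coordinate $j$ and trivial elsewhere. Replacing $x_j$ by a suitable power kills its $q'$-part, and after conjugating within $M$ we may take $x_j\in M_q$. The key consequence is the following. Conjugation by $x_j$ on $\prod_i C_q^q$ fixes every coordinate block $i\neq j$ pointwise, because the sub-blocks of $B_i$ are not permuted and $C_q^q$ is abelian. On block $j$ it acts as the $q$-cycle $\sigma$.

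\emph{Step 2 (commutators).} Take $y\in M_q$ whose $j$-component is an arbitrary base element $v\in C_q^q$ of $C_q\wr C_q$. Its components in the other blocks are not controlled. The commutator $[x_j,y]$ has trivial image in $\AGL_1(q)\wr H$, since both elements lie in $M$ and the middle-layer images commute. So $[x_j,y]\in K\cap[M_q,M_q]$. Its $j$-component is $(1-\sigma)v$, which runs over all of $I_q(q)$. Its other components are some "garbage" in $C_q^q$. To remove the garbage, I will pass to a second commutator. For $w\in W:=[M_q,M_q]\cap K\cap C_q^{qm}$, the element $[x_j,w]$ is supported only on block $j$ and equals $(1-\sigma)w_j$ there. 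So $W$ contains $\bigoplus_j (1-\sigma)\,\mathrm{pr}_j(W)$, and in particular $(1-\sigma)I_q(q)=:I_q(q)^{(2)}$ in each block, which has index $q$ in $I_q(q)$.

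\emph{Step 3 (recovering index $q$ overall, the main obstacle).} Step 2 only gives a subgroup of index $q^m$ in $I_q(q)^m$, so a better count is needed. My plan is to use that $W$ is normalized by $G_3$, which acts transitively (indeed primitively, via $H$) on the blocks. Look at the image $\overline W$ of $W\cap I_q(q)^m$ in $\prod_j I_q(q)/I_q(q)^{(2)}\cong \F_q^m$. This image is an $H$-invariant subspace of the permutation module, so it contains either the augmentation ideal of $\F_q^m$ or is at most the diagonal. The first-level commutators $[x_j,y]$ produce elements whose $j$-th coordinate in $\F_q^m$ is nonzero. Using the $x_i$-action to see which coordinates survive, I will argue that $\overline W$ is not contained in the diagonal, so it has codimension $\le 1$. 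Together with $\bigoplus_j I_q(q)^{(2)}\le W$, this yields the index-$q$ subgroup $U\le W$. Controlling the garbage coordinates here is where I expect the real work. If the argument stalls, I would first try choosing $y$ inside $\langle x_i : i\neq j\rangle$-conjugates to force the garbage into the $(1-\sigma)$-images.

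\emph{Step 4 ($q=2$).} The identity $[a,b]=a^{-2}(ab^{-1})^2b^2$ shows $[M,M]\le M^2$. Since $U\le[M_q,M_q]\le[M,M]$, this gives $U\le M^2$.
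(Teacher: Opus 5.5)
Your Steps 1, 2 and 4 are essentially sound. Step 4 is exactly the paper's product-of-squares identity; for the main assertion the paper gives no argument of its own and simply invokes \cite[Lemma B.3 with Example B.5]{BKN}. However, Step 3, which carries all the content, has a genuine gap.

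The dichotomy you rely on is false. You claim that an $H$-invariant subspace of the permutation module $\F_q^m$ of a primitive group $H$ either contains the augmentation ideal or lies in the diagonal. Take $H=C_7\rtimes C_3\le S_7$ (or $H=C_7$) and $q=2$. Then $\F_2^7\cong\mathbf{1}\oplus V\oplus V'$, where $V,V'$ are the two $3$-dimensional modules coming from $X^7-1=(X+1)(X^3+X+1)(X^3+X^2+1)$, and $V$ is neither of the two types. Such $H\le\AGL_1(m)$ are exactly the groups that occur when the lemma is applied in Proposition \ref{prop:large-three}\ref{case:large-three_3}. So even non-diagonality of $\overline W$ would not give codimension $\le 1$. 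You have not established non-diagonality either, since your first-level commutators $[x_j,y]$ carry uncontrolled nonzero entries in the other coordinates. Two smaller points: $W$ is not normalized by $G_3$, only by $N_{G_3}(M_q)$, which maps onto $H$ by the Frattini argument; and the induced action on $\prod_j I_q(q)/I_q(q)^{(2)}$ is in general monomial rather than a permutation action.

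What is missing is to use the shape of the generators of $M_q$ rather than the submodule lattice of $\F_q^m$.
\begin{enumerate}
\item Write $g\in M_q$ as $(t,\pi)$, with bottom part $t\in C_q^{qm}$ and middle part $\pi\in\F_q^m$, and let $\epsilon(t)\in\F_q^m$ be the vector of blockwise augmentations. One checks that $g\mapsto(\epsilon(t),\pi)$ is a homomorphism. One also checks that $[g,g']$ maps to $\pi'\ast\epsilon(t)-\pi\ast\epsilon(t')$ in $\prod_j I_q(q)/I_q(q)^{(2)}\cong\F_q^m$, where $\ast$ is the coordinatewise product.
\item Since $M_q=\langle M_q\cap K,x_1,\dots,x_m\rangle$, $\overline W$ is spanned by two kinds of images. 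The images of $[b,x_k]$ with $b\in M_q\cap K$ are multiples of $e_k$. The images of $[x_i,x_k]$ equal $\Phi_{ki}e_k-\Phi_{ik}e_i$, where $\Phi_{ki}:=\epsilon((x_i)_k)$.
\item Hypothesis 2) implies that for every $k$ one of these spanning vectors has nonzero $k$-th entry.
\item If some $e_k\in\overline W$, then all are, by transitivity of $N_{G_3}(M_q)$ on blocks.
\item Otherwise every $\epsilon(b_k)$ vanishes, so the $\Phi_{ki}$ no longer depend on the choice of the $x_i$. Moreover every nonzero $\Phi_{ki}$ with $i\neq k$ has $\Phi_{ik}\neq 0$. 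These pairs form a nonempty $H$-invariant graph, which is connected because nontrivial orbital graphs of a primitive group are connected. A spanning tree then gives $\dim\overline W\ge m-1$.
\end{enumerate}
Together with your $\bigoplus_j I_q(q)^{(2)}\le W$, this yields the required index-$q$ subgroup $U$.
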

\begin{proof}
The existence of such an index-$q$ subgroup $U$ of $I_q(q)^m$ such that $U\le [M_q,M_q]\cap K$ is an immediate consequence of \cite[Lemma B.3 with Example B.5]{BKN}.  The additional assertion for $q=2$ follows 
{since every commutator $[x,y]=x^2(x^{-1}y)^2y^{-2}$ is a product of squares}. 
\end{proof}

\begin{proof}[Proof of Proposition \ref{prop:large-three}] 

Due to the assumptions on $\Gamma=G_3/K$, one of the following holds by Theorem \ref{thm:largemon} together with the strengthening \cite[Theorem 2.3]{BKN} for Case iii) below: 
\begin{itemize}
\item[i)] $m$ and $\gp$ are both prime, and $G_3/K$ contains  $C_\gp^m.C_m$.
\item[ii)] $\gp$ is prime, $m=4$, and $G_3/K$ contains $V.S_4$, where $V\subset\mathbb{F}_\gp^4$ is a $3$-dimensional submodule.
\item[iii)] $\gp=4$, $m$ is prime, and $G_3/K$ contains $U.C_m$, where $U\le S_4^m$ contains an index-$3$ subgroup of $A_4^m$ (resp., contains $A_4^2$ in the case $m=2$).
\item[iv)] $\gp=m=4$, and $G_3/K$ contains $V_4^4$, where $V_4\cong C_2\times C_2$ is the Klein $4$-group.
\end{itemize}
Similarly, the assumptions on $G_2=\Mon_k(f_2\circ f_3)$ imply that $G_2$ contains $C_q^\gp.C_\gp$ when $\gp$ is prime, resp. contains $W.S_4$ when $\gp=4$, where $W=I_4(q)\subset\mathbb{F}_q^4$ is a $3$-dimensional submodule.  
Since the image $\oline{M}$ of $M$ in $G_2$ under projection to a component is a normal subgroup of $G_2$ containing a cyclic transitive subgroup, Corollary \ref{cor:normalofHatmostp_doneright} yields that $\oline{M}$ contains $I_\gp(q).C_\gp$ for $\gp\ne 4$. Analogously, when $\gp=4$, Corollary \ref{cor:normalofHatmostp_doneright} implies that $\oline{M}$ contains $W.S_4$ with $W=I_4(q)$ as above, since $W$ becomes a permutation module under the action of $A_4\le S_4$ with no submodule of dimension $2$. 

(1) Assume $\gp\neq q$ is a prime. Then the $C_\gp$-module $C_q^\gp\cong \mathbb F_q[C_p]$ is semisimple\footnote{In fact $C_q^\gp\cong \mathbb F_q[x]/(x^p-1)$ where $\sigma$ acts by multiplication by $x$. As $x^p-1$ is separable for $p\neq q$, the module is semisimple with the trivial module $\mathbb F_q[x]/(x-1)$ appearing with multiplicity $1$.}, and a generator of $C_\gp$ acts nontrivially on every irreducible submodule of $I_\gp(q)$. When $m$ is prime, since $G_3/K$ contains the full $C_\gp^m$ by i) and $C_\gp$ acts faithfully on every nontrivial submodule of $I_\gp(q)$, it follows directly from \cite[Lemma 3.9]{BKN} that $G_3$ contains an extension $I_\gp(q)^m.C_\gp^m.C_m$. 

Similarly by ii), when $m=4$,  the group $G_3/K$ contains two elements $x_1,x_2\in C_\gp^m$ whose supports have exactly one element in common; again, \cite[Lemma 3.9]{BKN} implies that $G_3$ contains $I_\gp(q)^m.V.S_4$. Since $I_\gp(q)^m$ does not contain the diagonal submodule, Corollary \ref{cor:normalofHatmostp_doneright} implies that $N\cap K$ still contains $I_\gp(q)^m$. On the other hand, the $m\gp$-th power of the cyclic transitive subgroup from the assumptions  generates the diagonal submodule inside $N\cap K$. Since $p\neq q$, this implies $N\cap K$  properly contains $I_\gp(q)^m$. 

(2) Now assume $\gp=4$ and $G_2\not\cong \GL[2](3)$. When $m\ne 4$, the kernel of $\psi: \Gamma\to \Mon_k(f_1)$ contains an index-$3$ subgroup of $A_4^m$ (resp., contains all of $A_4^2$ for $m=2$) by iii), which in particular projects to an index-$3$ subgroup of $C_3^m\cong A_4^m/V_4^m$. Hence, it induces a subspace of codimension $\leq 1$ in $\mathbb F_3^m$ which intersects any two-dimensional subspace (resp., induces $\F_3^2$). Thus there exist two elements of order $3$ in $\ker(\psi)\le S_4^m$ whose supports have exactly one element in common. Similarly, when $m=4$, a direct check using Magma\footnote{To do so we run over all subgroups $P\le S_4\wr S_4$ surjecting onto $S_4$, with block kernel acting on a block as $S_4$, and with a cyclic transitive subgroup (there are only four different such $P$).} shows that the kernel of $\psi:\Gamma\to\Mon_k(f_1)$ contains a subgroup $C_3^2\subseteq S_4^4$ in which all nonidentity elements are supported on exactly $3$ components. Thus there exist three elements of order $3$ in $\ker(\psi)$ whose supports have only one element in common. In all cases, it follows from \cite[Lemma 3.9]{BKN} for $q\ge 3$, and from \cite[Lemma B.1 with Example B.2]{BKN} for $q=2$, that $K$ contains $I_4(q)^m \cong C_q^{3m}$. 
For $q\ge 3$, 
{since $I_4(q)^m$ does not contain the diagonal, as in Case (1), it follows from Corollary \ref{cor:normalofHatmostp_doneright} that $N\supseteq I_4(q)^m$, and that this containment is even proper.}
For $q=2$, Corollary \ref{cor:normalofHatmostp_doneright}\ref{case:normalofHatmostp_doneright_1} implies $[I_4(2)^m: N\cap I_4(2)^m] \le 2$. 
On the other hand, the intersection of $N$ with the submodule $W_1\cong I_4(2)$ of $I_4(2)^m$, consisting of elements supported only on the first component, would have to be a submodule of $W_1$ of codimension at most $1$. However, since $I_4(2)$ is the augmentation ideal of the $\mathbb{F}_2[S_4]$-permutation module, it has no submodule of codimension $1$. Thus $N$ contains all of $W_1$, and  by transitivity of the action on components,  $N\supseteq I_4(2)^m$.


(3) Next, assume that $m\neq 4$ and $q=\gp$. 
By i), $G_3/K$ contains $C_q\wr C_m$, and $G_2$ contains $C_q\wr C_q$. 

As long as $q\ne 2$, we first claim that the image $\oline M$ of $M$ in $G_2$ contains $C_q\wr C_q$. Indeed, the image of the stabilizer $G_z\leq \Mon_{\overline{k}}(f)$ of a root $z$ of $f_1(X)-t\in \oline k(t)[X]$   maps onto $\tilde{G}_2:=\Mon_{\overline{k}}(f_2\circ f_3)\supseteq C_q\wr C_q$ via its action on the roots of $(f_2 \circ f_3)(X)-z$. 
Since $\Mon_{\overline{k}}(f_1)\in \{C_m, D_m\}$, the image {$\overline{M \cap G_z}$ of $M \cap G_z$} in {$\tilde{G}_2$} is of index $\leq 2$, and hence  {$[\tilde{G}_2:\overline{G_z\cap M}]\leq 2$}. 
As $q \neq 2$ and {$\tilde{G}_2\supseteq C_q \wr C_q$},
the claim follows immediately.

{Suppose that $q>2$, or $q=2$ and 
	$\oline M = C_2\times C_2$}.
	Then, $G_3$ fulfills the assumptions of Lemma \ref{lem:triple-wreath}. Thus, by the lemma,  there exists a $q$-Sylow subgroup $M_q\le M$ and a subgroup $U\le [M_q,M_q]$ of index at most $q$ in $I_q(q)^m$. To show that $U$ is contained in $N\cap K$ as asserted, it therefore suffices to show that $N\supseteq [M_q,M_q]$. We will achieve this via a twofold application of Corollary \ref{cor:normalofHatmostp_doneright}.
	First, an application of Corollary \ref{cor:normalofHatmostp_doneright}\ref{case:normalofHatmostp_doneright_1} with $N/(N\cap K)\cong NK/K$ (resp.\ $G_3/K$) in the role of $N$ (resp., of $G$) shows that the $q$-Sylow group of $(N\cap M)/(N\cap K)$ is of index dividing $q$ in the one of $M/K$. 
	Next, an application of Corollary \ref{cor:normalofHatmostp_doneright}\ref{case:normalofHatmostp_doneright_1} to $N\trianglelefteq NK$ shows that the $q$-Sylow subgroup of $N\cap K$ is of index dividing $q$ in that of $K$. 
	Both observations together show that the Sylow subgroup $N\cap M_q$ of $N\cap M$ is   of index dividing $q^2$ in $M_q$. Moreover, $N\cap M_q\lhd M_q$. It follows that $M_q/(N\cap M_q)$ is abelian, and hence $N\supseteq [M_q,M_q]\supseteq U$, as claimed. When $q=\gp=2\ne m$, the element $\sigma^{2m}\in (N\cap M)^2$ is diagonal and, in particular is supported on an odd number of components of the $\mathbb{F}_2[C_m]$-module $I_2(2)^m\cong C_2^m$. Therefore $(N\cap M)^2$ contains not only the (unique) index-$2$ submodule, but in fact the whole $I_2(2)^m$.

	To complete the proof of (3), it remains to treat the case $q=\gp=2$ when the image of $M$ in $G_2$ is not equal to the full group $C_2\wr C_2$. Since this image nevertheless contains a cyclic transitive subgroup, it must be $C_4$. Therefore, $M\le C_4^m$, and $M/K=C_2^m$ by (i) above. Now, in case $m=2$, 
	since $\langle \sigma^4\rangle\leq N\cap K$ is the diagonal submodule of $I_2(2)^m$, and  is of index $2$ in it, $N\cap K$ indeed  contains an index-$2$ subgroup of $I_2(2)^m$. If instead $m>2$ is odd, note that $N/(N\cap K)\leq G_3/K$ is a normal subgroup containing a $(2m)$-cycle and hence contains $C_2\wr C_m$ by  Corollary \ref{cor:normalofHatmostp_doneright}.(2) and (i) above. 
	Since in addition $M/K=C_2^m$ and $NK\cap M=(N\cap M)K$ (as $K\leq M$), we get that $(N\cap M)/(N\cap K)\cong (NK\cap M)/K=C_2^m$. Hence $N\cap M =C_4^m$, and therefore $(N\cap M)^2=C_2^m$. 
	
	(4)
	In case $\Mon_k(f_1\circ f_2)=G_3/K$ contains all of $C_q^4$, we may argue just as in Case (3) 
	to obtain that $N\cap K$ contains an index-$q$ subgroup of $I_q(q)^4$, and consequently the projection of $N_0\cap K$ to its orbit of length $3q^2$ (where $N_0 =N\cap M.S_3$ denotes the stabilizer in $N$ of a root of $f_1(X)-t$) contains an index-$q$ subgroup of $I_q(q)^3$.
	
	In view of iii) at the beginning of the proof, we may thus assume $\ker(\Mon_k(f_1\circ f_2)\to \Mon_k(f_1))$ to have a $q$-Sylow group isomorphic to $C_q^3$.
	
	In order to be able to apply Lemma \ref{lem:triple-wreath}, we now descend to the stabilizer $H_3 = M.S_3\le G_3$ of a root $z$ of $f_1(X)-t$ in $G_3$. This group acts transitively on $3q^2$ elements, namely, the roots of $f(X)-t$ not mapping to the given root $z$ of $f_1(X)-t$ under $f_2\circ f_3$. Denote by $\widehat{H}_3$ (resp., $\widehat{M}$, resp.\ $\widehat{K}$) the image of $H_3$ (resp., $M$, resp., $K$) in this action, so that $\widehat{M}$ and $\widehat{K}$ preserve a maximal and a minimal block system in $\widehat{H}_3$, respectively, and they are in fact the full block kernels of these actions: for $\widehat{M}$ this is obvious since an element of $G_3$ stabilizing $3$ out of $4$ maximal blocks must stabilize the last as well; if $\widehat{K}$ were not the full block kernel in the action on the minimal block system, there would have to be an element of $\ker(\Mon_k(f_1\circ f_2)\to \Mon_k(f_1))$ acting nontrivially only on the $q$ roots of $f_1(f_2(X))-t$ mapping to the fixed root $z$ of $f_1(X)-t$ (which is not ``seen" by $\widehat{H}_3$) while fixing all others. However, this would imply $\ker(\Mon_k(f_1\circ f_2)\to \Mon_k(f_1))$ having full $q$-part $C_q^4$, which we have excluded above.
	
	We aim to repeat the argument of (3) with $\widehat{H}_3$ in place of $G_3$. This will show the minimal block kernel $\widehat{K}$ of $\widehat{H}_3$ contains a subgroup $C_q^{3(q-1)-1}$, which a fortiori implies the same for $K$ (and with the descent argument from $\widehat{K}$ to $\widehat{N_0\cap K}$ unchanged).
	To verify the assumptions of Lemma \ref{lem:triple-wreath}, first note that, due to $G_3/K$ containing a subgroup $C_q^3$ and surjecting to $S_4$, the group $\widehat{H}_3/\widehat{K}$ contains $C_q^3.C_3 \cong C_q\wr C_3$, since indeed if the projection of $V\cong C_q^3$ to the union of three (out of four) blocks were not injective, we would obtain $\ker(\Mon_k(f_1\circ f_2)\to \Mon_k(f_1))\supseteq C_q^4$, contradicting our assumptions.
	Moreover, the image of the maximal block kernel $\widehat{M}\trianglelefteq \widehat{H}_3$ under projection to any given of the three components equals the image $\pi(M)$ of $M\trianglelefteq G_3$ under projection to any of the four maximal blocks. But the image $\pi(H_3)$ of a maximal block stabilizer $H_3=M.S_3$ still fulfills $C_q\wr C_q \le \pi(H_3) \le AGL_1(q)\wr AGL_1(q)$ by Theorem \ref{thm:largemon}, and $\pi(H_3)/\pi(M)$ must be a quotient of $H_3/M\cong S_3$; for $q\ge 5$, this shows instantly that $\pi(M)\supseteq C_q\wr C_q$, whereas for $q=3$, the same conclusion is easily verified upon noting additionally that $\pi(M)$ acts transitively.
	Now the argument is completely analogous to Case (3); one should merely note 
	that, since $G_3/K$ contains $C_q^3$, the group $\widehat{N_0}/\widehat{K}$ contains all of $C_q\wr C_3$, and in particular contains a $3q$-cycle.
	\end{proof}

	\section{Diagonality of the kernel} 
	\label{sec:diagkernel}
	Let $k$ be a field of characteristic $0$. 
	We start with the following key theorem on the diagonality of the block kernel for minimally reducible pairs. 
	\begin{thm}\label{thm:diagkern}
	Suppose that $f\in k[X]$ and a Siegel function $g \in k(X)$ form a minimally reducible pair, and that $f=h \circ f_r$ for an indecomposable $f_r\in k[X]$ 
	of solvable monodromy. 
	If $g\notin k[X]$, assume further $\deg(f)$ is odd. 
	Then   $K:=\ker(\Mon_k(f)\ra\Mon_k(h))$
	is either a diagonal subgroup of $\Mon_k(f_r)^{\deg(h)}$ or ($\deg(f_r)=2$ and $K\cong C_2\times C_2$).
	%
	\end{thm}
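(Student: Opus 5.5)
The plan is to exploit the minimal reducibility directly in the Galois correspondence. Let $x,y$ be roots of $f(X)-t$ and $g(X)-t$, let $\Omega$ be their common Galois closure (Lemma \ref{lem:friedarg}), and put $G=\Gal(\Omega/k(t))=\Mon_k(f)=\Mon_k(g)$. Set $w=f_r(x)$, so $k(w)$ is the root field of $h(X)-t$. Let $\Omega_h\subseteq\Omega$ be the Galois closure of $k(w)/k(t)$, so that $K=\Gal(\Omega/\Omega_h)$. By minimal reducibility, $k(w)$ and $k(y)$ are linearly disjoint over $k(t)$, while $k(x)$ and $k(y)$ are not. The goal is to show that the image of $K$ in each block factor $\Mon_k(f_r)$ is faithful. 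Equivalently, by Remark \ref{rem:diag}, it suffices to show that $\soc(K)$ is diagonal. The first step is to identify $\soc(K)$. By Proposition \ref{prop:indec_poly} and Lemma \ref{lem:subdirect}, $K$ is a subdirect power of $A:=\Mon_k(f_r)$, where $A\le \AGL_1(p)$ with $\deg f_r=p$ prime, or $A=S_4$. By Lemma \ref{lem:interandsoc}, $\soc(K)=K\cap C_p^{d}$ (resp.\ $K\cap V_4^d$), with $d=\deg(h)$. Thus $\soc(K)$ is an $\F_p[G/K]$-submodule of the permutation module on the blocks.

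The second step is the key use of reducibility. Let $H_y=\Gal(\Omega/k(y))$. Linear disjointness of $k(w),k(y)$ means $H_y$ is transitive on blocks. Failure of linear disjointness for $k(x),k(y)$ means $H_y$ is intransitive on roots of $f$. So the stabilizer $H_y\cap G_{w}$ of the block $w$ acts intransitively on that block, through its image in $A$. I would next pass to $k(y)\Omega_h$. Since $\infty$ is totally ramified in $k(y)/k(t)$ and unramified in $\Omega_h/k(w)$, Remark \ref{rem:abh}(1),(2) and Lemma \ref{lem:transitive-normal} show that $H_y$ restricted to $\Omega_h$ behaves well. The conclusion I aim for is that $H_y\cap K$ acts intransitively on each block. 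This is clearest for polynomials: if $K\cap H_y$ projected onto a transitive subgroup of $A$ on block $w$, then, since it is normal in $H_y\cap G_w$, the latter would be transitive. Conversely, $H_y K=G$ should follow by comparing degrees, because $k(y)$ and $\Omega_h$ are linearly disjoint. This holds since $\Omega_h\cap k(y)$ is a subfield of $k(y)$ contained in the Galois closure of $h$, and minimal reducibility plus Remark \ref{rem:genFriarg} force it to be $k(t)$. Hence $K/(K\cap H_y)$ has size $[G:H_y]=\deg f=pd$, and $K\cap H_y$ has index exactly $pd$ in $K$.

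The third step is the counting contradiction. If $K$ is not diagonal, then $\soc(K)\cap C_p^{(\text{block }w)}$, the elements of $\soc(K)$ supported on a single coordinate, is nontrivial. Since $\soc(K)$ is a $G/K$-module and $H_y$ permutes blocks transitively, some nonzero element is supported on the block fixed in question. I expect this to yield that $K\cap H_y$ must meet the $p$-part in a subgroup of index at most $p$ per orbit. Using Lemma \ref{lem:onAGL1}(1), the intransitive subgroup $K\cap H_y$ on each block fixes a point, hence lies in a point stabilizer of $A$. Then $K\cap H_y\le K\cap \prod_w A_{pt}$, and $\soc(K)\cap H_y$ meets $C_p^d$ in something with $|\soc(K)/(\soc(K)\cap H_y)|$ equal to $p^{\dim}$. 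The index equality $[K:K\cap H_y]=pd$ then forces $\dim\soc(K)\le 1$ when $p>2$ (giving diagonality), and bounds $\dim\le 2$ when $p=2$, giving $K\cong C_2\times C_2$. The $S_4$ case is handled analogously via Lemma \ref{lem:transitive-normal}(2).

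The main obstacle is this index comparison. The relation $[K:K\cap H_y]=\deg f$ is too crude as stated, and it must be refined using the ramification at $\infty$: $K\cap H_y$ contains the inertia data from Lemma \ref{lem:blocknontriv}. For Siegel $g$ with two poles, this is where oddness of $\deg f$ and Lemma \ref{lem:siegel_by_cyclic} will be needed, so that the two-pole inertia does not break the argument.
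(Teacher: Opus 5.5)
Steps 1–2 are fine, and the first half of Step 3 is a good observation. Since $H_y$ is transitive on blocks but not on the roots of $f$, the subgroup $K\cap H_y$ is intransitive on every block. For $\deg f_r=p$ prime, Lemma~\ref{lem:onAGL1}(1) then makes it fix a point in every block, so $\soc(K)\cap H_y=1$, because a nontrivial translation in $C_p$ fixes nothing. This is exactly Proposition~\ref{prop:intermediate}(1), $\Omega^{\soc(K)}(y)=\Omega$, obtained more directly than the paper's route through Lemma~\ref{clm:exisx0}.

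\textbf{The gap is the counting step that follows.}
\begin{itemize}
\item Counting only gives $|\soc(K)|\mid[K:K\cap H_y]$. Even granting $KH_y=G$, this index is $p\deg(h)$. Whenever $p\mid\deg(h)$ (e.g.\ $f$ a composition of degree-$p$ factors), no rank bound follows: with $\deg h=p=3$, a socle $C_3^2$ is compatible with $|\soc(K)|\mid 9$. So ``forces $\dim\soc(K)\le 1$'' is unjustified.
\item The refinement you suggest, inertia at $\infty$ inside $K\cap H_y$, is not available. For polynomial $g$, a point stabilizer meets every inertia group at $\infty$ trivially (Lemma~\ref{lem:towerext}).
\item A nondiagonal $\soc(K)$ need not contain a nonzero vector supported on a single coordinate. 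The sum-zero module $I_d(p)$, $d\ge 3$, is a counterexample; nondiagonality only says some coordinate projection has a kernel.
\item Remark~\ref{rem:genFriarg} does not give $\Omega_h\cap k(y)=k(t)$, since it takes a \emph{non}-disjoint pair as input.
\end{itemize}

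\textbf{The missing idea is to use $\soc(K)\cap H_y=1$ structurally, on the $g$-side.}
\begin{itemize}
\item Since $\soc(K)\lhd G$ acts semiregularly on the roots of $g$, its orbit through $y$ is a block of size $|\soc(K)|=p^\ell$. This block corresponds to $k(v):=\Omega^{\soc(K)}\cap k(y)$, i.e.\ to a right factor $v$ of $g$ of degree $p^\ell$.
\item $\Mon_k(v)$ contains $\Gal(\Omega^{\soc(K)}(y)/\Omega^{\soc(K)})\cong\soc(K)$ as a regular elementary-abelian normal subgroup.
\item If $v$ is linearly equivalent to a polynomial, inertia at $\infty$ gives a $p^\ell$-cycle in $\Mon_k(v)$. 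Lemma~\ref{lem:reg_normal_sub} then forces $p^\ell\in\{p,4\}$, and Remark~\ref{rem:diag} finishes. This is Lemma~\ref{lem:lastminute}.
\item This is where the odd-degree hypothesis enters. For a Siegel $g$, the ramification indices of $g$ over $\infty$ divide $\deg f$, hence are odd. A two-point fiber of $v$ over a pole of the complementary left factor would then make $\deg v=p^\ell$ even. So $v$ is totally ramified there, i.e.\ polynomial-like.
\end{itemize}

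\textbf{The even cases are not ``analogous''.} For $\Mon_k(f_r)=S_4$, Lemma~\ref{lem:onAGL1} is unavailable: intransitive subgroups of $S_4$ such as $\langle(12)(34)\rangle$ fix no point. So you do not even get $\soc(K)\cap H_y=1$. The paper instead uses three ingredients:
\begin{itemize}
\item $\deg g_s\in\{2,4\}$, from Proposition~\ref{lem:norittstepp}\ref{case:norittstepp_4};
\item Lemma~\ref{lem:2-power}, to rule out $v$ being a composition of two quadratics when $f_r$ is quartic;
\item Proposition~\ref{prop:intermediate}(2),(3), to get $[\Omega:\Omega^{\soc(K)}]\le 4$.
\end{itemize}
For $p=2$, the branch $p^\ell=4$ is exactly what produces the exceptional case $K\cong C_2\times C_2$.
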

	\begin{rem}
	\label{rem:kernel} 
	\label{rem:diagker}
	We shall moreover see in the proof below that the case ($\deg(f_r)=2$ and $K\cong C_2\times C_2$) can only occur if
	there is a complete decomposition $g=g_1\circ \cdots \circ g_s$  such that either $\Mon_k(g_s)= S_4$ or $\Mon_k(g_{s-1}\circ g_s)=D_4$.\\ 
	\end{rem}
	
	
	The key ingredient in the proof is the following proposition: 
	\begin{prop}\label{prop:intermediate}
	Let $f\in k[X]$, $g\in k(X)$ be a minimally reducible pair, and $\Omega$  the~common splitting field  
	of $f(X)-t,g(X)-t$ over $k(t)$. 
	Assume $f=h_1\circ f_r$, $g=h_2\circ g_s$  for  indecomposable $f_r\in k[X], g_s\in k(X)$ with solvable $\Mon_k(f_r)$. 
	Let $K=\ker(\Mon_k(f)\to\Mon_k(h_1))$ and $\Omega'=\Omega^{\soc(K)}$. Let $y$ be a root of $g(X)-t$.    
	\begin{enumerate}[leftmargin=*,label={(\arabic*)},ref=(\arabic*)]
		\item If $\deg(f_r)$ is prime,  then  $\Omega'(y)=\Omega$; 
		\label{case:intermediate_1}
		\item If $g\in k[X]$ and $\Mon_k(g_s)=S_4$, then  $\Omega'(y)=\Omega$; 
		\label{case:intermediate_2}
		\item  If 
		$g\in k[X]$ and   
		$\Mon_k(g_s)=C_2$, 
		then $\Omega'(y,y')=\Omega$ for some conjugate $y'$ of $y$.
		\label{case:intermediate_3}
	\end{enumerate}
	\end{prop}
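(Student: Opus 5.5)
Throughout, let $x$ be a root of $f(X)-t$, $u=f_r(x)$ and $v=g_s(y)$, and let $G=\Gal(\Omega/k(t))$. By Lemma \ref{lem:friedarg}, $\Omega$ is the Galois closure of both $k(x)/k(t)$ and $k(y)/k(t)$. The common strategy for all three parts is to compare the intermediate field $\Omega'(y)$ (or $\Omega'(y,y')$) with $\Omega$, using minimal reducibility together with the fact that $\soc(K)$ is a minimal-type normal subgroup. Set $S:=\soc(K)$, $H_y=\Gal(\Omega/k(y))$, and $L:=\Omega'(y)$, whose Galois group over it is $S\cap H_y$. The claim $\Omega'(y)=\Omega$ is equivalent to $S\cap H_y=1$; likewise $\Omega'(y,y')=\Omega$ is equivalent to $S\cap H_y\cap H_{y'}=1$.

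The first step is to show that $S\cap H_y$ is normal in $G$, or at least that it is killed by the normality arguments. Minimal reducibility says $k(y)$ is linearly disjoint from $k(u')$ for every proper subfield $k(u')\subsetneq k(x)$, in particular from $k(u)=k(t)$-fixed field of $G_u\supseteq K$. Hence $H_y$ acts transitively on the blocks $\mathcal B=f_r^{-1}$-fibers, i.e. $H_yK=G$. Then $S\cap H_y$ is normalized by $H_y$ and by $K$ modulo $[K,S]$-type corrections. I would use Lemma \ref{lem:interandsoc} to identify $S=K\cap C_p^{\deg h_1}$ (resp.\ $K\cap V_4^{\deg h_1}$), which is elementary abelian. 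Consequently $K$ acts on $S$ through $K/S$, and $S\cap H_y$ is an $H_y$-submodule of $S$.

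The second step handles (1). Since $\deg(f_r)=p$ is prime, $K$ acts on each block through a subgroup of $\AGL_1(p)$, and $S\le C_p^{d}$. Suppose $S\cap H_y\ne 1$; it is $H_y$-invariant. Project it to the coordinates, i.e. the blocks $b\in\mathcal B$. Because $H_y$ is transitive on blocks, every coordinate projection is nonzero, so some element of $S\cap H_y$ acts as a nontrivial translation on the block of $x$, hence fixes no root in that block. I would instead aim for the contrapositive via linear disjointness: $H_y\cap G_x$ should still be transitive on $G/G_x$-cosets corresponding to proper subfields, and nontrivial $S\cap H_y$ would force $k(x)$ and $k(y)$ to be linearly disjoint. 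The cleaner route is this. Nonlinear-disjointness of $k(x)$ and $k(y)$ over $k(t)$ gives that $H_y$ is intransitive on $\mathcal X_b$ for the block $b$ fixed, within $H_y\cap G_b$. Meanwhile, the image of $S\cap H_y\trianglelefteq H_y\cap G_b$ on $\mathcal X_b$ is a normal subgroup containing a nontrivial translation, hence is all of $C_p$, which is transitive. Thus $H_y\cap G_b$ is transitive on $\mathcal X_b$, and since $H_y$ is transitive on blocks, $H_y$ is transitive on $f^{-1}(t)$. This contradicts reducibility.

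The third step treats (2) and (3) the same way with $V_4$ replacing $C_p$. In (2), $\Mon(g_s)=S_4$ forces $\deg f_r\in\{2,4\}$ by Proposition \ref{lem:coprime}\ref{case:norittstepp_4}. The image of $S\cap H_y$ on a block is a nonzero $(H_y\cap G_b)$-submodule of $V_4$ (resp.\ $C_2$), and I must show that $H_y\cap G_b$ acts irreducibly enough that this image is transitive. Here I would use Lemma \ref{lem:transitive-normal}\ref{case:transitive-normal_2}: the Galois closure of $\Omega'(y)/\Omega'(v)$ has group $S_4$, whose action on a block makes $V_4$ irreducible. In (3), where $\deg g_s=2$, a single $y$ need not suffice, because an $\F_2$-submodule may miss the $V_4$ transitivity. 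Adjoining a conjugate $y'$ with $g_s(y')=g_s(y)$ forces $S\cap H_y\cap H_{y'}$ to act trivially on both roots of the quadratic. I expect the main obstacle to be exactly this: ensuring the block-image of $S\cap H_y$ (or of $S\cap H_y\cap H_{y'}$) is transitive in the non-prime cases, which requires controlling the action of $H_y\cap G_b$ on $\soc$ of a block.
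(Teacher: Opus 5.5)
\textbf{Part (1).} Your argument for (1) is correct, and it takes a different, more elementary route than the paper. You use $S=\soc(K)\le C_p^{\deg h_1}$ from Lemma \ref{lem:interandsoc}; its hypothesis that the component projections contain $C_p$ follows from Lemmas \ref{lem:subdirect} and \ref{lem:blocknontriv}, and you should say so. Then a nontrivial $S\cap H_y\trianglelefteq H_y$ acts on some block by a nontrivial translation. Since $H_y$ is transitive on blocks, it does so on every block, hence transitively. So $H_y$ is transitive on the roots of $f(X)-t$, contradicting reducibility. The paper instead uses Lemma \ref{clm:exisx0} to put a $k(u)$-conjugate of $x$ inside $k(u,y)$, and then sweeps through the conjugates of $u$ using that $\Omega'(y)/\Omega'$ is abelian. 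A small slip: transitivity of $H_y$ on blocks means $H_yG_u=G$, not $H_yK=G$. The latter is not implied by minimal reducibility, but you never use it.

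\textbf{Part (2).} Here there is a genuine gap. You may assume $\Mon_k(f_r)=S_4$, so the block image $N_b$ of $S\cap H_y$ is a nonzero subgroup of $V_4$. Your argument from (1) works only if $N_b=V_4$.
\begin{itemize}
\item The only group you have normalizing $N_b$ on the block is the image of $H_y\cap G_b$. That image is intransitive on the block, precisely because of the reducibility you are trying to contradict.
\item Such a group need not act irreducibly on $V_4$. For example, $\langle (12),(34)\rangle$ normalizes the intransitive subgroup $\langle (12)(34)\rangle$.
\item The $S_4$ you invoke does not help. It would act on conjugates of $y$, not on an $f$-block. Moreover, with $k(v):=\Omega'\cap k(y)$, Lemma \ref{lem:lastminute} and right-uniqueness of $g_s$ give $k(v)=k(g_s(y))$. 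So $\Omega'(y)/\Omega'(v)=\Omega'(y)/\Omega'$ is abelian, not an $S_4$-extension.
\end{itemize}
What is missing is a larger group that normalizes $S\cap H_y$ and acts on an $f$-block as all of $S_4$. You can get one as follows.
\begin{itemize}
\item Since $\Omega'(y)/\Omega'$ is Galois, $S\cap H_y$ fixes every $k(v)$-conjugate of $y$. Hence it is normalized by $\Gal(\Omega/k(v))$.
\item Since $[k(v):k(t)]=[k(u):k(t)]$, the place $\infty$ is unramified in the Galois closure of $k(u,v)/k(u)$, by Remark \ref{rem:abh}(1).
\item Then Lemma \ref{lem:transitive-normal}\ref{case:transitive-normal_2} shows that $\Gal(\Omega/k(u,v))$ acts on the block of $u$ as $S_4$. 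This forces $N_b=V_4$, and your argument from (1) then applies.
\end{itemize}
The paper argues with fields instead. Two $S_4$-closures over $\Omega_u(v)$ that are not linearly disjoint must coincide, so $\Omega_x\subseteq\Omega_y\Omega_u$. Transitivity of $V$ on the conjugates of $u$ then gives $\Omega_y\Omega_u=\Omega$, and finally $\Omega'(y)=\Omega_y\Omega'\supseteq\Omega$.

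\textbf{Part (3).} Your choice of $y'$ fails outright.
\begin{itemize}
\item The extension $k(y)/k(g_s(y))$ is quadratic, hence Galois. So any $y'$ with $g_s(y')=g_s(y)$ already lies in $k(y)$.
\item Consequently $H_{y'}=H_y$ and $\Omega'(y,y')=\Omega'(y)$. This need not be $\Omega$: the paper's proof explicitly allows $[\Omega:\Omega']=4$ while $[\Omega'(y):\Omega']=2$.
\end{itemize}
The correct $y'$ must be a $k(t)$-conjugate of $y$ with $\Omega'(y')\neq\Omega'(y)$. The paper obtains it as follows.
\begin{itemize}
\item Lemmas \ref{lem:lastminute} and \ref{lem:2-power} give $[k(y):\Omega'\cap k(y)]=2$.
\item Minimal reducibility puts the quadratic extension $k(u,y)/k(u,v)$ inside $k(x,v)$. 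This yields $\Omega'(y')\subseteq\Omega'(x)$ for every $k(t)$-conjugate $y'$ of $y$.
\item Hence $\Omega=\Omega'(x)$ has degree $2$ or $4$ over $\Omega'$, and two conjugates $y,y'$ with distinct fields $\Omega'(y)\neq\Omega'(y')$ generate it.
\end{itemize}
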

	Note that group theoretically, the equality $\Omega'(y)=\Omega$ means that $V:=\Gal(\Omega/k(y))$ and $\soc(K)$ generate a subgroup isomorphic to a semidirect product $\soc(K)\rtimes V$. Also note that in (3), the last assumption implies that $\Omega'(y)/\Omega'$ is quadratic and hence the conclusion is that $\Omega/\Omega'$ is at most biquadratic. 
	
	To prove Proposition \ref{prop:intermediate}, we shall need the following lemmas. 
	\begin{lem}
	\label{lem:lastminute}
	With the notation of Proposition \ref{prop:intermediate}, let $k(v) = \Omega'\cap k(y)$, and assume additionally that $v=v(y)\in k(y)$ is linearly equivalent over $\overline{k}$ to a polynomial map. Then $[k(y):k(v)]$ is either a prime or is $4$. 
	\end{lem}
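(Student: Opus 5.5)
Let $d=\deg(f)$ and $x$ a root of $f(X)-t$ with $u:=f_r(x)$, so $k(u)\subseteq k(x)$ and $\deg(h_1)=[k(u):k(t)]$. Since $\soc(K)\le K$, the field $\Omega'=\Omega^{\soc(K)}$ contains $\Omega^K$, the Galois closure of $k(u)/k(t)$. The plan is to understand $k(v)=\Omega'\cap k(y)$ through ramification at $\infty$, and then read off the degree $[k(y):k(v)]$ from the structure of $\Gal(\Omega/\Omega')$.

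First, ramification. By Remark \ref{rem:abh}(1), every place of $\Omega$ over $t\mapsto\infty$ has ramification index exactly $d$ over $k(t)$. Let $e'$ be the ramification index of $\infty$ in $\Omega'/k(t)$; it is a multiple of $\deg(h_1)$, since $k(u)\subseteq\Omega'$ is totally ramified at $\infty$. Since $v$ is linearly equivalent over $\overline k$ to a polynomial, after a linear fractional change of variable $k(y)/k(v)$ has a totally ramified point. I would take this point to lie over $\infty$, after checking that the place of $k(v)$ under $\infty$ is the distinguished one, perhaps using Lemma \ref{lem:siegel_by_cyclic} when $g$ is only a Siegel function. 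Then Lemma \ref{lem:towerext} applies to $k(y)/k(v)$ inside its Galois closure. I would also use the rank $\deg(f)/\deg(h_1)\cdot(\ldots)$ bound, namely that $\Gal(\Omega/\Omega')=\soc(K)$, which by Lemma \ref{lem:interandsoc} is elementary abelian. It is a subgroup of $C_p^{\deg(h_1)}$, or of $V_4^{\deg(h_1)}$, because $\Mon_k(f_r)\le\AGL_1(p)$ or $\Mon_k(f_r)=S_4$ by Proposition \ref{prop:indec_poly}.

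Second, the key step. The Galois closure $\widetilde\Omega$ of $k(y)/k(v)$ lies in $\Omega$, and $k(y)\cap\Omega'=k(v)$. Since $\Omega'/k(v)$ is Galois, $\Omega'$ and $k(y)$ are linearly disjoint over $k(v)$, so
\[
[k(y):k(v)]=[\Omega'(y):\Omega'].
\]
This degree divides $|\soc(K)|$, so $[k(y):k(v)]$ is a power of $p$, where $p=\deg(f_r)$ if $\deg(f_r)$ is prime and $p=2$ if $\deg(f_r)=4$. Moreover, $\Gal(\widetilde\Omega/\widetilde\Omega\cap\Omega')$ is a normal subgroup of $\Gal(\widetilde\Omega/k(v))$, it is elementary abelian, and it acts transitively on the roots (Lemma \ref{lem:transitive-normal}(1)), so it contains a cyclic transitive subgroup. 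An elementary abelian group containing an element of order $[k(y):k(v)]$ forces that degree to be $p$, or $4$ when there is an element of order $4$ and $p=2$.

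The delicate point is the case $p=2$. Here the monodromy of $k(y)/k(v)$ has a regular normal $2$-subgroup containing a $4$-cycle only if the degree is $4$; I would invoke Lemma \ref{lem:reg_normal_sub} to exclude degrees $2^e$ with $e\ge3$. The main obstacle I expect is justifying that the inertia of the totally ramified point actually lies in the elementary abelian normal subgroup $\Gal(\widetilde\Omega/\widetilde\Omega\cap\Omega')$, rather than only mapping into it modulo something. The argument for this is that $\infty$ is unramified in $\Omega'/k(v)$, because the ramification index of $\infty$ over $k(v)$ in $\Omega'$ already equals its index in $k(v)$. So the full inertia group of order $[k(y):k(v)]$ sits inside $\Gal(\widetilde\Omega/\widetilde\Omega\cap\Omega')$, which is elementary abelian, except that when $V_4$ appears an element of order $4$ may arise. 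This bounds the degree by the exponent, giving a prime degree or $4$.
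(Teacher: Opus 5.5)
Your opening steps match the paper. $\Omega'/k(v)$ is Galois, so $k(y)$ and $\Omega'$ are linearly disjoint over $k(v)$. Hence $[k(y):k(v)]=[\Omega'(y):\Omega']$ is a power of $p$, and $N:=\Gal(\Omega'(y)/\Omega')$ is an elementary abelian normal subgroup of $\Mon_k(v)$ that is transitive, hence regular, on the conjugates of $y$. (To apply Lemma \ref{lem:interandsoc} you should also check that the component projections of $K$ contain $C_p$, resp.\ $A_4$; this follows from Lemmas \ref{lem:blocknontriv} and \ref{lem:transitive-normal}(2).)

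\textbf{The gap: your key step is circular.} You want the inertia group of the totally ramified place to lie inside $N$, and you justify this by claiming that the place is unramified in $\Omega'/k(v)$. Check this when $g$ is a polynomial:
\begin{itemize}
\item The inertia at $\infty$ in $\Omega/k(t)$ is generated by a $d$-cycle $\sigma$.
\item $\langle\sigma\rangle\cap\soc(K)=\langle\sigma^{d/p}\rangle$ has order exactly $p$. For $\deg(f_r)=4$, only the square of $\sigma^{\deg(h_1)}$ lies in $V_4^{\deg(h_1)}$.
\item So $\infty$ has ramification index $d/p$ in $\Omega'/k(t)$ and $d/[k(y):k(v)]$ in $k(v)/k(t)$.
\item Its index in $\Omega'/k(v)$ is therefore $[k(y):k(v)]/p$.
\end{itemize}
Unramifiedness is thus \emph{equivalent} to $[k(y):k(v)]=p$, which is what you are trying to prove.

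Your fallback, that ``when $V_4$ appears an element of order $4$ may arise'', is false: $\soc(K)\le V_4^{\deg(h_1)}$ is elementary abelian. If the inertia really lay in $N$, you would prove the degree is exactly $p$, which is too strong. A concrete counterexample is the pair $(D_{4,\alpha},-\frac14 D_{4,2\alpha})$ of Example \ref{ex:equtoT4-T4}, with $f_r$ quadratic. There $K=\soc(K)\cong C_2\times C_2$, $k(v)=k(t)$ and $[k(y):k(v)]=4$. Moreover, $\infty$ ramifies in the quadratic extension $\Omega'/k(t)$, and the order-$4$ inertia is not contained in $N=K$.

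\textbf{The repair.} Use, for every prime, the argument you sketched only for $p=2$, and stop trying to place a cyclic transitive subgroup inside $N$.
\begin{itemize}
\item The hypothesis on $v$ gives a cyclic transitive subgroup of $\Mon_{\oline k}(v)\le\Mon_k(v)$ directly: the inertia at the totally ramified point, wherever it lies. You need not locate it over $\infty$ or invoke Lemma \ref{lem:siegel_by_cyclic}.
\item $N$ is a regular $p$-elementary-abelian normal subgroup of $\Mon_k(v)$ of degree $p^a$.
\item Lemma \ref{lem:reg_normal_sub}, which holds for all primes $p$, then forces $a=1$ or $(p,a)=(2,2)$.
\end{itemize}
This is exactly the paper's proof; the ramification bookkeeping and Lemma \ref{lem:towerext} are not needed.
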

	\begin{proof}
	Set $d_r:=\deg(f_r)$. Since $f_r$ is indecomposable,  either  $\Mon_k(f_r)\leq \AGL_1(p_r)$ where $d_r=p_r$ is a prime, 
	or $\Mon_k(f_r)=S_4$ and $d_r=4$, by Proposition \ref{prop:indec_poly}. If $d_r=4$, set $p_r=2$. 
	Note that $K$ is a subdirect power of its image  $\oline K\leq \Mon_k(f_r)$ by Lemma \ref{lem:subdirect}. Moreover $\oline K=S_4$ if $d_r=4$ by Lemma \ref{lem:transitive-normal}.(2) and Remark \ref{rem:abh}(1). 
	Hence, $\oline K$ contains $C_{p_r}$  if $d_r=p_r$ (resp.\ $A_4$ if $d_r=4$), by Lemma \ref{lem:blocknontriv}.  
	Hence   $\soc(K)$ is a $p_r$-elementary abelian group by Lemma \ref{lem:interandsoc}. 
	Since $k(y)/k(v)$ and $\Omega'/k(v)$ are linearly disjoint, it follows that $[k(y):k(v)]=[\Omega'(y):\Omega']$ is a power of $p_r$. 
	
	Since $v=v(y)$ is then linearly related over $\overline{k}$ to a polynomial of prime power degree by assumption, $\Mon_k(v)$ contains a cyclic transitive subgroup, but also contains the regular $p_r$-elementary-abelian normal subgroup $\Gal(\Omega'(y)/\Omega')$, where the latter is identified via restriction with a subgroup of $\Mon_k(v)$. By Lemma \ref{lem:reg_normal_sub}, this implies that $[k(y):k(v)]$ equals $p_r$ or $4$.
	\end{proof}
	
	\begin{lem}\label{clm:exisx0}
	Suppose that $f\in k[X]$ and $g\in k(X)$ form a minimally reducible pair. 
	Write $f=h\circ f_r$ and $g=g_1\circ g_2$ for indecomposable $f_r$ and $\deg(g_2)>1$, and assume $\Mon_k(f_r)$ is solvable with $p_r:=\deg(f_r)$ prime. Then  the fiber product $\mP^1\#_{g,h}\mP^1\ra \mP^1$ of $g$ and $h$ factors through $f$. 
	Equivalently, letting $x,y$ be the roots of $f(X)-t$ and $g(X)-t$, resp., in an extension of $k(t)$, and setting ${u=f_r(x)}$, there exists a $k(t)$-conjugate $x_0$ of $x$ such that $k(x_0) \subseteq k(u,y)$. {Furthermore, $x_0$ can be chosen as a $k(u)$-conjugate of $x$.}
	\end{lem}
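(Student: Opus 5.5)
We work inside the common Galois closure $\Omega$ of $k(x)/k(t)$ and $k(y)/k(t)$ from Lemma \ref{lem:friedarg}, with $u=f_r(x)$ and $w=g_2(y)$. The plan is to show that $k(x)/k(u)$ and $k(u,y)/k(u)$ are not linearly disjoint. Since $[k(x):k(u)]=p_r$ is prime, this will force $x_0\in k(u,y)$ for some $k(u)$-conjugate $x_0$ of $x$.

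\emph{Step 1: non-disjointness.} By minimality, $k(u)/k(t)$ and $k(y)/k(t)$ are linearly disjoint, so
\[ [k(u,y):k(u)]=[k(y):k(t)]=\deg(g)=\deg(f). \]
On the other hand, $k(x)/k(t)$ and $k(y)/k(t)$ are not linearly disjoint, so
\[ [k(x,y):k(t)]<\deg(f)^2=\deg(h)\, p_r\,\deg(f). \]
Hence
\[ [k(x,y):k(u,y)]<p_r=[k(x):k(u)], \]
and therefore $k(x)$ and $k(u,y)$ are not linearly disjoint over $k(u)$.

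\emph{Step 2: from non-disjointness to a root in $k(u,y)$.} Put $H=\Gal(\Omega/k(x))$ and $U=\Gal(\Omega/k(u,y))$, both inside $\Gal(\Omega/k(u))$. Non-disjointness means $U$ is not transitive on the $p_r$ roots of $f_r(X)-u$, i.e.\ on the $\Gal(\Omega/k(u))$-conjugates of $x$. The permutation image of $U$ in $\Mon_{k(u)}(f_r)\le \AGL_1(p_r)$ is then an intransitive subgroup. By Lemma \ref{lem:onAGL1}(1), such a subgroup fixes a point, namely some root $x_0$ of $f_r(X)-u$. Thus $U$ fixes $x_0$, so $k(x_0)\subseteq k(u,y)$.

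This $x_0$ is a $k(u)$-conjugate of $x$, hence also a $k(t)$-conjugate. Translating back, the function field $k(u,y)$ of $\mP^1\#_{g,h}\mP^1$ contains $k(x_0)$. So the projection of the fiber product to $\mP^1$, via $t$, factors through $f$ by way of $x_0$; equivalently, the second projection factors through $f_r$.

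\emph{Main obstacle.} The one delicate point is identifying $\Mon_{k(u)}(f_r)$ with a subgroup of $\AGL_1(p_r)$. This needs $\Mon_k(f_r)$ solvable of prime degree, which places it in $\AGL_1(p_r)$ by Proposition \ref{prop:indec_poly}; it also needs the $k(u)$-Galois closure of $k(x)$ to have group a subgroup of it. The latter holds because $\Gal$ over $k(u)$ of the splitting field of $f_r(X)-u$ restricts to a subgroup of $\Mon_k(f_r)$ acting on those $p_r$ roots.

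We do not need the hypothesis $\deg(g_2)>1$ in this argument. It only matters for the framing, namely so that $g$ is decomposed with a nontrivial right factor in the surrounding application.
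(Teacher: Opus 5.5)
Your argument is correct, and it is a more direct version of the paper's. The paper sets $v=g_2(y)$ and works over $k(u,v)$. It uses that $k(x),k(v)$ and $k(u),k(y)$ are linearly disjoint over $k(t)$ to show that $k(v,x)$ and $k(u,y)$ are not linearly disjoint over $k(u,v)$. Primality of $p_r$ then gives that the Galois closure $\Omega_x$ of $k(v,x)/k(u,v)$ lies in the Galois closure $\Omega_y$ of $k(u,y)/k(u,v)$. The paper identifies $\Gal(\Omega_x/k(u,v))$ with a subgroup of $\Mon_k(f_r)\le\AGL_1(p_r)$, via linear disjointness of $k(u,v)$ and $k(x)$ over $k(u)$. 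Finally it applies Lemma \ref{lem:onAGL1}(1) to the intransitive image of $\Gal(\Omega_y/k(u,y))$.

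You instead compare $k(x)$ with $k(u,y)$ directly over $k(u)$, where the Galois group of $f_r(X)-u$ is simply $\Mon_k(f_r)$. So you need neither the auxiliary field $k(v)$ nor any identification of Galois groups, and the decisive step (an intransitive subgroup of $\AGL_1(p_r)$ fixes a point) is the same. Your route shows that the hypothesis $\deg(g_2)>1$ is superfluous, as you observe. The paper needs it only for the linear disjointness of $k(x)$ and $k(v)$, and accordingly checks $k(v)\neq k(y)$ before invoking the lemma in Proposition \ref{prop:intermediate}(1). Likewise, the common Galois closure from Lemma \ref{lem:friedarg} is not needed: any Galois extension of $k(t)$ containing $k(x,y)$ will do.

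One slip: you use $\deg(g)=\deg(f)$, but here $g$ is only a rational function. Corollary \ref{cor:min-red-pol} is proved for polynomial pairs only, and the example in Remark \ref{rem:limitations}(2) is a minimally reducible pair with $\deg(f)=8$ and $\deg(g)=16$. The slip is harmless. Non-disjointness gives $[k(x,y):k(t)]<\deg(f)\deg(g)$, and dividing by $[k(u,y):k(t)]=\deg(h)\deg(g)$ yields $[k(x,y):k(u,y)]<p_r$, with no relation between $\deg(f)$ and $\deg(g)$ required.
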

	\begin{proof}
	{Let $v=g_2(y)$}. Since $f$ and $g$ is a minimally reducible pair, $k(x)$ and $k(v)$ (resp., $k(y)$ and $k(u)$) are linearly disjoint over $k(t)$. 
	Let $\Omega_x/k(u,v)$ (resp., $\Omega_y/k(u,v)$) be the Galois closure of $k(v,x)/k(u,v)$ (resp., $k(u,y)/k(u,v)$). Since $k(u,v)$ and $k(x)$ are linearly disjoint over $k(u)$,  we  identify $\Gal(\Omega_x/k(u,v))$ with a subgroup of $\Mon_k(f_r)$, as in Remark \ref{rem:Gal-closure}. Since $f_r$ is indecomposable of degree $\neq 4$ with solvable monodromy,  these subgroups identify with subgroups of $\AGL_1(p_r)$, where $p_r=\deg(f_r)$ is prime, by Proposition \ref{prop:indec_poly}. Since $p_r$ is prime and $k(v,x)/k(u,v)$ and $k(u,y)/k(u,v)$ are not linearly disjoint, we have $k(v,x)\subseteq \Omega_y$ and hence $\Omega_x \subset \Omega_y$. Since the image of $\Gal(\Omega_y/k(u,y))$ in $\Gal(\Omega_x/k(u,v))\leq \AGL_1(p_r)$ is intransitive, Lemma  \ref{lem:onAGL1} gives a root $x_0$ of $f_r(X)-u$ fixed by this image. This root is a $k(u)$-conjugate of $x$ that is contained in $k(u,y)$, yielding the desired inclusion $k(v,x_0)\subseteq k(u,y)$, cf.\ Figure \ref{diag1}. As $k(u,y)$ is the compositum of the linearly disjoint extensions $k(y)/k(t)$ and $k(u)/k(t)$, it is the function field of the fiber product of $g$ and $h$, so that the inclusion $k({x_0})\subseteq k(u,y)$ implies that this fiber product factors through $f$. 
	%
	\begin{figure}[h!]
		{\centering
			$$
			\xymatrix{
				& \Omega_x \ar@{-}[r]& \Omega_y \ar@{-}[d]\\
				k(x_0)  \ar@{-}[d]_{p_r} \ar@{-}[r]&k(v,x_0) \ar@{-}[u] \ar@{-}[r] \ar@{-}[d]^{p_r}& k(u,y) \ar@{-}[d] \\
				k(u) \ar@{-}[d] \ar@{-}[r]&k(u,v) \ar@{-}[d] \ar@{-}[ru]& k(y) \\
				k(t) \ar@{-}[r]&k(v) \ar@{-}[ru]& 
			}
			$$}
		\caption{Diagram of relevant field extensions in the proof of Lemma \ref{clm:exisx0}}\label{diag1}
	\end{figure}
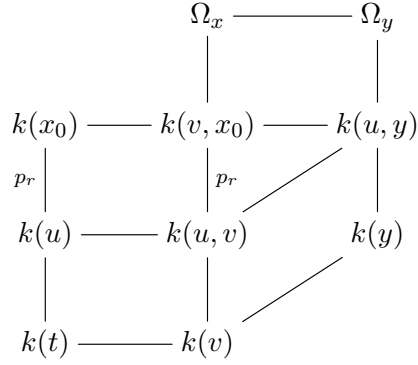
	\end{proof}
	
	\begin{proof}[Proof of Proposition \ref{prop:intermediate}]
	Let $x$ be a root of $f(X)-t\in k(t)[X]$, 
	and set $u=f_r(x)$ and $k(v)=\Omega'\cap k(y)$. 

	(1) Assume first  $p_r=\deg(f_r)$ is prime. Since   in  addition $\Gamma_r=\Mon_k(f_r)$ is solvable, it embeds into $\AGL_1(p_r)$ by Proposition \ref{prop:indec_poly}. 
	Thus, $\Mon_k(f)$ is a subgroup of $\AGL_1(p_r)\wr \Mon_k(h_1)$. 
	As in the proof of Lemma \ref{lem:lastminute},  $K$ is a subdirect power of its image  $\oline K\leq \AGL_1(p_r)$ by Lemma \ref{lem:subdirect},  and  $\oline K$ contains $C_{p_r}$ by Lemma \ref{lem:blocknontriv}.  Hence, by Lemma \ref{lem:interandsoc}, $\soc(K)= C_{p_r}^{{\deg(h_1)}} \cap K$, and in particular $\soc(K)$ is a $p_r$-elementary abelian group.  
	. 

	On the one hand, note that for any $k(t)$-conjugate $u_0$ of $u$ and roots $x_1, x_2$ of $f_r(X)-{u_0}$, we have $\Omega'({x_1})=\Omega'({x_2})$: Indeed, since $\Omega'/k(t)$ is Galois (as $\soc(K)\leq K$ is characteristic) and $\Omega'\neq \Omega$,  we have $k({x_1}) \not\subset \Omega'$, and hence $\Omega'/k({u_0})$ and $k({x_1})/k({u_0})$ are linearly disjoint. This implies that $f_r(X)-{u_0}$ is irreducible over $\Omega'$. As $\soc(K)$ is abelian, $\Omega'({x_1})/\Omega'$ is Galois, and hence $\Omega'({x_1})=\Omega'({x_2})$.
	
	On the other hand, since $k(y)/k(t)$ and $k(u)/k(t)$ are linearly disjoint, $h_1(X)-t$ remains irreducible over $k(y)$, and hence $V:=\Gal(\Omega/k(y))$ acts transitively on the $k(t)$-conjugates of $u$.
	To apply Lemma \ref{clm:exisx0}, note that $k(y)/k(v)$ is  nontrivial since  $\Omega$ is the Galois closure of $k(y)/k(t)$ 
	and since $\Omega'\neq \Omega$ (as $p_r\divides \soc(K)$). Thus the lemma implies that some $k(u)$-conjugate $x_0$ of $x$ is contained in $k(y,u)$. 
	Hence, $k(v,x_0^\sigma)\subset k(u^\sigma,y) \subset \Omega'(y)$ for every $\sigma \in V=\Gal(\Omega/k(y))$. 
	Note that since $\Omega'(y)/\Omega'$ is abelian, and $x_0^\sigma$ is contained $\Omega'(y)$,  so is every $k(u^\sigma)$-conjugate of $x_0^\sigma$. Combining this with the transitivity of $V=\Gal(\Omega/k(y))$ on $k(t)$-conjugates of $u$, we see that $\Omega'(y)$ contains all $k(t)$-conjugates of $x$, that is, $\Omega'(y)= \Omega$. 
	
	(2) Assume $g\in k[X]$ and  $\Mon_k(g_s)=S_4$. 
	Since the conclusion follows from (1) if $\deg(f_r)$ is prime, we may assume $\Mon_k(f_r)=S_4$  in view of Proposition \ref{prop:indec_poly}. 
	Let $\Omega_u,\Omega_{v}$ and $\Omega_x, \Omega_y$ be the Galois closures of $k(u)/k(t),k(v)/k(t)$, and $k(x)/k(u), k(y)/k(v)$, respectively.

	Since $f,g$ form a minimally reducible pair, $g_s$ is right-unique by Proposition \ref{lem:norittstepp}\ref{case:norittstepp_5}. Thus, Lemma \ref{lem:lastminute} implies that $k(v)=k(g_s(y))$. Moreover, $\deg(f)=\deg(g)$ by Corollary \ref{cor:min-red-pol}. Denote this degree by $n$. 
	Since $\deg(h_1)=\deg(h_2)=n/4$, this is also the ramification index of  $\Omega_u\Omega_{v}/k(t)$ over $\infty$ by Remark \ref{rem:abh}(1) (Abhyankar's lemma). 
	Hence $\Gal(\Omega_x\cdot \Omega_u\Omega_{v}/\Omega_u\Omega_{v})$ and $\Gal(\Omega_y\cdot \Omega_u\Omega_{v}/\Omega_u\Omega_{v})$ are also $S_4$ by Lemma \ref{lem:transitive-normal}(2), and hence so are $\Gal(\Omega_x\Omega_u(v)/\Omega_u(v))$, $\Gal(\Omega_y\Omega_u(v)/\Omega_u(v))$. 
	
	
	We next show that $\Omega_y\Omega_u\supseteq \Omega_x$. 
	Since $k(x,v)/k(u,v)$ and $k(u,y)/k(u,v)$ are not linearly disjoint but both are linearly disjoint from $\Omega_u(v)$, the extensions $\Omega_u(x,v)/\Omega_u(v)$ and 
	$\Omega_u(y)/\Omega_u(v)$ are not linearly disjoint as well. As $\Omega_x\Omega_u(v)/\Omega_u(v)$ and $\Omega_y\Omega_u/\Omega_u(v)$ are both $S_4$-extensions, this implies $\Omega_x\Omega_u(v)=\Omega_y\Omega_u$. In particular $\Omega_y\Omega_u\supseteq \Omega_x$. 
	
	We claim that $\Omega_y\Omega_u=\Omega$. As $V$ preserves $\Omega_u$ and fixes $y$ and $v$, it also preserves $\Omega_u(v),\Omega_u(y)$ and hence  also the Galois closure $\Omega_y\Omega_u$ of $\Omega_u(y)/\Omega_u(v)$. By the minimal reducibility assumption, $k(y)/k(t)$ and $k(u)/k(t)$ are linearly disjoint, and hence $V$ is transitive on the conjugates of $u$. As $\Omega_y\Omega_u\supseteq k(x)$,  for every conjugate $u'$ of $u$, by applying an element of $V$, we get that $\Omega_y\Omega_u$ contains $\Omega_u(x')$ for a root $x'$ of $f_r(X)-u'\in k(u')[X]$. 
	Moreover, since  $\Omega_y\Omega_u$ contains the Galois closure $\Omega_x$ of $k(x)/k(u)$, this implies that $\Omega_y\Omega_u$ also contains the Galois closure of $k(x')/k(u')$. 
	Since $V$ is transitive on $k(t)$-conjugate of $u$, it follows that $\Omega_y\Omega_u$ contains all conjugates $x'$ of $x$, 
	proving the claim.

	Finally to derive (2), note that  $\Omega'$ and $k(y)$ are linearly disjoint over $k(v)$, and hence the compositum $\Omega_{y}\Omega'$ coincides with the Galois closure of $\Omega'(y)/\Omega'$ by Remark \ref{rem:Gal-closure}.
	However, as $\soc(K)$ is a $2$-elementary abelian group, $\Omega'(y)/\Omega'$ is Galois, and hence $\Omega'(y)=\Omega_y\Omega'$ which contains $\Omega$ by the last claim.
	
	(3)  
	Since the conclusion follows from (1) if $\deg(f_r)$ is prime, we may assume $\Mon_k(f_r)=S_4$ by Proposition \ref{prop:indec_poly}. Since $g\in k[X]$, the right factor $g_s$ is right unique by Proposition \ref{lem:coprime}\ref{case:norittstepp_4}. Due to Lemma \ref{lem:lastminute}, $[k(y):k(v)]$ is either $2$ or $4$; in the latter case, however, $v=v(y)$ would be a composition of two quadratic polynomials (since $g_s$ is a right-unique factor), contradicting Lemma \ref{lem:2-power}. Thus, $[k(y):k(v)]=2$, and therefore $k(v)=k(g_s(y))$. 
	Since $k(x,v)$ contains the quadratic extension $k(u,y)/k(u,v)$ by minimal reducibility and since $\Omega'\supseteq k(u,v)$, we have $\Omega'(x)\supseteq \Omega'(y)$. Since $\Omega'/k(t)$ is Galois, $[k(y'):\Omega'\cap k(y')]=2$ for any $k(t)$-conjugate $y'$ of $y$, so $\Omega'(y') \subseteq \Omega'(x)$ by the same argument. Thus $\Omega'(x)=\Omega$. Therefore either $[\Omega'(x):\Omega']=2$, in which case $\Omega=\Omega'(x)=\Omega'(y)$, or  $[\Omega'(x):\Omega']=[k(x):k(u)]=4$, in which case $\Omega=\Omega'(x)=\Omega'(y,y')$ for some $k(t)$-conjugate $y'$ of $y$. 
	%
	%
	\end{proof}

	\begin{proof}[Proof of Theorem \ref{thm:diagkern}]
	Let $x,y$ be roots of $f(X)-t,g(X)-t$, respectively,  in an extension of $k(t)$. 
	As in Proposition \ref{prop:intermediate}, let
	$u=f_r(x)$,  let $\Omega'$ be the fixed field of $\soc(K)$ and $k(v)=\Omega'\cap k(y)$.
	
	Assume at first that $g$ is not a polynomial, but merely a Siegel function, and thus by assumption that $\deg(f)$ is odd. 
	In particular, $\Mon_k(f_r)\leq \AGL_1(p_r)$ for an odd prime $p_r=\deg(f_r)$ by Proposition \ref{prop:indec_poly}. 
	It follows that $\soc(K)=C_{p_r}^{\deg(h)} \cap K$ by Lemmas \ref{lem:subdirect} and \ref{lem:interandsoc}, and in particular $[k(y):k(v)]=[\Omega'(y):\Omega']=p_r^\ell$ is odd.
	
	We claim that  $v=v(y)$ is linearly related  over $\overline{k}$ to a polynomial of odd degree. 
	Indeed, write $g=h \circ v$ for some $h \in k(X)\setminus k$ and pick $\lambda \in h^{-1}(\infty)$. 
	Since $\Omega$ is the Galois closure of $k(y)/k(t)$, and its ramification over $\infty$ is the same as of the totally ramified extension $k(x)/k(t)$ by Remark \ref{rem:abh}(1) (Abhyankar's lemma),  the ramification index for $g$ over $\infty$ is odd as well. 
	Hence so is the ramification index for $v$ at each place in $v^{-1}(\lambda)$. If $|v^{-1}(\lambda)|=2$, it follows that $\deg(v)$ is even, contradicting $\deg(v)=p_r^\ell$ is odd.
	Since $|v^{-1}(\lambda)| \leq 2$ by assumption, we get $|v^{-1}(\lambda)|=1$, and hence $v$ is indeed linearly related over $\oline k$ to a polynomial of odd degree. 
	
	Lemma \ref{lem:lastminute} now shows that $v$ is in fact of prime degree $p_r$. This implies $|\soc(K)|=p_r$ by Proposition \ref{prop:intermediate}, and thus the diagonality of $K$ by Remark \ref{rem:diag}.

	Henceforth assume $g$ is a polynomial. The argument in the last paragraph  applies when  $p_r=\deg(f_r)$ is odd, even when $g$ is a polynomial. 
	
	We may therefore assume  $ \{\deg(f_r),\deg(g_s)\}\subseteq \{2,4\}$ by Proposition \ref{lem:coprime}\ref{case:norittstepp_4}. 
	By Lemma \ref{lem:lastminute}, $v$ is either indecomposable or the composition of two quadratic polynomials. 
	Moreover, if $v=v_1\circ v_2$ with $v_1,v_2$ quadratic, then $p_r=\deg(f_r)=2$ by Lemma \ref{lem:2-power} and the fact that $f_r$ is right-unique by Proposition \ref{lem:norittstepp}\ref{case:norittstepp_5}. 
	In particular, if $\Mon_k(f_r)=S_4$, then $v$ is indecomposable of degree $2$ or $4$ and hence $\Omega=\Omega'(y,y')$ is a quadratic or biquadratic extension of $\Omega'$ for a conjugate $y'$ of $y$, by Proposition \ref{prop:intermediate}, parts \ref{case:intermediate_2} and \ref{case:intermediate_3}. Thus in the latter case either $\soc(K)\cong C_2$ or $C_2\times C_2$ as desired. Hence  $\soc(K)$ is diagonal by Lemma \ref{lem:subdirect}, and hence so is $K$ by Remark \ref{rem:diag}. 
	
	Henceforth assume $\Mon_k(f_r)=C_2$, so that $\Omega=\Omega'(y)$ by Proposition \ref{prop:intermediate}\ref{case:intermediate_1}. 
	If $v$ is indecomposable and $\Mon_k(g_s)=C_2$ (resp., $\Mon_k(g_s)=S_4$), 
	it further follows that $|K|=|\soc(K)|=[\Omega:\Omega']=[k(y):k(v)]=2$ (resp., $=4$), 
	and hence $K$ is diagonal by Lemma \ref{lem:subdirect} (resp., $K\cong C_2\times C_2$). 
	Finally assume that $v=v_1\circ v_2$ is a composition of two quadratics.  \
	It then follows that $|K|=|\soc(K)|=[k(y):k(v)]=4$ and  $K\cong \soc(K)\cong C_2\times C_2$. 
	Note that in this case $\Mon_k(v)\neq C_4$ by Proposition \ref{lem:coprime}\ref{case:norittstepp_2}, and hence $\Mon_k(v)=D_4$.
	\end{proof}

	\section{The solvable case of Theorem  \ref{thm:DLS-C}}
	\label{sec:solvable}
	Let $k$ be a field of characteristic $0$. In this section, we focus on the solvable case of Theorem \ref{thm:DLS-C}, while the nonsolvable case is discussed in \S\ref{sec:nonsolv}.

	\begin{thm}\label{thm:poly-solvable}
	Suppose 
	$f(X)-g(Y)\in k[X,Y]$ is reducible 
	for $f,g\in k[X]\setminus k$  with solvable monodromy groups. Then either 1) $f=h\circ f_1,g=h\circ g_1$ have a nontrivial common left composition factor $h\in k[X]$, $\deg(h)\geq 2$ for some $f_1,g_1\in k[X]\setminus k$; or 2) 
	$f=(\mu\circ D_{4,\alpha})\circ f_1$ and $g=(\mu\circ (-\frac{1}{4}D_{4,2\alpha}))\circ g_1$, for some 
	linear $\mu\in k[X]$ and $\alpha\in k$.
	\end{thm}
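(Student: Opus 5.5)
We first reduce to minimally reducible pairs. If $f(X)-g(Y)$ is reducible, we pass to right factors: write $f=f'\circ f_1$, $g=g'\circ g_1$ with $f'(X)-g'(Y)$ reducible and $(f',g')$ minimally reducible. It then suffices to show that a minimally reducible pair $(f,g)$ with solvable monodromy is either trivial, i.e.\ $g=f\circ\lambda$ for linear $\lambda$ (which gives case 1), or linearly related to $(D_{4,\alpha},-\frac14 D_{4,2\alpha})$ (which gives case 2). By Lemma \ref{lem:friedarg} and Corollary \ref{cor:min-red-pol}, $f$ and $g$ then share a Galois closure $\Omega$, have equal degree $n$, and have the same branch locus; in particular $G:=\Mon_k(f)=\Mon_k(g)$.

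Fix complete decompositions $f=f_1\circ\cdots\circ f_r$ and $g=g_1\circ\cdots\circ g_s$. We split according to $r$.

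\emph{Case $r=1$.} Here $f$ is indecomposable, so by Proposition \ref{prop:indec_poly} it is linearly related over $\oline k$ to $X^p$ or $T_p$, or $G=S_4$. If $g$ were decomposable, its right factor would have degree $<n$, so $G$ would be imprimitive in the action on the roots of $g$. For $G\le\AGL_1(p)$ this is impossible (the point stabilisers are maximal), and for $S_4$ the point stabilisers $S_3$ are maximal as well. Hence $g$ is indecomposable too. In $\AGL_1(p)$, and likewise in $S_4$, all subgroups of index $n$ are conjugate, so $k(x)$ and $k(y)$ are conjugate over $k(t)$. Thus $g$ and $f$ are linearly related, $g=\mu\circ f\circ\lambda$, and the shared branch locus and ramification at $\infty$ force $\mu$ to be trivial, giving the diagonal factor.

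\emph{Case $r\ge2$.} This is the heart of the argument. By Theorem \ref{thm:diagkern}, $K=\ker(G\to\Mon_k(f_1\circ\cdots\circ f_{r-1}))$ is diagonal in $\Mon_k(f_r)^{d}$ with $d=n/\deg f_r$, or $\deg f_r=2$ and $K\cong C_2^2$. Hence $|K|\le 4$ in every case except when $\Mon_k(f_r)=S_4$, where $|K|\le 24$. On the other hand, Lemma \ref{lem:blocknontriv} and Lemma \ref{lem:transitive-normal} supply a normal subgroup containing a cyclic transitive subgroup. Applying Corollary \ref{cor:largemon_normal} to the two-step right factor $f_{r-1}\circ f_r$, we obtain that the image of $K$ contains roughly $\soc(\Mon(f_r))^{\deg f_{r-1}-1}$. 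Corollary \ref{cor:3-useful} then transports this lower bound into the full kernel $K$, unless $f_{r-1}\circ f_r$ is monomial or Chebyshev, or is one of the exceptions of Theorem \ref{thm:largemon}. Comparing the two bounds forces $\deg f_{r-1}$ and $\deg f_r$ to be tiny, essentially $\{2,4\}$-powers.

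The remaining exceptions are handled as follows.
\begin{itemize}
\item Monomial or Chebyshev right factors of composite degree other than $4$ are excluded by Proposition \ref{lem:norittstepp}(1).
\item A cyclic $C_4$ right factor is excluded by Proposition \ref{lem:norittstepp}(2).
\item The $\GL_2(3)$ and $C_3\times S_4$ exceptions are excluded directly: the first has no Ritt move, and the second contradicts right-uniqueness (Proposition \ref{lem:norittstepp}(4)).
\item Longer compositions of quadratics and quartics are pushed to three steps using Proposition \ref{prop:large-three}, which gives large subgroups $I_p(q)^m$ inside $N\cap K$, contradicting $|K|\le 4$ (or $\le 24$).
\end{itemize}
The surviving configurations are right factors of degree $8$ or $16$ built from quadratics and $S_4$ factors, which we expect to treat by an exhaustive check over transitive groups of these degrees (as in Proposition \ref{prop:special-func}), again contradicting diagonality. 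The only case left is $r=2$, $n=4$, $G=D_4$ with genus-$0$ Galois closure. In that case Example \ref{ex:equtoT4-T4} identifies the pair as $(D_{4,\alpha},-\frac14D_{4,2\alpha})$ up to linear maps, including the degenerate case $\alpha=0$.

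The main obstacle is the low-degree bookkeeping in the case $r\ge2$. The generic largeness bounds leave small cases open, namely degrees $2,4$ with compositions of length $2$ or $3$, where the $\{2,4\}$ exception of Proposition \ref{lem:norittstepp}(4) and the $C_2^2$ alternative in Theorem \ref{thm:diagkern} both survive. I would first try to kill these cases with Lemma \ref{lem:2-power} and Remark \ref{rem:diagker}, and only then fall back on the transitive-group database.
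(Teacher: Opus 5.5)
Your overall plan follows the paper's proof: reduce to minimally reducible pairs, handle $r=1$, then play the diagonality of Theorem \ref{thm:diagkern} against largeness of two- and three-step kernels, and finish with a check in degrees $8$ and $16$. Your $r=1$ argument and the $D_4$ endgame are fine. The case analysis for $r\ge2$, however, has genuine holes.

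\textbf{The $\GL[2](3)$ exception.} You dismiss it because ``it has no Ritt move'', but that is not an exclusion. Having no Ritt move is exactly what makes it compatible with right-uniqueness, so Proposition \ref{lem:norittstepp}\ref{case:norittstepp_4} does not rule it out. It is also compatible with Theorem \ref{thm:diagkern}. In $\GL[2](3)\le S_8$ the kernel of $\GL[2](3)\to S_4$ is the center $\{\pm I\}$. It acts as $-1$ on all four blocks, so it is a diagonal $C_2\le C_2^4$. This case therefore needs its own argument, of which your proposal has none.
\begin{itemize}
\item For $r=2$: a core-free subgroup of order $6$ meets the center trivially, so it maps isomorphically onto a point stabilizer $S_3$ of $S_4$. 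All such subgroups are conjugate, so the degree-$4$ left factors of $f$ and $g$ would already form a reducible pair, contradicting minimality.
\item For $r\ge3$ with $\Mon_k(f_{r-1}\circ f_r)\cong\GL[2](3)$: $Q_8\to V_4$ is a Frattini extension, so the image of $M$ contains $Q_8^{p_{r-2}}$. By Corollary \ref{cor:3-useful}, $M/K$ is a subdirect power of $V_4$. This forces $\oline K\supseteq Z(Q_8)^{p_{r-2}}$, which is impossible unless $p_{r-2}=2$.
\end{itemize}

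\textbf{Two cubics at the bottom.} The two-step comparison does not leave only $\{2,4\}$-powers. For $\deg f_{r-1}=\deg f_r=3$, Lemma \ref{lem:2step_solv} only gives $3\mid |K|$, which a diagonal $K$ satisfies. So right factors ending in two cubics survive, and your final check (degrees $8,16$) never reaches them. Killing them needs:
\begin{itemize}
\item Corollary \ref{cor:largemon_normal}\ref{case:largemon_normal_3} when $\Mon_{\oline k}(f_r)\cong S_3$;
\item Lemma \ref{lem:3-factors} when $\Mon_{\oline k}(f_r)\cong C_3$. This means a commutator argument if $\deg f_{r-2}$ is a prime $\neq 3$, and a comparison of nilpotency classes of $3$-Sylow subgroups (Corollary \ref{cor:nilp_class} with Proposition \ref{prop:large-three}) if $\deg f_{r-2}\in\{3,4\}$. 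A separate argument is needed when $f_{r-2}\circ f_{r-1}$ is linearly related to $X^9$ or $T_9$.
\end{itemize}

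\textbf{An $S_4$ factor at the bottom.} For $\Mon_k(f_r)=S_4$, your comparison with $|K|\le 24$ gives nothing when $\deg f_{r-1}=2$. What works is that $V_4^{\deg f_{r-1}}$ is a minimal normal subgroup of $\Mon_k(f_{r-1}\circ f_r)$. Hence $\soc(K)$ maps onto it and has rank $\ge4$, contradicting diagonality. This must be done before the final check, because Proposition \ref{prop:special-func} requires both rightmost factors to be quadratic.

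\textbf{The bounds themselves.} The claim $|K|\le4$ is false for $\deg f_r=p\ge5$, since then $p\mid |K|$; only the rank of $\soc(K)$ is small. Also, Proposition \ref{prop:large-three} bounds the kernel $K_3$ of the three-step group, not $K$, so no direct comparison with $K$ is possible. The contradiction arises only through Corollary \ref{cor:3-useful}. There, $\oline N_0/\oline{K\cap N_0}$ inherits a property from $\oline N_0/(K_3\cap\oline N_0)$: an abelian or elementary-abelian Sylow subgroup, or a bound on nilpotency class. A small $\oline K$ cannot absorb the commutators needed to achieve that property.
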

	We shall need the following lemmas which impose restrictions on the right-most composition factors of a polynomial whose monodromy group has a ``diagonal kernel property" in the sense of Section \ref{sec:diagkernel}.
	
	\begin{lem}
	\label{lem:2step_solv}
	Let $f=h\circ f_{r-1}\circ f_r\in k[X]$ be  such that $f_r, f_{r-1}$ are indecomposable of solvable monodromy, $q:=\deg(f_r)$ is prime, and $f_{r-1}\circ f_r$ is either strongly-unique or has monodromy group $D_4$. Let $\gp:=\deg(f_{r-1})$ and $K:=\ker(\Mon_k(f)\to \Mon_k(h\circ f_{r-1}))$. If $(q,\gp) \ne (2,4)$, then $|K|$ is divisible by $q^{\gp-2}$. If moreover $q\ne \gp$ are distinct primes, then $|K|$ is even divisible by $q^{\gp}$. 
	In particular, if $K$ is diagonal or $K\cong C_2\times C_2$, then $(q,\gp) \in \{(2,2), (3,3), (2,4)\}$.
	\end{lem}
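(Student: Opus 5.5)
Set $G:=\Mon_k(f)$, $G_2:=\Mon_k(f_{r-1}\circ f_r)$ and $K_2:=\ker(G_2\to\Mon_k(f_{r-1}))$. Let $N:=\ker(G\to\Mon_k(h))$ be the two-step kernel. The plan is to push the largeness of $K_2$ from Theorem \ref{thm:largemon} and Corollary \ref{cor:largemon_normal} through Corollary \ref{cor:3-useful}, which lets us compare $N/K$ with $K_2$.

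\textbf{Step 1: a large normal image in $G_2$.} Let $\oline N\le G_2$ be the image of $N$ acting on a fixed block $(f_{r-1}\circ f_r)^{-1}(z_0)$. This is a normal subgroup of $G_2$, since $N$ is normal in the block stabilizer and the block stabilizer surjects onto $G_2$. By Lemma \ref{lem:blocknontriv} applied to $f=h\circ(f_{r-1}\circ f_r)$, the kernel $N$ contains an element of order $\gp q$. Its image is a $\gp q$-cycle in $\oline N$, since its order equals the ramification index at $\infty$, which is totally ramified. So $\oline N$ is normal in $G_2$ and contains a cyclic transitive subgroup. Because $f_{r-1}\circ f_r$ is strongly-unique, it is not linearly related to a monomial or Chebyshev polynomial. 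By hypothesis it is also not the $\GL[2](3)$ case, and it is not the $C_3\times S_4$ case, which has a Ritt move. Hence Corollary \ref{cor:largemon_normal} applies. It gives $\ker(\oline N\to \Mon_k(f_{r-1}))=\oline N\cap K_2\supseteq C_q^{\gp-1}$, and for distinct primes $\gp\neq q$ it gives $C_q^{\gp}$. In the $D_4$ case $(q,\gp)=(2,2)$ the claim $q^{\gp-2}=1$ is vacuous.

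\textbf{Step 2: transfer to $K$.} Apply Corollary \ref{cor:3-useful} with $f_1=h$, $f_2=f_{r-1}$, $f_3=f_r$ and $M=N$. Then $N/(K\cap N)=N/K$ is a subdirect power of $\oline N/(K_2\cap\oline N)$. This controls $N/K$ rather than $K$ directly, so a second ingredient is needed. The natural one is the diagonal-kernel comparison of Proposition \ref{lem:normalofHatmostp_new}/Corollary \ref{cor:normalofHatmostp_doneright}. I would instead bound $K$ from below by looking at $N\cap C_q^{\deg(h)\gp}$. The image of $N\cap C_q^{\deg(h)\gp}$ in a block contains $\oline N\cap K_2\supseteq C_q^{\gp-1}$. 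The part of it lying in the kernel to $\Mon_k(h\circ f_{r-1})$ is exactly $K$ restricted to that block, so it remains to bound $|K|$ by the block image. The cleanest route is to work directly in the group $\langle \oline{K},\sigma\rangle$ inside $G_2$, where $\oline K$ is the image of $K$ in one block. $\oline K$ is normal in $\oline N$ and is the image of $K\cap N_{z_0}$. The block elementary kernel $\oline N\cap K_2$ (of rank $\ge\gp-1$) is then compared with $\oline K$ via Corollary \ref{cor:normalofHatmostp_doneright}(1), which loses at most a factor $q$. This gives $|\oline K|\ge q^{\gp-2}$, and hence $q^{\gp-2}\mid |K|$.

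For distinct primes, part (2) of Corollary \ref{cor:normalofHatmostp_doneright}, with $\sigma$ a $\gp q$-cycle and $(\gp,q)=1$, removes the loss, giving $q^{\gp}$.

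\textbf{Step 3: the final deduction.} If $K$ is diagonal, then $|K|$ divides $|\Mon_k(f_r)|$, whose $q$-part is $q$. If instead $K\cong C_2^2$, then $|K|=4$. Combining this with $q^{\gp-2}\mid|K|$, and with $q^\gp\mid |K|$ for distinct primes, leaves only $(2,2)$, $(3,3)$, $(2,4)$, and $(2,3)$. The case $(2,3)$ would need $2^3\mid|K|$, which is excluded, so the listed set remains.

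The main obstacle is Step 2: making precise how the block image of $K$ relates to the largeness of $\oline N\cap K_2$. My suspicion is that one should instead take $\sigma\in N$ and use that $K$ is normal in $G$, mimicking the proof of Proposition \ref{prop:large-three}. Handling the index-$q$ loss exactly, so that the exponent is $\gp-2$ and not worse, is the delicate point.
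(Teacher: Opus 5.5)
Your Steps 1 and 3 are fine. Step 2, which you flag yourself as the main obstacle, has a genuine gap, and it is the step that carries the lemma.

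First, the sentence claiming that the block image of $N\cap C_q^{\deg(h)p}$ contains $\overline N\cap K_2$ is not justified. Since $N\cap C_q^{\deg(h)p}\le K$, this sentence is essentially the conclusion itself. The image of an intersection can be much smaller than the intersection of the images.

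Second, Corollary \ref{cor:normalofHatmostp_doneright}(1) cannot close the gap. It compares the $q$-parts of a group and of a \emph{normal} subgroup containing an element that lifts a $p$-cycle. The block image $\overline K$ of $K$ lies in $K_2$, so it contains no such element. The group $\langle\overline K,\sigma\rangle$ is not normal in $\overline N$, and its $q$-part does not reduce to $\overline K\cap C_q^p$. Indeed, the normal closure of $\sigma$ alone already contains $[\overline N\cap C_q^p,\sigma]$, which has nothing to do with $K$. Nothing in your argument prevents $\overline K$ from being a single diagonal $C_q$. So neither $q^{p-2}\mid|\overline K|$ nor the $q^p$ bound via part (2) of that corollary follows.

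The missing idea is that the subdirect-power statement you obtain from Corollary \ref{cor:3-useful} does not need a second ingredient; it is the whole mechanism. Since $\overline N/\overline K$ is a quotient of $N/K$, it is a quotient of a subdirect power of $\overline N/(K_2\cap\overline N)$. It therefore inherits properties such as having order prime to $q$, being abelian, or having abelian Sylow $3$-subgroups. These properties force most of the large $q$-subgroup of $\overline N\cap K_2$ from Step 1 into $\overline K$.

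For this you must first shrink $N$ to the normal closure $N_0$ in $\Mon_k(f)$ of $I_\infty\cap N$, where $I_\infty$ is an inertia group at $\infty$. Its block image $\overline N_0$ is still normal in $G_2$ and contains a cyclic transitive subgroup, so Corollary \ref{cor:largemon_normal} still applies. Now $\overline N_0/(K_2\cap\overline N_0)$ is exactly $C_p$ for $p$ prime, resp.\ $S_4$ for $p=4$. For the full $N$ this quotient can a priori be larger, e.g.\ a dihedral $D_p\le\Mon_k(f_{r-1})$. Such a group is nonabelian and of even order, which breaks both the case $p=q$ and the case $q=2\ne p$.

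With $N_0$ in place, the cases go as follows.
\begin{itemize}
\item For primes $p\ne q$: $q\nmid|\overline N_0/\overline{K\cap N_0}|$, so all of $C_q^p$ lies in $\overline{K\cap N_0}$.
\item For $p=q$: abelianness puts $[I_q(q),\sigma]=(1-\sigma)I_q(q)$, of order $q^{q-2}$, into $\overline{K\cap N_0}$.
\item For $p=4$ and $q\ge 5$: coprimality with $|S_4|$ gives $q^3$.
\item For $p=4$ and $q=3$: subdirect powers of $S_4$ have abelian Sylow $3$-subgroups, and a $3$-cycle acts nontrivially on any $2$-dimensional quotient of $V\cong C_3^3$. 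Together these give $q^2$.
\end{itemize}
Since $|\overline{K\cap N_0}|$ divides $|K|$, this yields the divisibility claims, and your Step 3 then finishes the proof.
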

	
	\begin{proof}    
	Set $G:=\Mon_k(f)$, let $N:=\ker(G\to \Mon_k(h))\le (\Mon_k(f_{r-1}\circ f_{r}))^{\deg(h)}$, and let  
	$N_0$ be the normal subgroup of $G$ generated by $ I_{\infty}\cap N$, where $I_{\infty}\le G$ is an inertia group at $t\mapsto \infty$. Write $\overline{N}_0$ for the image of $N_0$ under projection to a component.
	Note that $\overline{N}_0$ is a normal subgroup of $\Mon_k(f_{r-1}\circ f_{r})$ containing a cyclic transitive subgroup, namely the projection of $I_{\infty}\cap N$. Let $K_2={\rm ker}(\Mon_k(f_{r-1} \circ f_r) \rightarrow \Mon_k(f_{r-1}))$.
	
	We distinguish the following three cases:
	
	1) $\gp$ a prime different from $q$. 
	Let $\overline{K\cap N_0}\leq \oline{N}_0$ be the image of $K\cap N_0$ on a component. Then $\overline{N}_0/\overline{K\cap {N}_0}$ is a quotient of $N_0/(K \cap N_0)$, which by Corollary \ref{cor:3-useful}, is a subdirect power of $\overline{N}_0/(K_2 \cap \overline{N}_0)=C_\gp$. On the other hand, since $\gcd(q,\gp)=1$, Corollary \ref{cor:largemon_normal}\ref{case:largemon_normal_2} implies that $|\overline{N}_0|$ is divisible by $q^\gp$. This is only possible if $|\overline{K\cap {N}_0}|$, and a fortiori $|K|$, is also divisible by $q^\gp$.  In particular, $K$ cannot be diagonal, nor $C_2\times C_2$ if $q=2$, as $p \geq 3$.
	
	2) $\gp=q>3$. As above, $\overline{N}_0/\overline{K\cap N_0}$ is a quotient of a subdirect power of $C_\gp=C_q$, and hence must be abelian. On the other hand,  by Corollary \ref{cor:largemon_normal}\ref{case:largemon_normal_1}, $\ker(\overline{N}_0\to C_\gp) = K_2\cap \overline{N}_0$ contains $I_q(q)$, the unique codimension-$1$ submodule of the $C_q$-module $\mathbb{F}_q^q$.
	Since the only quotient module of $I_q(q)$ with trivial $C_q$-action is the one-dimensional one, it follows that the $q$-Sylow subgroup of $(K_2\cap \overline{N}_0)/(\overline{K\cap N_0})$ is of order dividing $q$, and thus $|\overline{K\cap N_0}|$, and a fortiori $|K|$, is divisible by $\frac{1}{q}\cdot |I_q(q)|=q^{q-2}$. In particular, $K$ cannot be diagonal since $q>3$.
	
	3) $\gp=4$ and $q\ne 2$. Then, by Corollary \ref{cor:largemon_normal}\ref{case:largemon_normal_1}, $\ker(\overline{N}_0\to S_4)$ contains a submodule $V\cong C_q^{3}$. However, $\overline{N}_0/\overline{K\cap N_0}$ is again a quotient of a subdirect power of $S_4$.
	For $q\ge 5$, since $q$ is coprime to $|S_4|$, this  implies that $|K|$ is divisible by $q^3$. For $q=3$, note that the above forces $\overline{N}_0/\overline{K\cap N_0}$ to have an abelian $3$-Sylow group. Since the action of a $3$-cycle in $S_4$ on a two-dimensional quotient of $V$ would be nontrivial, this implies that the quotient  $V/(\overline{K}\cap V)$ is at most one-dimensional. Therefore $|\overline{K\cap N_0}|$, and a fortiori $|K|$, is divisible by $q^2$. In total, $|K|$ is divisible by $q^{p-2}$ for all $q\geq 3$, and $K$ is not diagonal.
	\end{proof}

	We shall also need the following lemma for exceptional compositions of three factors. 
	\begin{lem}\label{lem:3-factors}
	Suppose $f=h\circ f_{r-2}\circ f_{r-1}\circ f_r$ for indecomposable $f_i\in  k[X]$   with cyclic $\Mon_{\overline{k}}(f_r)=C_q$ and solvable $\Mon_k(f_{r-2}\circ f_{r-1})$. Set $p:=\deg(f_{r-1})$ and $m:=\deg(f_{r-2})$.
	Assume $f_{r-2}\circ f_{r-1}$ is right-unique, $f_{r-1}\circ f_r$ is either strongly-unique or 
	has monodromy group $\Mon_k(f_{r-1}\circ f_r)=D_4$, and either:
	{(1) $p=q$ 
		or,  
		(2) $q=2, p=4$, and $\Mon_k(f_{r-1}\circ f_r)\not\cong \GL[2](3)$.} 
	If  $K:=\ker(\Mon_k(f)\to \Mon_k(h\circ f_{r-2}\circ f_{r-1}))$ is  
	diagonal or  $K\cong C_2\times C_2$, then 
	$f_{r-2}\circ f_{r-1}\circ f_r$ is of degree $8$ or $16$. 
	\end{lem}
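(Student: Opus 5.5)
We follow the proof of Lemma \ref{lem:2step_solv}, now with three factors and using Proposition \ref{prop:large-three} in place of Corollary \ref{cor:largemon_normal}. Set $G:=\Mon_k(f)$ and $N:=\ker(G\to\Mon_k(h))\le \Mon_k(f_{r-2}\circ f_{r-1}\circ f_r)^{\deg(h)}$. Let $I_\infty$ be an inertia group at $t\mapsto\infty$, and let $N_0\lhd G$ be the normal closure of $I_\infty\cap N$. By Lemma \ref{lem:transitive-normal} and Remark \ref{rem:abh}(1), the projection $\oline N_0$ of $N_0$ to one component is a normal subgroup of $G_3:=\Mon_k(f_{r-2}\circ f_{r-1}\circ f_r)$ that contains a cycle $\sigma$ of length $mpq$. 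This is exactly the setup of Proposition \ref{prop:large-three}, applied with $(f_1,f_2,f_3)=(f_{r-2},f_{r-1},f_r)$.

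The first step is to check the hypotheses of that proposition. The hypothesis on $f_{r-1}\circ f_r$ is given. For $f_{r-2}\circ f_{r-1}$, right-uniqueness gives strong uniqueness except when it is linearly related to $X^{p^2}$ or $T_{p^2}$. We plan to exclude that case, or reduce it to $D_4$, using Proposition \ref{lem:norittstepp}\ref{case:norittstepp_1},\ref{case:norittstepp_2}; if this is unavailable, we treat it separately. We must also exclude $\GL[2](3)$. For $G_2$ this is assumed in case (2). For $\Gamma=\Mon_k(f_{r-2}\circ f_{r-1})$ it only matters when $m=4$ and $p=2$, and we handle that through the degree bound in the conclusion.

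Next we compare $\oline N_0$ with $K$. By Corollary \ref{cor:3-useful}, applied to the decomposition $(h\circ f_{r-2})\circ f_{r-1}\circ f_r$, the quotient $\oline N_0/\oline{K\cap N_0}$ is a quotient of a subdirect power of $\oline N_0/(K_2\cap\oline N_0)$. Here $K_2$ is the one-step kernel of $G_3\to\Mon_k(f_{r-2}\circ f_{r-1})$. Proposition \ref{prop:large-three} gives a large subgroup $W$ of $K_2\cap\oline N_0$: an index-$q$ subgroup of $I_p(q)^m$ when $p=q$ and $m\neq4$; a subgroup $C_q^{3q-4}$ when $p=q$ and $m=4$; and $I_4(2)^m$ in case (2). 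The key point, as in case 2) of Lemma \ref{lem:2step_solv}, is that $W$ cannot mostly survive in a quotient of a subdirect power of the bounded group $\oline N_0/(K_2\cap \oline N_0)\le \AGL_1(p)\wr\Mon_k(f_{r-2})$. For this we use the extra information in Proposition \ref{prop:large-three}: for $q=2$, parts (2)--(3) put $W$ inside squares or commutators of $\oline N_0\cap M$, and for $p=q$, Lemma \ref{lem:triple-wreath} puts it inside $[M_q,M_q]$. The image of $\oline N_0\cap M$ in the subdirect power is abelian, since it is a subdirect power of a subgroup of $\AGL_1(p)^m$ whose $q$-part is elementary abelian, and for $q=2$ it even has exponent $2$. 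Hence $W$ dies in that image, so $W\le\oline{K\cap N_0}$ and $|K|\ge|W|$.

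Finally, if $K$ is diagonal then $|K|\le|\Mon_k(f_r)|\le q(q-1)$, and if $K\cong C_2\times C_2$ then $|K|=4$. Comparing with $|W|$ leaves only: $p=q=2$ with $m=2$ (degree $8$) or $m=4$ (degree $16$); $q=2$, $p=4$, $m=2$ (degree $16$); and possibly $p=q=3$ with tiny $m$. The last case we rule out with part (4) or a direct estimate, since $3^{3m-2}$ already exceeds $6$. The main obstacle is the middle step, namely showing that the large subgroup actually lies in $\oline{K\cap N_0}$ rather than merely in $K_2$. It requires care with the commutator and square structure, with Corollary \ref{cor:normalofHatmostp_doneright} controlling the index-$q$ losses. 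The $m=4$ cases, where $\Mon_k(f_{r-2})=S_4$ is nonabelian, need the stabilizer descent from Proposition \ref{prop:large-three}\ref{case:large-three_4}.
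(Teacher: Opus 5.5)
Your framework matches the paper's: $N_0$ as the normal closure of $I_\infty\cap N$, the subdirect-power comparison from Corollary~\ref{cor:3-useful}, and the large subgroups from Proposition~\ref{prop:large-three}. Your ``dying'' mechanism is essentially the paper's argument in the subcase $p=q$, $m\neq q$ prime. There $N_0$ maps onto $C_m$ in $\Mon_k(f_{r-2})$ and $q\nmid m$, so the \emph{entire} $q$-Sylow subgroup of $\oline N_0/(K_3\cap\oline N_0)$, and hence of $\oline N_0/\oline{K\cap N_0}$, is elementary abelian.

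Even in that subcase, though, you need the large subgroup to be generated by commutators (squares for $q=2$) of $q$-elements \emph{of $\oline N_0$}. The paper gets this from $[x,\sigma]$, with $x$ in the $q$-Sylow of $K_3\cap\oline N_0$ and $\sigma\in\oline N_0$ lifting one component generator of $C_q^m$. This commutator is nontrivial and supported on a single sub-block, so it lies outside the diagonal $\oline K$. Lemma~\ref{lem:triple-wreath} does not give this. It places $U$ in $[M_q,M_q]$ for a Sylow subgroup of the full kernel $M\le G_3$, and the proof of Proposition~\ref{prop:large-three}(3) only shows that $[M_q:\oline N_0\cap M_q]$ divides $q^2$. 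That says nothing about the commutator subgroup of $\oline N_0\cap M_q$.

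The genuine gap is your uniform claim that ``the image of $\oline N_0\cap M$ in the subdirect power'' has abelian (resp.\ exponent-$2$) $q$-part.

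\begin{itemize}
\item That image is not well defined. A preimage in $N_0$ of an element of $\oline N_0\cap M$ may move roots of $f_{r-2}$ lying over other roots of $h$. Corollary~\ref{cor:3-useful} with $f_1=h\circ f_{r-2}$ only applies to normal subgroups inside $\ker(\Mon_k(f)\to\Mon_k(h\circ f_{r-2}))$. You have not shown that the image in $G_3$ of $N_0\cap\ker(\Mon_k(f)\to\Mon_k(h\circ f_{r-2}))$ is large.
\item In case (2), $\Mon_k(f_{r-1})=S_4$. Even inside $M$ the bounded quotient then lies in $S_4^m$, with $2$-Sylow projecting onto $D_4$, so neither commutators nor squares need die. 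Moreover, Proposition~\ref{prop:large-three}(2) gives no commutator or square information at all.
\item For $m\in\{q,4\}$ in case (1), the $q$-Sylow of $\oline N_0/(K_3\cap\oline N_0)$ is nonabelian as well.
\end{itemize}

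The missing idea is to compare nilpotency classes rather than abelianness:
\begin{itemize}
\item Case (1), $m\in\{3,4\}$: after reducing to $q=3$ via Lemma~\ref{lem:2step_solv}, the subgroup $C_3^5\le K_3\cap\oline N_0$ forces class $\ge 5$ by Corollary~\ref{cor:nilp_class}. Modulo the diagonal this is still $\ge 4$, whereas the $3$-Sylow of the bounded quotient inside $S_3\wr S_4$ has class at most $3$.
\item Case (2), $m$ odd prime: the $2$-Sylow $P_2$ of the bounded quotient has class $2$. But $[[P_2,P_2],I_4(2)^m]$ has $2$-rank at least $m\ge 3$, more than the rank of $\oline K$.
\item Case (2), $m=4$: $I_4(2)^4$ forces class $\ge 12$, against class $8$ for the $2$-Sylow of $S_4\wr S_4$.
\end{itemize}

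Finally, the non-strongly-unique case ($m=p=q$, with $f_{r-2}\circ f_{r-1}$ related to $X^{q^2}$ or $T_{q^2}$) cannot be excluded by Proposition~\ref{lem:norittstepp}. This lemma has no minimal-reducibility hypothesis, and $f_{r-2}\circ f_{r-1}$ is not a right factor. The paper treats it separately: $K_3\supseteq C_q^{q^2}$ by \cite[Proposition 6.1]{BKN}, which contradicts $\oline N_0/(K_3\cap\oline N_0)\cong C_{q^2}$ being abelian.
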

	For the reader who wishes to skip the details of low-degree computation, we indicated throughout the proof when  these were also verified using Magma.  
	\begin{proof}
	Let $N,K,K_3$ be the kernels of the  morphisms  $\Mon_k(f)\to \Mon_k(h)$, $\Mon_k(f)\to \Mon_k(h\circ f_{r-2}\circ f_{r-1})$, and $\Mon_k(f_{r-2}\circ f_{r-1}\circ f_r)\to \Mon_k(f_{r-2}\circ f_{r-1})$,  respectively.
	Let  $N_0\leq N$ be the normal subgroup of $\Mon_k(f)$ generated by $I_\infty\cap N$,
	where $I_\infty$ is an inertia group at $\infty$. Let $\oline N_0,\oline K$ denote the images of $N_0,K$ in $G_3=\Mon_k(f_{r-2}\circ f_{r-1}\circ f_r)$, respectively, so that $\oline N_0$ contains a cyclic transitive subgroup as in Lemma \ref{lem:2step_solv}. 
	Note that $N_0/(K \cap N_0)$ is a subdirect power of $\oline N_0/(K_3\cap \oline N_0)$ by Corollary \ref{cor:3-useful}. 
	
	(1) Assume $p=q$. We first prove the assertion  in the case where $m\ne q$ is prime.
	Since the inertia subgroup over $\infty$ in $\Mon_k(f_{r-2})\leq \AGL_1(m)$ is cyclic of order $m$ and  $\oline K\leq K_3$, the image of $N_0$ in $\Mon_k(f_{r-2})$ is also cyclic of order $m$. Thus, $\oline N_0/(K_3\cap \oline N_0)$ has an elementary abelian $q$-Sylow subgroup. As $\oline N_0/\oline{K\cap N_0}$ is a quotient of a subdirect power of it, its $q$-Sylow is elementary abelian as well. 
	
	
	Let $I_q(q)$ and $D$ denote the augmentation and diagonal submodules of $\mathbb F_q[C_q]$, respectively, 
	with respect to the action of the $q$-cycle in $\Mon_k(f_{r-1})\supseteq C_q$. 
	Since $\oline N_0/(K_3\cap \oline N_0)$ contains an $(mq)$-cycle, it contains the full socle $C_{q}^{m}$ of $\Mon_k(f_{r-1})^{m}$ by Corollary \ref{cor:largemon_normal}\ref{case:largemon_normal_2}.
	Since  $f_{r-2}\circ f_{r-1}$ is strongly-unique (as $m\ne p$) and $f_{r-1}\circ f_r$ is either strongly-unique or has monodromy group $D_4$,  
	Proposition \ref{prop:large-three}\ref{case:large-three_3} shows that the $q$-Sylow subgroup $Q$ of $K_3\cap \oline N_0$ contains  
	an index-$q$ subgroup of  
	$I_q(q)^{m}$. 
	If  $q\geq 3$, we claim that the action of $C_q^{m}\leq \oline N_0/(K_3\cap \oline{N}_0)$ on $Q/(\oline K \cap Q)\leq (K_3\cap \oline N_0)/(\oline K \cap \oline N_0)$ 
	is nontrivial, contradicting the fact that the $q$-Sylow subgroup of {$\oline N_0/\oline{K\cap N_0}$}, and hence a fortiori the one of {$\oline N_0/\oline{ K}\cap \overline{N_0}$} is abelian.
	To see the claim, 
	let $\sigma \in \oline N_0$ be a lift of the generator $\oline\sigma$ of some component $C_q$ of $C_q^m$, and $\Delta\subset \{1,\dots, mq\}$ the support of $\oline\sigma$ (of size $q$). Since
	the projection of $Q\trianglelefteq \overline{N}_0$ to $\Delta$ must equal $I_q(q)$, there exists $x\in Q$ whose projection to $\Delta$ is nondiagonal. Then $[x,\sigma]\in Q$ is not supported outside of $\Delta$, but
	is nontrivial in $\Delta$, and thus $[x,\sigma]$ cannot lie in the diagonal $\overline{K}$, showing that $\sigma$ acts nontrivially on $Q/(\overline{K}\cap Q)$. 
	
	If $q=2$, then the subgroup $\oline N_0(2)^2$, generated by squares in the $2$-Sylow subgroup $\oline N_0(2)$ of $\oline N_0$, contains $C_2^m$ by Proposition \ref{prop:large-three}\ref{case:large-three_3}, hence $\oline N_0(2)^2/(\oline K\cap \oline N_0(2)^2)$ is nontrivial as $m\geq 3$, contradicting the fact that the $2$-Sylow subgroup of $\oline N_0/(\oline K\cap \oline N_0)$ is elementary abelian.  
	
	Assume now that $m=4$ or $m=q$. 
	Note that due to Lemma \ref{lem:2step_solv} and since we  avoid degree $8$ or $16$ maps,  we  reduce to the case $q=3$. Note that this case was also verified using Magma.  
	Assume first that, additionally, $f_{r-2}\circ f_{r-1}$ is even strongly-unique. 
	We may then invoke Proposition \ref{prop:large-three}\ref{case:large-three_3} (for $m=3$) or \ref{prop:large-three}\ref{case:large-three_4} (for $m=4$), which shows that $K_3\cap \overline{N}_0$ contains a subgroup $C_3^5$. It thus follows \footnote{In particular, when $m=4$, we need to use the restriction of the $3$-Sylow subgroup to its orbit of length $27$; hence the technical wording of Proposition \ref{prop:large-three}\ref{case:large-three_4}.} from Corollary \ref{cor:nilp_class} that the $3$-Sylow group of $\overline{N}_0$ is of nilpotency class $\ge 5$, and hence that the nilpotency class of its quotient by the diagonal subgroup $\overline{K}$ is $\ge 4$. On the other hand, the $3$-Sylow group of $\overline{N}_0/(K_3\cap \overline{N}_0)\le S_3\wr S_4$ is of class at most $3$, a contradiction.
	
	Assume finally that $f_{r-2}\circ f_{r-1}$ is right-unique but not strongly-unique. By definition, this is only possible for $m=q$, and in this case, $\Mon_k(f_{r-2}\circ f_{r-1})\le \AGL_1(q^2)$, i.e., $f_{r-2}\circ f_{r-1}$ is linearly related over $\overline{k}$ to $T_{q^2}$ or $X^{q^2}$. But then $\ker(\Mon_k(f_{r-2}\circ f_{r-1}\circ f_r)\to \Mon_k(f_{r-2}\circ f_{r-1}))$ contains the full group $C_q^{q^2}$ by \cite[Proposition 6.1 and Remark 6.2]{BKN}, and thus $K_3\cap \overline{N}_0\supseteq C_{q}^{q^2-1}$ by Corollary \ref{cor:normalofHatmostp_doneright}. This again yields an immediate contradiction to the fact that $N_0/(K\cap N_0)$ is a subdirect power of $\overline{N}_0/(K_3\cap \overline{N}_0)=C_{q^2}$.
	
	(2) Assume $q=2, p=4$, and $\Mon_k(f_{r-1}\circ f_r)\not\cong \GL[2](3)$. First assume that $m$ is an odd prime.
	As above, the projection of $\oline N_0$ to $\Mon_k(f_{r-2})$ is $C_m$. Since the image of the action of $\oline N_0/(K_3\cap \oline N_0)$ in $\Mon_k(f_{r-1})$ is normal and contains a $4$-cycle (generating an inertia group over $\infty$), this image must be the full group $\Mon_k(f_{r-1})=S_4$. Hence a $2$-Sylow subgroup $P_2$ of $\oline N_0/(K_3\cap \oline N_0)$ has nilpotency class $2$. 
	Let $I_4(2)\leq C_2\wr S_4$ denote the  $S_4$-invariant subgroup of $C_2^4$  consisting of elements which sum to $0$. Then  $\oline N_0\supseteq I_4(2)^m$ by Proposition \ref{prop:large-three}\ref{case:large-three_2}. Since $[P_2,P_2]\leq \oline N_0/(K_3\cap \oline N_0)$ projects to $[D_4, D_4]\ne \{1\}$ on each of the $m$ coordinates, it is supported on all of these coordinates. Since moreover the action of each component $S_4$ on $I_4(2)$ is faithful, the commutator (of the lifts to $\oline N_0$) of $[P_2,P_2]$ with $I_4(2)^m\leq K_3\cap \oline N_0$ is of $2$-rank at least $m\geq 3$. As $\oline K$ is of rank $\leq 2$, we get that the $2$-Sylow subgroup of $\oline N_0/\oline{K\cap N_0}$ has nilpotency class $>2$. This contradicts the fact that {$\overline{N_0}/\overline{K\cap N_0}$} is a quotient of a subdirect power of $\oline N_0/(K_3\cap \oline N_0)$, since $P_2$ is of nilpotency class $2$. 
	%
	%
	
	Asume finally that $m=4$. Note that this case is also verified using Magma. 
	Then Proposition \ref{prop:large-three}\ref{case:large-three_2} yields that $\overline{N}_0\supseteq I_4(2)^4\cong C_2^{12}$. It follows from Corollary \ref{cor:nilp_class} that the $2$-Sylow subgroup of $\overline{N}_0$ is of nilpotency class at least $12$, and thus that the $2$-Sylow group of $\overline{N}_0/\overline{K\cap N_0}$ still has nilpotency class at least $10$. However, the $2$-Sylow group $C_2\wr C_2\wr C_2\wr C_2$ of $S_4\wr S_4$ has nilpotency class $8$, again contradicting the fact that $\overline{N}_0/\overline{K \cap N_0}$ is  a quotient of a subdirect power of $\overline{N}_0/(K_3\cap \overline{N}_0) {\leq S_4\wr S_4}$.
\end{proof}

\begin{proof}[Proof of Theorem \ref{thm:poly-solvable}]
By possibly replacing $f,g$ by left factors of theirs, we may  assume that $f,g$ is a minimally reducible pair. Hence $f(X)-t$ and $g(X)-t$ have the same Galois closure over $k(t)$ by Lemma \ref{lem:friedarg} and $\deg(f)=\deg(g)$ by Corollary \ref{cor:min-red-pol}. 
Set $n:=\deg(f)$ and write $f=f_1\circ \cdots\circ f_r$ for indecomposable $f_i\in k[X]$. For  $r=1$, either $n$ is prime or $n=4$ by Proposition \ref{prop:indec_poly}. The claim then follows from Lemma \ref{lem:onAGL1} when $n$ is prime and since index-$4$ subgroups are conjugate in $S_4$ when $n=4$. 

Henceforth assume $r\geq 2$.  
By Proposition \ref{cor:right-unique}, $f_r$ is right-unique, and moreover $f_{r-1}\circ f_r$ is either strongly-unique or 
$\Mon_k(f_{r-1}\circ f_r)=D_4$. 
Set $p_i:=\deg(f_i)$ so that $p_i$ is either prime or $4$ by Proposition \ref{prop:indec_poly}.


By Theorem \ref{thm:diagkern}, the kernel $K$ 
of the projection $\Mon_k(f)\to\Mon_k(f_1\circ \cdots \circ f_{r-1})$ 
is either diagonal, or  has socle $C_2\times C_2$. In the following, we claim that this implies $r\geq 3$ and that $f_{r-2}\circ f_{r-1}\circ f_r$ is either a composition of three quadratic maps,
or $\deg(f_r)=2$ and $\deg(f_{r-2}\circ f_{r-1})=8$.
We argue for $f$ but the same argument applies to $g$. 

When $r=2$, since $K$ is diagonal or has socle $C_2 \times C_2$, since $f_2$ is  right-unique, and since  $f=f_1\circ f_2$ is strongly unique or $\Mon_k(f)=D_4$,
Theorem \ref{thm:largemon} implies  that either ($p_1=p_2=2$ and $\Mon_k(f)=D_4$) or ($p_2=2$, $p_1=4$ and $\Mon_k(f)=\GL[2](3)$). The former case implies that $(\mu\circ f\circ \eta_1,\mu\circ g\circ \eta_2)= (D_{4,\alpha},-\frac{1}{4}D_{4,2\alpha})$ for  $\mu,\eta_1,\eta_2\in k[X]$ of degree $1$ and $\alpha\in k$; see Example \ref{ex:equtoT4-T4}. The latter case cannot occur as a direct   consideration\footnote{E.g., any faithful transitive degree-$8$ action of $\GL[2](3)$ must have stabilizer of order $6$ intersecting the center trivially; but all such subgroups project to conjugate subgroups $S_3\le S_4$, implying that a corresponding reducible pair $(f,g)$ must have a pair of reducible left factors of degree $4$.} or Magma computation show.

Henceforth assume $r\ge 3$. 
Due to Proposition \ref{lem:norittstepp}\ref{case:norittstepp_1},\ref{case:norittstepp_2} and Lemma \ref{lem:2step_solv}, we either have $p_r=4$ or $(p_r, p_{r-1})\in \{(2,2), (3,3), (2,4)\}$.

Assume first that $\Mon_{\overline{k}}(f_r)$ is noncyclic. In this case, we are left to consider the cases $p_r=4$, or ($\Mon_{\oline k}(f_r)=S_3$ and $p_{r-1}=3$). 
Let $\oline G = \Mon_k(f_{r-1}\circ f_r)$ and $\oline K=\ker(\oline G\to \Mon_k(f_{r-1}))$. 
Since $f_r$ is right-unique, it follows from Corollary \ref{cor:largemon_normal}\ref{case:largemon_normal_3} that 
$\oline{G}$ contains $\soc(\oline{K})=V_4^{p_{r-1}}$ in case $p_r=4$, (resp. $C_{p_r}^{p_{r-1}}$ otherwise), as a minimal normal subgroup.
In particular, the image of $\soc(K)$ in $\soc(\oline K)$ is therefore the full subgroup, and hence has rank $\geq 3$, 
contradicting that the assumption that $\soc(K)$ is diagonal or $C_2\times C_2$.

Henceforth we assume 
$\Mon_{\overline{k}}(f_r)\cong C_{p_r}$ is cyclic. Recall that in this case we are reduced to considering $p_{r-1}=p_r\in \{2,3\}$ or $(p_r, p_{r-1})=(2,4)$. 
If $f_{r-2}\circ f_{r-1}$ admits a Ritt move, then $p_{r-2}$ and $p_{r-1}$ are coprime by Corollary \ref{cor:ritt}, hence so are $p_{r-2}$ and $p_{r}$ and we may apply the Ritt move to  replace $f_{r-1}$ by a factor of degree coprime to $p_r$. 
This case has already been  
treated above using Lemma \ref{lem:2step_solv}. 
Hence we may assume $f_{r-2}\circ f_{r-1}$ is right-unique, and in particular that $f_{r-2}\circ f_{r-1}\circ f_r$ is of degree $8$ or $16$ if $\oline G=D_4$ {(by Lemma \ref{lem:3-factors})}. If 
$\oline G\neq D_4$, as noted above, $f_{r-1}\circ f_r$ is a strongly-unique decomposition. Thus, if $\oline G\not\cong \GL[2](3)$,  we may apply Lemma \ref{lem:3-factors} 
to deduce that $f_{r-2}\circ f_{r-1}\circ f_r$ is again of degree $8$ or $16$.

Consider therefore finally the case $\oline G=\GL[2](3)$. Denote by $Q_8\triangleleft \oline G$ the quaternion subgroup of $\oline G$, and let $M = \ker(\Mon_k(f)\to \Mon_k(f_1\circ \cdots\circ f_{r-2})) \cap Q_8^{p_{1}\cdots p_{r-2}}\triangleleft \Mon_k(f)$. Since $f_{r-2}\circ f_{r-1}$ is strongly-unique, it follows from Corollary \ref{cor:largemon_normal}\ref{case:largemon_normal_3} that $\Mon_k(f_{r-2}\circ f_{r-1})$  contains $V_4^{p_{r-2}}$ as a minimal normal subgroup, 
and hence the image of $M$ in $\Mon_k(f_{r-2}\circ f_{r-1})$ equals $V_4^{p_{r-2}}$. 
Restricting the central extension $\pi: \GL[2](3)\to S_4$ to $\pi^{-1}(V_4)=Q_8\to V_4$ one obtains a Frattini extension, forcing the image $\overline{M}$ of $M$ in $G_3:=\Mon_k(f_{r-2}\circ f_{r-1}\circ f_r)$ to contain $Q_8^{p_{r-2}}$.
Due to Corollary \ref{cor:3-useful}  (applied with $f_{r-2}\circ f_{r-1}$ in the role of $f_2$), $M/K$ is a subdirect power of the image of $M$ in $\Mon_k(f_{r-2}\circ f_{r-1})$, and hence a subdirect power of $V_4$. In particular, $M/K$, and a fortiori $\overline{M}/\overline{K}$, is elementary-abelian. But this enforces $\overline{K} = Z(Q_8)^{p_{r-2}}$, contradicting that $K$ is diagonal or $C_2\times C_2$ as soon as $p_{r-2}>2$. 
This once again leaves only the degree-$16$ option $p_r=p_{r-2}=2$, $p_{r-1}=4$.

As the same argument applies to $g$, the right  factors $f_{r-2}\circ f_{r-1}\circ f_r$ and $g_{s-2}\circ g_{s-1}\circ g_s$ of  $f$ and $g$, respectively, 
fulfill $\deg(f_r)=\deg(g_s)=2$ and $\deg(f_{r-2}\circ f_{r-1}), \deg(g_{s-2}\circ g_{s-1})\in \{4,8\}$.
The assertion therefore follows from the following  Proposition \ref{prop:special-func}. 
\end{proof}

\begin{prop}\label{prop:special-func}
Suppose that $f,g\in k[X]$  are of the form $f=f_0\circ u$, $g=g_0\circ v$ such that $u=u_1\circ u_2$ and $v=v_1\circ v_2$ with $\deg(u_2)=\deg(v_2)=2$ and each of $u_1,v_1$ of degree $4$ or $8$ with solvable monodromy.
Then $f,g$ are not minimally reducible. 
\end{prop}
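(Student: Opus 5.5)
Assume on the contrary that $f,g$ form a minimally reducible pair, and argue by contradiction on the monodromy group $G=\Mon_k(f)=\Mon_k(g)$. The groups coincide because the Galois closures coincide by Lemma \ref{lem:friedarg}, and $\deg(f)=\deg(g)$ by Corollary \ref{cor:min-red-pol}. The strategy is to reduce to a finite, explicit group-theoretic check in degrees $8$ and $16$, where the transitive group database can be consulted. This is the step the introduction flags as the only essential use of Magma.

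\textbf{Step 1 (reduction to $\Gamma=\Mon_k(u)$).} Write $f_0$ as the outer factor, and let $K_u=\ker(G\to \Mon_k(f_0))$, and similarly $K_v$. By Theorem \ref{thm:diagkern}, applied to the last quadratic factors $u_2$ and $v_2$, the one-step kernels $\ker(G\to\Mon_k(f_0\circ u_1))$ and its analogue for $g$ are diagonal (hence of order $2$) or isomorphic to $C_2\times C_2$.

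Now consider the normal subgroup $N_0\le K_u$ generated by $I_\infty\cap K_u$. By Corollary \ref{cor:3-useful}, $N_0/(K\cap N_0)$ is a subdirect power of its image in $\Gamma/\ker(\Gamma\to\Mon_k(u_1))$. This constrains the image $\oline{N}_0$ of $N_0$ in $\Gamma$ to be a transitive group of degree $8$ or $16$ with the following properties:
\begin{itemize}
\item[(a)] it contains a cyclic transitive subgroup (the image of inertia at $\infty$);
\item[(b)] it is normal in a group $\Gamma$ which is a monodromy group of a polynomial composed as $u_1\circ u_2$, with solvable primitive composition factors of degree $2$ or $4$;
\item[(c)] its block kernel modulo the image of $K$ is controlled as in the proofs of Lemmas \ref{lem:2step_solv} and \ref{lem:3-factors}.
\end{itemize}

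\textbf{Step 2 (translate reducibility into a factorization in $G$).} Minimal reducibility means that the stabilizers $H=\Gal(\Omega/k(x))$ and $H'=\Gal(\Omega/k(y))$ satisfy $HH'\ne G$, while every pair of overgroups, at least one of them proper, does give $G$. The image $\oline{H'}$ of $H'$ in $\Gamma$, acting on the blocks of $u$ over one fixed root of $f_0$, must then be intransitive on the $\deg(u)$ roots. At the same time it must be transitive on the roots of every proper left factor; this follows from Remark \ref{rem:genFriarg} applied with $u$ in place of $f$. Moreover, $\oline{H'}$ is a point stabilizer of a second degree-$\deg(u)$ action of $\Gamma$ coming from $v$, and $v$ is itself a polynomial (right-unique, with the same ramification at $\infty$). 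So I would extract the following finite criterion: $\Gamma$ is a transitive group of degree $8$ or $16$ admitting two inequivalent polynomial-type actions (each with a $\deg(u)$-cycle and genus-$0$ ramification data) whose point stabilizers $A$ and $B$ satisfy $AB\ne\Gamma$, yet every overgroup of $A$ or $B$ of index $\deg(u)/2$ or smaller gives a factorization. One also imposes the kernel constraints from Step 1: the kernel over the degree-$\deg(u_1)$ quotient has order at most $4$, both for the $u$-action and the $v$-action.

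\textbf{Step 3 (finite check and the main obstacle).} Run over the transitive groups of degree $8$ and $16$ containing a cycle of full length. Keep those with a block system with blocks of size $2$ whose quotient is a solvable polynomial-type group of degree $4$ or $8$, and whose kernel is small as in Step 1. For each survivor, test for a second inequivalent transitive action of the same degree with a block system of size-$2$ blocks and a non-factorization as above. I expect every survivor to fail in one of two ways: either the pair becomes reducible already at the level of the quotient of degree $\deg(u_1)$, which contradicts minimality since then $u_1,v_1$ would give a reducible pair after pulling back to $f_0\circ u_1$, $g_0\circ v_1$; or the two actions are equivalent, which gives case (a) of Theorem \ref{thm:poly-solvable}. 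The main obstacle is justifying that the local data in $\Gamma$ suffice. Reducibility of $f,g$ is a global statement in $G$, so one must show that the non-factorization $HH'\ne G$ descends to a non-factorization in a single block's image $\Gamma$. I would do this as in the proof of Lemma \ref{clm:exisx0}: use transitivity of $H'$ on the roots of $f_0$, which holds by minimal reducibility, to reduce to the stabilizer of one root of $f_0$ and its action on that fiber, and then use Lemma \ref{lem:towerext} to control intermediate subgroups. Where the general argument resists, I would fall back on the nilpotency-class bounds of Corollary \ref{cor:nilp_class} for the $2$-Sylow subgroups, as in the last case of Lemma \ref{lem:3-factors}, to rule out the degree-$16$ candidates.
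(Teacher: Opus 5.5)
Your proposal has a genuine gap at Step~2: the two actions do not live on $\Gamma=\Mon_k(u)$. Write $u=u(x)$ and $v=v(y)$ for the roots of $f_0(X)-t$ and $g_0(X)-t$. Then $k(x)/k(u)$ and $k(y)/k(v)$ sit over different bases, so nothing gives $\Gamma$ a second action ``coming from $v$''. What minimal reducibility yields is that $k(x,v)$ and $k(u,y)$ are not linearly disjoint over $k(u,v)$. The relevant group is $H=\Gal(\Omega_x(v)/k(u,v))$, which is only a transitive subgroup of $\Mon_k(u)$ (of index $[\Omega_x\cap k(u,v):k(u)]$) and of $\Mon_k(v)$. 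Your $\oline{H'}$ is at best a point stabilizer for an action of $H$, not of $\Gamma$.

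Consequently your filters are unjustified:
\begin{itemize}
\item $k(u,v)$ need not be rational, so $H$ carries no genus-$0$ or polynomial-type ramification data; it only contains a full cycle coming from inertia at $\infty$.
\item Theorem~\ref{thm:diagkern} bounds the \emph{global} kernel $\ker(G\to\Mon_k(f_0\circ u_1))$, not the local block kernel $\ker(\Gamma\to\Mon_k(u_1))$. The gap between these two is exactly what Lemmas~\ref{lem:2step_solv} and \ref{lem:3-factors} exploit.
\item Remark~\ref{rem:critical_cases} records that this line of attack, including the nilpotency-class bounds you fall back on, still leaves the survivors $8T\theta$, $\theta\in\{6,27,28,35\}$, and several $16T\theta$. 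The proposition is needed precisely because reducibility itself must be used.
\end{itemize}

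The missing idea is the descent of minimality to $k(u,v)$. When $\deg u=\deg v$, $\infty$ is unramified in $k(u,v)/k(u)$ by Abhyankar's lemma. Then Lemma~\ref{lem:field-definition}, not Lemma~\ref{lem:towerext} alone, shows that every intermediate field of $k(x,v)/k(u,v)$ is $k(x',v)$ for some $k(u)\subseteq k(x')\subseteq k(x)$, and symmetrically on the $v$-side. Minimality then forces the non-disjoint subfields to be $k(x,v)$ and $k(u,y)$ themselves, which share a Galois closure by Lemma~\ref{lem:friedarg}. Overgroups of $H_x$ and $H_y$ in $H$ then correspond to left factors of $u$ and $v$. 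Only now is the Magma check meaningful. For transitive $H$ of degree $8$ or $16$ containing a full cycle in both actions, intransitivity of $H_x$ on $H/H_y$ always persists for some overgroups $H_x\le H_x'$, $H_y\le H_y'$ with at least one inclusion proper, contradicting minimality.

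Second, you tacitly assume $\deg u=\deg v$, but the hypotheses allow $\deg u=16$, $\deg v=8$. If $u_1$ has a quadratic left factor, you can move it into $f_0$ and reduce to equal degrees. If instead $u_1$ has an indecomposable left factor $u_0$ with $\Mon_k(u_0)=S_4$, this reduction is impossible. In that case:
\begin{itemize}
\item $k(u,v)/k(u)$ is ramified over $\infty$;
\item inertia has order only $8$ (the square of a $16$-cycle), so there is no full cycle;
\item Lemma~\ref{lem:field-definition} is unavailable.
\end{itemize}
So neither the descent nor your finite criterion applies. The paper treats this case separately. Let $\Omega'$ be the Galois closure of $k(u,v)/k(t)$. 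The $2$-Sylow subgroup $N_2$ of $\Gal(\Omega_x\Omega'/\Omega')$ is transitive and maps onto $V_4\lhd S_4$, so every maximal subgroup of $N_2$ contains an element of order $8$. Combined with Lemma~\ref{lem:towerext}, this shows that $k(x,v)/k(u,v)$ has a unique maximal proper subfield, and that it is defined over $k(u)$. Minimality then forces $k(x,v)$ to share its Galois closure with some $F_2\le k(u,y)$ of degree dividing $8$. In that faithful action of $N_2$, a maximal block stabilizer would contain an element of order $8$. Such an element is a full cycle and fixes no block, which is a contradiction. Your proposal has no substitute for this case.
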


\begin{lem}\label{lem:field-definition}
Suppose $f\in k[X]$ is of prime power degree, $\Mon_k(f)$ is solvable, 
and $x$ a root of $f(X)-t\in k(t)[X]$. Let $L/k(t)$ be a field in which  $\infty$ is unramified. Then the lattice of intermediate fields in $k(x)/k(t)$ is in one-to-one correspondence with that of $L(x)/L$ via the map $M\to ML$ given by compositum with $L$ in the Galois closure of $L(x)/k(t)$. 
\end{lem}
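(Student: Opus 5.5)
Via the Galois correspondence, we reduce the statement to a group-theoretic one. Let $\Omega$ be the Galois closure of $k(x)/k(t)$, $G=\Gal(\Omega/k(t))$ and $H=\Gal(\Omega/k(x))$, and let $I\le G$ be an inertia group at a place over $\infty$. Since $f$ is a polynomial, $\infty$ is totally ramified in $k(x)/k(t)$. Lemma \ref{lem:towerext} therefore gives $G=HI$ and $H\cap I=1$, with $I$ cyclic of order $n=\deg(f)$. It also shows that every intermediate group $H\le T\le G$ satisfies $T=H(T\cap I)$. So the intermediate fields of $k(x)/k(t)$ correspond to the subgroups $T$ determined by the subgroups $T\cap I$ of the cyclic group $I$.

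Next we pass to the extension $L(x)/L$, working inside $\Omega L$. Since $\infty$ is unramified in $L/k(t)$ and totally ramified in $k(x)/k(t)$, Remark \ref{rem:abh}(2) shows that $k(x)$ and $L$ are linearly disjoint, and that places of $L$ over $\infty$ are totally ramified in $L(x)/L$. The same holds for every intermediate field $M$ of $k(x)/k(t)$, since $\infty$ is totally ramified in $M/k(t)$. By Remark \ref{rem:Gal-closure}, the Galois closure of $L(x)/L$ is $\Omega L$, and $N:=\Gal(\Omega L/L)$ identifies with a subgroup of $G$. Up to replacing $L$ by its Galois closure over $k(t)$ (in which $\infty$ is still unramified, by Abhyankar's lemma), $N$ is even normal, as in Lemma \ref{lem:transitive-normal}.

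Linear disjointness gives $[ML:L]=[M:k(t)]$. Hence $M\mapsto ML$ is injective: if $ML=M'L$, then $(M\cap M')L$ has the same degree, so $M=M\cap M'=M'$ by degree count and disjointness of each from $L$. It is also order-preserving in both directions. For surjectivity, apply Lemma \ref{lem:towerext} to $L(x)/L$ with an inertia group $I'\le N$ over a place of $L$ above $\infty$. Since $L/k(t)$ is unramified there, $I'=I$ after conjugation, so $I\le N$. Then every intermediate group $H\cap N\le T'\le N$ has the form $T'=(H\cap N)(T'\cap I)$. Setting $T:=H(T'\cap I)$ produces an intermediate field $M=\Omega^T$ with $ML=(\Omega L)^{T'}$.

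\textbf{Main obstacle.} The step needing care is checking that $T$ is actually a group, and that $T\cap N=T'$. The tool here is the Ritt-type structure in Lemma \ref{lem:towerext}: subgroups of the form $H\cdot J$ with $J\le I$ are groups exactly when they are intermediate subgroups. The hypotheses that the degree is a prime power and that $\Mon_k(f)$ is solvable presumably enter here. In that case the intermediate subgroups are governed by the $p$-power chain in $I$ (Lemma \ref{lem:reg_normal_sub}, Proposition \ref{prop:indec_poly}), so one argues inductively along a complete decomposition of $f$. Concretely, the plan is induction on the number of indecomposable factors. For an indecomposable $f$, the case $\Mon_k(f)\le\AGL_1(p)$ or $S_4$ is handled directly, because $N$ is a normal subgroup containing the $n$-cycle of $I$, hence primitive. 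The inductive step applies the indecomposable case to each right factor over the compositum.
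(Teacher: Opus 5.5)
Your setup is sound: linear disjointness via Remark \ref{rem:abh}(2), $\Omega L$ as the Galois closure of $L(x)/L$, an inertia group of order $\deg(f)$ inside $N$, and $T'=(H\cap N)(T'\cap I)$ from Lemma \ref{lem:towerext}. But surjectivity, which is the whole content of the lemma, is never proved. You reduce it to showing that $H(T'\cap I)$ is a group and then leave this open. The statement that such products are groups ``exactly when they are intermediate subgroups'' is a tautology, and the hypotheses are only said to ``presumably enter''. The inductive fallback does not close the gap either. Suppose each step $M_iL\subseteq M_{i+1}L$ of the image of the chain $M_0\subsetneq\cdots\subsetneq M_r$ from a complete decomposition has no proper intermediate field. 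This still says nothing about an intermediate field $F$ of $L(x)/L$ that is incomparable with these fields (e.g.\ with $L(f_r(x))$). So applying the indecomposable case ``to each right factor over the compositum'' does not exhaust the lattice.

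The missing idea uses only ingredients you already wrote down. Since $I$ is cyclic of prime-power order, its subgroups form a chain. Hence $T'=(H\cap N)(T'\cap I)$ forces the groups between $H\cap N$ and $N$, i.e.\ the intermediate fields of $L(x)/L$, to be totally ordered, and likewise for $k(x)/k(t)$. This is precisely where the prime-power hypothesis is used. Once the lattice is a chain, it suffices to check that $M_0L\subsetneq M_1L\subsetneq\cdots\subsetneq M_rL$ is a \emph{maximal} chain. By solvability and Proposition \ref{prop:indec_poly}, each step either has prime degree (nothing to check) or has degree $4$ and monodromy $S_4$. For the latter, apply Lemma \ref{lem:transitive-normal}\ref{case:transitive-normal_2} over the Galois closure of $LM_i/k(t)$, which is unramified over the place of $M_i$ above $\infty$. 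The monodromy stays $S_4$, hence primitive, and linear disjointness descends this to $LM_i\subsetneq LM_{i+1}$. This is the paper's argument; your remark on the indecomposable case supplies only this last step.

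Two smaller points:
\begin{itemize}
\item Your injectivity argument via $(M\cap M')L$ is not justified, since compositum with $L$ need not commute with intersection. Use $MM'$ instead, or note that injectivity is immediate for chains by comparing degrees.
\item Replacing $L$ by its Galois closure requires checking that distinct intermediate fields of $L(x)/L$ stay distinct after the compositum. This follows from the same degree and linear-disjointness argument.
\end{itemize}
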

\begin{proof}
Suppose $\deg(f)=p^n$ for a prime $p$ and integer $n\in\mathbb N$. Note, since $\Mon_k(f)$ is solvable, $f$ is either  a composition of degree-$p$ polynomials, or $p=2$ and degree-$4$ indecomposable polynomials also occur, by Proposition \ref{prop:indec_poly}. 
The places of $L$ lying over $\infty$  are totally ramified in $L(x)$ by Remark \ref{rem:abh}. 
Since $[L(x):L]=p^n$ is a prime power,  
this implies the intermediate fields of $L(x)/L$ 
are totally ordered 
by Lemma \ref{lem:towerext}. 

If $f$ is a composition of  degree-$p$ polynomials, the extension $k(x)/k(t)$ contains a maximal chain of degree-$p$ extensions. Since these fields are linearly disjoint from $L/k(t)$, their compositum induces a maximal chain of degree-$p$ extensions in $L(x)/L$. Since the intermediate fields in $L(x)/L$ are totally ordered the assertion follows.

Otherwise, $k(x)/k(t)$ has an intermediate degree-$4$ subextension $k(x')/k(t')$ with no intermediate fields. Letting $\Omega_x', \Omega'$ be the Galois closures of $k(x')/k(t')$, $L(t')/k(t)$, respectively, we have $\Gal(\Omega'_x/k(t'))=S_4$ by Proposition \ref{prop:indec_poly}. Since $\infty$ is unramified in $\Omega'/k(t')$ by Remark \ref{rem:abh}, we see that $\Gal(\Omega'_x\Omega'/\Omega')=S_4$ as well by Lemma \ref{lem:transitive-normal}\ref{case:transitive-normal_2}. It follows  that the degree-$4$ extension $L(x')/L(t')$ has no nontrivial intermediate fields since such a  field would yield a nontrivial intermediate field of $\Omega'(x')/\Omega'$. 
Thus, the same argument as for degree-$p$ compositions yields  that the composition of $L$ with the maximal chain of intermediate fields  in $k(x)/k(t)$  gives the  maximal  chain of intermediate fields of $L(x)/L$.
\end{proof}

\begin{proof}[Proof of Proposition \ref{prop:special-func}]
Assume on the contrary $f,g$ is a minimally reducible pair, and let $x,y$ be roots of $f(X)-t,g(X)-t\in k(t)[X]$, respectively. 
Set $u=u(x)$, $v=v(y)$, and let $\Omega_{x}$, $\Omega_y$ and $\Omega'$ denote the Galois closures of $k(x)/k(u)$,  $k(y)/k(v)$, and $k(u,v)/k(t)$, respectively. 
By the minimal reducibility assumption $k(x,v)$ and $k(u,y)$ are not linearly disjoint over $k(u,v)$, and hence admit intermediate extensions $k(u,v)\lneq F_1\leq k(x,v)$ and $k(u,v)\lneq F_2\leq k(y,u)$ which are not linearly disjoint and admit the same Galois closure by Lemma \ref{lem:friedarg}. 

Assume first $n:=\deg(u)=\deg(v)$, so that $k(u,v)/k(u)$ is unramified over $\infty$ by Remark \ref{rem:abh}(1). 
Then the extension $F_1/k(u,v)$ is defined over $k(u)$ by Lemma \ref{lem:field-definition}, that is, there exists $k(u)\lneq k(x')\leq k(x)$ which is linearly disjoint from $k(u,v)/k(u)$ such that $k(x',v)=F_1$. 
If $F_1$ is a proper subfield of $k(x,v)$,  already the extensions $k(x'),k(y)$ are not linearly disjoint over $k(t)$ by the above claim, contradicting minimal reducibility.     Hence we may assume $F_1=k(x,v)$ and similarly $F_2=k(u,y)$, so that these extensions of $k(u,v)$ have the same Galois closure $\Omega_c$. Set $H:=\Gal(\Omega_c/k(u,v))$. 

Since $k(u,v)$ and $k(x)$ are linearly disjoint over $k(u)$ by minimal reducibility,  $\Omega_x(v)$ coincides with the Galois closure $\Omega_c$ of $k(x,v)/k(u,v)$ and hence $H$ can be viewed as  a transitive subgroup of $\Mon_k(u)=\Gal(\Omega_x/k(u))$ via restriction. 
Similarly, $H$ 
can be viewed as a transitive subgroup of $\Mon_k(v)$. 
Moreover, $H$ contains a cyclic transitive subgroup (in both actions) due to ramification over $\infty$, as noted above.  
Let $H_x,H_y\leq H$ be the point stabilizers in the two actions, so that $H_x$ acts intransitively on $H/H_y$. By running in Magma over transitive groups $H$ (containing a cyclic transitive group) of degree $8$ (resp., $16$) in two actions, on $H/H_x$ and $H/H_y$, we see that if $H_x$ is intransitive on $H/H_y$, then there are overgroups $H_x\leq H_x'\leq H$ and $H_y\leq H_y'\leq H$ such that at least one of them is a proper subgroup of $H$, and $H_x'$ is intransitive on $H/H_y'$, contradicting the minimal reducibility assumption once more. 

Now assume $u,v$ are of distinct degrees, and without loss of generality $\deg(u)=16$, $\deg(v)=8$. We may and will furthermore assume that $u$ has an indecomposable left factor $u_0$ of monodromy group $S_4$, since in case $u_1=u_0\circ u_1'$ with $\deg(u_0)=2$, we may simply reduce to the case $\deg(u)=\deg(v)$ via replacing $u$ by $u_1'\circ u_2$. 
Setting $H=\Gal(\Omega_x(v)/k(u,v))$ and $N:=\Gal(\Omega_x\Omega'/\Omega')$, 
we identify $H$ as a  subgroup of $\Mon_k(u)$  and $N$ as a subgroup of $H$, via restriction. 
Note that $H$ is a transitive since $k(u,v)/k(u)$ and $k(x)/k(u)$ are linearly disjoint by minimal reducibility, Also note that $N\lhd \Mon_k(u)$, and that $N$ contains the normal subgroup of $\Mon_k(u)$ generated by the inertia groups $I\leq N$ over $\infty$. One has $\#I=8$  since $I$  contains the square of a full cycle by Remark \ref{rem:abh}. 

We first claim that there exists a unique proper maximal subfield $k(u,v)\lneq F_1\lneq k(x,v)$, and it is defined over $k(u)$. We give a direct argument to show this, and note that the underlying group theoretic statement is also verified directly with Magma. Let $U< \Mon_k(u)$ denote a point stabilizer. Note that, since $\Mon_k(u_0)=S_4$ and (the order-$8$ cyclic group) $I$ maps to the subgroup of $S_4$ generated by a double transposition, the $2$-Sylow group $N_2$ of $N$ is transitive and projects to $V_4\triangleleft S_4$ under the map $\Mon_k(u)\to \Mon_k(u_0)$. Moreover, since all elements of $V_4\setminus\{id\}$ are conjugate in $S_4$, the group $N_2$ has a quotient $C_2\times C_2$ all of whose nonidentity elements lift to an order-$8$ element in $N$. In particular, any maximal subgroup $M$ of the $2$-group $N_2$ must contain at least one conjugate of a generator of $I$, i.e., an element of order $8$. 
Letting $U\cap N_2 \subsetneq M\subsetneq N_2$ be the stabilizer of a maximal block $\Delta$ of $N_2$ in its action as a transitive subgroup of $\Mon_k(u)$, this implies that all stabilizers of blocks of $N_2$ which are contained in $\Delta$ must be totally ordered by inclusion, by Lemma \ref{lem:towerext} (as places over $\infty$ are totally ramified in the corresponding extension). 
Since intersections of blocks are again blocks, this implies that $N_2$ stabilizes a unique block system of each block length $2$ and $4$. In particular, since $N_2\leq H\leq \Mon_k(u)$, every block of these groups is a block of $N_2$, and hence $H$ and $\Mon_k(u)$  have unique minimal block systems. Since $u$ has a quadratic right factor $u_2$, the size of such a minimal block of $\Mon_k(u)$ and $H$ must equal $\deg(u_2)=2$. In particular, there exists a unique minimal intermediate group $U\cap H\lneq U'\lneq H$ in $H$, and $U'$ is the intersection with $H$ of the minimal block stabilizer $UU'$ in $\Mon_k(u)$ fixing a root of $u_2$. Thus, letting $F_1$ be the fixed field of $U'$, the claim follows from the usual Galois correspondence. 
%
%

It follows that if a proper subfield of $k(x,v)$ is not linearly disjoint from $k(u,y)$ over $k(u,v)$, then so is $F_1$. However, as $F_1/k(u,v)$ is defined  over $k(u)$, this contradicts the minimal reducibility assumption. 
Thus, $k(x,v)/k(u,v)$ has the same Galois closure $\Omega_c$ as  $F_2/k(u,v)$ for some subfield $F_2\leq k(u,y)$ by  Lemma \ref{lem:friedarg}. 
Letting $V=\Gal(\Omega_c/F_2)$, we obtain a  faithful transitive action of $H$ and hence  of $N_2\subseteq H$ on $H/V$. It is an action of degree dividing $8$. But now let $\tilde{M}\subset N_2$ be the stabilizer of a maximal block in this action. This contradicts that  $\tilde{M}$ contains an element of order $8$ as shown above. Thus there exists no  minimal reducible pair $f,g$ as desired.  
\end{proof}

\begin{rem}
\label{rem:critical_cases}
While not strictly required for the proof of any of the theorems, it may be of interest for related applications to give a precise list of possible monodromy groups $G_3:=\Mon_{\overline{k}}(f_{r-2}\circ f_{r-1}\circ f_r)$ such that $\ker(\Mon_k(h\circ f_{r-2}\circ f_{r-1}\circ f_r)\to \Mon_k(h\circ f_{r-2}\circ f_{r-1}))$ can be diagonal or isomorphic to $C_2\times C_2$ for some $h\in k[X]\setminus k$. 

Lemma \ref{lem:3-factors} and the proof of Theorem \ref{thm:poly-solvable} show that all such examples fulfill $$(\deg(f_r),\deg(f_{r-1}), \deg(f_{r-2}))\in \{(2,2,2), (2,2,4), (2,4,2)\}.$$
We then  use Magma (in particular the database of transitive groups of degrees $8$ and $16$) 
to run over quotients $\oline N/\oline K$ of  normal subgroups $\oline N\lhd G_3$ containing a full cycle by  subgroups $\oline K\leq \oline N$ isomorphic to $C_2$ or  $C_2\times C_2$, and keep track of cases in which the nilpotency class 
of the $q$-Sylow subgroup of $\oline N/\oline K$ is at most as large as that of the $q$-Sylow of $\oline N/(K_3\cap \oline N)$, where $K_3=\ker(G_3\to\Mon_k(f_{r-2}\circ f_{r-1}))$. This gives a reasonably short list of possible groups $G_3$, of which some can furthermore be excluded on the ground of not possessing a polynomial genus-$0$ system. This leaves only the monodromy groups $G_3$ of Id
$8T\theta$ for $\theta\in \{6, 27, 28, 35\}$, and 
$16T\theta$ for $\theta\in \{773, 1499, 1651, 1832, 1866, 1871, 1872, 1886\}$. 
Thus, except for these groups, we obtain a contradiction to the fact that $\oline N/\oline K$ is a quotient of a subdirect power of 
$\oline N/(K_3\cap \oline N)$.
\end{rem}
In particular, there are no such examples of odd degree.

We similarly obtain the following intermediate result on not necessarily polynomial reducible pairs with solvable monodromy group.

\begin{thm}\label{thm:solv} 
Let $f\in k[X]\setminus k$ be a polynomial of odd degree with solvable monodromy group, 
and let  $g \in k(Y) \setminus k$ be a Siegel function of degree at least $2$. Then $f(X)=g(Y)$ is reducible if and only if $f$ and $g$ have a nontrivial common left composition  factor, that is, $f = h\circ f_1$ and $g = h\circ g_1$ for some $h,f_1 \in k[X]$ and $g_1\in k(X)$ such that $\deg(h)>1$.
\end{thm}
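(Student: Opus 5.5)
The converse direction is immediate: if $f=h\circ f_1$ and $g=h\circ g_1$ with $\deg(h)>1$, then $h(X)-h(Y)$ has the factor $X-Y$, and substituting $X\mapsto f_1(X)$, $Y\mapsto g_1(Y)$ shows the curve $f(X)=g(Y)$ is reducible. For the forward direction, we replace $f,g$ by left composition factors and assume $f,g$ is a minimally reducible pair; a common nontrivial left factor of these left factors is then one of the original pair. Since $\deg(f)$ is odd and $\Mon_k(f)$ is solvable, Proposition \ref{prop:indec_poly} shows every indecomposable factor $f_i$ in a complete decomposition $f=f_1\circ\cdots\circ f_r$ is linearly related over $\oline k$ to $X^{p_i}$ or $T_{p_i}$ with $p_i$ an odd prime. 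Lemma \ref{lem:friedarg} gives a common Galois closure $\Omega$, so $\Mon_k(g)=\Mon_k(f)$ is solvable, and its ramification over $\infty$ equals that of $k(x)/k(t)$ (Remark \ref{rem:abh}(1)).

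\textbf{Case $r=1$.} Here $f$ is indecomposable of prime degree $p$ and $\Mon_k(f)\le\AGL_1(p)$. Let $H=\Gal(\Omega/k(y))$. It acts intransitively on the roots of $f(X)-t$, so by Lemma \ref{lem:onAGL1}(1) it fixes a root $x_0$. Hence $k(x_0)\subseteq k(y)$, i.e.\ $g=f\circ g_1$ (up to conjugation of the root), and $f$ is itself the common left factor. Minimality then forces $g_1$ to be linear.

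\textbf{Case $r\ge 2$.} Here we aim for a contradiction. Theorem \ref{thm:diagkern} applies, since $g$ is Siegel and $\deg(f)$ is odd. It says $K=\ker(\Mon_k(f)\to\Mon_k(f_1\circ\cdots\circ f_{r-1}))$ is diagonal in $\Mon_k(f_r)^{\deg(h)}$; the $C_2\times C_2$ alternative is excluded since $p_r$ is odd. We then want the right factors $f_r$ and $f_{r-1}\circ f_r$ to satisfy the hypotheses of Lemma \ref{lem:2step_solv}.

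First, Proposition \ref{lem:norittstepp}\ref{case:norittstepp_5} makes $f_r$ right-unique. If $f_{r-1}\circ f_r$ is not strongly-unique, then it is linearly related to $X^{p^2}$ or $T_{p^2}$, which is excluded by Proposition \ref{lem:norittstepp}\ref{case:norittstepp_1} applied with $f_2=f_{r-1}\circ f_r$. If $f_{r-1}\circ f_r$ admits a Ritt move, we replace $f_{r-1}$ via Corollary \ref{cor:ritt} by a factor of degree coprime to $p_r$; the new decomposition still ends with $f_r$, so this step is harmless. Lemma \ref{lem:2step_solv} then gives $(p_r,p_{r-1})=(3,3)$ with $\Mon_{\oline k}(f_r)$ cyclic. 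The noncyclic $S_3$ case is killed as in the proof of Theorem \ref{thm:poly-solvable}, via Corollary \ref{cor:largemon_normal}\ref{case:largemon_normal_3}: $\soc$ would contain a minimal normal $C_3^{3}$, contradicting diagonality.

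\textbf{The main obstacle: $p_{r-1}=p_r=3$ with cyclic $\Mon_{\oline k}(f_r)$.} If $r=2$, Theorem \ref{thm:largemon} forces a large kernel containing $C_3^2$ (there is no odd exceptional case), contradicting diagonality. If $r\ge3$, we apply Lemma \ref{lem:3-factors}(1) to $f_{r-2}\circ f_{r-1}\circ f_r$. Its right-uniqueness hypothesis is arranged by Ritt moves as before, and $f_{r-1}\circ f_r$ is strongly unique. The lemma yields degree $8$ or $16$, impossible for odd degree. One must check that the proof of Lemma \ref{lem:3-factors} only used polynomiality of $f$ (not $g$). It did: the kernel $K$ and the normal subgroup $N_0$ generated by inertia at $\infty$ live entirely in $\Mon_k(f)$, and the only input from $g$ is diagonality via Theorem \ref{thm:diagkern}. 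This contradiction shows $r=1$, completing the proof.
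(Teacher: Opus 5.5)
Your proposal is correct and follows the paper's argument. You reduce to a minimally reducible pair, use Theorem \ref{thm:diagkern} and Lemma \ref{lem:2step_solv} to force $p_{r-1}=p_r=3$, dispose of that case as in the proof of Theorem \ref{thm:poly-solvable}, and finish $r=1$ with Lemma \ref{lem:onAGL1}; the paper only cites that proof for the $p_{r-1}=p_r=3$ case, and you spell it out via Corollary \ref{cor:largemon_normal}\ref{case:largemon_normal_3}, Theorem \ref{thm:largemon} and Lemma \ref{lem:3-factors}. The one slip is an index: the Ritt move to rule out is in $f_{r-2}\circ f_{r-1}$, not $f_{r-1}\circ f_r$ (which cannot have one since $f_r$ is right-unique), and it is excluded because it would produce a new $f_{r-1}$ of degree coprime to $3$, contradicting Lemma \ref{lem:2step_solv}.
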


\begin{proof}
By possibly replacing  $f,g$ by left factors of theirs, we may assume it is a minimally reducible pair. 
Now write $f=f_1\circ \cdots \circ f_r$ for indecomposable polynomials $f_i\in k[X]$. Since $f$ is assumed to be solvable of odd degree,  $p_i=\deg(f_i)$ is prime for all $i=1,\dots, r$ by Proposition \ref{prop:indec_poly}. 
Then, Theorem \ref{thm:diagkern} implies that {the socle of} $K=\ker(\Mon_k(f) \rightarrow \Mon_k(f_1 \circ \dots \circ f_{r-1}))$ is either $C_{p_r}$ (with $p_r=\deg(f_r)$) or $C_2\times C_2$.

Assume that $r\ge 2$. 
Pick roots $x$ and $y$ of $f(X)-t=0$ and $g(Y)-t=0$, respectively. By Lemma \ref{lem:friedarg}, the extensions  $k(x)/k(t)$ and $k(y)/k(t)$ have a common Galois closure $\Omega$. Since $\deg(f_{r-1}\circ f_r)$ is not divisible by $4$, Proposition \ref{lem:norittstepp}\ref{case:norittstepp_1} implies  $f_{r-1} \circ f_r$ is not linearly related to {$X^{p_r p_{r-1}}$} or {$T_{p_r p_{r-1}}$} over $\oline k$, and thus is strongly-unique by Ritt's theorems. Lemma \ref{lem:2step_solv} thus already forces $p_r=p_{r-1}=3$, and this possibility is furthermore ruled out in the proof of Theorem \ref{thm:poly-solvable}, as in Remark \ref{rem:critical_cases}.

Thus we get $r=1$. In such case $\Mon_k(f)\leq \AGL_1(p_1)$ and hence  $k(y)$  contains a root of $f(X)-t$ by  Lemma \ref{lem:onAGL1}, so that $g$ factors through $f$ as needed. 
\end{proof}

\section{Proofs of main results}\label{sec:nonsolv}
\label{sec:nonsolv-DLS}
\subsection{DLS over general fields}\label{sec:DLS} Let $k$ be a field of characteristic $0$. Theorem \ref{thm:DLS-C} is the special case $k=\mC$ of the following theorem. Recall that $D_{n,\alpha}$ denote the $n$-th Dickson polynomial with parameter $\alpha\in k$. 
\begin{thm}\label{thm:DLS} 
Let $f,g\in k[X]$ be polynomials of degree $>1$. 
Then $f(X)-g(Y)$ is reducible in $k[X,Y]$  
if and only if one of the following occurs for some polynomials $f_1,g_1\in k[X]$:
\begin{enumerate}[leftmargin=*,label={(\arabic*)},ref=(\arabic*)]
\item $f$ and $g$ have a common left composition  factor  $h\in k[X]$ of degree at least $2$, that is, $f = h\circ f_1$ and $g = h\circ g_1$; 
\label{DLS_1}
\item $f=(\mu\circ h_1\circ \lambda)\circ f_1$ and $g=(\mu\circ h_2\circ \lambda)\circ g_1$, for some 
linear $\mu,\lambda\in\overline{k}[X]$, where $(h_1,h_2)$ is 
one of the pairs of polynomials of degrees 7,11,13,15,21,31  
given in \cite[\S 5]{CC}.
\label{DLS_2}
\item $f=(\mu\circ D_{4,\alpha})\circ f_1$ and $g=(\mu\circ (-\frac{1}{4}D_{4,2\alpha}))\circ g_1$, for some 
linear $\mu\in k[X]$ and $\alpha\in k$. 
\label{DLS_3}
\end{enumerate}
\end{thm}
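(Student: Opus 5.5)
The "if" direction is immediate. In case \ref{DLS_1}, $h(X)-h(Y)$ is divisible by $X-Y$, and substituting $f_1(X),g_1(Y)$ keeps the factorization nontrivial. In case \ref{DLS_3}, the identity \eqref{eq:facD4al} factors $D_{4,\alpha}(X)+\frac14 D_{4,2\alpha}(Y)$ over $k$, and the substitution preserves this. In case \ref{DLS_2}, the pairs of \cite[\S 5]{CC} give reducible curves $h_1(X)=h_2(Y)$ over $\overline k$. Since $\mu\circ h_i\circ\lambda$ are assumed to lie in $k[X]$ (implicitly, as $f,g\in k[X]$), the reducibility over $k$ follows as in \cite{KN}; at worst one restricts to the appropriate field of definition.

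For the converse, I first replace $f,g$ by left factors so that the pair is minimally reducible. Then $\Mon_k(f)=\Mon_k(g)$ is the group of the common Galois closure (Lemma \ref{lem:friedarg}), and $\deg f=\deg g$ (Corollary \ref{cor:min-red-pol}). Concluding one of \ref{DLS_1}--\ref{DLS_3} for this minimal pair suffices, since the original $f,g$ factor through it. I then split according to whether this common monodromy group is solvable.

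In the solvable case, Theorem \ref{thm:poly-solvable} gives exactly \ref{DLS_1} or \ref{DLS_3}.

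In the nonsolvable case, I will use Proposition \ref{prop:indec_poly} and the minimality. Write $f=h\circ f_r$ and $g=h'\circ g_s$ with $f_r,g_s$ indecomposable. By Proposition \ref{lem:norittstepp}\ref{case:norittstepp_4}, $f_r$ and $g_s$ are right-unique of equal degree or of degrees $\{2,4\}$.
\begin{itemize}
\item If some indecomposable factor has solvable monodromy, the kernel $K=\ker(\Mon_k(f)\to\Mon_k(h))$ is diagonal or $C_2\times C_2$ by Theorem \ref{thm:diagkern}. In the spirit of \cite[Prop.\ 3.3]{KNR24}, a diagonal solvable kernel sitting below nonsolvable layers should contradict the largeness of nonsolvable kernels. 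I expect one argues that a nonsolvable indecomposable layer $f_i$ below solvable ones forces a normal subgroup $S^{m}$ with $m$ large, incompatible with the unique minimal normal subgroup structure.
\item The cleaner route is to show, via the minimal-normal-subgroup argument as in the proof of Proposition \ref{lem:norittstepp}\ref{case:norittstepp_5}, that all factors must be nonsolvable once one is. In that situation \cite[Thm.\ 1.2]{KN} (or its simplified proof) reduces to $r=s=1$: the socle of the nonsolvable block kernel is a full power $S^{\deg h}$ by \cite{KNR24}, which is incompatible with diagonality-type constraints coming from linear non-disjointness of $k(x,v)/k(u,v)$ and $k(u,y)/k(u,v)$.
\end{itemize}
Once $f,g$ are indecomposable with the same Galois closure, nonsolvable monodromy, and $k(x),k(y)$ not linearly disjoint, the CFSG-based classification (Fried, \cite{CC}) yields \ref{DLS_2} over $\overline k$. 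One then checks that descent to $k$ only changes $\mu,\lambda$ over $\overline k$, as stated.

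The main obstacle is the mixed case: nonsolvable overall monodromy whose right-most factors are solvable, or solvable layers above nonsolvable ones. Here Theorem \ref{thm:diagkern} applies only to the bottom layer, and one must propagate the diagonality constraint upward through nonsolvable layers. I would first try to show that the solvable radical of $\Mon_k(f)$ is trivial using Lemma \ref{lem:transitive-normal} and normal-subgroup minimality. Failing that, I would treat the bottom solvable segment via the solvable-case lemmas applied to the kernel restricted below the lowest nonsolvable factor.
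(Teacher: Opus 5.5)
\textbf{Verdict: genuine gap in the nonsolvable case.} Your reduction to a minimally reducible pair, and your handling of the solvable case via Theorem \ref{thm:poly-solvable}, match the paper. The nonsolvable case, however, is not proved. Your two bullets are heuristics, and you leave the mixed case open yourself.

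\textbf{The heuristics point the wrong way.} A diagonal solvable kernel beneath a nonsolvable indecomposable layer is not contradictory in itself. Lemma \ref{lem:poly_nonaffinesiegel}\ref{poly_nonaffinesiegel_2} (via \cite[Table 1]{BKN}) lists polynomial monodromy groups such as $C_3\times A_5\le S_{15}$ and $C_2\times\PSL_3(2)\le S_{14}$. These are monodromy groups of compositions $f_1\circ f_2$ with $\Mon_k(f_1)$ almost simple, $\deg(f_2)$ prime, and diagonal kernel. Likewise, if solvable layers sit above a nonsolvable layer $f_i$, then $S^{\deg(\hat f_{i-1})}$ being the unique minimal normal subgroup of $\Mon_k(\hat f_i)$ is exactly what \cite[Prop.\ 3.3]{KNR24} asserts. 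So nothing internal to $\Mon_k(f)$ breaks, and the contradiction must come from comparing $f$ with $g$.

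Your claim ``all factors are nonsolvable once one is'' also does not follow from the argument of Proposition \ref{lem:norittstepp}\ref{case:norittstepp_5}. That argument only excludes a solvable bottom factor of $f$ when $g$ has a right-unique bottom factor with nonsolvable monodromy. It says nothing when both bottom factors are solvable and some higher factor is not. Note also that Theorem \ref{thm:diagkern} requires the \emph{bottom} factor $f_r$ to be solvable, not just some factor.

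\textbf{The missing ideas.}
\begin{enumerate}
\item Corollary \ref{cor:only_one_nonsolv}. Take $i$ maximal with $\Mon_k(f_i)$ nonsolvable, and make $f_i$ right-unique in $\hat f_i$ by Ritt moves. Then the socle of $\ker(\Mon_k(\hat f_i)\to\Mon_k(\hat f_{i-1}))$ is $S^{\deg(\hat f_{i-1})}$. Lemma \ref{lem:galcl_containment}\ref{case:galcl_containment_2} shows this socle maps isomorphically onto a normal subgroup of a quotient of the almost-simple group $\Mon_k(g_j)$. This forces $\deg(\hat f_{i-1})=1$. Hence $f=f_1\circ h_1$ and $g=g_1\circ h_2$, with $f_1,g_1$ indecomposable of nonsolvable monodromy and $\Mon_k(h_1),\Mon_k(h_2)$ solvable. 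This is the opposite of your ``all factors nonsolvable''.
\item Applying Lemma \ref{lem:galcl_containment}\ref{case:galcl_containment_1} symmetrically shows that $f_1(X)-t$ and $g_1(X)-t$ have the same splitting field. By \cite{Mul2}, the two actions of this almost-simple group are inequivalent, yet every element fixing a point in one action fixes a point in the other. Burnside's lemma then shows that a point stabilizer of one action is intransitive in the other, i.e.\ $f_1(X)-g_1(Y)$ is already reducible.
\item Minimality then forces $\deg(h_1)=\deg(h_2)=1$.
\end{enumerate}
So the solvable bottom part is eliminated by minimality. It is not handled by propagating diagonality upward or by showing the solvable radical is trivial, and Theorem \ref{thm:diagkern} plays no role in the nonsolvable case.

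\textbf{Minor point on the ``if'' direction.} In case \ref{DLS_2}, ``restricting to a field of definition'' does not give reducibility over $k$. What is needed is $\Mon_k(\mu\circ h_i\circ\lambda)=\Mon_{\oline k}(\mu\circ h_i\circ\lambda)$. Then the stabilizer-intransitivity criterion for reducibility holds over $k$ if and only if it holds over $\oline k$.
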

Note that when $k=\overline{k}$ is algebraically closed, the case $\alpha=0$ in (3) falls into Case (1), whereas all cases with $\alpha\ne 0$ can be expressed as $\{f,g\} = \{(\mu\circ T_4)\circ f_1, \mu\circ (-T_4)\circ g_1\}$.
\begin{rem}
For the converse direction of Theorem \ref{thm:DLS}, it is straightforward to see that (1) implies the reducibility of $f(X)-g(Y)\in k[X,Y]$. For (3), reducibility follows from  \eqref{eq:facD4al}. For (2), since ${\rm Mon}_k(\mu \circ h_i \circ \lambda)={\rm Mon}_{\overline{k}}(\mu \circ h_i \circ \lambda)$, reducibility over $k$ is equivalent to that over $\oline k$ since these are equivalent to the intransitivity in one action of the stabilizer in the other action. Thus, in this case the reducibility over $k$ follows from the reducibility of $h_1(X)-h_2(Y)\in \oline k[X,Y]$, whose factorizations are given in  \cite[Section 5]{CC}.  
\end{rem}

For maps with nonsolvable monodromy, we take a simpler approach based on \cite{KN}. The  key argument is given by the following lemma. We shall  apply this argument to compare the Galois closures of maximal nonsolvable left factors of maps. 
\begin{lem}
\label{lem:galcl_containment}
Let $f,g\in k(X)$ be a minimally reducible pair with nonsolvable $\Mon_k(f)$. Write $f=f_1\circ\dots\circ f_r$, and $g=g_1\circ\dots\circ g_s$ 
for indecomposable  $f_i,g_j\in k(X)$. Set $\Gamma_i=\Mon_k(f_i)$ (resp., $\Gamma'_j = \Mon_k(g_j))$, $\hat f_i:=f_1\circ\cdots\circ f_i$. 
(resp., $\hat g_j=g_1\circ\cdots \circ g_j$), and let $\Omega_{i}$ (resp., $\Omega'_{j}$) be the splitting field of  $\hat  f_i(X)-t$ (resp., $\hat g_j(X)-t$), with $\Omega_0:=\Omega'_0:=k(t)$. Now choose the indices $i\in \{1,\dots, r\}$ and $j\in \{1,\dots, s\}$ maximal such that $\Gal(\Omega_i/\Omega_{i-1})$ and $\Gal(\Omega'_j/\Omega'_{j-1})$ are nonsolvable.
Assume that 
$\Gamma_i$ has a unique nonabelian minimal normal subgroup $\soc(\Gamma_i)$ and that $f_i$ is a right-unique factor of $\hat f_i$. 
Then the following hold:
\begin{enumerate}[leftmargin=*,label={(\arabic*)},ref=(\arabic*)]
\item $\Omega'_{j}\supseteq \Omega_{i}$ and $\soc(\Gal(\Omega_i/\Omega_{i-1})) \cong \soc(\Gamma_i)^m$ for some $m \geq 1$. 
\label{case:galcl_containment_1}
%
\item 
\label{case:galcl_containment_2}
{If moreover $f\in k[X]$ is a polynomial, then} 
$\soc(\Gal(\Omega_i/\Omega_{i-1}))$ maps isomorphically to a normal subgroup of a quotient of $\Mon_k(g_j)$. 
\end{enumerate}
\end{lem}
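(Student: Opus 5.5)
Fix roots $x,y$ of $f(X)-t$, $g(X)-t$, with common Galois closure $\Omega$ from Lemma \ref{lem:friedarg}, and write $G=\Gal(\Omega/k(t))$. We regard $\Omega_i$ and $\Omega'_j$ as the fixed fields of the kernels $G\to\Mon_k(\hat f_i)$ and $G\to\Mon_k(\hat g_j)$.

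The proof of part \ref{case:galcl_containment_1} starts with the socle claim. Put $N_i:=\Gal(\Omega_i/\Omega_{i-1})$, the block kernel of $\Mon_k(\hat f_i)\to\Mon_k(\hat f_{i-1})$. By Lemma \ref{lem:subdirect}, $N_i$ is a subdirect power of its image $\oline N_i\trianglelefteq\Gamma_i$. Since $N_i$ is nonsolvable, $\oline N_i\neq 1$, so $\oline N_i\supseteq\soc(\Gamma_i)=:S^e$ because the nonabelian minimal normal subgroup is unique. Moreover, the solvable radical of $\oline N_i$ centralizes $\soc(\Gamma_i)$ and is therefore trivial in the cases at hand. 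I expect to need \cite[Prop.\ 3.3]{KNR24} here, together with the right-uniqueness of $f_i$, to conclude that $\soc(N_i)$ is a power $\soc(\Gamma_i)^m$ and is the unique minimal normal subgroup of $\Mon_k(\hat f_i)$ contained in $N_i$. This is the same use made in the proof of Proposition \ref{lem:norittstepp}\ref{case:norittstepp_5}.

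For the containment $\Omega_i\subseteq\Omega'_j$, the maximality of $j$ means that $\Omega/\Omega'_j$ is a tower of solvable extensions, so $\Gal(\Omega/\Omega'_j)$ is solvable. Its image in $\Mon_k(\hat f_i)=\Gal(\Omega_i/k(t))$ is $\Gal(\Omega_i/\Omega_i\cap\Omega'_j)$, a solvable normal subgroup. Suppose it is nontrivial. Then it contains a minimal normal subgroup of $\Mon_k(\hat f_i)$, which is abelian. The key step, and the main obstacle, is to show that every nontrivial normal subgroup of $\Mon_k(\hat f_i)$ meets $N_i$ nontrivially, so that it contains $\soc(N_i)$, which is nonabelian, giving a contradiction. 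I will try to obtain this from the uniqueness of the minimal normal subgroup from \cite[Prop.\ 3.3]{KNR24}, applied to $\hat f_i=\hat f_{i-1}\circ f_i$ with $f_i$ right-unique. If that result yields it only for $N_i$, I would fall back on minimal reducibility. In that route I would intersect with $\Omega_{i-1}$ and use Remark \ref{rem:genFriarg} to show that a nontrivial solvable normal subgroup outside $N_i$ would allow replacing $f$ by a proper left factor while keeping the pair non-linearly-disjoint.

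For part \ref{case:galcl_containment_2}, by \ref{case:galcl_containment_1} we have $\Omega_{i}\subseteq\Omega'_j$, so $\soc(N_i)$ is a quotient of $\Gal(\Omega'_j/\Omega_{i-1}\Omega'_{j-1}\cap\cdots)$. Concretely, I would consider $\Gal(\Omega_i\Omega'_{j-1}/\Omega_{i-1}\Omega'_{j-1})$. This is a normal subgroup of $N_i$, and it is a subquotient of $\Gal(\Omega'_j/\Omega'_{j-1})$, which embeds in a power of $\Mon_k(g_j)$. To see that it still contains all of $\soc(N_i)$, I would argue that $\Omega_i\cap\Omega'_{j-1}\Omega_{i-1}$ cannot contain part of $\soc(N_i)$. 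Since $f$ is a polynomial, the inertia at $\infty$ argument of Lemma \ref{lem:transitive-normal} applies: infinity is unramified in $\Omega'_{j-1}\Omega_{i-1}$ over $k(\hat f_{i-1}(x))$ by Remark \ref{rem:abh}(1). This gives a normal subgroup of $N_i$ containing a cyclic transitive subgroup on the block, hence it contains $\soc$. Finally, project to a single component $\Mon_k(g_j)$ and use the fact that $S$ is nonabelian simple, so that some coordinate projection restricts injectively on a minimal normal piece. This embeds $\soc(N_i)$ as a normal subgroup of a quotient of $\Mon_k(g_j)$.
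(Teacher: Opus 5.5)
Your part \ref{case:galcl_containment_1} matches the paper, provided you use \cite[Prop.\ 3.3]{KNR24} at full strength. It says that $K:=\soc(\Gal(\Omega_i/\Omega_{i-1}))\cong\soc(\Gamma_i)^m$ is the unique minimal normal subgroup of all of $\Mon_k(\hat f_i)$. Hence every nontrivial normal subgroup contains $K$, in particular $\Gal(\Omega_i/\Omega_i\cap\Omega'_j)$ if it is nontrivial, and no fallback is needed.

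Part \ref{case:galcl_containment_2}, however, has a genuine gap in its final step. The content of \ref{case:galcl_containment_2} is that all $m$ simple factors of $K\cong S^m$ embed into a \emph{single} copy of $\Mon_k(g_j)$; this is exactly what forces $\deg\hat f_{i-1}=1$ in Corollary \ref{cor:only_one_nonsolv}. Your group $\Gal(\Omega_i\Omega'_{j-1}/\Omega_{i-1}\Omega'_{j-1})$ is only a subquotient of $\Gal(\Omega'_j/\Omega'_{j-1})\le\Mon_k(g_j)^{\deg\hat g_{j-1}}$. ``Some coordinate projection is injective on a minimal normal piece'' yields at best one factor $S$ inside one component. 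Nothing in your argument prevents the $m$ factors of $K$ from being spread over different components and permuted by $\Mon_k(g)$. Your argument for \ref{case:galcl_containment_2} also uses nothing beyond the common Galois closure, whereas the statement genuinely needs the linear disjointness supplied by minimal reducibility.

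The missing idea is to pass to the block stabilizer $U=\Gal(\Omega/k(y_{j-1}))$ and its restriction $U_i=\Gal(\Omega_i(y_{j-1})/k(y_{j-1}))\le\Mon_k(\hat f_i)$. Here $y_j$ is a root of $\hat g_j(X)-t$, $y_{j-1}=g_j(y_j)$, $x_i$ is a root of $\hat f_i(X)-t$, and $x_{i-1}=f_i(x_i)$.

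First, $\Omega_i\not\subseteq\Omega'_{j-1}$, since $\Gal(\Omega/\Omega'_{j-1})$ is nonsolvable while $\Gal(\Omega/\Omega_i)$ is solvable. Hence $k(y_{j-1})\cap\Omega_i\subseteq\Omega'_{j-1}\cap\Omega_i\subseteq\Omega_i^K$, so $K\le U_i$.

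By minimal reducibility, $k(y_{j-1})$ and $k(x)$ are linearly disjoint over $k(t)$. So $U_i$ is the monodromy group of the tower $k(y_{j-1},x_i)\supseteq k(y_{j-1},x_{i-1})\supseteq k(y_{j-1})$. Its top step has a primitive almost simple group containing $\soc(\Gamma_i)$; this is where $f\in k[X]$ is used. Then \cite[Lemma 3.1, Cor.\ 3.4]{KN} give that $K$ is a minimal normal subgroup of $U_i$. It is the unique one, because its centralizer in $\Mon_k(\hat f_i)$ is trivial.

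Now let $\pi$ be the action of $U$ on the roots of $g_j(X)-y_{j-1}$, and let $N=\Gal(\Omega/\Omega_i(y_{j-1}))$, which is solvable. The induced map $U_i\cong U/N\to\Mon_k(g_j)/\pi(N)$ is injective on $K$ as soon as it is nontrivial on $K$. Nontriviality follows from a solvability argument.

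A smaller point: your claim that $\infty$ is unramified in $\Omega_{i-1}\Omega'_{j-1}$ over $k(\hat f_{i-1}(x))$ does not follow from Remark \ref{rem:abh}(1), which concerns extensions of equal degree. A priori it would need something like $\deg\hat g_{j-1}\mid\deg\hat f_{i-1}$. The conclusion you want from it, $K\le\Gal(\Omega_i/\Omega_i\cap\Omega_{i-1}\Omega'_{j-1})$, follows instead from $\Omega_i\cap\Omega'_{j-1}\subseteq\Omega_i^K$ together with Dedekind's modular law, which gives $\Omega_i\cap\Omega_{i-1}\Omega'_{j-1}=\Omega_{i-1}(\Omega_i\cap\Omega'_{j-1})$.
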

\begin{rem}\label{rem:addendum}
1) By Lemma \ref{lem:friedarg}, the minimal reducibility of $f,g$ implies $f(X)-t$ and $g(X)-t$ have the same splitting field $\Omega$ over $k(t)$. Beyond this implication, the assumption of reducibility is unnecessary for  Assertion \ref{case:galcl_containment_1} of Lemma \ref{lem:galcl_containment}. \\
2) 
We  shall in fact see  that $j$ is the minimal index for which $\Omega_j'\supseteq \Omega_i$. \\
3) The assumption that $f$ is a polynomial is only  used  to guarantee that $\soc(\Gamma_i)$ is simple and primitive, by Proposition \ref{prop:indec_poly}. 
\end{rem}  

\begin{proof}[Proof of Lemma \ref{lem:galcl_containment}]
Let $\Omega$ be the splitting field of $f(X)-t$ and $g(X)-t$ over $k(t)$, as in Remark \ref{rem:addendum}. Let $\hat K:=\Gal(\Omega_i/\Omega_{i-1}) = \ker(\Mon_k(\hat f_i)\to\Mon_k(\hat f_{i-1}))$, $\hat{K}_g:=\Gal(\Omega'_j/\Omega'_{j-1})$, and $K:=\soc(\hat K)$. \\ 
(1) Since $\hat K \neq 1$ and $f_i$ is a right-unique factor of $\hat f_i$, it follows from \cite[Proposition 3.3]{KNR24} that $K$ is the unique minimal normal subgroup of $\Mon_k(\widehat{f}_i)$, and that $K\cong \soc(\Gamma_i)^m$ for some $m \geq 1$. Moreover, it is contained as a  subdirect power in $\soc(\Gamma_i)^{d_{i}}$, where $d_{i}=\deg(\hat f_{i-1})$.

Assume on the contrary that $\Omega'_{j}$ does not contain $\Omega_{i}$. Since $K$ is the unique minimal normal subgroup of $\Mon_k(\hat{f}_i)$, it follows that $\Omega'_{j} \cap \Omega_{i} \subseteq \Omega_{i}^{K}$. As $\soc(\Gamma_i)$, and hence $K$, is nonsolvable, $\Gal(\Omega_{i}/\Omega'_{j} \cap \Omega_{i} )$ is also nonsolvable. On the other hand, this group is isomorphic to the quotient $\Gal(\Omega_i\Omega_j'/\Omega_j')$ of $\Gal(\Omega/\Omega'_{j})$, which is solvable, a contradiction. Therefore $ \Omega'_{j}\supseteq \Omega_{i}$.
We moreover note that $\Omega'_{j-1}$ does not contain $\Omega_{i}$ as in Remark \ref{rem:addendum}. Indeed, since $\hat{K}_g$ is nonsolvable, ${\rm Gal}(\Omega/\Omega'_{j-1})$ is nonsolvable as well. As $\textrm{Gal}(\Omega/\Omega_{i})$ is solvable, it follows that $\Omega'_{j-1}$ cannot contain $\Omega_{i}$. \\
(2) Identifying $U_i:=\textrm{Gal}(\Omega_{i}(y_{j-1})/k(y_{j-1}))$ with a subgroup of $\Mon_k(\hat f_i)$ via restriction below, we claim that $K$ is contained in $U_i$ as its unique minimal normal subgroup. 
For that, let $x$ be a root of $f(X)-t$, let $x_i$ (resp., $y_j$) be a root of $\widehat{f}_{i}(X)-t$ (resp., $\widehat{g}_j(X)-t$), and set $x_{i-1}=f_i(x_i)$ (resp., $y_{j-1}=g_j(y_j)$). Moreover, define 
$$
U=\textrm{Gal}(\Omega/k(y_{j-1}))\text{ and }U_{i-1}:=\textrm{Gal}(\Omega_{i-1}(y_{j-1})/k(y_{j-1}))
.$$ By minimal reducibility of $f$ and $g$, the extensions $k(y_{j-1})/k(t)$ and $k(x)/k(t)$ are linearly disjoint. The same holds a fortiori for $k(y_{j-1})/k(t)$ with $k(x_i)/k(t)$ and with $k(x_{i-1})/k(t)$. Consequently, $U$, $U_i$ and $U_{i-1}$ are the Galois groups of the Galois closures of  $k(y_{j-1},x)/k(y_{j-1})$, $k(y_{j-1},x_i)/k(y_{j-1})$, and $k(y_{j-1},x_{i-1})/k(y_{j-1})$, respectively, by Remark \ref{rem:Gal-closure}. Under restriction, $U_i$ identifies with $ \Gal(\Omega_{i}/k(y_{j-1}) \cap \Omega_{i}) \leq \Mon_k(\widehat{f}_i)=\textrm{Gal}(\Omega_{i}/k(t))$. 
Note that via this identifcation $K\leq U_i$ since $k(y_{j-1})\cap \Omega_{i} \subseteq \Omega'_{j-1}\cap \Omega_{i} \subseteq \Omega_{i}^K$, where  the last inclusion follows from the minimal normality of $K$ and since $\Omega_i \not\subseteq\Omega'_{j-1}$. 
Moreover, $U_{i-1}$ restricts to a subgroup of $\Mon_k(\widehat{f}_{i-1}) = \textrm{Gal}(\Omega_{i-1}/k(t))$, and since $\Omega_{i-1}\subseteq \Omega_{i}^K$, we deduce that $K$ in contained in $K_U:= \ker(U_i\to U_{i-1})$. 

We  now use \cite{KN} to show that $K=\soc(K_U)=U_i\cap \soc(\Gamma_i)^{d_{i}}$ 
is a minimal normal subgroup of $U_i$. 
Since $K\subseteq K_U$ and the image of $K$ in each of the $\Gamma_i$-components is $\soc(\Gamma_i)$, 
the Galois group $\Gamma_U$ of the Galois closure of $k(x_i,y_{j-1})/k(x_{i-1},y_{j-1})$ (which is also the image in $\Gamma_i$ of a block stabilizer)  contains the image of $K_U$ in $\Gamma_i$, and hence contains  $\soc(\Gamma_i)$. Since $\soc(\Gamma_i)$ is primitive and simple, $\Gamma_U$ is primitive almost-simple. Thus, viewing $U_i$ as a subgroup of $\Gamma_U\wr U_{i-1}$,   Lemma 3.1 and Corollary 3.4 of \cite{KN} imply that $\soc(K_U)=U_i\cap \soc(\Gamma_i)^{d_{i}}$ is a minimal normal subgroup of $U_i$. Since $K\leq U_i\cap \soc(\Gamma_i)^{d_{i}}$ is normal in $U_i$, we  deduce that $K=\soc(K_U)$ is a minimal normal subgroup of $U_i$. Since $K$ is the unique  minimal normal subgroup of $\Mon_k(\hat f_i)$, its centralizer is trivial. Thus,  there is no other minimal normal subgroup of $U_i$, proving the claim.

To  deduce (2), consider the image of $\pi:U\to \Gamma'_j$, that is, the image of the action of $U$ on the $k(y_{j-1})$-conjugates of $y_j$. 
Let $\tilde K:=\Gal(\Omega/\Omega_i^K(y_{j-1}))\leq U$ be the preimage of $K\leq U_i$ and $N:=\Gal(\Omega/\Omega_i(y_{j-1}))$, so that $N\lhd U$ is solvable and $\tilde K/N\cong K$. 
Since $N$ is contained in the kernel of  the map $U\to \Gamma_j'/\pi(N)$ and $U_i\cong U/N$, 
we obtain an  induced map $\tilde \pi:U_i\to \Gamma_j'/\pi(N)$. 
We claim that $\tilde\pi(K)\neq 1$.  
Assuming the claim,  since  $K$ is the unique minimal normal subgroup of $U_i$, we deduce that $K$ maps isomorphically onto $\tilde\pi(K)\cong \pi(\tilde K)/\pi(N)$. Thus,  $\pi(\tilde K)/\pi(N)$ is the desired normal subgroup of $\Gamma_j'/\pi(N)$, yielding  (2).

To show the remaining claim $\tilde\pi(K)\neq 1$, assume on the contrary $\pi(\tilde K)=\pi(N)$. Since $N$ is solvable, so is $\pi(N)=\pi(\tilde K)$. In particular, the image of the action of $\tilde K$ on conjugates of $y_{j}$ is solvable and hence $\Omega_i\Omega_{j-1}'/\Omega_i^K\Omega_{j-1}'$ has a solvable Galois group. 
On the other hand since $\Omega_{j-1}'\cap \Omega_i\subseteq \Omega_i^K$, the Galois group of $\Omega_i\Omega_{j-1}'/\Omega_i^K\Omega_{j-1}'$ identifies with $K$ via restriction and hence is nonsolvable, a contradiction. 
\end{proof}
The assumptions of Lemma \ref{lem:galcl_containment} can be guaranteed as follows. 
\begin{rem}\label{rem:fury}
1) If $f$ is a polynomial, then a nonsolvable monodromy group $\Gamma_i=\Mon_k(f_i)$ has to be almost-simple by Proposition \ref{prop:indec_poly}. 
Moreover the right-uniqueness assumption on $f_i$ may be assumed to hold up to performing Ritt moves.\\
2) Similarly, if $f$ is a Siegel function with nonsolvable monodromy group such that $f$ does not factor through an indecomposable with {\it affine} nonsolvable monodromy, then $\Gamma_i=\Mon_k(f_i)$ has a unique nonabelian minimal normal subgroup by Proposition \ref{prop:siegel_indec}. The right-uniqueness assumption may be assumed to hold via performing Ritt moves, using the generalization \cite[Theorem 1.1]{Pak5} of Ritt's theorem to Siegel functions. 
\end{rem}

Lemma \ref{lem:galcl_containment} has the following surprising consequence.

\begin{cor}
\label{cor:only_one_nonsolv}
Assume that $f,g\in k[X]$ form a minimally reducible pair. Then there exists a decomposition $f=f_1\circ h$ of $f$, where $f_1\in k[X]$ is indecomposable and $\Mon_k(h)$ is solvable.
\end{cor}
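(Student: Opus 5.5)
If $\Mon_k(f)$ is solvable, we take $f_1$ to be the leftmost factor of any complete decomposition, and $h$ the rest; then $\Mon_k(h)$ is a section of the solvable group $\Mon_k(f)$ and there is nothing to prove. So assume $\Mon_k(f)$ (equivalently $\Mon_k(g)$, since both have the common Galois closure $\Omega$ of Lemma \ref{lem:friedarg}) is nonsolvable.

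Fix complete decompositions $f=f_1\circ\cdots\circ f_r$ and $g=g_1\circ\cdots\circ g_s$. By Remark \ref{rem:fury}(1), after Ritt moves we may assume the conditions of Lemma \ref{lem:galcl_containment}: $\Gamma_i=\Mon_k(f_i)$ is almost simple with primitive simple socle, and $f_i$ is right-unique in $\hat f_i$, where $i$ is the maximal index with $\Gal(\Omega_i/\Omega_{i-1})$ nonsolvable. The same holds for $g$ with maximal index $j$. The goal is to show $i=1$. Then $\Gal(\Omega/\Omega_1)$ is an extension of solvable groups $\Gal(\Omega_l/\Omega_{l-1})$, $l\ge 2$, hence solvable. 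Since $\Mon_k(f_2\circ\cdots\circ f_r)$ is the image of the block stabilizer, whose kernel-part in $\Mon_k(f)$ is governed by this solvable tower and $\Mon_k(f_1)$, one still needs $\Mon_k(h)$ solvable. The cleaner route is to note that $\Mon_k(h)$ is a quotient of the stabilizer in $\Gal(\Omega/k(t))$ of a root $x_1$ of $f_1(X)-t$, that is, of $\Gal(\Omega/k(x_1))$, and that $\Gal(\Omega/k(x_1))$ is an extension of $\Gal(\Omega_1/k(x_1))\le\Gamma_1$ by a solvable group. I will therefore aim instead at the stronger statement that $\Omega_1\supseteq$ everything nonsolvable, and if necessary redo the argument over base field $k(x_1)$ inductively on $r$.

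The core step is the symmetric application of Lemma \ref{lem:galcl_containment}. Part (1) gives $\Omega'_j\supseteq\Omega_i$, and symmetrically $\Omega_i\supseteq\Omega'_j$, so $\Omega_i=\Omega'_j$. Remark \ref{rem:addendum}(2) then says $j$ is minimal with $\Omega'_j\supseteq\Omega_i$. Part (2) embeds $\soc(\Gal(\Omega_i/\Omega_{i-1}))\cong\soc(\Gamma_i)^m$ as a normal subgroup of a quotient of $\Gamma'_j$, which is almost simple. Hence $m=1$, and $\soc(\Gal(\Omega_i/\Omega_{i-1}))$ is a diagonal subdirect power of $\soc(\Gamma_i)^{\deg \hat f_{i-1}}$. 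Compare this with Lemma \ref{lem:blocknontriv}: the block kernel $\ker(\Mon_k(\hat f_i)\to\Mon_k(\hat f_{i-1}))$ contains an element of order $\deg f_i$, lying in the inertia at $\infty$. Applying Lemma \ref{lem:transitive-normal} with $M=\Omega_{i-1}$ (unramified at $\infty$ over $k(\hat f_{i-1}(x))$), the kernel acting on a block contains a transitive cyclic group, which is compatible with diagonality. So I need a finer contradiction when $i\ge 2$. Following the template of Theorem \ref{thm:diagkern}, the plan is to show that if $i\ge2$, then $\hat f_{i-1}$ and $\hat g_{j-1}$ are already reducible with each other. Indeed, $\Omega_i=\Omega'_j$ and the diagonality should force $\Omega_{i-1}$ and $\Omega'_{j-1}$ both to equal the fixed field of the unique minimal normal subgroup $K$ (each is the fixed field of a normal subgroup containing $K$ with almost-simple quotient structure). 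This contradicts minimal reducibility via Lemma \ref{lem:friedarg}, because a minimally reducible pair cannot have proper left factors sharing Galois closures in a way making them non-disjoint.

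I expect the last step to be the main obstacle: turning the equality $\Omega_{i-1}=\Omega'_{j-1}$ together with the diagonal $K$ into genuine non-linear-disjointness of $k(\hat f_{i-1}(x))$ and $k(\hat g_{j-1}(y))$. My first attempt would use that $\Gal(\Omega_i/k(y_{j-1}))=U_i$ has $K$ as its unique minimal normal subgroup (established in the proof of part (2)). With $K$ diagonal, the stabilizer in $U_i$ of a block of $\hat f_i$ is then forced to act intransitively on the $\hat f_{i-1}$-blocks, giving the required intransitivity and hence the contradiction, so $i=1$.
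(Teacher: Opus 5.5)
Your reduction to Lemma~\ref{lem:galcl_containment} and your deduction that $m=1$, i.e.\ that the socle $K$ of $\hat K=\Gal(\Omega_i/\Omega_{i-1})$ is diagonal, are fine and agree with the paper. The gap is the passage from $m=1$ to $i=1$.

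The paper closes it with a largeness result you never invoke. Because $f_i$ is right-unique in $\hat f_i$, \cite[Corollary 4.4]{KNR24} shows that $K$ is the \emph{full} power $\soc(\Gamma_i)^{\deg(\hat f_{i-1})}$. Hence $m=\deg(\hat f_{i-1})$, and $m=1$ immediately forces $\deg(\hat f_{i-1})=1$.

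Without such an input, diagonality alone gives no contradiction, as you suspected. It genuinely occurs when right-uniqueness fails. Take $a,b\in A_5$ of orders $2,3$ with $ab$ of order $5$, and let $c$ generate $C_2$. Then $(a,c),(b,1),((ab)^{-1},c)$ is a genus-$0$ polynomial branch cycle tuple for $A_5\times C_2\le S_{10}$. It gives a degree-$10$ polynomial $\hat f_1\circ f_2$ with $\deg(f_2)=5$, $\Mon(f_2)=A_5$, and diagonal block kernel $A_5\le A_5^2$. Here $f_2$ is not right-unique, since $f$ also has a quadratic right factor. So any substitute argument must use right-uniqueness essentially, whereas yours uses it only to enter Lemma~\ref{lem:galcl_containment}.

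The substitute you sketch also fails on its own terms:
\begin{itemize}
\item Nothing forces $\Omega_{i-1}=\Omega_i^{K}$. By Remark~\ref{rem:diag}, $\hat K$ is a diagonal copy of a subgroup of $\Mon_k(f_i)$ containing $\soc(\Gamma_i)$, which may be strictly larger than $K$. Nothing at all is established about $\Omega'_{j-1}$.
\item Even a common Galois closure for $\hat f_{i-1}$ and $\hat g_{j-1}$ would not make them non-disjoint. In an $S_5$-extension, the fixed fields of $S_4$ and of $\AGL_1(5)$ share the whole extension as Galois closure, yet have coprime degrees $5$ and $6$ and are therefore disjoint.
\item Your final intransitivity statement concerns the wrong group. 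A block stabilizer trivially fixes its own block. Reducibility of $(\hat f_{i-1},\hat g_{j-1})$ instead needs $U_i$ itself to be intransitive on the roots of $\hat f_{i-1}(X)-t$. That is exactly the negation of the transitivity that minimal reducibility gives, and you give no reason why diagonality would force it.
\end{itemize}

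A minor point: once $i=1$ is known, solvability of $\Mon_k(h)$ is immediate. For $l\ge 2$, suppose $\Mon_k(f_l)$ were nonsolvable, hence almost simple. The image of $\Gal(\Omega_l/\Omega_{l-1})$ in $\Mon_k(f_l)$ is a nontrivial normal subgroup by Lemma~\ref{lem:blocknontriv}. It therefore contains the socle, so $\Gal(\Omega_l/\Omega_{l-1})$ would be nonsolvable, contradicting $i=1$. Thus every $\Mon_k(f_l)$, $l\ge 2$, is solvable, and $\Mon_k(h)$ embeds in their iterated wreath product. Your proposed detour through $\Gal(\Omega/k(x_1))$ would not work, since point stabilizers of $\Mon_k(f_1)$ need not be solvable.
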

\begin{proof}
We may assume that $\Mon_k(f)$ is nonsolvable. Write $f=f_1\circ\dots\circ f_r\in k[X]$ and $g=g_1\circ \dots\circ g_s$ with all $f_i$ and $g_j\in k[X]$ indecomposable. Let $i\in \{1,\dots, r\}$ (resp., $j\in \{1,\dots, s\}$) be maximal such that $\Mon_k(f_i)$ (resp., $\Mon_k(g_j)$) is nonsolvable. Set $\widehat{f}_i=f_1\circ\dots\circ f_i$ and $\widehat{g}_j = g_1\circ\dots\circ g_j$. After performing Ritt moves if necessary, we may assume that $f_i$ is a right-unique factor of $\widehat{f}_i$ as in Remark \ref{rem:fury}. 
It then follows from \cite[Corollary 4.4]{KNR24} that $\ker(\Mon_k(\hat f_i)\to\Mon_k(\hat f_{i-1}))$ has socle $K\cong \Gamma_i^{\deg(\hat f_{i-1})}$, where $\Gamma_i:=\soc(\Mon_k(f_i))$ is a nonabelian simple group, cf.\ Proposition \ref{prop:indec_poly}.

On the other hand, by Lemma \ref{lem:galcl_containment}, $K$ maps isomorphically to a normal subgroup of a quotient of $\Mon_k(g_j)$. Since $\Mon_k(g_j)$ is nonsolvable and hence almost-simple by Proposition \ref{prop:indec_poly}, it follows  that $\deg(\hat f_{i-1})=1$, and hence that $i=1$.
\end{proof}
Finally, we combine Corollary \ref{cor:only_one_nonsolv} with the classification of indecomposable pairs $f_1,g_1$ (and hence the CFSG) to deduce Theorem \ref{thm:DLS}. 
\begin{proof}[Proof of Theorem \ref{thm:DLS}]  
By possibly replacing $f$ and $g$ by left factors of theirs, we may assume $f,g$ form a minimally reducible pair and hence admit the same Galois closure by Lemma \ref{lem:friedarg} and hence $\Mon_k(f)\cong \Mon_k(g)$ as abstract groups. 
If $\Mon_k(f)$ is solvable the conclusion follows from  Theorem \ref{thm:poly-solvable}, henceforth assume it   is nonsolvable. 
By Corollary \ref{cor:only_one_nonsolv},
we then have decompositions $f=f_1\circ h_1$, $g=g_1\circ h_2$ for {solvable $\Mon_k(h_1),\Mon_k(h_2)$, and} indecomposable $f_1,g_1$ with  $\Mon_k(f_1), \Mon_k(g_1)$ nonsolvable, and thus almost-simple by Proposition \ref{prop:indec_poly}.

By a symmetric application of Lemma \ref{lem:galcl_containment}, the splitting field of $f_1(X)-t$ equals that of $g_1(X)-t$. Thus $\Mon_k(f_1)\cong \Mon_k(g_1)$ is an almost-simple group admitting a cyclic transitive subgroup in its action on the roots of $f_1(X)-t$, as well as in its action on the roots of $g_1(X)-t$.  Thus,  \cite[Theorem]{Mul2} shows that the actions on roots of $f_1(X)-t$ and of $g_1(X)-t$, respectively, are permutation-equivalent, but are not equivalent (since $f_1,g_1$ have no common left composition factor by minimal reducibility). In particular, every element fixing a point in one of the two actions also fixes a point in the other action. Thus, by Burnside's lemma, the stabilizer of a root of $g_1(X)-t$ acts intransitively on the roots of $f_1(X)-t$. Thus, $f_1(X)-g_1(Y)\in k[X,Y]$ is  reducible, and by the minimal reducibility assumption,  $h_1,h_2$ are of degree $1$ and  $f,g$ are indecomposable. 

Moreover for pairs $f_1,g_1$ as above, \cite{Mul2}  gives  the  full list of possible degrees $\deg(f_1)$ and monodromy groups $\Mon_k(f_1)$, namely,  
$\deg(f_1)=7,11,13,15,21,31$ and $\Mon_k(f_1) = \PSL_3(2), \PSL_2(11), \PSL_3(3), \PSL_4(2), {\rm P\Gamma L}_4(3)$, and $\PSL_5(2)$, respectively. 
The corresponding polynomials $f_1,g_1$ were shown to be linearly related over $\oline k$ to those in \cite[\S 5]{CC}.
\end{proof}
\begin{rem}\label{rem:CFSG} The CFSG was not used in the proof of Corollary \ref{cor:only_one_nonsolv} but only in the last step of Theorem \ref{thm:DLS}, where the monodromy of the indecomposable factors $f_1,g_1$ was needed. 
\end{rem}
\subsection{Proof of Theorem \ref{thm:Hilbert}}\label{sec:siegel}
By \cite[Corollary\ 2.5]{KN}, $\textrm{Red}_f(\mathbb{Z})$ is the union of {a finite set and} value sets $g(\mathbb{Q})\cap \mathbb{Z}$ of Siegel functions $g$ over $\mathbb{Q}$ such that the fiber product of $f:\mP^1_\mQ\to\mP^1_\mQ$ and $g:\mP^1_\mQ\to\mP^1_\mQ$ is reducible.  
Equivalently, $\textrm{Red}_f(\mathbb{Z})$ is the union of a finite set and value sets $\tilde{g}(\mathbb{Q})\cap \mathbb{Z}$ of Siegel functions $\tilde{g}$ over $\mathbb{Q}$ such that $\tilde{f}$ and $\tilde{g}$ form a  {\it minimally} reducible pair, for some left factor $\tilde{f}$ of $f$. 
The case of solvable $\Mon_{\mathbb{Q}}(\tilde{f})$ is immediate from Theorem \ref{thm:solv}. To deal with the nonsolvable case, we begin with the following generalization of Corollary \ref{cor:only_one_nonsolv}. It requires the classification of indecomposable Siegel functions from \cite{Mul3} and hence the CFSG.

\subsubsection*{Setup} Throughout the proof, we use the notation of Section \ref{sec:DLS}.  We write $f=f_1\circ\dots\circ f_r$ and $g=g_1\circ \dots \circ g_s$ for indecomposable $f_1,\ldots,f_r\in k[X]$ and $g_1,\dots, g_s\in k(X)$ over a number field $k$. Write $\hat g_j=g_1\circ \dots\circ g_j$ for $j=1,\ldots,s$.  Let $j\in \{1,\dots, s\}$ be maximal such that $\ker(\Mon_{k}(\widehat g_j)\to \Mon_{k}(\widehat g_{j-1}))$ is nonsolvable. Without loss of generality, we may assume that $g_j$ is a right-unique factor of $\hat{g}_j$ via performing Ritt moves (\cite[Theorem 1.1]{Pak5}). In the part where the argument is specific to $\mQ$, we shall assume $k=\mQ$.

We next show $\Mon_{k}(f)$ contains only the single nonabelian composition factor $\soc(\Mon_{k}(f_1))$. In particular there is only a single  index $j$ as above.
\begin{lem}
\label{lem:only_one_nonsolv2}
Assume that a polynomial $f \in k[X]$ and a Siegel function $g\in k(X)$ form a minimally reducible pair. Then there exists a decomposition $f=f_1\circ h$ with  indecomposable $f_1\in k[X]$ and some $h\in k[X]$ of solvable monodromy. 
\end{lem}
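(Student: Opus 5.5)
We mirror the proof of Corollary~\ref{cor:only_one_nonsolv}, now with $g$ only a Siegel function. We may assume $\Mon_k(f)$ is nonsolvable; otherwise take $f_1$ to be any indecomposable left factor. Write $f=f_1\circ\cdots\circ f_r$ with indecomposable $f_i\in k[X]$ and let $i$ be maximal with $\Mon_k(f_i)$ nonsolvable. After Ritt moves we may assume $f_i$ is a right-unique factor of $\hat f_i$ (Remark~\ref{rem:fury}(1)). By Proposition~\ref{prop:indec_poly}, $\Mon_k(f_i)$ is almost-simple with simple primitive socle $\Gamma_i$. By \cite[Corollary 4.4]{KNR24}, the socle $K$ of $\ker(\Mon_k(\hat f_i)\to\Mon_k(\hat f_{i-1}))$ is then $\Gamma_i^{\deg(\hat f_{i-1})}$. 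The goal is to show $\deg(\hat f_{i-1})=1$. Then $i=1$, and $h:=f_2\circ\cdots\circ f_r$ has solvable monodromy by maximality of $i$.

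On the $g$ side we use the Setup: $j$ is maximal with $\ker(\Mon_k(\hat g_j)\to\Mon_k(\hat g_{j-1}))$ nonsolvable, and $g_j$ is right-unique in $\hat g_j$ (\cite[Theorem 1.1]{Pak5}). We apply Lemma~\ref{lem:galcl_containment} with the roles as stated, $f$ being the polynomial. The hypotheses on $f_i$ hold, so part~\ref{case:galcl_containment_2} applies. It shows that $K\cong\Gamma_i^{\deg(\hat f_{i-1})}$ maps isomorphically onto a normal subgroup of a quotient $\Mon_k(g_j)/\pi(N)$. Part~\ref{case:galcl_containment_1} only needs the common splitting field. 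Strictly speaking, part~(2) is stated assuming $\Mon_k(g_j)$ plays the role of an indecomposable with nonsolvable monodromy. Its proof only uses that $\Gamma_U$ is almost-simple primitive, which comes from $f$, so it should carry over verbatim to a Siegel function $g_j$.

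It remains to bound the nonabelian normal sections of quotients of $\Mon_k(g_j)$. By Proposition~\ref{prop:siegel_indec}, $\Mon_k(g_j)$ is affine, almost-simple, or contained in $(\Aut(S)\times\Aut(S))\rtimes C_2$ with a unique minimal normal subgroup. In the almost-simple case, a normal subgroup of a quotient isomorphic to $\Gamma_i^m$ forces $m=1$, since nonabelian composition factors appear at most once. In the product case at most two copies of $S$ appear, so $m\le 2$. In the affine case the only nonabelian composition factors come from the point stabilizer in $\GL(V)$. This case must be handled by invoking M\"uller's classification \cite{Mul3} of indecomposable Siegel functions (here CFSG enters). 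The classification shows that affine nonsolvable Siegel monodromy is very restricted, for example in low dimensions over small fields with a single nonabelian factor in the stabilizer.

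The main obstacle is to exclude $m=\deg(\hat f_{i-1})=2$ in the product-type case, and similarly any case where $\Mon_k(g_j)$ has two copies of $\Gamma_i$. Since $\hat f_{i-1}$ is a polynomial of degree $2$, $\Mon_k(\hat f_i)$ is then a subgroup of $\Mon_k(f_i)\wr C_2$ with socle $\Gamma_i^2$. I would compare degrees and ramification over $\infty$. The polynomial $f$ is totally ramified there. The Galois closure's inertia at $\infty$ must be cyclic of order $\deg(f)$ and must map into $\Mon_k(g_j)$ compatibly with $|g^{-1}(\infty)|\le 2$, using Lemma~\ref{lem:siegel_by_cyclic}. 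In the product action of degree $n^2$, a cyclic element in the $C_2$-coset needs order divisible by $2\deg(f_i)$. I would use this, together with M\"uller's list, to show that the two actions cannot share the same Galois closure, and that a genuine product-type $g_j$ would give a quotient with a normal $\Gamma_i^2$ incompatible with the block kernel $K$ being the full socle. Failing a clean argument, I would check the finitely many product-type cases in \cite{Mul3} directly.
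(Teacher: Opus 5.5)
Your reduction matches the paper's. With $K\cong\Gamma_i^{m}$ and $m=\deg(\hat f_{i-1})$, Lemma \ref{lem:galcl_containment}\ref{case:galcl_containment_2} embeds $K$ as a normal subgroup of a quotient of $\Mon_k(g_j)$. That lemma is already stated for arbitrary $g\in k(X)$, with only $f$ required to be a polynomial, so your worry about a Siegel $g_j$ is unnecessary. This embedding forces $m=1$ unless $\Mon_k(g_j)$ has two nonabelian composition factors. By \cite[Theorem 4.8]{Mul3}, that happens only for $\Mon_k(g_j)=S_n\wr C_2$ in product action on $n^2$ points.

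\textbf{The gap.} Excluding $m=2$ is the only new content of this lemma beyond Corollary \ref{cor:only_one_nonsolv}, and your proposal does not prove it. You list things you ``would'' try: an order divisible by $2\deg(f_i)$, an incompatibility with $K$ being the full socle, or a finite check of cases. A finite check cannot suffice, because the product-type case is an infinite family ($n\ge 5$). Nor can the quotient map from part (2) be used to compare ramification, since it carries no information about inertia at $\infty$.

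\textbf{The missing idea: work in a common Galois closure.}
\begin{enumerate}
\item Apply Lemma \ref{lem:galcl_containment}\ref{case:galcl_containment_1} in both directions. For $g$ this is allowed, since $\Mon_k(g_j)$ has a unique nonabelian minimal normal subgroup and $g_j$ is right-unique in $\hat g_j$. This shows that $(f_1\circ f_2)(X)-t$ and $\hat g_j(X)-t$ have the same splitting field.
\item Then $\Mon_k(\hat g_j)\cong\Mon_k(f_1\circ f_2)\le S_n\wr C_2$, while its quotient $\Mon_k(g_j)$ is already $S_n\wr C_2$. Hence $\hat g_{j-1}$ is linear and $j=1$.
\item Consider the inertia group at $t\mapsto\infty$ in this common Galois closure. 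Since $f_1\circ f_2$ is a polynomial of degree $2n$, it has order $2n$ by Remark \ref{rem:abh}(1).
\item The same inertia group is generated by an element of $S_n\wr C_2\le S_{n^2}$ with at most two cycles, because $|g_1^{-1}(\infty)|\le 2$. Such an element has a cycle of length at least $n^2/2$, and $n^2/2>2n$ for $n\ge 5$.
\end{enumerate}
Steps 3 and 4 contradict each other, which rules out $m=2$. This is the argument your sketch still needs to supply.
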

\begin{proof}
We use the notation of the above Setup. The proof of Corollary \ref{cor:only_one_nonsolv} applies if   $g_j$ has an almost-simple monodromy group, or more generally when $\Mon_k(g_j)$ has only one nonabelian composition factor counting with multiplicity. But by \cite[Theorem 4.8]{Mul3}, the only other case is $\Mon_k(g_j) = S_n\wr C_2$, in which case it suffices to rule out the possibility $\deg(\hat f_{i-1})=2$, i.e., $f=f_1\circ f_2\circ h$ where $\deg(f_1)=2$ and $\Mon_k(f_2)$ is almost-simple. Further assume  $g_j$ is a right-unique factor of $\hat g_j$ as in the setup above. 
Thus we may apply Lemma \ref{lem:galcl_containment}\ref{case:galcl_containment_1} symmetrically in $f,g$ to conclude that $\hat g_j(X)-t$ and $(f_1\circ f_2)(X)-t$ have the same splitting field. Therefore, $\Mon_k(\widehat{g_j})=\Mon_k(f_1\circ f_2)\le  S_n\wr C_2$ for $n:=\deg(f_2)$. Since $\Mon_k(g_j)$  is a subquotient of $\Mon(\hat g_j)$ and it is already isomorphic to $\Mon(\hat g_j)\cong S_n\wr C_2$, it follows that $\hat g_j=g_j$ and  $j=1$. We now compare inertia groups at $t\mapsto \infty$ for the maps $g_1$ and $f_1\circ f_2$. Since their Galois closures coincide, these inertia groups must have the same order; the latter one is obviously of order $2n$, but the former one is generated by an element with exactly two orbits in the primitive product-type group $S_n\wr C_2 \le S_{n^2}$. Since $n\ge 5$, we have $n^2 > 4n$, so that at least one of these two orbits is of length $>2n$,  contradicting that the order is $2n$.
\end{proof}

To conclude the proof of Theorem \ref{thm:Hilbert}, it therefore remains to classify minimally reducible pairs $f\in \mathbb{Q}[X]$, $g\in \mathbb{Q}(X)$  a Siegel function over $\mathbb{Q}$, where $f=f_1\circ h$ with $\Mon_{\mathbb{Q}}(f_1)$ almost-simple and $\Mon_{\mathbb{Q}}(h)$ solvable.  We divide the remainder of the proof according to the shape of $\Mon_{k}(g_j)$. Firstly if $|\hat g_j^{-1}(\infty)|=1$, {since $f_1(X)-t$ and $\hat{g}_j(X)-t$ have the same splitting field (by Lemma \ref{lem:galcl_containment}\ref{case:galcl_containment_1}),} the proof of Theorem \ref{thm:DLS} applies  and yields (over arbitrary number fields $k$) that $f,g$ must be indecomposable. On the other hand, $|\hat g_{j-1}^{-1}(\infty)|=1$ holds by Lemma \ref{lem:siegel_by_cyclic} and the nonsolvability of $\Mon_{k}(g_j)$. In total, we shall henceforth  assume: 
\begin{equation}\label{ass:g_j} 
\Mon_{k}(f_1)\text{ is almost-simple};  \Mon_{k}(h)\text{ is solvable};  |\hat g^{-1}_{j-1}(\infty)|=1;\text{ and }|\hat g_j^{-1}(\infty)|=2.
\end{equation}
We distinguish the cases of nonaffine and affine $\Mon_{\mathbb{Q}}(g_j)$. In the former case, we have a result over arbitrary number fields $k$:
\begin{lem}
\label{lem:poly_nonaffinesiegel}
Let $f = f_1\circ \dots\circ f_r\in k[X]$ and $g=g_1\circ\dots\circ g_s\in k(X)$ be a minimally reducible pair, where $g$ is a Siegel function with $|g^{-1}(\infty)|=2$.
If $\Mon_k(f)$ is nonsolvable and $g$ does not factor through an indecomposable with affine nonsolvable monodromy group, then one of the following holds:
\begin{enumerate}[leftmargin=*,label={(\arabic*)},ref=(\arabic*)]
\item $f$ is indecomposable, $\Mon_k(f)\in \{A_5,S_5\}$, and $\deg(g)=10$. More precisely, $f$ is one of the polynomials in \cite[(1.8)]{DF};
\label{case:poly_nonaffinesiegel_1}
\item $f$ is of composition length $2$, and more precisely $\Mon_k(f)$ is one of the groups $C_2\times \PSL_3(2), C_3\times A_5, C_3\rtimes S_5, C_2.P\Gamma L_2(9), C_2\times M_{11}$ of  Magma Id 14T17, 15T15, 15T21, 20T265,22T26, resp. 
\label{poly_nonaffinesiegel_2}
\end{enumerate}
Moreover, in 2), $g$ is not a Siegel function over $\mathbb{Q}$.
\end{lem}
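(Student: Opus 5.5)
Since $f$ is a polynomial with nonsolvable monodromy forming a minimally reducible pair with the Siegel function $g$, Lemma \ref{lem:only_one_nonsolv2} gives $f=f_1\circ h$ with $f_1$ indecomposable of almost-simple monodromy and $\Mon_k(h)$ solvable. With the notation of the Setup, $j$ is the maximal index with nonsolvable kernel, and $g_j$ is right-unique after Ritt moves. By Lemma \ref{lem:galcl_containment}\ref{case:galcl_containment_1}, applied symmetrically (Remark \ref{rem:fury}(2) supplies the hypotheses on $g_j$, since $g$ avoids affine nonsolvable factors), $f_1(X)-t$ and $\hat g_j(X)-t$ have the same splitting field. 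If $|\hat g_j^{-1}(\infty)|=1$, the proof of Theorem \ref{thm:DLS} applies and would make $f$, $g$ indecomposable polynomials, which contradicts $|g^{-1}(\infty)|=2$. So we may assume \eqref{ass:g_j}. The core comparison is then between $f_1$ and $\hat g_j$: they share a Galois closure whose group $A:=\Mon_k(f_1)$ is almost simple. By Lemma \ref{lem:siegel_by_cyclic} together with nonsolvability, $|\hat g_{j-1}^{-1}(\infty)|=1$. The same subquotient argument as in Lemma \ref{lem:only_one_nonsolv2} should force $j=1$: $\Mon_k(g_j)$ is nonsolvable and nonaffine, so by Proposition \ref{prop:siegel_indec} it has a unique minimal normal subgroup, and this group must be a subquotient of $A$.

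Next, compare the inertia groups at $\infty$. In $A$ acting on roots of $f_1(X)-t$, the inertia group is generated by an $n$-cycle, where $n=\deg f_1$. In the action on roots of $g_1(X)-t$, the same element has exactly two cycles. By Proposition \ref{prop:siegel_indec} and \cite{Mul3}, $A$ is then almost simple with two faithful actions, one containing an $n$-cycle and one containing an element with two cycles of lengths summing to $\deg g_1$, each dividing $n$. Müller's list of primitive groups with an $n$-cycle (\cite{Mul2}), intersected with the list of Siegel monodromy groups having two cycles at infinity (\cite{Mul3}, the nonaffine almost-simple cases), leaves $A\in\{A_5,S_5\}$ with $n=5$ and $\deg g_1=10$ (cycle type $(5,5)$ on unordered pairs). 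Reducibility follows because a point stabilizer of the degree-$5$ action is intransitive on the $10$ pairs. The remaining candidates (e.g.\ $\PSL_2(11)$ on $11$, $\PSL_3(2)$, $M_{11}$) fail because no second action has the required shape. Since $h$ is solvable and minimal reducibility passes to right factors, we next show $\deg h=1$ in the $A_5/S_5$ case, giving case (1). If $h$ is nontrivial, the argument should show the only possibility is a single extra factor of degree $2$ or $3$ producing one of the listed groups, giving case (2).

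The extension by $h$ is the step I expect to be the main obstacle. Here I would apply Theorem \ref{thm:diagkern} to the last factor $f_r$ (assuming $\deg f$ odd when that theorem requires it, and handling the even case through Remark \ref{rem:diagker}). This gives a diagonal kernel $K$, which makes $\Mon_k(f)$ a small extension of $A$: $C_p\times A$ or $C_p\rtimes A$ with $\deg(h)=p\in\{2,3\}$. By Lemma \ref{lem:2step_solv} and Lemma \ref{lem:3-factors}, any longer solvable tail would produce a non-diagonal kernel. The inertia comparison then has to be redone in the product group (an element of order $pn$ whose action on $g$-roots has two cycles), and this finite check yields exactly the listed Magma groups 14T17, 15T15, 15T21, 20T265, 22T26. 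Finally, for the claim over $\mathbb{Q}$ in case (2), I would check in each group that the two inertia cycles of $g$ at $\infty$ have distinct lengths, or lie over a non-conjugate split. Then the two preimages of $\infty$ cannot be $\mathbb{Q}$-conjugate, so $g$ is not Siegel over $\mathbb{Q}$.
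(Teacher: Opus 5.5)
Your case (1) analysis is in the spirit of the paper's, but the treatment of a nontrivial solvable part $h$ has a genuine gap: the proposal lacks the mechanism that controls $\deg(h)$.

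\textbf{The missing step.} The paper looks at the tail $w:=g_{j+1}\circ\cdots\circ g_s$ of $g$. Since $g=\hat g_j\circ w$ is Siegel and $|\hat g_j^{-1}(\infty)|=2$, Lemma \ref{lem:siegel_by_cyclic} makes $\Mon_{\oline k}(w)$ cyclic. Proposition \ref{lem:norittstepp}\ref{case:norittstepp_1} then gives $\deg(w)\in\{1,p,4\}$ with $p$ prime. Next, compare the ramification index at $\infty$ in $\Omega/\Omega'$, where $\Omega,\Omega'$ are the common splitting fields of $(f,g)$ and of $(f_1,\hat g_j)$. By Abhyankar's lemma it equals $\deg(f)/\deg(f_1)=\deg(h)$ on one side. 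On the other side it equals $\deg(w)$, because $w$ is totally ramified over both poles of $\hat g_j$. So $\deg(h)=\deg(w)$, and $r=2$ in the prime case.

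\textbf{Why your substitutes do not give this.}
\begin{enumerate}
\item Lemmas \ref{lem:2step_solv} and \ref{lem:3-factors} need $\Mon_k(f_{r-1})$, resp.\ $\Mon_k(f_{r-2}\circ f_{r-1})$, to be solvable. So, for example, $f=f_1\circ f_2\circ f_3$ with $\deg(f_2)=\deg(f_3)=3$ is not excluded: there the two-step bound only yields $3\mid |K|$. Nothing in your argument produces $\deg(h)\in\{2,3\}$.
\item Theorem \ref{thm:diagkern} assumes $\deg(f)$ odd when $g\notin k[X]$. Remark \ref{rem:diagker} concerns polynomial $g$, and Remark \ref{rem:limitations}(2) shows diagonality really can fail for Siegel $g$ when $\deg(f_r)\in\{2,4\}$. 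So your route cannot reach 14T17, 20T265 or 22T26, all of which have $\deg(f_2)=2$.
\end{enumerate}

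\textbf{How the paper finishes.}
\begin{enumerate}
\item Once $h=f_2$ has prime degree $p=\deg(g_s)$, diagonality comes from Proposition \ref{prop:intermediate}\ref{case:intermediate_1}. That result holds for every prime $\deg(f_r)$, including $2$, and for any $g\in k(X)$. It says $y$ generates $\Omega$ over $\Omega^{\soc(K)}\supseteq\Omega'$, while $[\Omega'(y):\Omega']\le p$. Hence $|\soc(K)|\le p$, and $K$ is diagonal by Remark \ref{rem:diag}.
\item Your ``finite check'' needs an actual finite list. The paper uses the classification \cite[Theorem 1.2, Table 1]{BKN} of $\Mon_k(f_1\circ f_2)$ with nonsolvable $f_1$ and diagonal kernel, and runs Magma over that table.
\item Your plan omits the case $\deg(w)=4$, where $\Mon_{\oline k}(w)=C_4$. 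The paper passes to $\oline k$, where Proposition \ref{lem:norittstepp}\ref{case:norittstepp_2} forbids minimal reducibility. So proper left factors $(\tilde f,\tilde g)$ are already reducible over $\oline k$, and fall under (1) or the even-degree part of (2). In those cases $\Mon_{\oline k}(\tilde f)$ is large enough in $\Mon_k(\tilde f)$ that reducibility descends to $k$, contradicting minimality.
\end{enumerate}

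\textbf{Two smaller points on case (1).}
\begin{enumerate}
\item The claim $j=1$ is unjustified but also unnecessary. The paper works with the possibly imprimitive action on the roots of $g=\hat g_j$ and bounds its degree by $2n$. It then uses \cite[Theorems 5.2.A and 5.2.B]{DM96} to cut down to M\"uller's finite list.
\item The other candidates are not excluded by the orbit shape alone. For instance, in $\PSL_3(2)$ acting on $8$ points, an element of order $7$ has orbits of lengths $7$ and $1$. This group is excluded only because the point stabilizer $S_4$ of the degree-$7$ action is transitive on those $8$ points, so the intransitivity check is essential.
\end{enumerate}
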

\begin{proof}
We use the Setup from the beginning of the section,  and following the above discussion assume \eqref{ass:g_j} holds. Since $\Mon_k(f)$ has only a single nonabelian composition factor, 
and since $\Mon_k(g_j)$ is assumed to be nonaffine, it follows that $\Mon_k(g_j)$ is almost-simple by Proposition \ref{prop:siegel_indec}. Moreover, $g_{j+1}\circ \dots\circ g_s$ has cyclic monodromy group over $\overline{k}$ by Lemma \ref{lem:siegel_by_cyclic}. Thus,  Proposition \ref{lem:norittstepp}\ref{case:norittstepp_1} implies that $g_{j+1}\circ \dots\circ g_s$ is either i) trivial (and thus $s=j$), or ii) of prime degree (and thus $s=j+1$), or iii) of degree $4$ (and thus $s=j+2$). 
Furthermore, since $f=f_1\circ h$ and $\Mon_k(h)$ is solvable, a symmetric application of Lemma \ref{lem:galcl_containment}\ref{case:galcl_containment_1} implies that  $f_1(X)-t$ and $\hat g_j(X)-t$ 
have the same splitting field $\Omega'$.

Case i) therefore implies that $f=f_1$ is indecomposable. Here, it remains to determine  the indecomposable polynomials whose monodromy group $G\le S_n$ is almost-simple and admits another faithful transitive permutation action (not factoring through an action permutation-equivalent to the first one) in which a cyclic transitive subgroup of $G$ is mapped to a cyclic group with exactly two orbits, and moreover a point stabilizer in one  action is intransitive in the other action. Since the ``two-orbit condition" in particular enforces the second action to be of degree $\le 2n$, \cite[Theorems 5.2.A and 5.2.B]{DM96} show that $G\in \{A_n,S_n\}$ is impossible as soon as $n>8$, whence the entire check reduces to the short finite list of cases by \cite[Theorem]{Mul2}. One directly verifies using Magma that the only possibilities for $(G,\tilde{G}, [a,b])$, where $\tilde{G}$ denotes the image in the second action and $[a,b]$ denotes the two orbit lengths, are $(A_5, A_5(10), [5,5])$, and $(S_5, S_5(10), [5,5])$, where $A_5(10),S_5(10)$ denote the degree-$10$ actions of $A_5,S_5$, resp. 
Such polynomials $f$ are shown in \cite[Theorem 1.1]{DF} to lie in the family \cite[(1.8)]{DF}. 

For Case ii), recall that $f(X)-t$ and $g(X)-t$ (resp., $f_1(X)-t$ and $\hat g_j(X)-t$) have the same splitting field $\Omega$ (resp., $\Omega'$). Thus, since $\Mon_{\overline{k}}(g_s)$ is cyclic of prime order $p$ by Lemma \ref{lem:siegel_by_cyclic}, and hence  the ramification index of $\Omega/\Omega'$ at $\infty$ is $\deg(f_{2}\circ \dots \circ f_r)=\deg(g_s)$ by Remark \ref{rem:abh} (Abhyankar's lemma). In particuar, $r=2$. Since $\Mon_k(f_2)\le \AGL_1(p)$ for a prime $p$, the group $K:=\textrm{Gal}(\Omega/\Omega')$ is a subdirect power of  a subgroup of $\AGL_1(p)$ containing $C_p$.
Let $y$ be a root of $g(X)-t$.
By Proposition \ref{prop:intermediate}, $y$ generates all of $\Omega$ over the fixed field of $\soc(K)$. But $y$ must have a minimal polynomial of degree $p$ already over $\Omega'$. This implies the diagonality of $\soc(K)$, and hence of $K$ via Remark \ref{rem:diag}. Note furthermore that $f=f_1\circ f_2$ must be right-unique due to Proposition \ref{lem:coprime}\ref{case:norittstepp_5}. The list of groups $\Mon_k(f_1\circ f_2)$ with this diagonality property is classified  in \cite[Theorem 1.2]{BKN}, see  \cite[Table 1]{BKN} for the finite list. Running with Magma on the entries of that list, we find the the groups giving rise to a minimally reducible pair $f,g$  with $f\in k[X]$ and $g\in k(X)$ Siegel. This is the list given in Case 2) of the assertion. Since all the occurring Siegel functions have an inertia group at $\infty$ with two different orbit lengths, none of the functions $g$ is  Siegel over $\mathbb{Q}$. 

For Case iii), note that over $\oline k$, one has $\Mon_{\overline{k}}(g_{s-1}\circ g_s) = C_4$ and hence $(f,g)$ cannot be a minimally reducible pair over $\overline{k}$ by Proposition \ref{lem:norittstepp}\ref{case:norittstepp_2}. Due to Lemma \ref{lem:friedarg}, a corresponding minimally reducible pair over $\overline{k}$ must involve  proper left-factors $\tilde{f}$ of $f$ and $\tilde{g}$ of $g$, so that $(\tilde{f},\tilde{g})$ fall into Case i) or ii), and so $\Mon_{\overline{k}}(\tilde{f})$ must either be as in Assertion (1) or one of the even-degree cases in Assertion (2). But since $A_5\le \Mon_{\overline{k}}(\tilde{f})\trianglelefteq \Mon_{k}(\tilde{f})\le S_5$ in (1), and $\Mon_{\overline{k}}(\tilde{f})=\Mon_k(\tilde{f})$ for all the even-degree cases in (2), cf.\ Table 1 of \cite{BKN}, reducibility of $(\tilde{f},\tilde{g})$ over $\overline{k}$ implies reducibility even over $k$, contradicting the minimal reducibility of $(f,g)$.
\end{proof} 

Finally we consider cases where {$\Mon(g_j)$ is affine}. This relies more concretely on the classification of indecomposable Siegel functions over $\mathbb{Q}$ in \cite[Theorem 5.2]{Mul3}.

\begin{lem}
\label{lem:no_affine_siegel}
There is no minimally reducible pair $f\in \mathbb{Q}[X]$, $g\in \mathbb{Q}(X)$, for which $g$ is a Siegel function over $\mathbb{Q}$ factoring through an indecomposable function in $\mQ(X)$ with a nonsolvable affine monodromy group.
\end{lem}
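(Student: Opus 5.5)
We argue by contradiction and work in the Setup of \S\ref{sec:siegel}. Suppose $f\in\mQ[X]$ and a Siegel function $g\in\mQ(X)$ over $\mQ$ form a minimally reducible pair, and $g$ factors through an indecomposable with nonsolvable affine monodromy. Since $\Mon_\mQ(f)\cong\Mon_\mQ(g)$ (common Galois closure, Lemma \ref{lem:friedarg}), $\Mon_\mQ(f)$ is nonsolvable. By Lemma \ref{lem:only_one_nonsolv2} we may write $f=f_1\circ h$ with $\Mon_\mQ(f_1)$ almost-simple and $\Mon_\mQ(h)$ solvable. Hence $\Mon_\mQ(g)$ has a single nonabelian composition factor, namely $\soc(\Mon_\mQ(f_1))$.

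It follows that the indecomposable factor of $g$ with affine nonsolvable monodromy must be the factor $g_j$ of the Setup, with $j$ maximal such that the $j$-th kernel is nonsolvable. Indeed, any other nonsolvable indecomposable factor would contribute a second nonabelian composition factor. After Ritt moves for Siegel functions \cite[Theorem 1.1]{Pak5}, $g_j$ is right-unique in $\hat g_j$. As in the discussion before \eqref{ass:g_j}, the case $|\hat g_j^{-1}(\infty)|=1$ is handled by the proof of Theorem \ref{thm:DLS}. In that case $f,g$ are indecomposable, so $g=g_j$ would be a polynomial with affine nonsolvable monodromy, which Proposition \ref{prop:indec_poly} excludes. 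We may therefore assume \eqref{ass:g_j}: $|\hat g_{j-1}^{-1}(\infty)|=1$ and $|\hat g_j^{-1}(\infty)|=2$. Lemma \ref{lem:siegel_by_cyclic} then makes $g_{j+1}\circ\cdots\circ g_s$ geometrically cyclic, and Proposition \ref{lem:norittstepp}\ref{case:norittstepp_1},\ref{case:norittstepp_2} restrict it to degree $1$, a prime, or $4$.

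Next we apply Lemma \ref{lem:galcl_containment}\ref{case:galcl_containment_1} symmetrically, using Remark \ref{rem:fury}(1) on the $f$-side. On the $g$-side we need the unique minimal normal subgroup of $\Mon(g_j)$ to be nonabelian, which fails for affine groups, so we argue directly. Lemma \ref{lem:galcl_containment}\ref{case:galcl_containment_2} is stated for the polynomial $f$ and gives that $\soc(\Gal(\Omega_1/k(t)))=\soc(\Mon_\mQ(f_1))=:S$ maps isomorphically to a normal subgroup of a quotient of $\Mon_\mQ(g_j)=V\rtimes G_0$, with $V=\F_p^e$ and $G_0\le\GL_e(p)$ nonsolvable. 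The only nonabelian composition factors live in $G_0$, so $S$ must be a composition factor of $G_0$. This gives strong constraints: $\deg(g_j)=p^e$, and a cyclic subgroup of $\Mon(g_j)$ (the inertia at $\infty$) has exactly two orbits on $V$ with algebraically conjugate preimages over $\mQ$.

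The key input is then \cite[Theorem 5.2]{Mul3}, which lists indecomposable Siegel functions over $\mQ$ with affine nonsolvable monodromy. I expect these to be a short list of specific degrees, for example $p^e\in\{16,25,\dots\}$ with $G_0$ a small group such as $A_5$, $S_5$, $\mathrm{SL}_2(5)$, or $A_6$, $A_7$ in $\GL_4(2)$. For each entry one compares with M\"uller's list of almost-simple polynomial monodromy groups $\Mon_\mQ(f_1)$ \cite{Mul2}, which is the group containing a cyclic transitive subgroup of degree $\deg(f_1)$. One also compares inertia at $\infty$. By Abhyankar (Remark \ref{rem:abh}(1)) the common Galois closure has ramification index $\deg(f)$ at $\infty$, so the two orbit lengths of inertia in $\hat g_j$ must divide $\deg(f_1)$ and have lcm equal to $\deg(f_1)$. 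Moreover the image of the unique minimal normal subgroup $V$ must be reached through $\Gal(\Omega/\Omega_1)$, which is solvable, i.e.\ through $h$.

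The main obstacle is this final case check. It means showing, for each entry of M\"uller's affine list over $\mQ$, that either no almost-simple polynomial group $\Mon_\mQ(f_1)$ with matching socle and degree exists, or that $\deg(f)=\lcm$ of the orbit lengths is incompatible with $|V|$ dividing the order of the solvable kernel. That kernel is a subdirect power of monodromy groups of degrees at most $4$ or prime, by Proposition \ref{prop:indec_poly}. I would first try the Abhyankar/inertia comparison, since it is uniform, and resort to a Magma check on the remaining finitely many groups.
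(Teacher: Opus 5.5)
You set things up the way the paper does: the reduction to \eqref{ass:g_j} and to \cite[Theorem 5.2]{Mul3} matches. However, the proposal stops exactly where the proof happens, and the ``uniform'' criterion you plan to use is not valid.

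\textbf{The inertia criterion.} From Lemma \ref{lem:galcl_containment}\ref{case:galcl_containment_1} applied on the $f$-side you only get $\Omega'_j\supseteq\Omega_1$. Hence the lcm of the ramification indices of $\hat g_j$ over $\infty$, which is the ramification index of $\Omega'_j$, is a \emph{multiple} of $\deg(f_1)$. Your claim that the orbit lengths divide $\deg(f_1)$, with lcm equal to it, would require $\Omega'_j=\Omega_1$. That is exactly the symmetric application you correctly noted is unavailable for affine $\Mon_\mQ(g_j)$. It is in fact false in the cases at hand: $\Mon_\mQ(\hat g_j)$ would then equal $\Mon_\mQ(f_1)$, while having $\Mon_\mQ(g_j)$, of larger order, as a subquotient.

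\textbf{The other constraints.} ``$|V|$ divides the order of the solvable kernel'' is not a contradiction in general. Lemma \ref{lem:galcl_containment}\ref{case:galcl_containment_2} is also satisfied, since $A_5$ is a normal subgroup of the quotient $S_5$ of $C_2^4\rtimes S_5$. So as written, your argument excludes no case. Your guessed list also does not match the actual one.

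\textbf{What is needed.} The list in \cite[Theorem 5.2]{Mul3} consists only of $\AGL_3(2)\le S_8$ and $C_2^4\rtimes S_5\le S_{16}$, and each needs its own argument.
\begin{enumerate}
\item For $\AGL_3(2)$: the simple factor $\PSL_3(2)$ never occurs as a composition factor of the monodromy group of a polynomial over $\mQ$, by \cite{Mul2} or the branch cycle lemma.
\item For $C_2^4\rtimes S_5$: one has $\Mon_\mQ(f_1)\in\{S_5,\PGL_2(5)\}$. The inertia comparison must be made not in $\hat g_j$ but inside the normal subgroup $N=\ker(\Mon_\mQ(g)\to\Mon_\mQ(\hat g_{j-1}))$.
\begin{itemize}
\item Since $A_5$ is the only nonabelian composition factor of $\Mon_\mQ(f)$, every normal subgroup has at most one $S_5$-quotient, namely modulo its solvable radical.
\item The corresponding extension is the pullback of the Galois closure of $f_1$. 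So by Remark \ref{rem:abh}, its ramification index at places over $\infty$ divides $5$ or $6$.
\item On the $g$-side, the inertia group at $\infty$ of the Galois closure of $g_j$ is cyclic of order $8$, and its image in the quotient $S_5$ has order $4$. So the $S_5$-quotient of $N$ has ramification index $4$, which divides neither $5$ nor $6$. This is a contradiction.
\end{itemize}
\end{enumerate}
This comparison in the $S_5$-quotient of $N$, together with the field-of-definition obstruction for $\PSL_3(2)$, is the idea missing from your plan.
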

\begin{proof}
We use the setup from the beginning of the section and, as above, assume \eqref{ass:g_j}. 
In particular, $f=f_1\circ h$ for an indecomposable $f_1\in\mQ[X]$ and $h\in \mQ[X]$ with solvable monodromy and   $\Mon_{\mathbb{Q}}(f)$ has only the single nonabelian composition factor $\soc(\Mon_{\mathbb{Q}}(f_1))$.  
As in the setup, since $\Mon_{\mathbb{Q}}(f)\cong\Mon_{\mathbb{Q}}(g)$ as abstract groups, there is only a single index $j$ such that $\Mon_{\mathbb{Q}}(g_j)$ is nonsolvable.

Assume on the contrary that $\Mon_{\mathbb{Q}}(g_j)$ is affine. By \cite[Theorem 5.2]{Mul3}, the only affine monodromy groups of indecomposable Siegel functions over $\mathbb{Q}$ are $\AGL_3(2)$ and $C_2^4\rtimes S_5(\le S_{16})$. The case $\Mon_{\mathbb{Q}}(g_j)=\AGL_3(2)$ does not occur since the simple composition factor $\PSL_3(2)$ of $\AGL_3(2)$ does not occur as a composition factor of the monodromy of a polynomial over $\mathbb{Q}$ by \cite{Mul2}. (This can also be seen using Fried's branch cycle lemma, cf.\ \cite[Section 3]{Fri73}.) Thus, we are in the second case, that is, $\Mon_{\mathbb{Q}}(g_j)=C_2^4\rtimes S_5$. Moreover,  $\Mon_{\mathbb{Q}}(f_1)=S_5$ or $\PGL_2(5) (\le S_6)$, since these are the only faithful actions of the almost-simple group $S_5$ with a cyclic transitive subgroup. Since $K=\ker(\Mon_{\mathbb{Q}}(f)\to \Mon_{\mathbb{Q}}(f_1))$ is solvable, it has to be contained in the kernel of any $S_5$-quotient of $\Mon_\mQ(f)$ (since $S_5$ has no nontrivial solvable normal subgroups). Hence the quotient $\Mon_{\mathbb{Q}}(f_1)$ is the unique (nonsolvable) almost-simple quotient of $\Mon_{\mathbb{Q}}(f)$. Note that the ramification index over $t\mapsto \infty$ in the Galois subextension corresponding to this quotient is either $5$ or $6$. Thus, every normal subgroup $N\trianglelefteq \Mon_{\mathbb{Q}}(f)=\Mon_{\mathbb{Q}}(g)$ similarly has a unique $S_5$-quotient, if any, and the ramification index over a place extending $t\mapsto \infty$ in the extension corresponding to this $S_5$-quotient must divide either $5$ or $6$ by Remark \ref{rem:abh} (since its corresponding cover  must be the pullback of the Galois closure of $f_1$). But $N=\ker(\Mon_{\mathbb{Q}}(g)\to \Mon_{\mathbb{Q}}(\widehat{g}_{j-1}))$ is such a normal subgroup, whose $S_5$-quotient has ramification index $4$ (as is easily seen from the fact that the splitting field of $g_j(X)-t$ is ramified over $t\mapsto \infty$ with ramification index $8$, see \cite[Theorem 5.2]{Mul3}). This contradiction shows that $\Mon_{\mathbb{Q}}(g_j)$ must be almost-simple. 
\end{proof}

\begin{proof}[Proof of Theorem \ref{thm:Hilbert}]
Using the setup from the beginning of the section. By Lemma \ref{lem:only_one_nonsolv2} we may assume \eqref{ass:g_j} holds. 
The conclusion then follows from Lemma \ref{lem:poly_nonaffinesiegel} if $\Mon(g_j)$ is nonaffine, and Lemma \ref{lem:no_affine_siegel} if it is affine.
\end{proof}

\begin{rem}
\label{rem:limitations}
(1) As seen,  Theorem \ref{thm:solv} and Lemma \ref{lem:poly_nonaffinesiegel} also yield a solution to the Hilbert-Siegel problem over arbitrary number fields $k$ under the assumption that $f\in k[X]$ does not factor through a solvable of degree $2$ or $4$, and the {\it additional} assumption that the corresponding Siegel function $g\in k(X)$ does not factor through a nonsolvable indecomposable with affine monodromy group. Since there is only a short finite list of such affine monodromy groups of Siegel functions, cf.\ \cite[Theorem 4.8]{Mul3}, we have a complete solution to the problem for polynomials $f$ which do not factor through an indecomposable $h\in k[X]$ such that $\Mon_k(h)$ has a nonsolvable composition factor occurring in that list (the largest degree exception being $\Mon_k(h)=\PSL_5(2)\le S_{31}$).
\\
(2)
The case of even degree solvable polynomials, left open in Theorem \ref{thm:Hilbert}, requires additional ideas. In particular, the diagonality assertion of Theorem \ref{thm:diagkern} is no longer valid for minimally reducible pairs $(f,g)$ with $f$ a polynomial and $g$ a Siegel function, in case $\deg(f_r)\in \{2,4\}$, {as the following paragraph shows}. 

By \cite[Section 5.3.2]{Mul3}, there is a family of indecomposable degree-$16$ Siegel functions $g$ over $\mathbb{Q}$ with monodromy group $\Mon_{\mQ}(g) = S_4\wr C_2$ in the product-type action, and such that the {\it imprimitive} wreath product action of the same monodromy group gives rise to a polynomial ramification type. Comparing point and block stabilizers in these two actions, one furthermore sees that the pairs $(f,g)$ of polynomials $f$ and Siegel functions $g$ arising in this way are minimally reducible. 
But  the polynomial $f$ is then of the form $f=f_1\circ f_2$ with $\Mon_k(f_1)=C_2$, $\Mon_k(f_2)=S_4$, and $\soc(\ker(\Mon_k(f)\to \Mon_k(f_1)))=\soc(S_4^2)=C_2^4$.

This is also an example where 
$\Red_f(\Z)$ contains an infinite set $g(\mQ)\cap \Z$ that is not contained in $f_1(\mQ)$ for the (unique) indecomposable left factor $f_1$ of $f$.
\end{rem}

\subsection{Functional equations}\label{sec:functional}
Let $k$ be an algebraically closed field of characteristic $0$. 
\begin{proof}[Proof of Corollary \ref{cor:genus0}]
Let $F(X,Y)\in k[X,Y]$ be a right-reduced irreducible factor of $f(X)-g(Y)$ defining a genus-$0$ curve  birational to $F(X,Y)=0$. Let   $k(x,y)$ be its genus-$0$ function field, so that $F(x,y)=0$. Set $t=f(x)=g(y)$. Then $k(x)\cap k(y)$ is  a rational function field by a theorem of L\"uroth.  
Furthermore, we may write $k(x)\cap k(y)=k(s)$ where $s=f_1(x)=g_1(y)$ and $t=w_1(s)$ for some polynomials   $f_1,g_1,w_1\in k[X]\setminus k$. 
Since the factor is assumed to be right-reduced, we get $k(s)=k(t)$, and  hence $\deg(w_1)=1$. 
The  reducibility of $f(X)-g(Y)\in k[X,Y]$ thus implies that of $f_1(X)-g_1(Y)\in k[X,Y]$. 
Theorem \ref{thm:DLS} then implies that one of the following holds: 
\vskip 1mm
\noindent
(a) $f_1$ and $g_1$ have a common left factor $h\in k[X]$ with $\deg(h)>1$, or 
\vskip 1mm
\noindent
(b)  $f_1=w_2\circ h_1\circ f_2$ and $g_1=w_2\circ h_2\circ g_2$, where $\deg(w_2)=1$, and $\{h_1,h_2\}$ is either $\{T_4, -T_4\}$ or is one of the pairs of polynomials of degree $7,11,13,15,21,31$ in  \cite[\S 5]{CC}. 

A direct computer check, using Magma for the possibilities of ramification  in \cite[Theorem]{Mul2}, shows that the only pairs $\{h_1,h_2\}$ in \cite{CC} with a genus-$0$ factor are certain pairs of degree $7$ or $13$, cf.\ Remark \ref{rem:genus0} for a concrete description. 

In the first case (a), we have 
$f_1=h\circ f_2$ and $g_1=h\circ g_2$, for $h,f_2,g_2\in k[X]\setminus  k$. Since $k(x)\cap k(y)=k(s)$, it follows that $k(x_2)\cap k(y_2)=k(s)$, where $x_2=f_2(x)$ and $y_2=g_2(y)$. Since $h(x_2)=h(y_2)=s$, we obtain two distinct roots  $x_2$ and $y_2$ of $h(X)-s\in k(s)[X]$, so that $h(X)-h(Y)\in k[X,Y]$ has a nondiagonal irreducible factor $H(X,Y)\in k[X,Y]$ whose corresponding curve  $H(X,Y)=0$ is birational to a  genus-$0$ curve $\mathcal D$. Since $k=\oline k$, it now follows from \cite[Theorem\ 1]{AZ2} that\footnote{In \cite{AZ2}, $w_2$ is denoted by $A$, and $u_1$ corresponds to $S$ in (1) and $M$ in (2)-(5).} there exist $w_2,h',u_1\in k[X]\setminus k$ such that $h=w_2\circ h'\circ u_1$ and $h'$ is either 
\vskip 1mm
\noindent
i) $X^d$ for $d\geq 3$, or 
\vskip 1mm
\noindent
ii) $T_d$ for  $d\geq 3$, or 
\vskip 1mm
\noindent
iii) one of the polynomials $ P_i$, $i=1,2,3$  appearing in the statement of the corollary. 
\vskip 1mm
\noindent
Moreover, \cite{AZ2} shows that $\deg(u_1)=1$ in cases ii) and iii), 
and that $k(h'(u_1(x_2)))=k(h'(u_1(y_2)))$.
Since  $k(x_2)\cap k(y_2)=k(s)$, it follows that $k(h'(u_1(x_2)))$ is of degree $1$ over $k(s)$, and hence $\deg(w_2)=1$.
For simplicity, in what follows for case (a) we replace $h,f_2,g_2$ by  $h'$,   $u_1\circ f_2$ and $u_1\circ g_2$, respectively, and consequently $x_2,y_2$ by their new values $f_2(x),g_2(y)$, respectively. We modify $H$ and $\mathcal D$ so that they still correspond to a nondiagonal factor of $h(X)-h(Y)$, and a curve birational to $H=0$, resp. 

We next note that case i) does not occur: For, both $x_2$ and $y_2$ are roots of $X^d-s\in k(s)[X]$. Since $k=\oline k$ this implies that $k(x_2)=k(y_2)$ is of degree $d\geq 3$ over $k(s)$, contradicting $k(x_2)\cap k(y_2)=k(s)$.  

For the remaining cases ii) and iii) of a), as well as the cases in b),  it remains to determine the possibilities for the right factors $f_2$ and $g_2$.
In case b), set $h= h_1$, and let $\mathcal D$ be a curve birational to $H(X,Y)=0$, for an irreducible factor $H(X,Y)$ of $h_1(X)-h_2(Y)\in k[X,Y]$. 
Since $\mathcal D$ is of genus $0$ and $k=\oline k$, the function field $k(\mathcal D)=k(z)$ is  rational. 
Furthermore, $k(z)/k(x_2)$ is unramified over the place  $x_2\mapsto \infty$ by Remark \ref{rem:abh}(1). 
Since $k(x,z)$ is contained in the genus-$0$ field $k(x,y)$, it is also of genus $0$. 
Since $k(x)/k(x_2)$  is totally ramified over $x_2\mapsto \infty$, whereas $k(z)/k(x_2)$ is unramified over $x_2\mapsto \infty$, the fields $k(x)$ and $k(z)$ are linearly disjoint over $k(x_2)$ by Remark \ref{rem:abh}(2), and moreover every place of $k(z)$ lying over $x_2\mapsto \infty$ is totally ramified in $k(x,z)/k(z)$ by Remark \ref{rem:abh}(2). 
If $m:=[k(z):k(x_2)]>2$, then the Riemann--Hurwitz formula for the degree-$\deg(f_2)$ extension $k(x,z)/k(z)$ shows the genus $\tilde g=0$ of  $k(x,z)$ satisfies: 
\begin{equation}\label{equ:RH}
2(\tilde g-1)\geq -2\deg(f_2)+\sum_{Q}(\deg(f_2)-1)=(m-2)\deg(f_2)-m,
\end{equation}
where $Q$ runs over the places of $k(z)$ lying over $x_2\mapsto\infty$. 
If $m>2$ and $\deg(f_2)>1$, \eqref{equ:RH} contradicts that  $\tilde g=0$ as above. 
Since $m>2$ in case (a) iii) and in case (b), except for $\{h_1,h_2\} = \{T_4,-T_4\}$, it follows that $\deg(f_2)=1$ in all those cases, and by symmetry $\deg(g_2)=1$, leading to cases (2) and (3) of the assertion with  $\mu:=w_1\circ w_2$, $\lambda_1:=f_2$ and $\lambda_2=g_2$. 

It remains to treat the cases  $h= T_d$ for $d\geq 3$ in a), or $\{h_1,h_2\} = \{T_4,-T_4\}$ in b). 
In these cases we have  $m=2$, and there are exactly two places $Q_1,Q_2$ of $k(z)$ lying over $x_2\mapsto\infty$. 
In particular, \eqref{equ:RH} is an equality, and hence $k(x,z)/k(z)$ is unramified away from  $Q_1,Q_2$. 
Thus the {\it only} finite branch points of $k(x)/k(x_2)$  are the two branch points $x_2\mapsto -2,2$ of $k(z)/k(x_2)$, by Remark \ref{rem:abh}(3). Moreover, Abhyankar's lemma implies that the ramification index of every place in $f_2^{-1}(\pm 2)$ is at most $2$. 
Since $x_2\mapsto -2,2$ are the only unramified places  over two finite branch points $s$ by \cite[Lemma 3.2.]{MZ}, 
it follows that $h\circ f_2$ has only two finite branch points and the $(h\circ f_2)$-preimages of these are all of ramification index $\leq 2$. Such ramification forces that $h_1\circ f_2=\pm T_n\circ v_1$ for $n:=\deg(f)$ and some $v_1\in k[X]$ of degree $1$, 
by \cite[Lemma 3.2.]{MZ}. Similarly, $h_2\circ g_2=\pm T_m\circ v_2$  for $m:=\deg(g)$ and $v_2\in k[X]$ of degree $1$. 

We may therefore write $f=\mu\circ (\pm T_n)\circ \lambda_1$ and $g=\mu\circ (-T_m)\circ \lambda_2$ for linear $\mu,\lambda_1,\lambda_2\in k[X]$, where $d:=\gcd(m,n)\ge 3$. 
So far we allow any combination of plus and minus signs for $f$. But note that $f,g$ cannot have a common left factor $T_2$, since it is linearly related to $X^2$ and thus would contradict minimality as for Case i) above. In case $d$ is even this leaves only the possibility of different signs in $f,g$, as asserted in (1). In case  $d$ is odd, we may assume  without loss of generality that  $n$ odd. Then, by  possibly composing $\mu$, $\lambda_1$, $\lambda_2$ with  $X\mapsto \pm X$ and using the identity $T_n(-X)=-T_n(X)$, we obtain the combination of signs given in (1).
\end{proof}
\begin{rem}\label{rem:genus0}
(a) A computer check further reveals that in case \ref{case:genus0_3} of Corollary \ref{cor:genus0}, the degree-$7$ (resp., degree-$13$) polynomials $h_1,h_2$ have three branch points with branch cycles of orders $2,3,7$ or $2,4,7$ (resp., $2,3,13$). From this, it is possible to obtain concrete parameterizations, namely \begin{equation}\label{equ:h1a}h_1=X(X+1)^3(X+a+3)^3, \text{ with } a^2+a+2=0,\end{equation}
\begin{equation}\label{equ:h1b}h_1=X^4(X-2)^2(X-a), \text{ with } a^2+a+2=0,\text{ and }\end{equation} 
\begin{equation}\label{equ:h1c}h_1 = X^3(X-27)(X^3 + (-16a^3 - 12a^2 - 29a + 51)X^2 + (16a^3 - 96a^2 - 25a - 618)X \end{equation} $$+ \frac{1}{3}(21872a^3 + 29148a^2 + 58408a - 53112))^3, \text{ with } a=\zeta_{13}+\zeta_{13}^3+\zeta_{13}^9$$
in the three respective cases. In all cases, $h_2$ is then obtained as $h_2(X)=\gamma/\overline{\gamma} \cdot \overline{h_1}(X)$, where $\gamma$ denotes the unique finite nonzero branch point of $h_1$, and $\overline{z}$ denotes the complex conjugate of $z$.\\
(b) For the converse of Corollary \ref{cor:genus0} we show that (1)-(3) indeed have right-reduced irreducible factors defining curves of genus $0$.   For case (1) with $d:=\gcd(m,n)$, the polynomials $T_d(X)\pm T_d(Y)\in\mC[X,Y]$ admit\footnote{In fact any factor of it defines a genus-$0$ curve.} an irreducible factor $H(X,Y)=X^2-2XY\cos(\pi/d)+Y^2-4\sin^2(\pi/d)$  \cite[Prop.\ 2.2]{AZ2}, so that $H(T_{n/d}(X),T_{m/d}(Y))$ is an irreducible factor of $T_n(X)\pm T_m(Y)$    defining a right-reduced genus-$0$ curve. 
Composing with linear polynomials one obtains the desired factor of $f(X)-g(Y)$. 
In case (2), the irreducible nondiagonal component of genus $0$ is the unique one:  $(f(X)-f(Y))/(X-Y)$.  In case (3), the degrees of the irreducible factors defining a genus-$0$ curve are: 
$3$ and $4$ (i.e.\ two factors of genus $0$) in \eqref{equ:h1a}; 
$3$ in \eqref{equ:h1b}; and  $4$ in \eqref{equ:h1c}. Since the polynomials in (2) and (3) are indecomposable, the factors are automatically right-reduced. 
\end{rem}

\appendix 
\section{Fried's $(m,n)$-problem}\label{app:mn}
As a consequence of Theorem \ref{thm:DLS}, we have the following answer to Fried's $(m,n)$-problem.  
\begin{cor}
\label{rem:m-n} Let $P,Q \in \mC[X]$ be simply branched polynomials of degrees $m:=\deg(Q)\ge 2$ and $n:=\deg(P)\ge \max\{m,3\}$ such that $P$ and $Q$ do not satisfy a relation $P=Q\circ \mu$ with $\mu\in \mathbb{C}[X]$ linear. If $n=3$, then assume moreover that the sets of finite branch points of $P$ and of $Q$ are disjoint. Then $Q(f(X))-P(g(Y))$ is irreducible for all $f,g\in \mathbb{C}[X]\setminus\mathbb{C}$.
\end{cor}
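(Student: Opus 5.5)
We argue by contradiction, applying Theorem \ref{thm:DLS} over $k=\mC$ to the pair $F:=Q\circ f$ and $G:=P\circ g$. If $F(X)-G(Y)$ were reducible, one of cases (1)--(3) of Theorem \ref{thm:DLS} would hold. The plan is to show that a simply branched polynomial cannot share a nontrivial left composition factor with another such polynomial, and also cannot sit on the left of $T_4$, $-T_4$, or the Cassou-Nogu\`es--Couveignes polynomials. The engine is Ritt theory for decompositions of $Q\circ f$ and $P\circ g$, and especially the remark on Ritt moves in Remark \ref{rem:further-Ritt}.

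\textbf{Step 1: a simply branched polynomial is indecomposable for degree $\geq 4$, and its left factors stay in place.} Let $R$ be simply branched and $R=a\circ b$ with $\deg a,\deg b>1$. Every finite branch point of $a$ is a branch point of $R$, and over such a point $a$ has a ramified preimage $w$. Any point of $b^{-1}(w)$ is then ramified for $R$. Since $R$ has a unique ramified preimage of index $2$, this forces $\deg b\le 2$. A short count then gives $\deg R\le 3$ or a contradiction. It follows that $P$ (of degree $n\ge 3$) and $Q$ are indecomposable, except in the degenerate low-degree cases, which we handle separately.

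Next consider any complete decomposition of $Q\circ f$. By Ritt's first theorem, every complete decomposition is reached by Ritt moves. By Remark \ref{rem:further-Ritt}, a simply branched polynomial of degree $\ge 4$ is never a left factor of a Ritt move. For degrees $2$ and $3$, the two left factors in a Ritt step share a finite branch point and are $\pm T$ or $X^n$ type, so the move is of Chebyshev or monomial shape. A simply branched $X^2$ or $T_3$ can appear there, and this needs care. Modulo this, the indecomposable left factors of $Q\circ f$ are all linearly related to $Q$, and those of $P\circ g$ to $P$.

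\textbf{Step 2: case analysis.} In case (1), the common left factor $h$ has an indecomposable left factor, which by Step 1 is linearly related on the left to both $Q$ and $P$. The branch points of $\mu\circ Q$ are those of $Q$ moved by $\mu$, and the common left factor is literally the same polynomial. This forces $P=Q\circ\lambda$ with $\lambda$ linear when $m=n$, which is excluded. When $m\ne n$ it is impossible, by degree. In cases (2) and (3), the indecomposable polynomials $h_1,h_2$ and $\pm T_4$ (which is decomposable as $T_2\circ T_2$) would have to occur as left factors. However, none of $T_4$ or the $h_i$ from \cite{CC} is simply branched: the $h_i$ have three branch points with ramification of orders such as $7$. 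Their leftmost indecomposable factors also have non-simple ramification, whereas by Step 1 the leftmost factor must be linearly related to $Q$ or $P$. The one exception is $T_2$ inside $T_4$, which is simply branched of degree $2$.

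\textbf{Main obstacle: the small-degree cases.} The hard part is the low-degree cases $m\in\{2,3\}$ and $n=3$, where simply branched polynomials are $X^2$ or $T_3$ up to linear maps, and Ritt moves with Chebyshev or monomial factors genuinely exist.

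For $m=2$ with $n\ge 4$, the leftmost factor of $P\circ g$ is $P$, and it is not movable. Case (3) would need $P\circ g=\mu\circ(-T_4)\circ g_1$, which is impossible because $T_2$ is not linearly related to $P$ of degree $\ge 4$. Case (1) would need a common left factor, whose leftmost indecomposable piece has degree $2$ on one side and $n$ on the other.

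For $n=3$, so that $m\in\{2,3\}$, the disjoint-branch-loci hypothesis enters. Remark \ref{rem:further-Ritt} shows that the two left factors in any Ritt step share a finite branch point. We would iterate this to show that any indecomposable left factor of $Q\circ f$ (respectively $P\circ g$) has its finite branch locus meeting that of $Q$ (respectively $P$). Hence a common left factor would force the branch loci of $P$ and $Q$ to meet, contradicting disjointness. Finally, $T_4$ has finite branch points $\pm2$ (via $T_2$ shared), so case (3) would similarly force a shared branch point of $P$ and $Q$. This yields the contradiction.
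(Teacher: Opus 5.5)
\paragraph{The case $n\ge 4$.} Here your argument is the paper's. $P$ is never a left factor of a Ritt move, so every indecomposable left factor of $P\circ g$ is $P\circ\ell$. This rules out cases (2) and (3) and reduces case (1) to $P=Q\circ\ell$. Two local repairs are needed.
\begin{itemize}
\item In Step 1, uniqueness of the ramified $R$-preimage over a finite branch point of $a$ forces $b$ to be totally ramified over $w$. The ramification index is then $\ge 2\deg b\ge 4$, a contradiction. So every simply branched polynomial is indecomposable, and no ``count'' is needed.
\item In case (1) with $m\in\{2,3\}$, you cannot claim that the leftmost factor on $Q$'s side has degree $m$, since $X^2$ and $T_3$ do move under Ritt moves. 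Argue from $P$'s side instead. A complete decomposition of $Q\circ f$ through the common factor starts with $P\circ\ell$, which can never move. Ritt's first theorem then gives $Q=P\circ\ell'$, impossible by degree.
\end{itemize}

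\paragraph{The gap at $n=3$.} Your two properties say that indecomposable left factors of $Q\circ f$ (resp.\ $P\circ g$) have branch locus \emph{meeting} $\mathrm{Br}(Q)$ (resp.\ $\mathrm{Br}(P)$). These do not imply that a common left factor $u$ forces $\mathrm{Br}(P)\cap\mathrm{Br}(Q)\neq\emptyset$. A Chebyshev-type $u$ with one finite branch point in each set is compatible with disjointness.

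The paper uses a stronger, asymmetric fact on $P$'s side. Because $P$ is linearly related to $T_3$, Ritt relations with $P$ on the left only bring in Chebyshev-type factors whose branch loci lie inside that of $P$. Hence every indecomposable left factor of $P\circ g$ has branch locus \emph{contained} in $\mathrm{Br}(P)$. Only this, combined with ``meets $\mathrm{Br}(Q)$'' on the other side, contradicts disjointness.

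Also, ``iterating'' along a chain of Ritt moves proves neither property, since sharing a branch point is not transitive. A clean way to get both is to use a single relation. Let $u\ne Q\circ\ell$ be an indecomposable left factor, $Q\circ f=u\circ v$.
\begin{itemize}
\item Since $Q$ is indecomposable, $\mC(f)\cap\mC(v)=\mC(t)$.
\item Engstr\"om's theorem then gives $f=b\circ w$ and $v=d\circ w$ with $Q\circ b=u\circ d$, where $\deg d=\deg Q$ and $\deg b=\deg u$ is coprime to $\deg Q$.
\item Ritt's second theorem applies directly to this one relation.
\end{itemize}
For $P$, the monomial shapes requiring $P\sim X^3$ are impossible. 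So $u$ and $P$ are of Chebyshev type (or, for $\deg u=2$, the equivalent pair $X^2$, $Xh(X)^2$) with a common outer normalization, whence $\mathrm{Br}(u)\subseteq\mathrm{Br}(P)$.

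\paragraph{Case (3) at $n=3$.} This case is not ``similar''. The quadratic left factors $\mu\circ T_2$ and $\mu\circ(-T_2)$ have different branch points $\mu(-2)\ne\mu(2)$. Even with containment you would only get one of them in $\mathrm{Br}(P)$ and the other in $\mathrm{Br}(Q)$. You must use the whole quartic instead:
\begin{itemize}
\item $\mu\circ(\pm T_4)$ is a left factor of $P\circ g$ of degree prime to $3$.
\item The same Engstr\"om--Ritt argument gives $\mathrm{Br}(\mu\circ T_4)=\mu(\{\pm 2\})=\mathrm{Br}(P)$.
\item The quadratic left factor on $Q$'s side then has its branch point in $\mathrm{Br}(P)\cap\mathrm{Br}(Q)$.
\end{itemize}
The paper's closing sentence is terse on this case as well.

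\paragraph{Case (2) at $n=3$.} You omit this case. It follows from the containment, since the Cassou-Nogu\`es--Couveignes factor and its partner have the same branch locus.
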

Note that the assumption $P\neq Q\circ\mu$ is required for  $P(X)-Q(Y)\in k[X,Y]$ to be irreducible, and clearly holds if the branch loci of $P$ and $Q$ are disjoint.
\begin{proof}
Assume on the contrary that $Q(f(X))-P(g(Y)) \in \mC[X,Y]$ is reducible for some $f(X),g(X) \in \mC[X] \setminus \mC$.  Let $(h_Q, h_P)$ be a corresponding minimally reducible pair, so that in particular $h_Q$ is a left-factor of $Q\circ f$ and $h_P$ is a left-factor of $P\circ g$.

Note first that, as in Remark \ref{rem:further-Ritt}, 
a simply branched polynomial of degree $n\ge 4$ cannot be the left-factor of a Ritt move. Therefore, when $n\ge 4$, the polynomial $h_P$ must admit (the simply branched, hence indecomposable) $P$ as a left-factor.  
By Theorem \ref{thm:DLS-C}, $P$ is either a left factor of $Q\circ f$, or $P$ is one of the exceptional polynomials in  Theorem \ref{thm:DLS-C}\ref{case:DLS-C_2}. The latter case cannot occur since the exceptional polynomials in Theorem \ref{thm:DLS-C}\ref{case:DLS-C_2} are not simply-branched.  In the former case, the same reasoning shows that $P=Q\circ \mu$ for a linear $\mu\in\mathbb{C}[X]$, contradicting our assumptions. 

Next, assume $n=3$. Since  $P$ is simply branched of degree $3$, it is linearly related to $T_3$ over $\mC$. 
Since the only Ritt moves with $T_3$ as a left factor are  $T_3\circ (T_m\circ\ell)=T_m\circ (T_3\circ \ell)$  or $(-T_2)\circ (T_3\circ \ell)=T_3\circ (-T_2\circ \ell)$, for $m\geq 2$ and $\ell\in\mC[X]$ of degree $1$, by Ritt's second theorem,
and since the branch loci of $T_m$ and $-T_2$ are contained in that of $T_3$, it follows that 
any indecomposable left-factor of $P\circ g$ must have a branch point set which is contained in the one of $P$. On the other hand,  every non-linear left factor of $Q\circ f$ must share at least one finite branch point with $Q$, as in Remark \ref{rem:further-Ritt}. 
Together with the assumption that the sets of finite branch points of $P$ and $Q$ are disjoint, these two observations yield that in every pair $(\tilde{h}_P,\tilde{h}_Q)$ of indecomposable left-factors  of $(h_P, h_Q)$, the branch point set of $\tilde{h}_P$ must be different from the one of $\tilde{h}_Q$. This contradicts the fact that, for all pairs in Theorem \ref{thm:DLS-C}, these two branch point sets coincide.
\end{proof}

\bibliography{biblio1}

@book {Cox,
    AUTHOR = {Cox, David A.},
     TITLE = {Galois theory},
    SERIES = {Pure and Applied Mathematics (Hoboken)},
   EDITION = {Second},
 PUBLISHER = {John Wiley \& Sons, Inc., Hoboken, NJ},
      YEAR = {2012},
     PAGES = {xxviii+570},
      ISBN = {978-1-118-07205-9},
   MRCLASS = {12F10 (00-01 01A05)},
  MRNUMBER = {2919975},
       DOI = {10.1002/9781118218457},
       URL = {https://doi-org.technion.idm.oclc.org/10.1002/9781118218457},
}

@article {AZ,
    AUTHOR = {Avanzi, R. M. and Zannier, U. M.},
     TITLE = {Genus one curves defined by separated variable polynomials and
              a polynomial {P}ell equation},
   JOURNAL = {Acta Arith.},
  FJOURNAL = {Acta Arithmetica},
    VOLUME = {99},
      YEAR = {2001},
    NUMBER = {3},
     PAGES = {227--256},
      ISSN = {0065-1036,1730-6264},
   MRCLASS = {11D41 (11C08 12E10 14H52)},
MRREVIEWER = {Yuri\ Bilu},
       DOI = {10.4064/aa99-3-2},
       URL = {https://doi.org/10.4064/aa99-3-2},
}

@article {AZ2,
    AUTHOR = {Avanzi, R. M. and Zannier, U. M.},
     TITLE = {The equation {$f(X)=f(Y)$} in rational functions {$X=X(t)$},
              {$Y=Y(t)$}},
   JOURNAL = {Compositio Math.},
  FJOURNAL = {Compositio Mathematica},
    VOLUME = {139},
      YEAR = {2003},
    NUMBER = {3},
     PAGES = {263--295},
      ISSN = {0010-437X,1570-5846},
   MRCLASS = {14H45 (11C08 11G30 11R09 12E05)},
MRREVIEWER = {Yuri\ Bilu},
       DOI = {10.1023/B:COMP.0000018136.23898.65},
       URL = {https://doi.org/10.1023/B:COMP.0000018136.23898.65},
}

@article {Bilu,
    AUTHOR = {Bilu, Y. F.},
     TITLE = {Quadratic factors of {$f(x)-g(y)$}},
   JOURNAL = {Acta Arith.},
  FJOURNAL = {Acta Arithmetica},
    VOLUME = {90},
      YEAR = {1999},
    NUMBER = {4},
     PAGES = {341--355},
      ISSN = {0065-1036,1730-6264},
   MRCLASS = {12E05 (14H30)},
MRREVIEWER = {Jean-Marc\ Couveignes},
       DOI = {10.4064/aa-90-4-341-355},
       URL = {https://doi.org/10.4064/aa-90-4-341-355},
}

@article {BiTi,
    AUTHOR = {Bilu, Y. F. and Tichy, R. F.},
     TITLE = {The {D}iophantine equation {$f(x)=g(y)$}},
   JOURNAL = {Acta Arith.},
  FJOURNAL = {Acta Arithmetica},
    VOLUME = {95},
      YEAR = {2000},
    NUMBER = {3},
     PAGES = {261--288},
      ISSN = {0065-1036,1730-6264},
   MRCLASS = {11D41 (14G05)},
MRREVIEWER = {Yann\ Bugeaud},
       DOI = {10.4064/aa-95-3-261-288},
       URL = {https://doi.org/10.4064/aa-95-3-261-288},
}

@article {HT,
    AUTHOR = {Hajdu, L. and Tijdeman, R.},
     TITLE = {The {D}iophantine equation {$f(x) = g(y)$} for polynomials
              with simple rational roots},
   JOURNAL = {J. Lond. Math. Soc. (2)},
  FJOURNAL = {Journal of the London Mathematical Society. Second Series},
    VOLUME = {108},
      YEAR = {2023},
    NUMBER = {1},
     PAGES = {309--339},
      ISSN = {0024-6107,1469-7750},
   MRCLASS = {11N32 (11B75 11D41 11P05)},
MRREVIEWER = {Maciej\ Ulas},
       DOI = {10.1112/jlms.12746},
       URL = {https://doi.org/10.1112/jlms.12746},
}

@article {BT,
    AUTHOR = {Bukh, B. and Tsimerman, J.},
     TITLE = {Sum-product estimates for rational functions},
   JOURNAL = {Proc. Lond. Math. Soc. (3)},
  FJOURNAL = {Proceedings of the London Mathematical Society. Third Series},
    VOLUME = {104},
      YEAR = {2012},
    NUMBER = {1},
     PAGES = {1--26},
      ISSN = {0024-6115,1460-244X},
   MRCLASS = {11B30 (11B75)},
MRREVIEWER = {Mei\ Chu\ Chang},
       DOI = {10.1112/plms/pdr018},
       URL = {https://doi.org/10.1112/plms/pdr018},
}

@article {CC,
    AUTHOR = {Cassou-Nogu\`es, P. and Couveignes, J.-M.},
     TITLE = {Factorisations explicites de {$g(y)-h(z)$}},
   JOURNAL = {Acta Arith.},
  FJOURNAL = {Acta Arithmetica},
    VOLUME = {87},
      YEAR = {1999},
    NUMBER = {4},
     PAGES = {291--317},
      ISSN = {0065-1036,1730-6264},
   MRCLASS = {11C08 (12Y05)},
MRREVIEWER = {Maurice\ Mignotte},
       DOI = {10.4064/aa-87-4-291-317},
       URL = {https://doi.org/10.4064/aa-87-4-291-317},
}

@article {DF,
    AUTHOR = {D\`ebes, P. and Fried, M. D.},
     TITLE = {Integral specialization of families of rational functions},
   JOURNAL = {Pacific J. Math.},
  FJOURNAL = {Pacific Journal of Mathematics},
    VOLUME = {190},
      YEAR = {1999},
    NUMBER = {1},
     PAGES = {45--85},
      ISSN = {0030-8730,1945-5844},
   MRCLASS = {14G32 (12E25)},
MRREVIEWER = {Peter\ M\"{u}ller},
       DOI = {10.2140/pjm.1999.190.45},
       URL = {https://doi.org/10.2140/pjm.1999.190.45},
}

@article{Eren, 
AUTHOR = {Ehrenfeucht, A.},
TITLE = {Kryterium absolutnej nierozkladaln\'{o}sci wielomia\'ow},
JOURNAL = {Prace Mat.},
VOLUME = {2},
YEAR = {1958},
PAGES = {167-169},
}

@article {Fried-Schur,
    AUTHOR = {Fried, M. D.},
     TITLE = {On a conjecture of {S}chur},
   JOURNAL = {Michigan Math. J.},
  FJOURNAL = {Michigan Mathematical Journal},
    VOLUME = {17},
      YEAR = {1970},
     PAGES = {41--55},
      ISSN = {0026-2285,1945-2365},
   MRCLASS = {10.76},
MRREVIEWER = {A.\ L.\ Whiteman},
       URL = {http://projecteuclid.org.technion.idm.oclc.org/euclid.mmj/1029000374},
}

@article {AD-Survey,
    AUTHOR = {Benedetto, R. and Ingram, P. and Jones, R. and
              Manes, M. and Silverman, J. H. and Tucker, T.
              J.},
     TITLE = {Current trends and open problems in arithmetic dynamics},
   JOURNAL = {Bull. Amer. Math. Soc. (N.S.)},
  FJOURNAL = {American Mathematical Society. Bulletin. New Series},
    VOLUME = {56},
      YEAR = {2019},
    NUMBER = {4},
     PAGES = {611--685},
      ISSN = {0273-0979,1088-9485},
   MRCLASS = {37P05 (11G50 37P15 37P20 37P25 37P30 37P45 37P55)},
       DOI = {10.1090/bull/1665},
       URL = {https://doi.org/10.1090/bull/1665},
}

@article {DLS,
    AUTHOR = {Davenport, H. and Lewis, D. J. and Schinzel, A.},
     TITLE = {Equations of the form {$f(x)=g(y)$}},
   JOURNAL = {Quart. J. Math. Oxford Ser. (2)},
  FJOURNAL = {The Quarterly Journal of Mathematics. Oxford. Second Series},
    VOLUME = {12},
      YEAR = {1961},
     PAGES = {304--312},
      ISSN = {0033-5606,1464-3847},
   MRCLASS = {14.49 (10.13)},
MRREVIEWER = {B.\ J.\ Birch},
       DOI = {10.1093/qmath/12.1.304},
       URL = {https://doi.org/10.1093/qmath/12.1.304},
}

@article {DS,
    AUTHOR = {Davenport, H. and Schinzel, A.},
     TITLE = {Two problems concerning polynomials},
   JOURNAL = {J. Reine Angew. Math.},
  FJOURNAL = {Journal f\"ur die Reine und Angewandte Mathematik. [Crelle's
              Journal]},
    VOLUME = {214/215},
      YEAR = {1964},
     PAGES = {386--391},
      ISSN = {0075-4102,1435-5345},
   MRCLASS = {12.30},
MRREVIEWER = {M.\ J.\ Wonenburger},
       DOI = {10.1515/crll.1964.214-215.386},
       URL = {https://doi.org/10.1515/crll.1964.214-215.386},
}

@article {Fried12,
    AUTHOR = {Fried, M. D.},
     TITLE = {Variables separated equations: strikingly different roles for
              the branch cycle lemma and the finite simple group
              classification},
   JOURNAL = {Sci. China Math.},
  FJOURNAL = {Science China. Mathematics},
    VOLUME = {55},
      YEAR = {2012},
    NUMBER = {1},
     PAGES = {1--72},
      ISSN = {1674-7283,1869-1862},
   MRCLASS = {12E30 (11G18 11R58 12E05 12F10 14H30 20D05)},
MRREVIEWER = {Peter\ M\"uller},
       DOI = {10.1007/s11425-011-4324-4},
       URL = {https://doi.org/10.1007/s11425-011-4324-4},
}

@article {Fried23,
    AUTHOR = {Fried, M. D.},
     TITLE = {Taming genus 0 (or 1) components on variables-separated
              equations},
   JOURNAL = {Albanian J. Math.},
  FJOURNAL = {Albanian Journal of Mathematics},
    VOLUME = {17},
      YEAR = {2023},
    NUMBER = {2},
     PAGES = {19--80},
      ISSN = {1930-1235},
   MRCLASS = {14H50 (14H55 20B15 20E22)},
MRREVIEWER = {Jos\'e\ Javier\ Etayo},
}

@article {FG,
    AUTHOR = {Fried, M. D. and Gusi\'c, I.},
     TITLE = {Schinzel's problem: imprimitive covers and the monodromy
              method},
   JOURNAL = {Acta Arith.},
  FJOURNAL = {Acta Arithmetica},
    VOLUME = {155},
      YEAR = {2012},
    NUMBER = {1},
     PAGES = {27--40},
      ISSN = {0065-1036,1730-6264},
   MRCLASS = {11C08 (12E05 13P05)},
MRREVIEWER = {Mihai\ Cipu},
       DOI = {10.4064/aa155-1-3},
       URL = {https://doi.org/10.4064/aa155-1-3},
}

@book {Har,
    AUTHOR = {Hartshorne, Robin},
     TITLE = {Algebraic geometry},
    SERIES = {Graduate Texts in Mathematics},
    VOLUME = {No. 52},
 PUBLISHER = {Springer-Verlag, New York-Heidelberg},
      YEAR = {1977},
     PAGES = {xvi+496},
      ISBN = {0-387-90244-9},
   MRCLASS = {14-01},
  MRNUMBER = {463157},
MRREVIEWER = {Robert\ Speiser},
}

@article {FM,
    AUTHOR = {Fried, M. D. and MacRae, R. E.},
     TITLE = {On the invariance of chains of fields},
   JOURNAL = {Illinois J. Math.},
  FJOURNAL = {Illinois Journal of Mathematics},
    VOLUME = {13},
      YEAR = {1969},
     PAGES = {165--171},
      ISSN = {0019-2082},
   MRCLASS = {12.45},
MRREVIEWER = {E.\ Inaba},
       URL = {http://projecteuclid.org/euclid.ijm/1256053748},
}

@article {Fried74,
    AUTHOR = {Fried, M. D.},
     TITLE = {On {H}ilbert's irreducibility theorem},
   JOURNAL = {J. Number Theory},
  FJOURNAL = {Journal of Number Theory},
    VOLUME = {6},
      YEAR = {1974},
     PAGES = {211--231},
      ISSN = {0022-314X,1096-1658},
   MRCLASS = {12A20 (10M05)},
MRREVIEWER = {G.\ Thomas\ Frey},
       DOI = {10.1016/0022-314X(74)90015-8},
       URL = {https://doi.org/10.1016/0022-314X(74)90015-8},
}

@article{Fri73,
author = {Fried, M. D.},
year = {1973},
month = {03},
pages = {128--146},
title = {Fields of definition of function fields and a problem in the reducibility of polynomials in two variables},
volume = {17},
journal = {Illinois Journal of Mathematics},
doi = {10.1215/ijm/1256052044}
}

@article {KN,
    AUTHOR = {K\"onig, J. and Neftin, D.},
     TITLE = {Reducible fibers of polynomial maps},
   JOURNAL = {Int. Math. Res. Not. IMRN},
  FJOURNAL = {International Mathematics Research Notices. IMRN},
      YEAR = {2024},
    NUMBER = {6},
     PAGES = {5373--5402},
      ISSN = {1073-7928,1687-0247},
   MRCLASS = {14D05},
       DOI = {10.1093/imrn/rnad251},
       URL = {https://doi.org/10.1093/imrn/rnad251},
}

@misc{BKN,
      title={Monodromy groups of polynomials of composition length 2}, 
      author={Behajaina, A. and König, J. and Neftin, D.},
      year={2026},
      primaryClass={math.NT} 
}

@article {Mul,
    AUTHOR = {Müller, P.},
     TITLE = {Kronecker conjugacy of polynomials},
   JOURNAL = {Trans. Amer. Math. Soc.},
  FJOURNAL = {Transactions of the American Mathematical Society},
    VOLUME = {350},
      YEAR = {1998},
    NUMBER = {5},
     PAGES = {1823--1850},
      ISSN = {0002-9947,1088-6850},
   MRCLASS = {11C08 (12E05 20D99)},
MRREVIEWER = {Robert\ M.\ Guralnick},
       DOI = {10.1090/S0002-9947-98-02123-0},
       URL = {https://doi.org/10.1090/S0002-9947-98-02123-0},
}

@article {DZ,
    AUTHOR = {Ding, Zhiguo and Zieve, Michael E.},
     TITLE = {Extensions of absolute values on two subfields},
   JOURNAL = {J. Algebra},
  FJOURNAL = {Journal of Algebra},
    VOLUME = {598},
      YEAR = {2022},
     PAGES = {105--119},
      ISSN = {0021-8693,1090-266X},
   MRCLASS = {12J20 (11S15 13J05)},
  MRNUMBER = {4379276},
MRREVIEWER = {G\'erard\ Leloup},
       DOI = {10.1016/j.jalgebra.2022.01.016},
       URL = {https://doi.org/10.1016/j.jalgebra.2022.01.016},
}

@incollection {Mul2,
    AUTHOR = {Müller, P.},
     TITLE = {Primitive monodromy groups of polynomials},
 BOOKTITLE = {Recent developments in the inverse {G}alois problem
              ({S}eattle, {WA}, 1993)},
    SERIES = {Contemp. Math.},
    VOLUME = {186},
     PAGES = {385--401},
 PUBLISHER = {Amer. Math. Soc., Providence, RI},
      YEAR = {1995},
      ISBN = {0-8218-0299-2},
   MRCLASS = {20B15 (11C08 12F10 20B20)},
MRREVIEWER = {Robert\ M.\ Guralnick},
       DOI = {10.1090/conm/186/02193},
       URL = {https://doi.org/10.1090/conm/186/02193},
}

@article{Mul3,
    AUTHOR = {Müller, P.},
     TITLE = {Permutation groups with a cyclic two-orbits subgroup and
              monodromy groups of {L}aurent polynomials},
   JOURNAL = {Ann. Sc. Norm. Super. Pisa Cl. Sci. (5)},
  FJOURNAL = {Annali della Scuola Normale Superiore di Pisa. Classe di
              Scienze. Serie V},
    VOLUME = {12},
      YEAR = {2013},
    NUMBER = {2},
     PAGES = {369--438},
      ISSN = {0391-173X,2036-2145},
   MRCLASS = {20B15 (12E25 14H30)},
MRREVIEWER = {Johannes\ Siemons},
}

@article {GS,
    AUTHOR = {Guralnick, R. M. and Shareshian, J.},
     TITLE = {Symmetric and alternating groups as monodromy groups of
              {R}iemann surfaces. {I}. {G}eneric covers and covers with many
              branch points},
      NOTE = {With an appendix by Guralnick and R. Stafford},
   JOURNAL = {Mem. Amer. Math. Soc.},
  FJOURNAL = {Memoirs of the American Mathematical Society},
    VOLUME = {189},
      YEAR = {2007},
    NUMBER = {886},
     PAGES = {vi+128},
      ISSN = {0065-9266,1947-6221},
   MRCLASS = {14H30 (14H55 30F20)},
  MRNUMBER = {2343794},
MRREVIEWER = {Andrei\ B.\ Bogatyr\"ev},
       DOI = {10.1090/memo/0886},
       URL = {https://doi-org.technion.idm.oclc.org/10.1090/memo/0886},
}

@misc{MZ,
      title={On {R}itt's polynomial decomposition theorems}, 
      author={Zieve, M. E. and Müller, P.},
      year={2008},
      eprint={0807.3578},
      archivePrefix={arXiv},
      primaryClass={math.AG},
      note={https://arxiv.org/pdf/0807.3578}
}

@misc{Ostrov, 
title={M.{S}c. {T}hesis, {F}ibonacci numbers as special values of polynomials},
author={Ostrov, A.}, 
year={2025}
}

@misc{Cas68,
 author = {Cassels, J. W. S.},
 title = {Factorization of polynomials in several variables},
 year = {1968},
 language = {English},
 howpublished = {Proc. 15th {Scand}. {Congr}. {Oslo} 1968, {Lect}. {Notes} {Math}. 118, 1-17 (1970)},
 doi = {10.1007/bfb0060248},
 zbMATH = {3323127},
 Zbl = {0203.35001}
}

@misc{KNR24,
      title={Polynomial compositions with large monodromy groups and applications to arithmetic dynamics}, 
      author={König, J. and Neftin, D. and Rosenberg, S.},
      year={2024},
      eprint={2401.17872},
      archivePrefix={arXiv},
      primaryClass={math.NT},
      note={https://arxiv.org/abs/2401.17872}, 
}

@book {Schinzel,
    AUTHOR = {Schinzel, A.},
     TITLE = {Polynomials with special regard to reducibility},
    SERIES = {Encyclopedia of Mathematics and its Applications},
    VOLUME = {77},
      NOTE = {With an appendix by Umberto Zannier},
 PUBLISHER = {Cambridge University Press, Cambridge},
      YEAR = {2000},
     PAGES = {x+558},
      ISBN = {0-521-66225-7},
   MRCLASS = {11R09 (11C08 12D05 12E05 12E25)},
MRREVIEWER = {Art\=uras\ Dubickas},
       DOI = {10.1017/CBO9780511542916},
       URL = {https://doi.org/10.1017/CBO9780511542916},
}

@book {Sch,
    AUTHOR = {Schinzel, A.},
     TITLE = {Andrzej {S}chinzel selecta. {V}ol. {I}},
    SERIES = {Heritage of European Mathematics},
    EDITOR = {Iwaniec, Henryk and Narkiewicz, W\l adys\l aw and Urbanowicz,
              Jerzy},
      NOTE = {Diophantine problems and polynomials},
 PUBLISHER = {European Mathematical Society (EMS), Z\"urich},
      YEAR = {2007},
     PAGES = {xiv+858},
      ISBN = {978-3-03719-038-8},
   MRCLASS = {11-06 (01A75)},
MRREVIEWER = {R.\ C.\ Baker},
}

@misc{Tao12,
  author       = {Tao, T.},
  title        = {When is ${P}(x)-{Q}(y)$ irreducible?},
  year         = {2012},
  note         = {MathOverflow post},
  howpublished = "\url{https://mathoverflow.net/q/105747}",
}

@misc{Red12,
  author       = {Zieve, M. E.},
  title        = {Criteria for irreducibility of polynomial},
  year         = {2012},
  note         = {MathOverflow post},
  howpublished = "\url{https://mathoverflow.net/questions/105304/criteria-for-irreducibility-of-polynomial/}",
}

@unpublished{DR25,
  author       = {Derickx, M. and Rawson, J.},
  title        = {Functions on curves with infinitely many split fibres},
  note         = {Work in preparation},
  year         = {2025},
}

@unpublished{Fried86a,
  author       = {Fried, M. D.},
  title        = {Rigidity and applications of the classification of simple groups to monodromy {P}art {I}{I}-Applications of connectivity: {D}avenport and {H}ilbert--{S}iegel problems},
  note         = {Preprint},
  year         = {1986},
}

@unpublished{Fried86b,
  author       = {Fried, M. D.},
  title        = {The {H}ilbert--{S}iegel problems and {G}roup {T}heory solving cases of them},
  note         = {Preprint, \url{https://www.math.uci.edu/~mfried/paplist-cov/Hilb-Sieg86.pdf}},
  year         = {1986},
}

@misc{Rur12,
  author = {Rurik},
  title  = {Criteria for irreducibility of polynomial},
  howpublished = {Question on MathOverflow},
  year   = {2012},
  note   = {User ``Rurik'' on MathOverflow. 
            \url{https://mathoverflow.net/questions/105304/criteria-for-irreducibility-of-polynomial}
            (accessed 2026-03-04)}
}

@article{Magma,
 author = {Bosma, W. and Cannon, J. and Playoust, C.},
 title = {The {Magma} algebra system. {I}: {The} user language},
 fjournal = {Journal of Symbolic Computation},
 journal = {J. Symb. Comput.},
 issn = {0747-7171},
 volume = {24},
 number = {3-4},
 pages = {235--265},
 year = {1997},
 doi = {10.1006/jsco.1996.0125},
 zbMATH = {1077111},
 Zbl = {0898.68039}
}

@article {Pak2,
    AUTHOR = {Pakovich, F.},
     TITLE = {On algebraic curves {$A(x)-B(y)=0$} of genus zero},
   JOURNAL = {Math. Z.},
  FJOURNAL = {Mathematische Zeitschrift},
    VOLUME = {288},
      YEAR = {2018},
    NUMBER = {1-2},
     PAGES = {299--310},
      ISSN = {0025-5874,1432-1823},
   MRCLASS = {14H50 (14G05 14H45 37F10)},
MRREVIEWER = {Maria\ Montanucci},
       DOI = {10.1007/s00209-017-1889-9},
       URL = {https://doi.org/10.1007/s00209-017-1889-9},
}

@article {Pak,
    AUTHOR = {Pakovich, F.},
     TITLE = {On the equation {$P(f)=Q(g)$}, where {$P,Q$} are polynomials
              and {$f,g$} are entire functions},
   JOURNAL = {Amer. J. Math.},
  FJOURNAL = {American Journal of Mathematics},
    VOLUME = {132},
      YEAR = {2010},
    NUMBER = {6},
     PAGES = {1591--1607},
      ISSN = {0002-9327,1080-6377},
   MRCLASS = {30D05 (39B32)},
MRREVIEWER = {Ilpo\ Laine},
}

@Misc{Pak3,
 Author = {Pakovich, F.},
 Title = {On intersection of lemniscates of rational functions},
 Year = {2023},
 Language = {English},
 HowPublished = {arXiv:2309.04983, Preprint}
 }

@article{Pak5,
title = {Prime and composite {L}aurent polynomials},
journal = {Bulletin des Sciences Math\'ematiques},
volume = {133},
number = {7},
pages = {693-732},
year = {2009},
issn = {0007-4497},
doi = {https://doi.org/10.1016/j.bulsci.2009.06.003},
author = {Pakovich, F.},
}

@Misc{Zieve1,
 Author = {Do, T. and Hallett, J. and Huang, X. and Jiang, Y. and Weiss, B. and Wells, E. and Zieve, M. E. },
 Title = {On the functional equation f(u)=g(v) in complex polynomials f,g and meromorphic functions u,v, {I}: the irreducible case},
 Year = {2012},
 Language = {English},
 HowPublished = {Preprint}
 }

@Misc{Zieve2,
 Author = {Carney, A. and Do, T. and Hallett, J. and Sun, Q. and Weiss, B. and Wells, E. and Xia, S.  and Zieve, M. E.},
 Title = {On the functional equation f(u)=g(v) in complex polynomials f,g and meromorphic functions u,v, {I}{I}: the reducible case},
 Year = {2012},
 Language = {English},
 HowPublished = {Preprint}
 }

@article {Tao2,
    AUTHOR = {Tao, T.},
     TITLE = {Expanding polynomials over finite fields of large
              characteristic, and a regularity lemma for definable sets},
   JOURNAL = {Contrib. Discrete Math.},
  FJOURNAL = {Contributions to Discrete Mathematics},
    VOLUME = {10},
      YEAR = {2015},
    NUMBER = {1},
     PAGES = {22--98},
      ISSN = {1715-0868},
   MRCLASS = {11T06 (05C75)},
MRREVIEWER = {Tom\ Sanders},
}

@article {Fried87,
    AUTHOR = {Fried, M. D.},
     TITLE = {Irreducibility results for separated variables equations},
   JOURNAL = {J. Pure Appl. Algebra},
  FJOURNAL = {Journal of Pure and Applied Algebra},
    VOLUME = {48},
      YEAR = {1987},
    NUMBER = {1-2},
     PAGES = {9--22},
      ISSN = {0022-4049,1873-1376},
   MRCLASS = {14A10 (12E99 20C30)},
MRREVIEWER = {Doru\ \c Stef\u anescu},
       DOI = {10.1016/0022-4049(87)90104-6},
       URL = {https://doi-org.technion.idm.oclc.org/10.1016/0022-4049(87)90104-6},
}

@article {KMS,
    AUTHOR = {Kulkarni, M. and Müller, P. and Sury, B.},
     TITLE = {Quadratic factors of {$f(X)-g(Y)$}},
   JOURNAL = {Indag. Math. (N.S.)},
  FJOURNAL = {Koninklijke Nederlandse Akademie van Wetenschappen.
              Indagationes Mathematicae. New Series},
    VOLUME = {18},
      YEAR = {2007},
    NUMBER = {2},
     PAGES = {233--243},
      ISSN = {0019-3577,1872-6100},
   MRCLASS = {12E05 (11C08 12F15 12F20)},
MRREVIEWER = {Amartya\ K.\ Dutta},
       DOI = {10.1016/S0019-3577(07)80019-X},
       URL = {https://doi.org/10.1016/S0019-3577(07)80019-X},
}

@article {Mul4,
    AUTHOR = {Müller, P.},
     TITLE = {Hilbert's irreducibility theorem for prime degree and general
              polynomials},
   JOURNAL = {Israel J. Math.},
  FJOURNAL = {Israel Journal of Mathematics},
    VOLUME = {109},
      YEAR = {1999},
     PAGES = {319--337},
      ISSN = {0021-2172,1565-8511},
   MRCLASS = {12E25},
MRREVIEWER = {N.\ Sankaran},
       DOI = {10.1007/BF02775041},
       URL = {https://doi.org/10.1007/BF02775041},
}

@article {Mul5,
    AUTHOR = {Müller, P.},
     TITLE = {Finiteness results for {H}ilbert's irreducibility theorem},
   JOURNAL = {Ann. Inst. Fourier (Grenoble)},
  FJOURNAL = {Universit\'e{} de Grenoble. Annales de l'Institut Fourier},
    VOLUME = {52},
      YEAR = {2002},
    NUMBER = {4},
     PAGES = {983--1015},
      ISSN = {0373-0956,1777-5310},
   MRCLASS = {12E25 (12E30 14H25 20B25)},
MRREVIEWER = {Boris\ \`E.\ Kunyavski\u i},
       DOI = {10.5802/aif.1907},
       URL = {https://doi.org/10.5802/aif.1907},
}

@article {Pak4,
    AUTHOR = {Pakovich, F.},
     TITLE = {Tame rational functions: decompositions of iterates and orbit
              intersections},
   JOURNAL = {J. Eur. Math. Soc. (JEMS)},
  FJOURNAL = {Journal of the European Mathematical Society (JEMS)},
    VOLUME = {25},
      YEAR = {2023},
    NUMBER = {10},
     PAGES = {3953--3978},
      ISSN = {1435-9855,1435-9863},
   MRCLASS = {37F10 (14E05)},
MRREVIEWER = {Rin\ Gotou},
       DOI = {10.4171/jems/1277},
       URL = {https://doi.org/10.4171/jems/1277},
}

@article{Klein1879,
author = {Klein, F.},
journal = {Mathematische Annalen},
pages = {428-471},
title = {Ueber die {T}ransformation siebenter {O}rdnung der elliptischen {F}unctionen},
url = {http://eudml.org/doc/156835},
volume = {14},
year = {1879},
}

@article{Klein1879b,
author = {Klein, F.},
journal = {Mathematische Annalen},
pages = {533-555},
title = {Ueber die {T}ransformation elfter {O}rdnung der elliptischen {F}unctionen},
url = {http://eudml.org/doc/156871},
volume = {15},
year = {1879},
}

@article{GMS,
  title={The rational function analogue of a question of {S}chur and exceptionality of permutation representations},
  author={Guralnick, R. M. and Müller, P. and Saxl, J.},
  journal={Memoirs of the American Mathematical Society},
  year={2002},
  volume={162},
  pages={0-0},
}

@book{DM96,
  title={Permutation {G}roups},
  author={Dixon, J. D. and Mortimer, B.},
  series={Graduate Texts in Mathematics},
  year={1996},
  publisher={Springer New York}
}

@incollection{Schinzel1963,
  title={Some unsolved problems on polynomials},
  author={Schinzel, A.},
  booktitle={ Neki nereseni problemi u
matematici. Matematicka Biblioteka 25}, 
pages={63--70},
  year={1963}
}

@article{Schinzel1967,
 author = {Schinzel, A.},
 title = {Reducibility of polynominals of the form {{\(f(x)-g(y)\)}}},
 fjournal = {Colloquium Mathematicum},
 journal = {Colloq. Math.},
 issn = {0010-1354},
 volume = {18},
 pages = {213--218},
 year = {1967},
 doi = {10.4064/cm-18-1-213-218},
 zbMATH = {3246404},
 Zbl = {0153.37203}
}

@phdthesis{tv68,
  author       = {Tverberg, H.},
  title        = {A Study in Irreducibility of Polynomials},
  school       = {University of Bergen},
  year         = {1968},
  type         = {Ph.{D}. thesis},
  address      = {Bergen, Norway},
  note         = {Doctoral dissertation}
}

@phdthesis{PHDTali,
author = {Monderer, T.},
title = {Reducibility, Specialization
and Related Low Genus Phenomena},
school = {Technion, Israel Institute of Technology},
year = {2024},
type = {Ph.{D}. thesis},
address = {Haifa, Israel},
note = {Doctoral dissertation}
}

@article{Jones2012,
  title={Newly reducible iterates in families of quadratic polynomials},
  author={K. Chamberlin and E. Colbert and S.M. Frechette and P. Hefferman and R. Jones and S. Orchard},
  journal={Involve, A Journal of Mathematics},
  year={2012},
  volume={5},
  pages={481--495},
 }

@article{Jones2021,
author = {Illig, P. and Jones, R. and Orvis, E. and Segawa, Y. and Spinale, N.},
title = {Newly reducible polynomial iterates},
journal = {International Journal of Number Theory},
volume = {17},
number = {06},
pages = {1405--1427},
year = {2021},
}

@misc{NZ,
title={Monodromy groups of indecomposable coverings of bounded genus}, 
author={Neftin, D. and  Zieve, M.},
year = {2024}, 
eprint={2403.17167}, 
note={arXiv:2403.17167},
      url={https://arxiv.org/abs/2403.17167}, 
}

@article {MV,
    AUTHOR = {M\"uller, Peter and V\"olklein, Helmut},
     TITLE = {On a question of {D}avenport},
   JOURNAL = {J. Number Theory},
  FJOURNAL = {Journal of Number Theory},
    VOLUME = {58},
      YEAR = {1996},
    NUMBER = {1},
     PAGES = {46--54},
      ISSN = {0022-314X,1096-1658},
   MRCLASS = {12F10 (11R09 12E05)},
  MRNUMBER = {1387720},
MRREVIEWER = {Dan\ Haran},
       DOI = {10.1006/jnth.1996.0059},
       URL = {https://doi-org.technion.idm.oclc.org/10.1006/jnth.1996.0059},
}
\bibliographystyle{amsalpha}
\end{document}